\documentclass{article}

% I think this is needed for graphics?
%\usepackage{graphicx}

% Allows hyperlinks in the document when building a pdf file.
\usepackage{hyperref}

% Includes AMS fonts, mathematical symbols, theorem environments
\usepackage{amsfonts, amsmath, amsthm}

% Geometry package good for setting margins
\usepackage[margin=3cm]{geometry}

% Can be useful with graphics
%\usepackage{color}
%\usepackage{epstopdf}

% Activate these for tikz!
\usepackage{tikz}
\usetikzlibrary{calc, arrows, decorations.markings, decorations.pathmorphing, positioning, decorations.pathreplacing}
\makeatletter
\tikzset{nomorepostaction/.code={\let\tikz@postactions\pgfutil@empty}}
\makeatother

% Use this for fancy headers!
 \usepackage{fancyhdr}
 \pagestyle{fancy}
 \lhead{}

% To adjust table of contents, can use this counter
%\setcounter{tocdepth}{1}

% Standard theorem environments I like to use

\newtheorem{thm}{Theorem}[section]
\newtheorem{cor}[thm]{Corollary}
\newtheorem{lem}[thm]{Lemma}
\newtheorem{prop}[thm]{Proposition}

\newtheorem{defn}[thm]{Definition}

% Standard commands I like to use
\newcommand{\To}{\longrightarrow}
\newcommand{\C}{\mathbb{C}}

\newcommand{\Z}{\mathbb{Z}}

\newcommand{\V}{\mathcal{V}}
\newcommand{\bV}{{\bf V}}
\newcommand{\D}{\mathcal{D}}
\newcommand{\Disc}{\mathbb{D}}
\newcommand{\R}{\mathbb{R}}
\newcommand{\s}{\mathfrak{s}}

\newcommand{\e}{\mathbf{e}}

\newcommand{\0}{{\bf 0}}
\newcommand{\1}{{\bf 1}}
\newcommand{\T}{\mathbb{T}}
\newcommand{\Tree}{\mathcal{T}}
\newcommand{\M}{\mathcal{M}}

\newcommand{\Q}{\mathbb{Q}}
\newcommand{\I}{\mathcal{I}}

\renewcommand{\P}{\mathcal{P}}

\DeclareMathOperator{\im}{Im}
\renewcommand{\Re}{\text{Re}\;}

\DeclareMathOperator{\Sym}{Sym}

\DeclareMathOperator{\ev}{ev}

\DeclareMathOperator{\Spin}{Spin}

\DeclareMathOperator{\DOS}{DOS}

\DeclareMathOperator{\Int}{Int}

% Numbering in roman numerals!

%Now begin the document!
\begin{document}

\title{Twisty itsy bitsy topological field theory}

\author{Daniel V. Mathews \\ School of Mathematical Sciences, Monash University}

%If a date is wanted, put it here.
%\date{}

\maketitle

\begin{abstract}
We extend the topological field theory (``itsy bitsy topological field theory"') of our previous work from mod-2 to twisted coefficients. This topological field theory is derived from sutured Floer homology but described purely in terms of surfaces with signed points on their boundary (occupied surfaces) and curves on those surfaces respecting signs (sutures). It has information-theoretic (``itsy'') and quantum-field-theoretic (``bitsy'') aspects. In the process we extend some results of sutured Floer homology, consider associated ribbon graph structures, and construct explicit admissible Heegaard decompositions.
\end{abstract}

\tableofcontents

\section{Introduction}

\subsection{Overview}

This paper is part of a series dealing with a certain type of 2-dimensional topological quantum field theory which we call \emph{sutured quadrangulated field theory} (SQFT). This paper extends that theory to a more general system of coefficients --- \emph{twisted coefficients} --- to define \emph{twisted SQFT}.

We defined SQFT in \cite{Me09Paper}, following previous work of ourselves and Honda--Kazez--Mati\'{c}. We facetiously called this TQFT ``itsy bitsy topological field theory'', as certain aspects resemble the ``its'' of information and the ``bits'' of particles, and is a toy model. It deals with certain 2-dimensional surfaces (with a little extra structure), which we called \emph{occupied surfaces}, and certain curves (\emph{sutures}) on those surfaces. It does not quite satisfy the standard axioms for a TQFT (e.g. \cite{Witten88}), but we describe it loosely as such.

The SQFT of \cite{Me09Paper} uses $\Z_2$ coefficients. Roughly, it associates graded $\Z_2$-vector spaces to occupied surfaces, and graded linear maps to certain continuous maps (\emph{decorated morphisms}) between occupied surfaces. ``Filling in'' a surface with sutures singles out a \emph{suture element} of the vector space. In these respects SQFT is something like a traditional TQFT; but SQFT has much further structure. Decomposing an occupied surface into \emph{squares} gives a tensor decomposition of the corresponding vector space; and on each square, two ``basic'' sets of sutures are possible, which can be considered as representing \emph{bits} of information. The simplest morphisms between surfaces are those which ``create'' and ``annihilate'' squares, and give vector space maps which look like \emph{creation and annihilation} operators of quantum field theory.

The definition of twisted SQFT generalises the coefficient ring from $\Z_2$ to the group ring of the first homology of the surface. Roughly, we now associate a graded $\Z[H_1(\Sigma)]$-module to an occupied surface $(\Sigma,V)$, and a graded module homomorphism to a decorated morphism between occupied surfaces. Thus, the coefficient ring now depends on the surface --- and is no longer a field. All of the structure of SQFT should generalise to twisted coefficients.

The motivation, and prototypical example, for SQFT derives from \emph{sutured Floer homology} (\emph{SFH}), an invariant of certain sutured 3-dimensional manifolds based on symplectic and contact geometry and holomorphic curves. This TQFT was first considered (in a related form) by Honda--Kazez--Mati\'{c} in \cite{HKM08}, and considered further (in no particular order) by Massot \cite{Massot09}, Ghiggini-Honda \cite{Ghiggini_Honda08}, Fink \cite{Fink12}, Tian \cite{Tian12_Uq, Tian13_UT} and the author \cite{MyThesis, Me09Paper, Me10_Sutured_TQFT, Me12_itsy_bitsy, Me11_torsion_tori, Mathews_Schoenfeld12_String}. We proved in \cite{Me09Paper} that an SQFT exists by showing that SFH presents an example.  Sutured Floer homology can be defined over various systems of coefficients; the previous work of \cite{Me09Paper} was based on SFH with $\Z_2$ coefficients, and this paper is based on SFH with twisted coefficients. Careful treatment has also recently been given to naturality properties, which are closely related to TQFT, in Floer homology theories, including Heegaard Floer \cite{Juhasz_Thurston12} and monopole and instanton Floer homology \cite{Baldwin_Sivek14}.

The main object of this paper is to define a twisted SQFT, present an argument why this definition is the natural one, and prove the existence of a twisted SQFT, as expressed in the following theorem. 
\begin{thm}
The sutured Floer homology of sutured 3-manifolds $(\Sigma \times S^1, F \times S^1)$, with twisted coefficients, contact elements, and maps on SFH induced by inclusions of surfaces, forms a twisted SQFT.
\end{thm}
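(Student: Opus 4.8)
The plan is to exhibit, for each occupied surface $(\Sigma,V)$, the twisted sutured Floer homology $SFH(\Sigma\times S^1, F\times S^1)$ as the module the theory assigns, to exhibit the maps induced by inclusions as the homomorphisms assigned to decorated morphisms, and to exhibit the contact class of the $S^1$-invariant contact structure determined by a set of sutures as the suture element. The bulk of the work is then: (i) pinning down the coefficient ring; (ii) making the inclusion-induced maps well defined and functorial; (iii) identifying contact elements with suture elements; and (iv) checking the combinatorial axioms of a twisted SQFT, in particular the tensor/gluing structure coming from decomposition into squares.

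First I would produce explicit admissible Heegaard diagrams for the sutured manifolds $(\Sigma\times S^1, F\times S^1)$ --- the ``explicit admissible Heegaard decompositions'' promised in the abstract --- and from them compute the twisted SFH. The computation should show that, with a suitable (partially twisted) coefficient system, the homology is a free $\Z[H_1(\Sigma)]$-module whose rank agrees with the known $\Z_2$ answer, with one generator for each isotopy class of sutures on $(\Sigma,V)$ with no closed contractible component, and that the $H_1(\Sigma)$-action is the geometric one coming from the inclusion $\Sigma\hookrightarrow\Sigma\times S^1$. Here one must be careful: $H_1(\Sigma\times S^1)=H_1(\Sigma)\oplus\Z$, so the naive universally twisted SFH is a module over too large a ring; the point is that the $S^1$-factor acts trivially (equivalently, is quotiented out), leaving exactly $\Z[H_1(\Sigma)]$.

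Next I would treat morphisms. A decorated morphism of occupied surfaces is, up to the relevant equivalence, built from inclusions $\Sigma_1\hookrightarrow\Sigma_2$; such an inclusion induces an inclusion of sutured manifolds $(\Sigma_1\times S^1, F_1\times S^1)\hookrightarrow(\Sigma_2\times S^1, F_2\times S^1)$, equivalently a gluing of a contact piece, which by the Honda--Kazez--Mati\'{c} gluing maps induces a map on SFH; one must upgrade these maps to the twisted setting, check they are $\Z[H_1]$-equivariant along $H_1(\Sigma_1)\to H_1(\Sigma_2)$, respect the grading, send contact element to contact element, and compose correctly. Functoriality and the identity axiom should follow from the corresponding properties of the contact gluing maps, once naturality in the sense of Juh\'asz--Thurston \cite{Juhasz_Thurston12} is arranged. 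Finally, decomposing $\Sigma$ into squares realises $SFH(\Sigma\times S^1)$ as a tensor product over the pieces, reproducing the ``bit'' structure and the creation/annihilation operators; verifying this is the SQFT analogue of the gluing axiom.

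The main obstacle I expect is the interaction between twisted coefficients and gluing: showing that the tensor decomposition along a cutting arc is an isomorphism of $\Z[H_1]$-modules, natural and compatible with contact elements. Over $\Z_2$ this is essentially a K\"unneth statement for a product Heegaard diagram; over twisted coefficients one has to track the change of rings induced by the maps on $H_1$ of the pieces into $H_1(\Sigma)$, verify that the relevant Tor-terms vanish so that freeness persists, and confirm that contact elements multiply correctly under gluing. Establishing strong admissibility of the glued diagrams --- and hence well-definedness of the twisted invariants independently of choices --- is the other delicate ingredient, and is precisely where the explicit Heegaard decompositions do their real work.
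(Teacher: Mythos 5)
Your overall strategy matches the paper's: build explicit admissible Heegaard diagrams for $(\Sigma\times S^1, F\times S^1)$, compute twisted SFH, take contact classes as suture elements, use the gluing/inclusion maps of Honda--Kazez--Mati\'c (extended to twisted coefficients, which is done by Ghiggini--Honda \cite{Ghiggini_Honda08}, so the ``upgrade'' you flag is already available) for the morphisms, handle ambiguity by passing to equivalence up to units, and realise the tensor decomposition by a twisted version of Juh\'asz' surface-decomposition theorem, whose change-of-rings step is exact because $\iota_*\colon H_2(M')\to H_2(M)$ is split injective. That is all correct and is what the paper does.

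Two points in your proposal are wrong, however, and both would derail a careful write-up. First, the coefficient ring. You write that $H_1(\Sigma\times S^1)=H_1(\Sigma)\oplus\Z$ is ``too large'' and the $S^1$-factor must act trivially and be quotiented out. This is a confusion of $H_1$ with $H_2$. The twisted coefficient ring for SFH of a sutured $(M,\Gamma)$ is $\Z[H_2(M)]$ (that is what indexes homotopy classes of Whitney discs, via $\pi_2(x,x)\cong H_2(M)$). For $M=\Sigma\times S^1$ with $\partial\Sigma\neq\emptyset$ one has $H_2(\Sigma)=0$, so K\"unneth gives $H_2(\Sigma\times S^1)\cong H_1(\Sigma)$ directly. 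Nothing is quotiented, and no ``partially twisted'' system is needed; the identification $\Z[H_2(\Sigma\times S^1)]\cong\Z[H_1(\Sigma)]$ is immediate.

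Second, the description of the generators. You claim the free module has ``one generator for each isotopy class of sutures on $(\Sigma,V)$ with no closed contractible component.'' That set is typically infinite (sutures can wind around handles in infinitely many ways), whereas $SFH$ here is free of finite rank $2^{I(\Sigma,V)}$. The correct statement, which the explicit Heegaard computation is designed to produce, is that a quadrangulation $Q$ of $(\Sigma,V)$ yields a free basis indexed by the $2^{I(\Sigma,V)}$ \emph{basic} sets of sutures for $Q$, via the two intersection points in each Heegaard block. General nontrivial suture elements are then expressed in this basis (e.g.\ using the bypass relation), not counted among the free generators.
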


Extending SQFT to twisted coefficients gives much more structure, but also presents various difficulties and diversions. For one thing, the TQFT itself is more complicated, and ambiguities up to units arise. For another, the computations in sutured Floer homology are more complicated. More practically, some of the existing literature in sutured Floer homology does not consider the twisted case, so that we must to extend some of the general theory of sutured Floer homology ourselves. And some considerations, like admissibility of Heegaard structures, lead us to consider auxiliary constructions such as \emph{spines} of quadrangulated surfaces, which are a type of ribbon graph we call \emph{tape graphs}. Tape graphs, so far as we know, are a new concept, and may be of interest in their own right. 

We detail the winding path of this paper in section \ref{sec:what_this_paper_does} below. In any case, because of the lengthy diversions required, in this paper we limit ourselves to consideration only of some very basic properties of twisted SQFT --- mainly, that it exists! Further properties will be considered in subsequent work.

In our view the difficulties in generalising to twisted coefficients is worth the effort. We expect that with twisted coefficients, connections between this topological quantum field theory and conjecturally underlying quantum group representations will be clarified. Just as Khovanov homology presents a categorification of the Jones polynomial, which is a $U_q \mathfrak{sl}(2)$ quantum invariant, Heegaard Floer homology presents a categorification of the Alexander polynomial, which is a $U_q \mathfrak{sl}(1|1)$ quantum invariant. Categorification of $U_q \mathfrak{sl}(1|1)$ has recently been explored by Tian \cite{Tian12_Uq, Tian13_UT}.

\subsection{What this paper does}
\label{sec:what_this_paper_does}

In generalising SQFT to twisted coefficients, several issues arise. As it turns out, this means that over half this paper deals with background, technicalities of sutured Floer homology, manipulations of Heegaard decompositions, and related considerations. We therefore take the time now to briefly explain these issues, and how we deal with them.

The first difficulty is that the most naive generalisation to twisted coefficients fails. An assignment of $\Z[H_1(\Sigma)]$-modules,  to occupied surfaces $(\Sigma,V)$, and module homomorphisms and suture elements strictly as in untwisted SQFT, does not exist. A version of this impossibility was known to Honda--Kazez--Mati\'{c} in \cite{HKM08} and explored in \cite{Me10_Sutured_TQFT}. We define two natural putative versions of twisted SQFT in section \ref{sec:ambiguity}, which we call \emph{optimistic} (definition \ref{defn:optimistic_twisted_SQFT}) and (after that fails) \emph{sanguine}, and obtain the following.
\begin{prop}[Necessity of ambiguity]
\label{prop:necessity_of_ambiguity}
No optimistic or sanguine twisted SQFT exists.
\end{prop}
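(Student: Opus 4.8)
The plan is to isolate a single small occupied surface on which the structural requirements of a twisted SQFT over-determine the suture elements, and then to compute one suture element in two ways whose answers differ by a genuine unit of the coefficient group ring. Since the first homology of a disc is trivial, any such obstruction must live on a surface with $H_1(\Sigma)\neq 0$, so the most economical choice is an occupied annulus $A$; its coefficient ring is $\Z[H_1(A)]\cong\Z[t,t^{-1}]$. At the level of $\Z_2$ coefficients this reprises the inconsistency found by Honda--Kazez--Mati\'{c} in \cite{HKM08}, but the twisted version must additionally detect the group-ring units.

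For the optimistic case (Definition~\ref{defn:optimistic_twisted_SQFT}), I would take the number of marked points on $A$ small enough that the square-decomposition (tensor) property pins the module associated to $A$ down to an explicit free $\Z[t,t^{-1}]$-module with a distinguished basis, and the decorated morphisms that build $A$ up from simpler occupied surfaces down to explicit matrices over $\Z[t,t^{-1}]$. Evaluating the suture element of one fixed taut suture configuration on $A$ via two square decompositions of $A$ related by elementary moves --- equivalently, by realizing $A$ from a disc through two different sequences of edge identifications --- yields two expressions for the same module element. Because exactly one of the two routes carries a sutured arc once around the core circle of $A$, the two expressions differ by the factor $t$, producing a relation $\mathbf{c}=t\,\mathbf{c}$ for the common suture element $\mathbf{c}$. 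As the module is forced to be free, hence torsion-free, and $t\neq 1$, this forces $\mathbf{c}=0$, contradicting the fact --- inherited from tautness, or seen by applying the augmentation $t\mapsto 1$ to recover untwisted SQFT --- that this suture element is nonzero. Hence no optimistic twisted SQFT exists.

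For the sanguine case the requirements are relaxed so that suture elements and decorated morphisms are specified only up to multiplication by units, chosen independently for each surface and each map. The obstruction then becomes a coherence condition rather than a single equation: the system of unit rescalings needed to make all square decompositions and the action of all decorated morphisms simultaneously compatible must itself be consistent, and I would show that the discrepancy it is asked to trivialise is nonzero --- it is again the class of $t$, now considered modulo the rescalings, and it survives because it is detected by the decorated morphism realizing $A$ from a square no matter how the basis on each square is rescaled: a rescaling of a piece contributes the same unit to both sides of the pertinent identity and so cannot manufacture the asymmetric factor $t$. Thus no sanguine twisted SQFT exists either, and the two cases together give Proposition~\ref{prop:necessity_of_ambiguity}.

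The step I expect to be the main obstacle is the winding bookkeeping behind both parts: one must track precisely which sutured arcs run once around the nontrivial loop of the chosen surface under each decomposition or morphism, so that the offending element of $\Z[t,t^{-1}]$ is computed without sign or exponent errors and is shown not to be an artefact of a poor choice of basis that a unit rescaling could absorb; and for the sanguine part one must set up the rescaling degrees of freedom carefully enough to verify that they are powerless against that element. A secondary check is to confirm that the chosen marked-point count really is large enough for the axioms to determine the module and the morphisms up to the stated ambiguity, so that the contradiction is forced rather than merely possible.
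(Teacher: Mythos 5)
Your starting premise is wrong, and this undermines the whole plan. You argue that ``since the first homology of a disc is trivial, any such obstruction must live on a surface with $H_1(\Sigma)\neq 0$,'' and so you move to an annulus where the coefficient ring is $\Z[t,t^{-1}]$. But the paper finds the obstruction already on the disc with six vertices, where $H_1(\Sigma)=0$ and the coefficient ring is just $\Z$. The necessary ambiguity is not primarily a group-ring phenomenon: the units of $\Z$ are $\{\pm1\}$, so the obstruction is a \emph{sign} obstruction, visible before twisted coefficients introduce any Laurent variables. Concretely, on the disc with six vertices (quadrangulated into two squares), the order-3 rotation morphism $R$ forces the matrix of $\D_R$ on the 0-graded summand to satisfy $M^3=I$ with first column $(0,1)^T$, giving $M=\bigl[\begin{smallmatrix}0&-1\\1&-1\end{smallmatrix}\bigr]$ and hence $c(\Gamma_{-+})+c(\Gamma_{+-})+c(\Gamma_\pm)=0$; a further morphism to the square then sends these three elements to $|\1\rangle$, $0$, $|\1\rangle$, which cannot sum to zero. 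No annulus, no nontrivial $H_1$, no $t$ is involved.

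Beyond the misdirected premise, the annulus argument itself has a gap you have not closed. You want to derive $\mathbf{c}=t\,\mathbf{c}$ by computing one suture element via two quadrangulations. But the ``optimistic'' axiom only asserts that for each isotopy class of quadrangulation there \emph{exists} a tensor-decomposition isomorphism under which basic suture elements are pure tensors. It does not canonically identify the isomorphisms for different quadrangulations, and it does not say that a suture class that is basic for two quadrangulations must be sent to corresponding pure tensors under \emph{compatible} identifications. So the relation $\mathbf{c}=t\,\mathbf{c}$ is not an automatic consequence of ``carrying a sutured arc once around the core.'' To make the annulus route rigorous you would instead have to pin the isomorphisms down using morphisms (e.g.\ from the square into $A$) and verify that the composite constraints are genuinely inconsistent; as written this is the step you flagged as hardest, and you have not carried it out. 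Finally, for the sanguine case the paper does not merely rerun the optimistic argument ``modulo rescalings'': it first proves a sequence of lemmas (e.g.\ $\D_{\iota_1}=\D_{\iota_2}$ for the two vacuum inclusions into the double vacuum, via a triple-vacuum functoriality argument) to extract an actual well-defined representative $s(\Gamma)\in c(\Gamma)$ from the unambiguous module homomorphisms, and only then applies the disc argument. Your sketch for the sanguine case omits this nontrivial extraction step entirely.
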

Thus, we must resign ourselves to ambiguity in suture elements and module homomorphisms: they are naturally well-defined \emph{up to units} in the coefficient ring --- a situation which is familiar from quantum theory.

Having defined twisted SQFT, we can prove a few first properties. It turns out there is a natural notion of ``mod 2 reduction'', so that some properties are inherited from untwisted SQFT immediately. However, some statements which are obvious over $\Z_2$ become much more slippery, due to ambiguities (for instance, ``standard gluings of surfaces give identity morphisms of modules''). We may know, for example, that each basis element of a free module is mapped to itself up to units under a homomorphism --- but this does not imply that the homomorphism is the identity up to units! Nonetheless, we dealt with difficulties of a similar nature in previous work \cite{Me10_Sutured_TQFT}, and similar methods should be able to resolve these issues. But invoking those previous methods, which requires significant additional background \cite{Me09Paper, Me10_Sutured_TQFT}, is best left to the sequel; we instead focus on proving the existence of twisted SQFT via twisted SFH.

Proving that SFH presents a twisted SQFT leads to its own array of issues. Ozsv\'{a}th--Szab\'{o}'s definition of twisted coefficients in \cite{OS04Prop} came before sutured Floer homology; and when Juh\'{a}sz introduced SFH in \cite{Ju06} he did not cover the twisted case. As such, there are gaps in the literature. Although others have subsequently considered SFH with twisted coefficients (e.g. \cite{Massot09, Ghiggini_Honda08}), the theory has not been fully treated. We recall and clarify the underlying theory as needed for our purposes, following Ozsv\'{a}th--Szab\'{o} and Juh\'{a}sz. 

More particularly, we need to extend a theorem of Juh\'{a}sz on the effect of surface decompositions on sutured Floer homology to the case of twisted coefficients. Roughly the generalised version of Juh\'{a}sz' theorem is as follows; a precise statement is theorem \ref{thm:Juhasz_gluing}.
\begin{thm}
\label{thm:gluing_rough}
If $(M, \Gamma) \stackrel{U}{\rightsquigarrow} (M', \Gamma')$ be a sutured manifold decomposition along a sufficiently nice surface $U$, then $SFH(M', \Gamma'; \Z[H_2(M)])$ is a direct summand of $SFH(M, \Gamma; \Z[H_2(M)])$. Furthermore, explicit Heegaard decompositions, chain complexes, and isomorphisms between summands can be given. 
\end{thm}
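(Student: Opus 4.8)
The plan is to follow Juh\'{a}sz' original strategy for surface decompositions, upgrading each step to twisted coefficients. Recall that Juh\'{a}sz' proof rests on two pillars: first, that a sutured manifold decomposition $(M,\Gamma) \stackrel{U}{\rightsquigarrow} (M',\Gamma')$ along a ``nice'' (well-groomed, or more generally decomposing) surface $U$ can be realised by a \emph{surface diagram}, i.e.\ a Heegaard diagram adapted to $U$ in which $U$ is built from a subset of the $\alpha$- and $\beta$-curves together with arcs on the Heegaard surface; and second, that with respect to such a diagram the generators of $CFH(M',\Gamma')$ are identified with a distinguished subset $O_P$ (the ``outer'' generators) of the generators of $CFH(M,\Gamma)$, and that this subset is closed under the differential, so that $CFH(M',\Gamma')$ sits inside $CFH(M,\Gamma)$ as a direct summand of the chain complex. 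First I would recall the definition of twisted (in fact $\Z[H_2(M)]$-, equivalently $\Z[H_1(M)/\text{torsion}]$- or totally twisted) sutured Floer homology following Ozsv\'{a}th--Szab\'{o} and the adaptations in the earlier sections of this paper: one fixes a complete system of paths, and the differential counts holomorphic disks weighted by the group-ring element recording the homology class of the associated 2-chain. The key point is that the Maslov-index-one disks contributing to the differential are exactly the same disks as in the untwisted theory --- twisting only decorates the coefficients --- so the combinatorial/holomorphic content of Juh\'{a}sz' argument is untouched.

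The main steps, in order, are as follows. (i) Quote the existence of an adapted (surface) Heegaard diagram for a nice decomposing surface $U$; this is Juh\'{a}sz' construction and needs no change, though I would note the construction of explicit admissible such diagrams is exactly where the \emph{tape graph}/spine machinery of the later sections is deployed, which is why the theorem promises \emph{explicit} Heegaard decompositions. (ii) Recall the partition of the generator set into $O_P$ and its complement, and Juh\'{a}sz' lemma that no positive disk goes from $O_P$ to its complement (while disks may go the other way). (iii) Upgrade this to twisted coefficients: since the underlying disks are the same, the twisted differential on $CFH(M,\Gamma;\Z[H_2(M)])$, written in block form with respect to the decomposition (generators in $O_P$) $\oplus$ (rest), is lower-triangular with the $O_P$-block a subcomplex; hence $CFH(M,\Gamma;\Z[H_2(M)])$ retracts onto this subcomplex and the subcomplex is a direct summand. (iv) Identify the $O_P$-subcomplex with $CFH(M',\Gamma';\Z[H_2(M)])$: Juh\'{a}sz gives a bijection of generators; I must check the \emph{coefficients} match, i.e.\ that under the inclusion $M' \hookrightarrow M$ the homology classes of the relevant 2-chains correspond, so that the group-ring weights agree. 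This amounts to tracking how the complete system of paths and the periodic domains restrict from $M$ to $M'$, and is where twisting actually enters. (v) Conclude the statement at the level of homology, noting the $\Z[H_2(M)]$-module structure is respected throughout, and assemble the ``furthermore'' clause by collating the explicit diagram from (i), the explicit chain complex with its block form from (iii), and the explicit generator bijection with matching weights from (iv).

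I expect step (iv) --- matching the twisted coefficients on the two sides --- to be the main obstacle. Over $\Z_2$ this step is vacuous, which is precisely why the statement is ``obvious'' in the untwisted world and genuinely requires work here. The subtlety is that $SFH(M',\Gamma')$ is naturally a module over $\Z[H_2(M')]$ (or $\Z[H_1(M')]$), while the summand lives inside a $\Z[H_2(M)]$-module, and these rings differ; I must use the map on homology induced by inclusion to pull one back to the other, check it is well-defined up to the expected ambiguities (units, and choices of complete systems of paths), and verify compatibility with the differentials. A secondary obstacle, more bookkeeping than conceptual, is handling admissibility: twisted SFH requires (strong) admissibility of the Heegaard diagram for the sum over periodic domains to converge, and one must ensure the adapted surface diagram can be taken admissible while remaining adapted to $U$ --- this is exactly the role of the explicit constructions via spines and tape graphs promised in the later sections, so here I would simply cite those. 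With those two points handled, the remainder is a faithful transcription of Juh\'{a}sz' proof with $\Z_2$ replaced by the group ring.
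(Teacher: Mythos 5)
Your proposal follows the paper's route closely --- upgrade Juh\'{a}sz' surface-decomposition argument to twisted coefficients, with the crux being the matching of group-ring weights between $CF(S',\alpha',\beta')$ and the outer subcomplex $O_P$ via the inclusion $\iota: M' \hookrightarrow M$ --- and you correctly identify that crux. However, step (ii)--(iii) contains a real error in reasoning, even though it lands on the right conclusion. You claim that ``no positive disk goes from $O_P$ to its complement (while disks may go the other way),'' so the differential is lower-triangular, and you deduce that $O_P$ is a direct summand by ``retraction.'' This is wrong twice over. First, a lower-triangular complex makes $O_P$ a subcomplex, giving a short exact sequence and a long exact sequence on homology, but does \emph{not} make $H(O_P)$ a direct summand of $H(CF)$; you would need the connecting maps to vanish, which is additional information. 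Second, and more to the point, the correct mechanism is not about positivity at all: outer and inner intersection points lie in distinct $\text{Spin}^c$ classes (Juh\'{a}sz' Lemma 5.5 of \cite{Ju08}), and $\pi_2(x,y)=\emptyset$ whenever $\s(x)\neq\s(y)$, so there are no Whitney discs of \emph{any} Maslov index in either direction between $O_P$ and $I_P$. The chain complex therefore splits as a genuine direct sum $(O_P,\partial|_{O_P})\oplus(I_P,\partial|_{I_P})$ --- no filtration or retraction argument needed or available.

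A secondary misattribution: the spine/tape-graph machinery of the later sections is not what makes the adapted Heegaard diagram admissible in the proof of the general gluing theorem --- that is handled by the abstract admissibility results already in \cite{Ju08}, carried over unchanged. The tape-graph constructions are deployed later, and only for the \emph{specific} manifolds $(\Sigma\times S^1, F\times S^1)$, to produce explicit admissible diagrams built from Heegaard blocks. (Also, weak admissibility suffices here since the coefficient ring is finitely generated over $\Z$ and only finitely many positive domains connect any pair of generators; invoking strong admissibility is unnecessary.) With the spin-c splitting corrected, your step (iv) --- choosing complete sets of paths compatibly under $p^k: \Sym^k S' \to \Sym^k S$ and verifying $\iota_* A'(\phi') = A(\phi)$ --- is exactly where the work lies, and the paper does this via the homological compatibility of $p$ with $\iota$ (Lemma \ref{lem:decomp_homology}).
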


When we then turn to the specific sutured 3-manifolds $(\Sigma \times S^1, V \times S^1)$ used to defined twisted SQFT, there are further specific issues. Computations for SFH with twisted coefficients for these manifolds have been made \cite{Massot09}, with explicit Heegaard decompositions and correct answers. However, at least so far as we understand, these Heegaard decompositions are sometimes inadmissible. We are therefore led to revisit this construction, and treat admissibility.

It turns out that these Heegaard decompositions fit nicely with quadrangulations of occupied surfaces. We are able to build Heegaard decompositions out of pieces we call \emph{Heegaard blocks}, one for each square of the quadrangulation. The following result is made precise in propositions \ref{prop:Heegaard_block_decomposition} and \ref{prop:Heegaard_block_decomposition_admissible}.
\begin{prop}
\label{prop:Heegaard_block_decomposition_rough}
Given a quadrangulation $Q$ of an occupied surface $(\Sigma,V)$, there is a naturally associated Heegaard decomposition $(S, \alpha, \beta)$ of $(\Sigma \times S^1, V \times S^1)$, where $S$ is built out of Heegaard blocks, one block for each square of $Q$. Each block contains one $\alpha$ and one $\beta$ curve. There is a well-defined isotopy of the $\beta$ curves which renders this Heegaard decomposition admissible.
\end{prop}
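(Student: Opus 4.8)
The plan is to construct the Heegaard decomposition locally, one square at a time, and then glue. First I would fix a quadrangulation $Q$ of $(\Sigma, V)$ and recall that $\Sigma \times S^1$ is a sutured manifold whose sutured boundary is $V \times S^1$; a Heegaard surface for this manifold must separate $\Sigma \times S^1$ into two compression bodies. The key local model is a single square $\sigma$ of $Q$: here I would describe an explicit \emph{Heegaard block}, a piece of surface-with-boundary carrying exactly one $\alpha$ arc and one $\beta$ arc, sitting inside $\sigma \times S^1$, so that the two sides are the appropriate local compression bodies. The block should be designed so that its boundary pattern matches up with that of adjacent blocks along the edges shared by adjacent squares of $Q$ (and so that along edges lying in $V$ it closes up against the sutured boundary correctly). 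Concretely, I expect the block to be something like an annulus or disc-with-holes, with the $\alpha$ curve running ``horizontally'' (in the $\Sigma$ direction) and the $\beta$ curve running partly in the $S^1$ direction, so that after gluing all blocks the $\alpha$ curves assemble from the edge structure of $Q$ and the $\beta$ curves wind once around $S^1$ per square. This gives the surface $S = \bigcup_\sigma (\text{block}_\sigma)$ together with curve systems $\alpha = \{\alpha_\sigma\}$, $\beta = \{\beta_\sigma\}$, and one checks by a standard handle-cancellation / Morse-theory argument that $(S, \alpha, \beta)$ is indeed a Heegaard decomposition of $(\Sigma \times S^1, V \times S^1)$ — this is the content of proposition \ref{prop:Heegaard_block_decomposition}.

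The second half is admissibility. Recall (as needed from the recollections of Juh\'asz's and Ozsv\'ath--Szab\'o's setup earlier in the paper) that a (balanced, or sutured) Heegaard diagram is admissible when every nontrivial periodic domain has both positive and negative local multiplicities; equivalently, there is no nonnegative nontrivial periodic domain. For the diagram $(S,\alpha,\beta)$ coming from the blocks, the periodic domains are generated by ``obvious'' ones: each square $\sigma$ contributes a periodic domain supported near that square (coming from the fact that $\alpha_\sigma$ and $\beta_\sigma$ together with the $S^1$ factor bound), and the global periodic domains are $\Z$-linear combinations of these. The plan is to exhibit an isotopy of each $\beta_\sigma$ — a ``finger move'' or winding that drags a portion of $\beta_\sigma$ across a basepoint or across an edge — chosen so that in the resulting diagram the elementary periodic domain attached to $\sigma$ acquires a region of negative multiplicity while retaining a region of positive multiplicity. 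One then checks that these negativities survive in every nontrivial integral combination: because the elementary domains are ``localised'' to distinct squares and the isotopies are performed independently, a nonzero combination $\sum n_\sigma P_\sigma$ with, say, some $n_{\sigma_0} \neq 0$ will have a region (inside block $\sigma_0$, away from the overlaps) where its multiplicity is $\pm n_{\sigma_0} \neq 0$, and by choosing signs of the winding consistently one arranges that both signs of multiplicity occur globally. This yields admissibility, which is proposition \ref{prop:Heegaard_block_decomposition_admissible}.

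The main obstacle I anticipate is \emph{not} the construction of the blocks — that is a bookkeeping exercise, albeit a delicate one, in matching boundary data across edges of $Q$, including the distinction between interior edges of $Q$ and edges lying along $V$ — but rather making the admissibility argument \emph{uniform and global}. The subtlety is that the elementary periodic domains $P_\sigma$ are not literally disjointly supported: they overlap along the edges of $Q$, where two blocks are glued, so after winding one must verify that the sign of the multiplicity contributed by $P_\sigma$ near edge $e$ is compatible with (does not cancel against) that contributed by $P_{\sigma'}$ for the adjacent square $\sigma'$. The cleanest route is probably to pick the winding direction for every $\beta$ curve according to a fixed global convention (e.g. all fingers pushed ``the same way'' relative to an orientation of $\Sigma$, or relative to a chosen orientation of the dual spine / tape graph of $Q$), so that contributions along shared edges reinforce rather than cancel; verifying that such a consistent choice exists and does the job — essentially a combinatorial lemma about the quadrangulation and its spine — is where the real work lies. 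A secondary technical point is confirming that the winding isotopy can be taken to fix the sutured boundary $V \times S^1$ and does not disturb the Heegaard-decomposition property established in the first half, which should follow since isotopy of attaching curves preserves the underlying 3-manifold and its sutures.
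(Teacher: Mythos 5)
Your block-by-block construction and gluing strategy matches the paper's (the block is the thickened positive basic sutures plus a tube along the shortened positive diagonal, $\alpha$ bounding a bulk disc and $\beta$ a tube meridian, with $|\alpha\cap\beta|=2$), so the first half is essentially the same plan.

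The admissibility argument, however, rests on a false picture of the periodic-domain group. You posit ``each square $\sigma$ contributes a periodic domain $P_\sigma$ supported near that square'' and that $\mathcal{P}(S,\alpha,\beta)$ is spanned by these. But for this Heegaard decomposition $\mathcal{P}(S,\alpha,\beta)\cong H_1(\alpha)\cap H_1(\beta)\cong H_2(\Sigma\times S^1)\cong H_1(\Sigma)\cong H_1(G_Q^+)$, which has rank $1-\chi(\Sigma)$ (for connected $\Sigma$), not rank $I(\Sigma,V)$. In particular a single Heegaard block has no nonzero periodic domains whatsoever ($H_1(\alpha)\cap H_1(\beta)=0$ there), and if $\Sigma$ is a disc quadrangulated by arbitrarily many squares there are still no periodic domains. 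The actual generators $D_e$ are indexed by edges of $G_Q^+$ outside a maximal tree: each is supported along the wedges swept out by a loop $l_e$ in the spine, which typically passes through many squares. Your proposed verification---that $\sum n_\sigma P_\sigma$ has multiplicity $\pm n_{\sigma_0}$ in a region interior to block $\sigma_0$---therefore has nothing to latch onto, and the cancellation worry you flag for adjacent squares is aimed at the wrong object.

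The paper sidesteps this bookkeeping entirely. After the finger moves (your ``consistent global convention'' is exactly the paper's ``always turn left'' along the tape graph), one does not compute periodic domains at all; one instead arranges that \emph{each $\beta$ curve is adjacent, on each of its two sides, to at least one boundary-adjacent component of $S\setminus(\alpha\cup\beta)$}. Then for any nonzero periodic domain $D$, some $\beta_i$ appears in $\partial D$ with nonzero coefficient $b_i$ (else the $\alpha$ curves would be dependent in $H_1(S)$); the multiplicity of $D$ jumps by $\pm b_i$ across $\beta_i$, so comparing against the zero multiplicities forced on the boundary-adjacent regions on either side produces both a positive and a negative coefficient. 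To carry out your plan you would need to replace the per-square periodic domains with the $D_e$ coming from loops in the spine, or better, abandon tracking periodic domains and adopt this structural criterion on the $\beta$ curves---which is what makes the ``always turn left'' choice the right global convention: it is chosen to guarantee the boundary-adjacency condition, not to control sign cancellations among nonexistent $P_\sigma$.
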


The periodic domains of these Heegaard decompositions are described conveniently in terms of a \emph{ribbon graph} associated to a quadrangulated occupied surface. It was mentioned in \cite[sections 4.5, 8.5]{Me12_itsy_bitsy} that quadrangulations can endow an occupied surface with a ribbon graph structure; we pursue these ideas in a slightly different direction to define the \emph{spine} of an occupied surface $(\Sigma,V)$, which is a ribbon graph, onto which $\Sigma$ deformation retracts. In fact, we do not quite have ribbon graphs in the traditional sense: ribbon graphs have a \emph{cyclic ordering} on edges at each vertex, but our spines have a \emph{total ordering} on edges at each vertex. We call such graphs \emph{tape graphs}.

Since we have not seen the concept of tape graphs before, we pause to consider some of the properties of tape graphs in general. These properties may be of interest in their own right. In any case they assist our analysis of periodic domains and admissibility. 

Once an admissible Heegaard decomposition is obtained, we compute SFH by successively applying the twisted version of Juh\'{a}sz' decomposition theorem, successively simplifying the Heegaard decomposition, and keeping track of admissibility via associated quadrangulations and tape/ribbon graphs. No nontrivial holomorphic curve ever contributes to the calculation of SFH. Previous calculations therefore come to the correct result, and the correct conclusion that the generators of SFH can be identified with generators of the chain complex, i.e. complete intersections of $\alpha$ and $\beta$ curves.

In sum, there are various hurdles of various types to be cleared, and various auxiliary considerations, in generalising SQFT to twisted coefficients. We must clarify why ambiguity enters the picture; we must extend a theorem about SFH, in the process revisiting some technicalities of SFH; we must clarify admissibility of previous constructions, and introduce some ribbon/tape graphs along the way. These various hurdles and diversions lead us in different directions, but all are necessary for a proper treatment of the main theorem.

As a result, computations in twisted SQFT, and proofs of some of the most basic properties of twisted SQFT, are deferred to the sequel. Nonetheless we establish some properties, including a bypass relation, and those inherited from the mod 2 case, in sections \ref{sec:first_properties}-- \ref{sec:inheritance}.

\subsection{Structure of this paper}

In section \ref{sec:background} we recall the definition of SQFT (the ``itsy bitsy topological field theory'' of the prequel) and related concepts. We then proceed to the twisted case. We first (section \ref{sec:algebraic_preliminaries}) discuss the types of maps which arise, between graded modules over different rings. Then we explain (section \ref{sec:ambiguity}) why ambiguity of suture elements and module homomorphisms cannot be avoided, proving the non-existence of ``optimistic'' and even ``sanguine'' versions of SQFT with twisted coefficients. This motivates the definition of twisted SQFT (section \ref{sec:definition}). We can quickly perform some computations in the simplest cases (section \ref{sec:first_properties}), including a version of the bypass relation. We show that twisted SQFT canonically reduces mod-2 to untwisted SQFT (section \ref{sec:reduction_mod_2}), so that results of \cite{Me12_itsy_bitsy} apply (section \ref{sec:inheritance}) and we obtain a few basic properties.

The rest of the paper is devoted to proving the existence of a twisted SQFT by performing computations in SFH. 

Section \ref{sec:twisted_SFH} deals with the technicalities and generalities of SFH with twisted coefficients. We begin by recalling the untwisted case (section \ref{sec:SFH_background}) and how it leads to untwisted SQFT (section \ref{sec:SQFT_from_SFH}), before turning to the twisted case (section \ref{sec:introducing_twisted_coeffs}). We revisit some of the technicalities on homology of the symmetric product (section \ref{sec:homology_symmetric_product}) and homotopy classes of Whitney discs (section \ref{sec:homotopy_classes_Whitney_discs}), adapting them to the sutured case --- these are essential for extending Juh\'{a}sz' theorem in section \ref{sec:decompositions_twisted}. We also recall the gluing theorem of Ghiggini--Honda with twisted coefficients (section \ref{sec:contact_gluing}).

Section \ref{sec:TSQFT_Heegaard} then deals with Heegaard decompositions of our specific 3-manifolds, and related constructions. We begin by discussing quadrangulations and spines of occupied surfaces (section \ref{sec:spines_of_occupied_surfaces}). We consider some generalities about ribbon/tape graphs, which are useful for subsequent discussion (section \ref{sec:tape_graphs}). We define Heegaard blocks, the building blocks of our decompositions, in section \ref{sec:Heegaard_blocks}, and then use them to build Heegaard decompositions in section \ref{sec:building_Heegaard}. We discuss the relationship between these Heegaard decompositions and spines in section \ref{sec:Heegaard_and_spines}, which allows us to analyse periodic domains in section \ref{sec:periodic_domains} and admissibility in section \ref{sec:admissibility}. Having achieved an admissible Heegaard decomposition, we show how to compute SFH by successive decomposition in section \ref{sec:computing_SFH}, and finally demonstrate how it forms an example of twisted SQFT in section \ref{sec:twisted_SQFT_from_SFH}.

\section{Twisted SQFT}

\subsection{Previous work and background}
\label{sec:background}

We begin by recalling some of the ``itsy bitsy topological field theory'' i.e. sutured quadrangulated field theory (SQFT), of \cite{Me12_itsy_bitsy}. This theory is all over $\Z_2$, and when we refer to SQFT without adjectives, we mean this ``untwisted'' mod 2 case. This is a very brief summary; for full details see \cite{Me12_itsy_bitsy}.

We do not consider twisted coefficients in this section; for now we merely recall previous work.

\begin{defn}
An \emph{occupied surface} is a pair $(\Sigma,V)$, where $\Sigma$ is a compact oriented surface (possibly disconnected), and $V \subset \partial \Sigma$ is a finite set of $\pm$-signed \emph{vertices} ($V = V_+ \cup V_-$), such that
\begin{enumerate}
\item each component of $\Sigma$ has nonempty boundary
\item each component of $\partial \Sigma$ contains vertices, and
\item along each boundary component, vertices alternate in sign.
\end{enumerate}
\end{defn}

Throughout this paper, a pair $(\Sigma,V)$ will always denote an occupied surface. The empty set is a trivial occupied surface. A disc with two vertices is called a \emph{vacuum}. A disc with four vertices is called a \emph{square}. We denote $|V| = 2N$, so there are $N$ vertices of either sign, $N = |V_+| = |V_-|$. An arc along $\partial \Sigma$ from one vertex to the next is called a \emph{boundary edge} and is naturally oriented from $V_-$ to $V_+$.

Occupied surfaces decompose nicely into squares. A \emph{decomposing arc} is a properly embedded arc in $\Sigma$ from a vertex to a vertex of opposite sign. An occupied surface $(\Sigma,V)$ decomposes into $I(\Sigma,V) = N - \chi(\Sigma)$ squares; there are many ways to so decompose, but the number of squares is always the same, and we call $I(\Sigma,V)$ the \emph{index} of $(\Sigma,V)$. (Conversely, the $I(\Sigma,V)$ squares can be glued into $(\Sigma,V)$.) We call such a decomposition a \emph{quadrangulation}. A quadrangulation can be described via the set of decomposing arcs (\emph{internal edges}) cutting $\Sigma$ into squares; or, equivalently, as a set of square subsurfaces of $(\Sigma,V)$. Any two quadrangulations are related by local adjustments called \emph{diagonal slides}, shown in figure \ref{fig:diagonal_slides}.

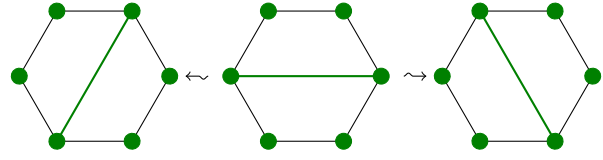
\begin{figure}
\begin{center}
\begin{tikzpicture}[
scale=1, 
boundary/.style={ultra thick}, 
decomposition/.style={thick, draw=green!50!black}, 
]

\foreach \x/\rot in {-80/60, 0/0, 80/-60}
{
\draw [xshift=\x, rotate=\rot] (0:1) -- (60:1) -- (120:1) -- (180:1) -- (240:1) -- (300:1) -- cycle;
\draw [xshift=\x, rotate=\rot, decomposition] (0:1) -- (180:1);

\foreach \angle in {0, 60, 120, 180, 240, 300}
\fill [green!50!black, draw=green!50!black, xshift=\x, rotate=\rot] (\angle:1) circle (3pt);
}

\draw [shorten >=1mm, -to, decorate, decoration={snake,amplitude=.4mm, segment length = 2mm, pre=moveto, pre length = 1mm, post length = 2mm}]
(1.2,0) -- (1.7,0);
\draw [shorten >=1mm, -to, decorate, decoration={snake,amplitude=.4mm, segment length = 2mm, pre=moveto, pre length = 1mm, post length = 2mm}] (-1.2,0) -- (-1.7,0);

\end{tikzpicture}
\caption{Diagonal slides.}
\label{fig:diagonal_slides}
\end{center}
\end{figure}

A \emph{morphism} $\phi: (\Sigma,V) \To (\Sigma',V')$ is, roughly (see \cite[sec. 3.1]{Me12_itsy_bitsy} for full details), a continuous map $\Sigma \To \Sigma'$ which embeds of the interior $\Int \Sigma$ of $\Sigma$ into $\Sigma'$, and which treats boundary edges as follows. Boundary edges of $\Sigma$ may be folded or glued under $\phi$; but if they intersect other at endpoints, they coincide. Signs of vertices must be respected. Any homeomorphism of $\Sigma$ preserving $V_+$ and $V_-$ is a morphism.
Occupied surfaces and morphisms form a category. The empty set is an initial object in this category.

In \cite{Me12_itsy_bitsy} we show any morphism is a composition of the following four types of morphisms, illustrated in figure \ref{fig:elementary_morphisms}.
\begin{enumerate}
\item 
A \emph{creation} takes an occupied surface $(\Sigma,V)$ and places a disjoint square $(\Sigma^\square, V^\square)$ (the \emph{created square}) next to it to obtain $(\Sigma',V') = (\Sigma,V) \sqcup (\Sigma^\square,V^\square)$.
\item
A \emph{standard gluing} glues two non-consecutive boundary edges of $(\Sigma,V)$ respecting orientations, to obtain $(\Sigma',V')$ with $\chi$ decreased by $1$ and $N$ decreased by $1$, so $I(\Sigma',V') = I(\Sigma,V)$.
\item
A \emph{fold} glues two consecutive boundary edges of $(\Sigma,V)$, respecting orientations, which do not comprise an entire boundary component of $\Sigma$. The resulting $(\Sigma',V')$ has $\Sigma'$ homeomorphic to $\Sigma$ but $|V'| = |V| - 2$.
\item
A \emph{zip} glues together two consecutive boundary edges of $(\Sigma,V)$ which comprise an entire boundary component. This decreases $N$ by $1$ and increases $\chi$ by $1$.
\end{enumerate}

\begin{figure}
\begin{center}

\begin{tabular}{c|c}
\begin{tabular}{c}
Creation:
\\
\def\svgwidth{180pt}
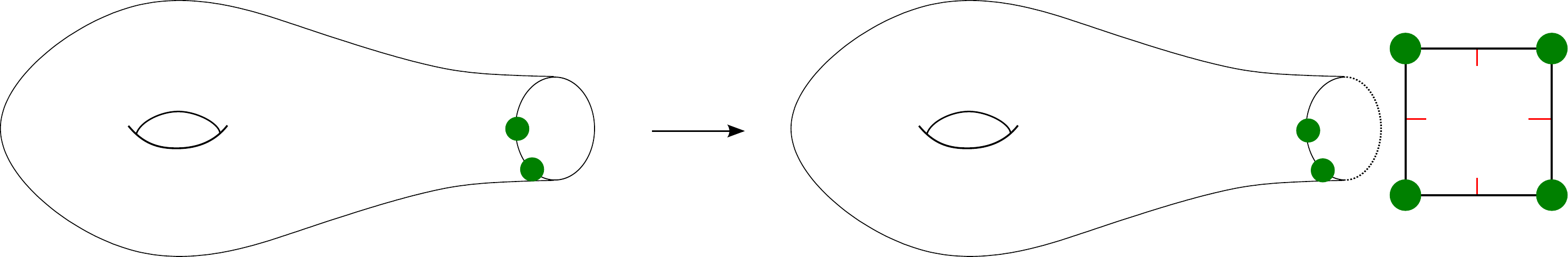
\end{tabular}
&
\begin{tabular}{c}
Standard gluing:
\\
\def\svgwidth{180pt}
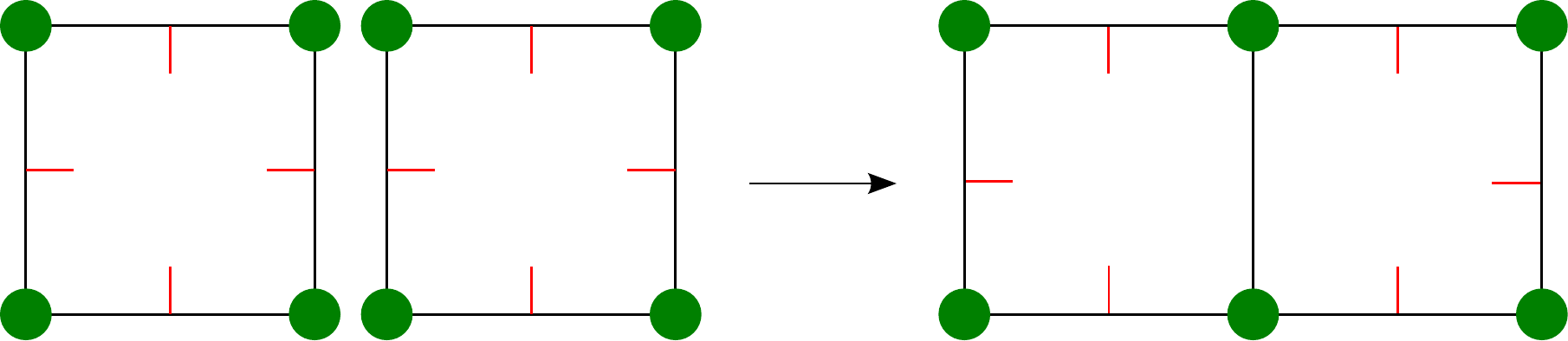
\end{tabular}
\\
\hline
\begin{tabular}{c}
Fold:
\\
\def\svgwidth{160pt}
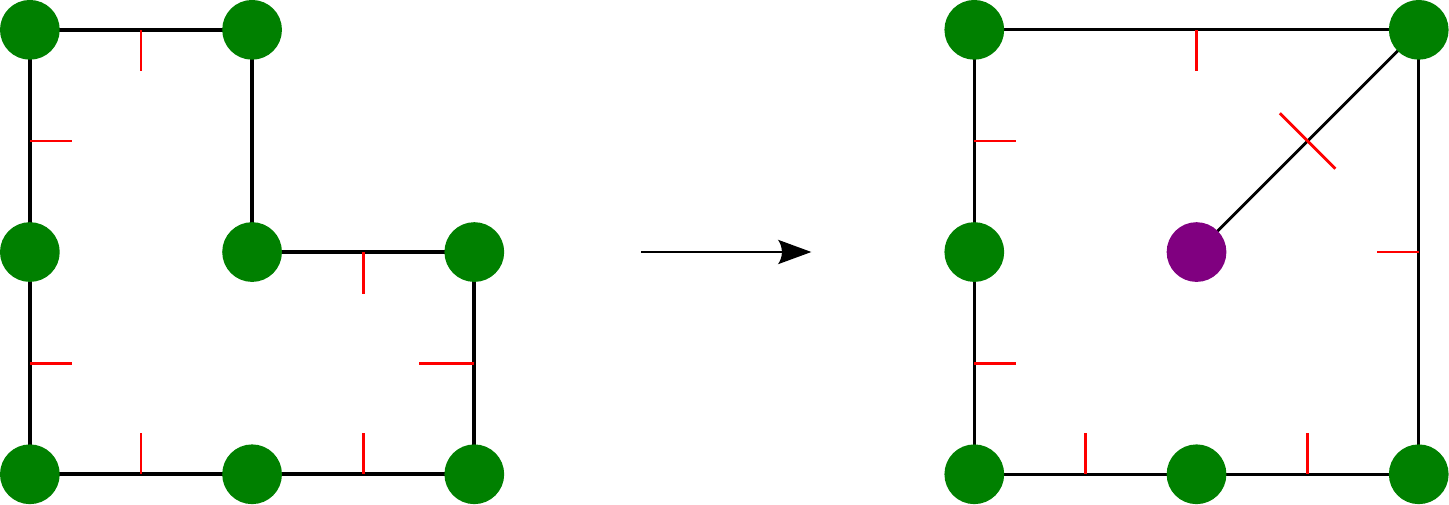
\end{tabular}
&
\begin{tabular}{c}
Zip:
\\
\def\svgwidth{160pt}
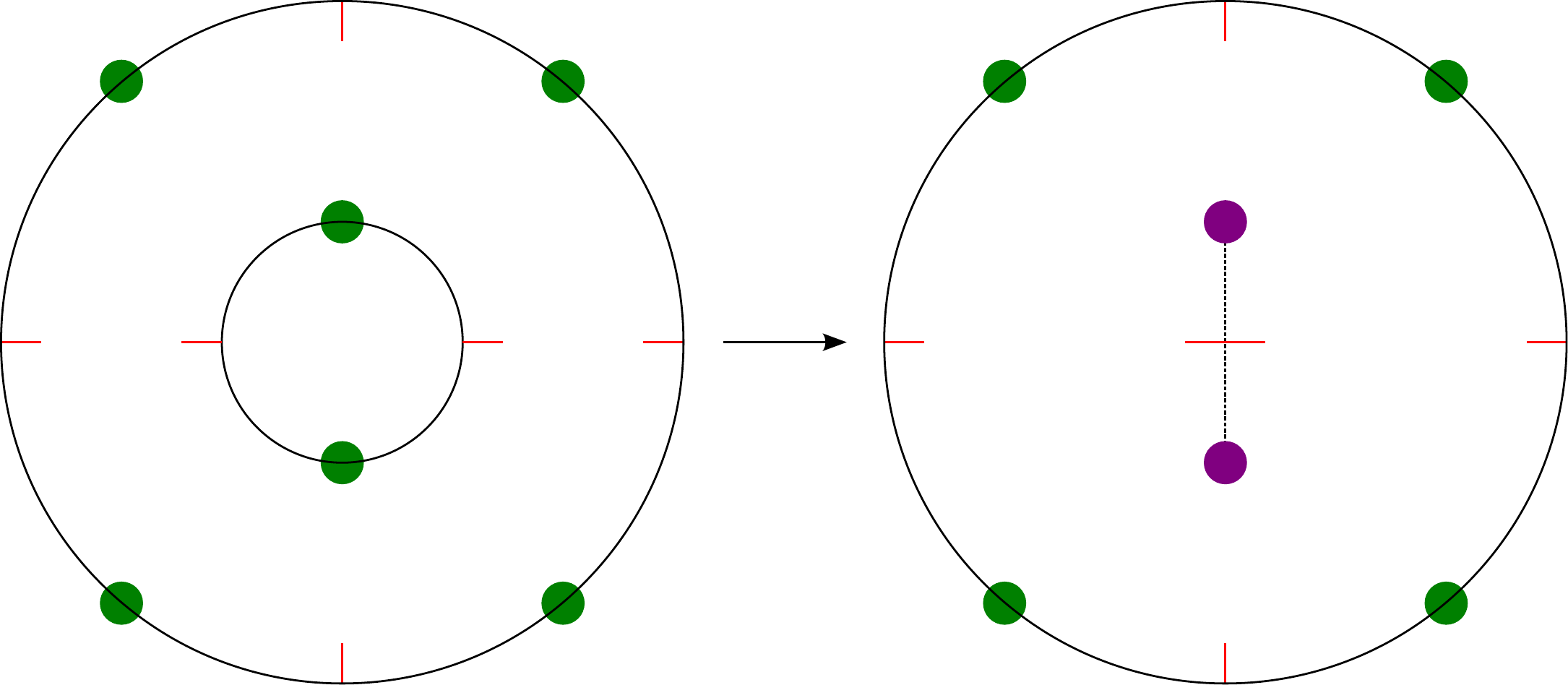
\end{tabular}
\end{tabular}

\caption{Elementary morphisms.}
\label{fig:elementary_morphisms}
\end{center}
\end{figure}

The image of a morphism is a well-defined occupied surface. However, the image occupied surface may have fewer vertices; a vertex of an occupied surface need not be sent to a vertex of the image occupied surface. For instance, this occurs with a fold or zip. In this case we say a vertex has been \emph{swallowed}. The \emph{complement} of the image of a morphism is also a well-defined occupied surface.

Given a quadrangulated occupied surface, after applying a standard gluing, or creation, we naturally obtain a quadrangulation on the image occupied surface. Even after applying a fold or zip, we obtain a \emph{slack quadrangulation} of the resulting surface, which is essentially a quadrangulation with vertices permitted in the interior of $\Sigma$. Isotoping these interior vertices to boundary vertices we perform \emph{slack square collapse} and obtain a bona fide quadrangulation.

Occupied surfaces form a natural setting for \emph{sutures}. For our purposes (slightly nonstandard), a \emph{sutured surface} $(\Sigma,\Gamma)$ is a compact oriented surface $\Sigma$, and a properly embedded oriented 1-submanifold $\Gamma \subset \Sigma$, such that:
\begin{enumerate}
\item $\Sigma \backslash \Gamma = R_+ \cup R_-$, where $R_\pm$ are surfaces oriented as $\pm \Sigma$,
\item $\overline{\partial R_\pm \backslash \partial \Sigma} = \Gamma$ as oriented 1-manifolds, and
\item for every component $C$ of $\partial \Sigma$, $C \cap \Gamma \neq \emptyset$.
\end{enumerate}

The whole of $\Gamma$ is called a \emph{set of sutures} and each component a \emph{suture}. We only ever consider sutures up to isotopy relative to endpoints. Roughly speaking, $\Gamma$ splits $\Sigma$ into positive regions $R_+$ and negative regions $R_-$; crossing $\Gamma$ switches the sign. The \emph{Euler class} of a set of sutures $\Gamma$ is $e(\Gamma) = \chi(R_+) - \chi(R_-)$. 

In \cite{Me12_itsy_bitsy} we define a \emph{sutured background surface} $(\Sigma,F)$ to be the boundary data of a sutured surface: it consists of the surface $\Sigma$ together with a collection of (signed) points $F$ on $\partial \Sigma$, which cut $\partial \Sigma$ into positive and negative arcs. Such a $(\Sigma,F)$ naturally corresponds to an occupied surface $(\Sigma,V)$ by taking one positive (resp. negative)  vertex in each positive (resp. negative) arc of $\partial \Sigma \backslash F$. The data of a sutured background surface and an occupied surface are equivalent; an occupied surface also provides natural boundary conditions for sutures. In our diagrams we draw vertices in green and sutures in red. We may speak of a set of sutures $\Gamma$ on an occupied surface $(\Sigma,V)$.

Sutures are \emph{trivial} if they contain a contractible closed curve. Sutures are \emph{confining} if there is a component of $\Sigma \backslash \Gamma$ which does not intersect $\partial \Sigma$ (a ``confined component''). Trivial sutures are confining. For nontrivial sutures $\Gamma$ on occupied $(\Sigma,V)$ we always have  $|e(\Gamma)| \leq I(\Sigma,V)$, and $e(\Gamma) \cong I(\Sigma,V)$ mod $2$.

There is a natural surgery on sutures called \emph{bypass surgery}. Bypass surgery is performed along an \emph{attaching arc} $c$, which is an embedded arc in $\Sigma$ intersecting $\Gamma$ at its endpoints and precisely one interior point. In a neighbourhood of $c$, sutures look as in figure \ref{fig:bypass_surgery}, and we may make the two surgeries shown. Sets of sutures related by bypass surgery naturally come in triples and we refer to such triples as \emph{bypass triples}. Bypass surgery preserves Euler class.

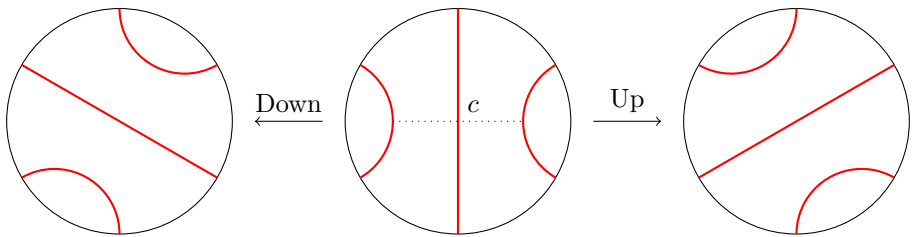
\begin{figure}
\begin{center}

\begin{tikzpicture}[
scale=1.5, 
suture/.style={thick, draw=red}]

\draw (3,0) circle (1 cm); 	
\draw (0,0) circle (1 cm);
\draw (-3,0) circle (1 cm);

\draw [suture] (30:1) arc (120:240:0.57735);
\draw [suture] (0,1) -- (0,-1);
\draw [suture] (210:1) arc (-60:60:0.57735);
\draw [dotted] (-0.57735,0) -- node [above right] {$c$} (0.57735,0);

\draw [suture] (3,0) ++ (150:1) arc (-120:0:0.57735);
\draw [suture] (3,0) ++ (30:1) -- ($ (3,0) + (210:1) $);
\draw [suture] (3,0) ++ (-30:1) arc (60:180:0.57735);

\draw [suture] (-3,1) arc  (180:300:0.57735);
\draw [suture] ($ (-3,0) + (-30:1) $) -- ($ (-3,0) +  (150:1) $);
\draw [suture] (-3,-1) arc (0:120:0.57735);

\draw[->] (-1.2,0) -- node [above] {Down} (-1.8,0);
\draw[->] (1.2,0) -- node [above] {Up} (1.8,0);

\end{tikzpicture}

\caption{Bypass surgery.}
\label{fig:bypass_surgery}
\end{center}
\end{figure}

Sutures are nicely behaved under various cutting and gluing operations. Gluing boundary edges of a sutured occupied surface $(\Sigma,\Gamma,V)$ (a surjective morphism) gives another sutured occupied surface. Performing a standard gluing on a sutured occupied surface results in sutures $\Gamma'$ with $e(\Gamma') = e(\Gamma)$. A decomposing arc $a$ on $(\Sigma,V)$ which is transverse to $\Gamma$ must intersect $\Gamma$ an odd number of times; if $|a \cap \Gamma| = 1$, then cutting along $a$ gives another sutured occupied surface with the same Euler class.

Sutures also play nicely with quadrangulations. Let $Q$ be a quadrangulation of $(\Sigma,V)$. A set of sutures $\Gamma$ is called \emph{basic} for $Q$ if $\Gamma$ is nontrivial and intersects each internal edge of $Q$ in a single point; equivalently, if on every square of $Q$ the sutures are of one of the two ``basic'' types $\Gamma_+, \Gamma_-$ shown in figure \ref{fig:sutured_squares}. As there are $I(\Sigma,V)$ squares in a quadrangulation of $(\Sigma,V)$, there are precisely $2^{I(\Sigma,V)}$ basic sets of sutures.

\begin{figure}
\begin{center}

\begin{tikzpicture}[
scale=2, 
suture/.style={thick, draw=red}, 
]

\coordinate [label = above left:{$-$}] (1tl) at (0,1);
\coordinate [label = above right:{$+$}] (1tr) at (1,1);
\coordinate [label = below left:{$+$}] (1bl) at (0,0);
\coordinate [label = below right:{$-$}] (1br) at (1,0);

\draw (1bl) -- (1br) -- (1tr) -- (1tl) -- cycle;
\draw [suture] (0.5,0) to [bend left=45] (1,0.5);
\draw [suture] (0,0.5) to [bend right=45] (0.5,1);
\draw (0.2,0.8) node {$-$};
\draw (0.5,0.5) node {$+$};
\draw (0.8,0.2) node {$-$};
\draw (0.5,-0.5) node {$c(\Gamma_-) = \0$};

\coordinate [label = above left:{$-$}] (2tl) at (3,1);
\coordinate [label = above right:{$+$}] (2tr) at (4,1);
\coordinate [label = below left:{$+$}] (2bl) at (3,0);
\coordinate [label = below right:{$-$}] (2br) at (4,0);

\draw (2bl) -- (2br) -- (2tr) -- (2tl) -- cycle;
\draw [suture] (3.5,1) to [bend right=45] (4,0.5);
\draw [suture] (3.5,0) to [bend right=45] (3,0.5);
\draw (3.8,0.8) node {$+$};
\draw (3.5,0.5) node {$-$};
\draw (3.2,0.2) node {$+$};
\draw (3.5,-0.5) node {$c(\Gamma_+) = \1$};

\foreach \point in {1bl, 1br, 1tl, 1tr, 2bl, 2br, 2tl, 2tr}
\fill [green!50!black] (\point) circle (2pt);

\end{tikzpicture}

\caption{Basic sets of sutures on the occupied square.}
\label{fig:sutured_squares}
\end{center}
\end{figure}
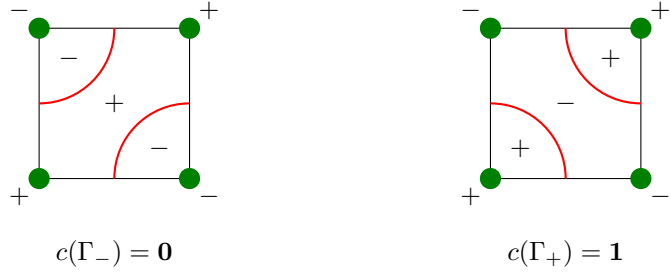

Basic sutures are always nonconfining. Conversely, given nonconfining sutures $\Gamma$ on occupied $(\Sigma,V)$, there exists a quadrangulation for which $\Gamma$ is basic. Any nontrivial set of sutures on a quadrangulated surface can be transformed to a basic set of sutures via bypass surgeries which reduce the number of intersections with decomposing arcs, and such that the set of sutures at each stage remains nontrivial.

The notion of morphism extends to \emph{decorated morphisms}, which send sutures to sutures.
\begin{defn}
A \emph{decorated morphism} is a pair $(\phi, \Gamma_c)$, where $\phi$ is a morphism, and $\Gamma_c$ is a set of sutures on the complement of the image of $\phi$.
\end{defn}
Occupied surfaces and decorated morphisms form a category, $\mathcal{DOS}$. 

Any surjective morphism is also a decorated morphism, as the complement of the image of $\phi$ is empty. Thus standard gluings, folds, and zips are also decorated morphisms. A creation morphism, with basic sutures on the created square, is a decorated morphism. See figure \ref{fig:decorated_creation}. Any decorated morphism is a composition of decorated creations, standard gluings, folds and zips.

\begin{figure}
\begin{center}
\includegraphics[scale=0.3]{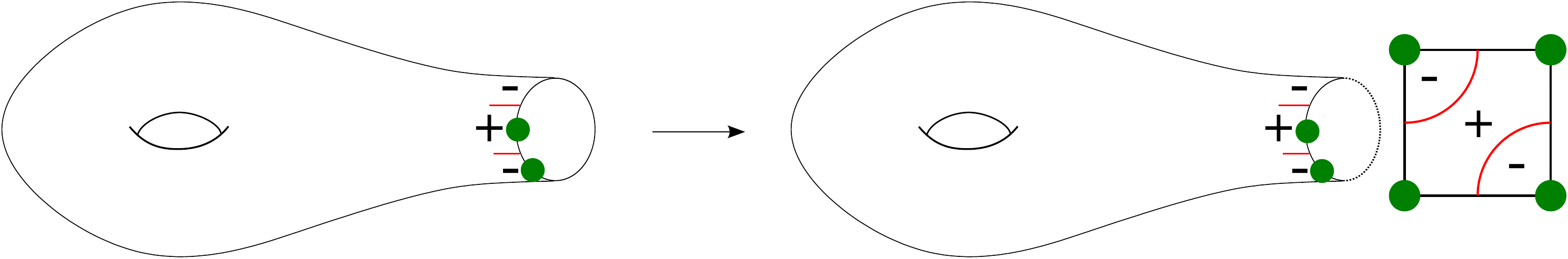}
\caption{A decorated creation.}
\label{fig:decorated_creation}
\end{center}
\end{figure}

An \emph{isotopy of decorated morphisms} is a family of decorated morphisms $(\phi_t, \Gamma_t)$, which varies smoothly with $t$ or admits singularities in which vacua are removed or created, as in figure \ref{fig:decorated_surface_isotopy_singularity}. Decorated morphisms related by such an isotopy are \emph{decorated-isotopic}. Any decorated morphism $(\Sigma,V) \To (\Sigma',V')$ is decorated-isotopic to one with image in the interior $\Int \Sigma'$ of $\Sigma'$.

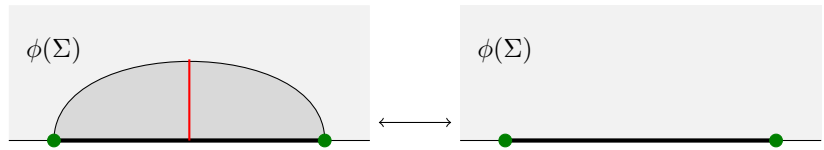
\begin{figure}
\begin{center}

\begin{tikzpicture}[
scale=1.2, 
boundary/.style={ultra thick}, 
vertex/.style={draw=green!50!black, fill=green!50!black},
suture/.style={thick, draw=red}
]

{
\fill [gray!10] (-0.5,0) -- (0,0) to [bend left=90] (3,0) -- (3.5,0) -- (3.5,1.5) -- (-0.5,1.5) -- cycle;
\fill [gray!30] (0,0) to [bend left=90] (3,0) -- cycle;

\draw (-0.5,0) -- (0,0) to [bend left=90] (3,0) -- (3.5,0);
\draw [boundary] (0,0) -- (3,0);
\draw [suture] (1.5,0.9) -- (1.5,0);
\draw (0,1) node {$\phi(\Sigma)$};

\fill [gray!10] (4.5,0) -- (8.5,0) -- (8.5,1.5) -- (4.5,1.5) -- cycle;

\draw (4.5,0) -- (5,0) (8,0) -- (8.5,0);
\draw [boundary] (5,0) -- (8,0);
\draw (5,1) node {$\phi(\Sigma)$};

\draw [<->] (3.6,0.2) -- (4.4,0.2);
}

\fill [vertex] (0,0) circle (2pt);
\fill [vertex] (3,0) circle (2pt);
\fill [vertex] (5,0) circle (2pt);
\fill [vertex] (8,0) circle (2pt);

\end{tikzpicture}

\caption{Removing or creating vacua in decorated isotopy.}
\label{fig:decorated_surface_isotopy_singularity}
\end{center}
\end{figure}

A decorated morphism $(\phi, \Gamma_c)$ adds a well-defined number to Euler class, as the following lemma shows; we call this the \emph{degree} of $(\phi, \Gamma_c)$. This point was not made in \cite{Me12_itsy_bitsy}, so we prove it now.
\begin{lem}
\label{lem:decorated_morphism_degree}
Let $(\phi, \Gamma_c): (\Sigma,V) \To (\Sigma',V')$ be a decorated morphism. There exists an integer $\deg(\phi, \Gamma_c)$ such that for any set of sutures $\Gamma$ on $(\Sigma,V)$,
\[
e(\Gamma \cup \Gamma_c) = e(\Gamma) + \deg(\phi, \Gamma_c).
\]
Moreover, if $\phi(\Sigma) \subset \Int \Sigma'$ then $\deg(\phi, \Gamma_c) = e(\Gamma_c)$. 

Further, degree is additive under composition: for decorated morphisms $(\Sigma,V) \stackrel{(\phi,\Gamma_c)}{\To} (\Sigma',V') \stackrel{(\phi',\Gamma'_c)}{\To} (\Sigma',V'')$ we have
\[
\deg(\phi' \circ \phi, \Gamma_c \cup \Gamma'_c) = \deg(\phi, \Gamma_c) + \deg(\phi', \Gamma'_c).
\]
\end{lem}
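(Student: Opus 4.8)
The plan is to establish the degree identity first in the special case $\phi(\Sigma) \subset \Int \Sigma'$ by a direct Euler characteristic count, then to bootstrap to an arbitrary decorated morphism using decorated isotopy, and finally to deduce additivity formally.

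First I would treat the case $\phi(\Sigma) \subset \Int \Sigma'$. Here $\phi(\Sigma)$ is a compact subsurface of $\Sigma'$ whose boundary is a disjoint union of circles in $\Int \Sigma'$, and $\Sigma' = \phi(\Sigma) \cup \Sigma_c$, where $\Sigma_c$ underlies the complement and the two pieces meet along $\partial\phi(\Sigma)$. Given any set of sutures $\Gamma$ on $(\Sigma,V)$, the regions $R_\pm(\Gamma \cup \Gamma_c)$ are obtained by gluing $R_\pm(\Gamma)$ (viewed inside $\phi(\Sigma)$) to $R_\pm(\Gamma_c)$ along the positive, resp.\ negative, arc unions $A_\pm$ into which the sutures cut $\partial\phi(\Sigma)$ --- these agree as seen from the two sides because morphisms respect signs. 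Inclusion--exclusion then gives $\chi(R_\pm(\Gamma \cup \Gamma_c)) = \chi(R_\pm(\Gamma)) + \chi(R_\pm(\Gamma_c)) - \chi(A_\pm)$, hence
\[
e(\Gamma \cup \Gamma_c) = e(\Gamma) + e(\Gamma_c) - \bigl( \chi(A_+) - \chi(A_-) \bigr).
\]
The point is that the signs of arcs alternate around each circle of $\partial\phi(\Sigma)$, so there are equally many positive and negative arcs there and $\chi(A_+) = \chi(A_-)$; thus $e(\Gamma \cup \Gamma_c) = e(\Gamma) + e(\Gamma_c)$, which already gives existence of $\deg(\phi,\Gamma_c)$ and the value $e(\Gamma_c)$ in this case.

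Next I would handle a general decorated morphism $(\phi,\Gamma_c)$ by choosing, via decorated isotopy, a representative $(\phi_1, \widetilde{\Gamma}_c)$ with $\phi_1(\Sigma) \subset \Int \Sigma'$; the target $\Sigma'$ is unchanged along the isotopy. Fixing $\Gamma$ and following the isotopy, the sets of sutures $\Gamma \cup \Gamma_t$ on $\Sigma'$ vary by ambient isotopy except at finitely many singular times where a vacuum --- a disc with two vertices carrying a single suture arc, attached to the rest along one boundary edge --- is created or removed. Euler class is invariant under isotopy of sutures, and removing such a vacuum changes each of $\chi(R_+), \chi(R_-)$ by $1 - 1 = 0$, so $e(\Gamma \cup \Gamma_t)$ is constant along the isotopy. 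Applying the interior case to the endpoint yields $e(\Gamma \cup \Gamma_c) = e(\Gamma) + e(\widetilde{\Gamma}_c)$, so $\deg(\phi,\Gamma_c)$ exists and equals $e(\widetilde{\Gamma}_c)$, which is independent of $\Gamma$. (Here $\widetilde{\Gamma}_c \neq \Gamma_c$ in general, which is precisely why the formula $\deg(\phi,\Gamma_c) = e(\Gamma_c)$ is only claimed when $\phi(\Sigma) \subset \Int \Sigma'$ to begin with.)

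Finally, additivity is formal: for $(\Sigma,V) \stackrel{(\phi,\Gamma_c)}{\To} (\Sigma',V') \stackrel{(\phi',\Gamma'_c)}{\To} (\Sigma'',V'')$ and any $\Gamma$ on $(\Sigma,V)$, the set $\Gamma \cup \Gamma_c$ is a set of sutures on $(\Sigma',V')$ (it meets every boundary component, since $\Gamma_c$ does), so applying the degree identity of $(\phi,\Gamma_c)$ and then of $(\phi',\Gamma'_c)$,
\[
e(\Gamma \cup \Gamma_c \cup \Gamma'_c) = e(\Gamma \cup \Gamma_c) + \deg(\phi',\Gamma'_c) = e(\Gamma) + \deg(\phi,\Gamma_c) + \deg(\phi',\Gamma'_c),
\]
and comparing with the degree identity of the composite gives the claim. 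I expect the only genuinely delicate step to be the Euler characteristic bookkeeping in the interior case --- correctly identifying the positive and negative regions and the matching arcs across the gluing circles --- together with the verification of the local model at a vacuum singularity; everything else is bookkeeping or formal manipulation.
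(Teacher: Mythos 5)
Your proof follows the same three-step structure as the paper's: establish $e(\Gamma \cup \Gamma_c) = e(\Gamma) + e(\Gamma_c)$ when $\phi(\Sigma) \subset \Int \Sigma'$, bootstrap to the general case via decorated isotopy, and deduce additivity formally (your last paragraph is essentially verbatim the paper's). The one substantive difference is in the interior case: the paper invokes Lemma 5.7 of \cite{Me12_itsy_bitsy} (additivity of Euler class when cutting along simple closed curves meeting the sutures in an even number of points), whereas you reprove that fact from scratch by inclusion--exclusion on $R_\pm$ together with the alternating-sign observation around each circle of $\partial\phi(\Sigma)$; this is a correct and self-contained alternative and is, in effect, a proof of the cited lemma. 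For the isotopy step the paper states directly that $\Gamma \cup \Gamma_c$ and $\Gamma \cup \Gamma'_c$ are isotopic on $\Sigma'$ (so the Euler class is trivially preserved), which is a bit cleaner and more robust than your explicit tracking of $\chi(R_\pm)$ across the vacuum singularity; your $1-1=0$ count is correct under the right local model (each half-bigon of the vacuum is glued to the rest along one half of the $\partial\phi(\Sigma)$-arc, so each contributes a disc minus a gluing interval), but it relies on understanding that local picture, which the paper's isotopy observation sidesteps.
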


In writing $e(\Gamma_c)$ we consider $\Gamma_c$ as a set of sutures on the complementary occupied surface $(\Sigma^c, V^c)$.

\begin{proof}
If $\phi(\Sigma) \subset \Int (\Sigma')$, then $(\Sigma',V')$ splits into $(\Sigma,V)$ and $(\Sigma^c,V^c)$ by cutting along a finite collection of disjoint simple closed curves, each of which intersect $\Gamma \cup \Gamma_c$ in an even number of points. So by \cite[lem. 5.7]{Me12_itsy_bitsy}, $e(\Gamma \cup \Gamma_c) = e(\Gamma) + e(\Gamma_c)$; hence $\deg(\phi,\Gamma_c)$ exists and equals $e(\Gamma_c)$.

A general $(\phi, \Gamma_c)$ is decorated-isotopic to a $(\phi', \Gamma'_c)$ with image in $\Int \Sigma'$. For any sutures $\Gamma$ on $(\Sigma,V)$, $\Gamma \cup \Gamma_c$ and $\Gamma \cup \Gamma'_c$ are isotopic, so $e(\Gamma \cup \Gamma_c) = e(\Gamma \cup \Gamma'_c) = e(\Gamma) + e(\Gamma'_c)$, giving a well-defined degree $\deg(\phi,\Gamma_c) = e(\Gamma'_c)$.

For composable morphisms as above we have, for any sutures $\Gamma$ on $(\Sigma,V)$,
\begin{align*}
\deg(\phi' \circ \phi, \Gamma_c \cup \Gamma'_c) &= e(\Gamma \cup \Gamma_c \cup \Gamma'_c) - e(\Gamma) \\
&= e(\Gamma \cup \Gamma_c \cup \Gamma') - e(\Gamma \cup \Gamma_c) + e(\Gamma \cup \Gamma_c) - e(\Gamma) \\
&= \deg(\phi', \Gamma'_c) + \deg(\phi, \Gamma_c).
\end{align*}
\end{proof}

The SQFT of \cite{Me12_itsy_bitsy} is defined as follows.
\begin{defn}
\label{def:SQFT}
A \emph{sutured quadrangulated field theory} is a pair $(\D, c)$ where
\begin{enumerate}
\item
$\D$ is a functor $\DOS \To \Z_2 \mathcal{VS}$ from the decorated occupied surface category to the category of $\Z_2$ vector spaces, associating a $\Z_2$-vector space $\V(\Sigma,V)$ to an occupied surface $(\Sigma,V)$, and a linear map $\mathcal{D}_{(\phi,\Gamma_c)} \; : \; \V(\Sigma,V) \To \V(\Sigma',V')$ to a decorated morphism $(\phi, \Gamma_c)$,
\item
$c$ assigns to each (isotopy class of) sutures $\Gamma$ on $(\Sigma,V)$ an element $c(\Gamma) \in \V(\Sigma,V)$, (a \emph{suture element}),
\end{enumerate}
satisfying the following conditions.
\begin{enumerate}
\item
Quadrangulations give tensor decompositions. An (isotopy class of) quadrangulation of $(\Sigma,V)$ with squares $(\Sigma_i^\square, V_i^\square)$ gives
\[
\V(\Sigma,V) \cong \bigotimes_i \V(\Sigma_i^\square, V_i^\square).
\]
(This includes the ``null quadrangulation'' on any vacuum component $(\Sigma^\emptyset, V^\emptyset)$, and $\V(\Sigma^\emptyset, V^\emptyset) \cong \Z_2$.) If $\Gamma$ is a basic set of sutures restricting to $\Gamma_i$ on $(\Sigma_i^\square, V_i^\square)$, then under this isomorphism $c(\Gamma)$ corresponds to $\bigotimes_i c(\Gamma_i)$.
\item
Suture elements are respected: for any sutures $\Gamma$ on $(\Sigma,V)$,
\[
\mathcal{D}_{\phi,\Gamma_c}(c(\Gamma)) = c(\Gamma \cup \Gamma_c).
\]
\item
Basic sutures give basis elements: $c(\Gamma_+) = \1$ and $c(\Gamma_-) = \0$ form a basis for the vector space $\V(\Sigma^\square, V^\square)$ of a square $(\Sigma^\square, V^\square)$. In particular, $\V(\Sigma^\square, V^\square)$ is 2-dimensional.
\item
Euler class gives a grading. On $\V(\Sigma^\square, V^\square)$ we set the grading $e(\0) = -1$, $e(\1) = 1$. This induces a grading on any $\V(\Sigma,V)$ via a quadrangulation and tensor decomposition. For any sutures $\Gamma$, $c(\Gamma)$ has grading $e(\Gamma)$.
\end{enumerate}
\end{defn}

It is argued in \cite{Me12_itsy_bitsy} that the definition of SQFT is a natural extension of the standard axioms for a topological quantum field theory (e.g. \cite{Witten88}).

It follows from conditions (i) and (iii) that the 2-dimensional vector space of a square, with basis $\0$ and $\1$, is a fundamental algebraic building block for SQFT: we can write it as $\bV$, so
\[
\bV = \V(\Sigma^\square, V^\square) = \Z_2 \0 \oplus \Z_2 \1.
\]
Note that $\0 \neq 0$ and $\1 \neq 1$. We use this notation for convenience, and to emphasise connections with information theory. A quadrangulation on a $(\Sigma,V)$ gives an isomorphism $\V(\Sigma,V) \cong \bV^{\otimes n}$, where $n = I(\Sigma,V)$ and a basis is given by tensor products of $n$ $\0$'s and $\1$'s, i.e. $\e_1 \otimes \cdots \otimes \e_n$ where each $\e_n = \0$ or $\1$. To avoid confusion, we will adopt the notation in this paper
\[
| \e_1 \e_2 \cdots \e_n \rangle = \e_1 \otimes \cdots \otimes \e_n
\]
so that $\V(\Sigma,V) \cong \bV^{\otimes n}$ has basis $| \0 \cdots \0 \rangle$, $| \0 \cdots \1 \rangle$, $\ldots$, $|\1 \cdots \1 \rangle$.

In \cite{Me12_itsy_bitsy} we prove various properties of SQFT. For instance, for a disjoint union $(\Sigma_1, V_1) \sqcup (\Sigma_2, V_2) = (\Sigma_1 \sqcup \Sigma_2, V_1 \sqcup V_2)$ we have $\V(\Sigma_1 \sqcup \Sigma_2, V_1 \sqcup V_2) = \V(\Sigma_1, V_1) \otimes \V(\Sigma_2, V_2)$, and if $\Gamma_1, \Gamma_2$ are sutures on $(\Sigma_1, V_1)$, $(\Sigma_2, V_2)$ respectively, then $c(\Gamma_1 \sqcup \Gamma_2) = c(\Gamma_1) \otimes c(\Gamma_2)$.

The vacuum $(\Sigma^\emptyset, V^\emptyset)$ has $\V(\Sigma^\emptyset, V^\emptyset) \cong \Z_2$ and for the vacuum sutures $\Gamma^\emptyset$ we have $c(\Gamma^\emptyset) = 1$.

Confining sutures $\Gamma$ give $c(\Gamma) = 0$. (This includes trivial sutures.) 

For a bypass triple $\Gamma, \Gamma', \Gamma''$, their suture elements satisfy $c(\Gamma) + c(\Gamma') + c(\Gamma'') = 0$; we call this the \emph{bypass relation}. 

We can also describe the linear maps $\D_{\phi, \Gamma_c}$ associated to various elementary decorated morphisms $(\phi, \Gamma_c)$:
\begin{enumerate}
\item
An identity morphism gives the identity isomorphism of vector spaces.
\item
A standard gluing gives an isomorphism of vector spaces: a standard gluing preserves a quadrangulation, and the isomorphism is the identity on each corresponding tensor factor.
\item
A positive decorated creation $(\phi, \Gamma^+)$, with basic positive sutures on the created square, gives a \emph{positive digital creation operator}, which is a map $\bV^{\otimes n} \To \bV^{\otimes (n+1)}$ sending $x \mapsto x \otimes \1$. A negative decorated creation $(\phi, \Gamma^-)$, with basic negative sutures on the created square, gives a \emph{negative digital creation operator}, which is similar except it sends $x \mapsto x \otimes \0$.
\item
When, after a morphism, a quadrangulation becomes slack, each slack square collapse gives a \emph{general digital annihilation operator}.
\end{enumerate}

A \emph{digital annihilation operator} is a map $\bV^{\otimes (n+1)} \To \bV^{\otimes n}$. A $\1$-annihilation $a_\1$ deletes a $\1$ in a particular tensor factor, if possible; else it deletes the $\0$ there, and compensates by replacing a $\1$ with a $\0$ elsewhere, summing over the various possibilities.
\[
\begin{array}{ccccc}
a_\1 &:& \0 \otimes x_1 \otimes \cdots \otimes x_n & \mapsto & \sum_{x_i = \1} x_1 \otimes \cdots \otimes x_{i-1} \otimes \0 \otimes x_{i+1} \otimes \cdots \otimes x_n \\
&:& \1 \otimes x_1 \otimes \cdots \otimes x_n & \mapsto & x_1 \otimes \cdots \otimes x_n.
\end{array}
\]
A $\0$-annihilation operator behaves similar with the roles of $\0$ and $\1$ reversed. A \emph{general digital annihilation operator} is a map of the form $a_\0 \otimes 1^{\otimes m}$ or $a_\1 \otimes 1^{\otimes m}$, i.e. a tensor product with some tensor power of the identity, leading to a map $\bV^{\otimes (n+1+m)} \To \bV^{\otimes (n+m)}$. 

The main result of \cite{Me12_itsy_bitsy} is that any decorated morphism $(\phi, \Gamma_c)$ gives a graded module homomorphism $\D_{\phi, \Gamma^c}$ which is a composition of digital creation and general digital annihilation operators.

Also, an SQFT exists! An example is given by sutured Floer homology --- as discussed in section \ref{sec:SQFT_from_SFH}.

\subsection{Algebraic preliminaries}
\label{sec:algebraic_preliminaries}

We now turn to extending SQFT to to twisted coefficients. The first thing we do is establish some basic algebraic preliminaries: defining the types of algebraic objects, and maps between them.

The ``natural'' coefficients for a twisted version of SQFT, for an occupied surface $(\Sigma,V)$, lie in $\Z[H_1(\Sigma)]$, the group ring of first homology of $\Sigma$. (By default, all homology considered of manifolds is singular homology with integer coefficients.) This is since twisted coefficients for SFH of a balanced sutured 3-manifold $(M, \Gamma)$ lie in $\Z[H_2(M)]$, and $H_2(\Sigma \times S^1) = H_1(\Sigma)$. See section \ref{sec:twisted_coefficient_SFH} below for further details.

We use exponential notation for group rings. Thus if $A \in H_1(\Sigma)$ then we write $e^A$ for the corresponding element in $\Z[H_1(\Sigma)]$. Given two elements $A, B \in H_2(M)$ their sum is $A+B \in H_2(M)$ and we have $e^A \cdot e^B = e^{A+B} \in \Z[H_2(M)]$. We always consider rings to be commutative with unit.

For any occupied surface $(\Sigma,V)$, $H_1(\Sigma)$ is a free abelian group of some rank $n$. Taking a basis $A_1, \ldots, A_m$, a general element of $\Z[H_1(\Sigma)]$ can be written as
\[
c_1 e^{A_1} + c_2 e^{A_2} + \cdots + c_m e^{A_m}
\]
where $c_i \in \Z$ and $A_i \in H_1(\Sigma)$. Thus $\Z[H_1(\Sigma)]$ is the ring of Laurent polynomials in the $n$ variables $e^{A_1}, \ldots, e^{A_m}$, with we can also write as $q_1, \ldots, q_m$. The units are just the elements of the form $\pm e^A$, where $A \in H_1(\Sigma)$; equivalently, the units are (up to sign) the monomials.

(More generally for any free group $G$, and in fact for any indicable group, the group ring $\Z[G]$ has units consisting of precisely of the elements of the form $\pm e^g$ where $g \in G$. See e.g. \cite{Higman40}.)

In twisted SQFT, the object $\V(\Sigma,V)$ associated to an occupied surface $(\Sigma,V)$ has coefficients in $\Z[H_1(\Sigma)]$, i.e. is a $\Z[H_1(\Sigma)]$-module. And the map $\D_{\phi, \Gamma_c} : \V(\Sigma,V) \To \V(\Sigma',V')$ associated to a decorated morphism $(\phi, \Gamma_c) : (\Sigma,V) \To (\Sigma',V')$ is a map from a $\Z[H_1(\Sigma)]$-module to a $\Z[H_1(\Sigma')]$-module. As $\phi$ is (among other things) a continuous map $\Sigma \To \Sigma'$, it also induces a ring homomorphism $\phi_*: \Z[H_1(\Sigma)] \To \Z[H_1(\Sigma')]$.

The most natural type of map between modules over different rings is one which preserves addition and is equivariant with respect to a ring homomorphism, as in the following.
\begin{defn}
Let $M_i, M_j$ be modules over rings $R_i, R_j$ respectively. A \emph{module homomorphism} $f: M_i \to M_j$ over a ring homomorphism $\bar{f}: R_i \To R_j$ is an abelian group homomorphism which is equivariant with respect to $\bar{f}$. That is, for all $r \in R_i$ and $m \in M_i$,
\[
f(rm) = \bar{f}(r) \; f(m).
\]
\end{defn}

It is easy to check that the composition of two module homomorphisms $M_1 \stackrel{f}{\To} M_2 \stackrel{g}{\To} M_3$ over ring homomorphisms $R_1 \stackrel{\bar{f}}{\To} R_2 \stackrel{\bar{g}}{\To} R_3$ is a module homomorphism $M_1 \To M_3$ over $\bar{g} \circ \bar{f}$. For any module there is also an identity module homomorphism over the identity ring homomorphism. 

Since SQFT requires gradings on the $\V(\Sigma,V)$, we consider graded modules, in the following sense. (This definition could easily be generalised in several directions, but this is all we need.)
\begin{defn}
A graded  $R$-module is an $R$-module $M$ which splits as a direct sum of $R$-modules $M_e$, indexed by $e \in \Z$.
\[
M = \bigoplus_{e \in \Z} M_e,
\]
where only finitely many of the $M_e$ are nonzero.
\end{defn}
We say elements of $M_e$ have \emph{degree} $e$. Note that as each $M_e$ is an $R$-module, ``$R$ has zero grading". The natural notion of map between graded modules is the following.
\begin{defn}
Let $M_i = \bigoplus_e M_{i,e}$, $M_j = \bigoplus_e M_{j,e}$ be graded modules over $R_i$, $R_j$ respectively. A \emph{graded module homomorphism} $f: M_i \To M_j$ \emph{of degree} $n \in \Z$ over a ring homomorphism $\bar{f}: R_i \To R_j$ is an abelian group homomorphism $f: M_i \To M_j$, which decomposes as 
\[
f = \bigoplus_{e \in \Z} f_e,
\quad \text{where each} \quad
f_e: M_{i,e} \To M_{j,e+n}
\]
is a module homomorphism over $\bar{f}$.
\end{defn}

That is, each $f_e$ is equivariant with respect to $\bar{f}$; for each $r \in R_i$ and $m \in M_{i,e}$ we have $f_e (rm) = \bar{f}(r) f_e (m) \in M_{j,e+n}$. We write $\deg f$ for the degree of $f$; $f$ shifts gradings by $\deg f$. Note that only finitely many of the $f_e$ are nonzero. 

Given two graded module homomorphisms $M_1 \stackrel{f}{\To} M_2 \stackrel{g}{\To} M_3$ of degrees $m,n$ over ring homomorphisms $R_1 \stackrel{\bar{f}}{\To} R_2 \stackrel{\bar{g}}{\To} R_3$, the composition $g \circ f$ is a graded module homomorphism of degree $m+n$ over $\bar{g} \circ \bar{f}$. In particular, $\deg(g \circ f) = \deg (f) + \deg(g)$. The identity map $M \To M$ is a graded module homomorphism of degree $0$.  Hence we can consider categories built out of graded modules, as follows.
\begin{defn}
A \emph{graded module category} $\M$ is a category such that:
\begin{enumerate}
\item each object is a pair $(M,R)$, where $M$ is a graded $R$-module;
\item a morphism $(M,R) \To (M',R')$ is a pair $(f, \bar{f})$ where $f$ is a graded module homomorphism $M \To M'$ over the ring homomorphism $\bar{f}: R \To R'$.
\end{enumerate}
\end{defn}

Such a category must, of course, contain identity morphisms and be closed under composition. Note that the morphisms of $\M$ have integer gradings, and  $\deg(g \circ f) = \deg (f) + \deg(g)$, but the objects do not. 
In this sense, $\M$ is \emph{relatively graded}.

\subsection{Necessity of ambiguity}
\label{sec:ambiguity}

An SQFT is defined as $(\D, c)$, where $\D$ a functor from $\DOS$ to graded $\Z_2$-vector spaces, and $c$ picks out suture elements in those vector spaces. The most straightforward generalisation to twisted coefficients, then, would involve a functor from $\DOS$ to a graded module category, together with suture elements.

The most striking difference of twisted SQFT from the untwisted (mod 2) case, is that both notions fail. We do not obtain a functor from $\DOS$ to a graded module category, and we do not have suture elements. We do, however, have weakened versions of both. Both suture elements and graded module homomorphisms are \emph{ambiguous up to units} in the appropriate coefficient rings. 

We now explain why a naive generalisation to twisted coefficient fails; and also, why a slightly-less-naive generalisation also fails!

The naive optimist, based on the above, and taking care to define graded modules over the correct rings and homomorphisms between them, might define a twisted SQFT as follows.
\begin{defn}
\label{defn:optimistic_twisted_SQFT}
An \emph{optimistic twisted SQFT} is a pair $(\D,c)$ where
\begin{enumerate}
\item
$\D$ is a functor $\DOS \To \M$, where $\M$ is a graded module category, associating to an occupied surface $(\Sigma,V)$ a graded module $\V(\Sigma,V)$ over the ring $\Z[H_1(\Sigma)]$, and to a decorated-isotopy class of decorated morphisms $(\phi, \Gamma_c)$ a graded module homomorphism $\D_{\phi, \Gamma_c}: \V(\Sigma,V) \To \V(\Sigma',V')$ over the ring homomorphism $\phi_*: \Z[H_1(\Sigma)] \To \Z[H_1(\Sigma')]$;
\item
$c$ assigns to each isotopy class of sutures $\Gamma$ on $(\Sigma,V)$ an element $c(\Gamma) \in \V(\Sigma,V)$, called a \emph{suture element}.
\end{enumerate}
The pair $(\D,c)$ satisfy the following conditions:
\begin{enumerate}
\item
Quadrangulations give tensor decompositions. For an isotopy class of quadrangulation $Q$ of $(\Sigma,V)$ with squares $(\Sigma_i^\square, V_i^\square)$,
\[
\V(\Sigma,V) \cong \Z[H_1(\Sigma)] \otimes \bigotimes_i \V(\Sigma_i^\square, V_i^\square),
\]
where the tensor products are over $\Z$. (This includes the ``null quadrangulation'' on an occupied vacuum $(\Sigma^\emptyset, V^\emptyset)$, which has $\V(\Sigma^\emptyset, V^\emptyset) \cong \Z$.) If$\Gamma$ is a basic set of sutures on $(\Sigma,V)$ restricting to $\Gamma_i$ on $(\Sigma_i^\square, V_i^\square)$, then under this isomorphism $c(\Gamma) = 1 \otimes \bigotimes_i c(\Gamma_i)$.
\item
Suture elements are respected. For any sutures $\Gamma$ on $(\Sigma,V)$,
\[
\mathcal{D}_{\phi,\Gamma_c}(c(\Gamma)) = c(\Gamma \cup \Gamma_c).
\]
\item
Basic sutures give bases. The suture elements $c(\Gamma_+) = \1$ and $c(\Gamma_-) = \0$ of the standard positive and negative sutures on the occupied square $(\Sigma^\square, V^\square)$ form a free basis for $\V(\Sigma^\square, V^\square)$ as a $\Z$-module.
\item
Euler class gives grading. Set the grading $e(\0) = -1$, $e(\1) = 1$ on $\V(\Sigma^\square, V^\square)$ and extend to obtain a $\Z$-grading on any $\V(\Sigma,V)$ (any element of any coefficient ring has grading $0$). Then for any set of sutures $\Gamma$, every element of $c(\Gamma)$ has grading $e(\Gamma)$.
\end{enumerate}
\end{defn}

(Note in \cite{Me12_itsy_bitsy} we defined the maps $\D_{\phi,\Gamma_c}$ to depend only on the decorated morphism $(\phi, \Gamma_c)$. There, we proved that $\D_{\phi, \Gamma_c}$ depended only on the decorated isotopy class of $(\phi,\Gamma_c)$; here, as the situation is more involved, we simply assume it, for now. See section \ref{sec:definition} for further discussion.)

Perhaps the least obvious part of this definition, given the above, is the extra tensor product with $\Z[H_1(\Sigma)]$ in condition (i); this is simply to ensure the correct coefficient ring.

In any case, our optimism is shortly crushed.
\begin{prop}
\label{prop:no_optimistic_TSQFT}
No optimistic twisted SQFT exists.
\end{prop}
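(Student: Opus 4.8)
The plan is to derive a contradiction by tracking suture elements through a decorated morphism whose target occupied surface has nontrivial first homology, so that the group ring is a genuine (non-field) Laurent polynomial ring. The simplest candidate is the annulus, or rather: take a surface $(\Sigma', V')$ for which $H_1(\Sigma')$ is nontrivial — e.g. an annulus with two vertices on each boundary circle, so $I(\Sigma', V') = 2$ and $H_1(\Sigma') \cong \Z$, giving $\Z[H_1(\Sigma')] \cong \Z[q, q^{-1}]$. Realize $(\Sigma', V')$ by a standard gluing (or a sequence of elementary decorated morphisms) from a disjoint union of squares, where everything is over $\Z$. First I would fix a quadrangulation $Q'$ of $(\Sigma', V')$ with squares $(\Sigma_i^\square, V_i^\square)$, so that condition (i) gives $\V(\Sigma', V') \cong \Z[q,q^{-1}] \otimes \bigotimes_i \V(\Sigma_i^\square, V_i^\square) \cong \Z[q,q^{-1}] \otimes \bV^{\otimes 2}$, a free $\Z[q,q^{-1}]$-module of rank $4$.

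Next I would exhibit a nontrivial isotopy of sutures on $(\Sigma', V')$ — concretely, a set of sutures $\Gamma$ on the annulus which can be "wound around" the core circle. Because sutures are only ever considered up to isotopy relative to endpoints, the two windings represent the \emph{same} isotopy class and hence, by condition (ii), must have the \emph{same} suture element. But if we compute these two windings by pulling back along the defining decorated morphism $(\phi, \Gamma_c)$ from a disjoint union of squares — where the corresponding basic suture configurations are genuinely distinct, giving distinct basis tensors $1 \otimes |\e_1 \e_2\rangle$ — condition (ii) also forces $c(\Gamma)$ to equal the image of each of these distinct basis elements. The point is that the gluing map $\phi_*: \Z[H_1(\Sigma_1 \sqcup \Sigma_2)] \to \Z[q,q^{-1}]$ is the inclusion $\Z \hookrightarrow \Z[q,q^{-1}]$ (since $H_1$ of a disjoint union of discs is trivial), and a map $\D_{\phi,\Gamma_c}$ over this ring homomorphism is $\Z$-linear; it can only send the rank-$4$ free $\Z$-module spanned by basic suture elements to a rank-$\le 4$ \emph{$\Z$}-submodule. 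Winding produces infinitely many isotopic-but-combinatorially-distinct configurations whose basic suture elements would have to all be equal in $\V(\Sigma', V')$, yet the argument (via a further quadrangulation adapted to each winding, and condition (iii) giving $\Z$-bases) shows that distinct windings give $\Z$-linearly independent, hence distinct, elements. This contradicts single-valuedness of $c$.

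An alternative and perhaps cleaner route, which I would pursue in parallel, is to use the grading (condition (iv)) together with Lemma \ref{lem:decorated_morphism_degree}: suture elements are homogeneous of degree $e(\Gamma)$, and coefficients have degree $0$, so multiplying $c(\Gamma)$ by a unit $\pm q^k$ preserves its degree. Hence grading alone cannot detect the winding ambiguity, which is exactly why it is forced to be an ambiguity rather than a genuine distinction — this shows that \emph{any} rigid (unit-free) assignment is overdetermined. The contradiction then comes from: winding a suture around a non-separating curve and applying condition (ii) to a self-homeomorphism $\psi$ of $(\Sigma', V')$ realizing the winding (a "Dehn twist"–type map), which must act on $\V(\Sigma', V')$ by a graded module homomorphism over the induced ring automorphism $\psi_* : q \mapsto q$ (or $q \mapsto q^{\pm 1}$); iterating $\psi$ and tracking a basic suture element, condition (ii) would force $c(\psi^n \Gamma) = \D_\psi^n (c(\Gamma))$ to be a fixed \emph{$\Z$-linear} combination of the finitely many basic suture elements for all $n$, while the actual isotopy classes $\psi^n \Gamma$ are pairwise distinct with provably $\Z$-independent basic suture elements. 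Either way, the same rank obstruction bites.

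The main obstacle I anticipate is \textbf{purely bookkeeping}: setting up an explicit occupied surface, quadrangulation, decorated morphism, and winding family for which one can \emph{prove} that the relevant basic suture elements are $\Z$-linearly independent in $\V(\Sigma', V')$ (using only conditions (i), (iii) and the behavior of quadrangulations under bypass surgery recalled in the background section), and simultaneously that they all represent isotopic sutures. The conceptual content — that a $\Z$-linear map out of a finite-rank free $\Z$-module cannot hit infinitely many $\Z$-independent elements, so equivariance over the non-surjective ring map $\Z \hookrightarrow \Z[H_1(\Sigma')]$ is incompatible with a single-valued $c$ respecting isotopy — is short; making the combinatorics airtight, and verifying that the degree constraint (iv) genuinely fails to rescue the situation, is where the real work lies.
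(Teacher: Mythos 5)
Your proposed argument has a fundamental gap, and it is worth being precise about why, because the gap is instructive about what the optimistic twisted SQFT actually \emph{is} able to accommodate.

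The core problem is the claim that winding a suture around the core of the annulus produces ``isotopic-but-combinatorially-distinct'' configurations. It does not. Sutures are taken up to isotopy \emph{relative to endpoints} on $\partial\Sigma$; a Dehn twist around the core of the annulus produces a genuinely non-isotopic set of sutures, not a different representative of the same isotopy class. So condition (ii) does not force any two windings to have the same suture element, and the tension you are reaching for evaporates. What you actually have is a family $\psi^n\Gamma$ of pairwise \emph{non}-isotopic sutures, and an optimistic twisted SQFT is perfectly entitled to assign them distinct elements $c(\psi^n\Gamma)$ — and indeed this is exactly what twisted coefficients are for: $H_1(\Sigma')\cong\Z$ enters precisely so that winding can be recorded as multiplication by $q^{\pm n}$. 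Your second route runs into the same wall: $\D_\psi$ is a $\Z[q,q^{-1}]$-module homomorphism (not merely $\Z$-linear, and $\psi_*$ is the identity on $H_1$ of the annulus), so it can act on a basis vector by $q$, and iterating yields $q^n\cdot(\text{basis vector})$, which is infinitely many distinct elements of a rank-$4$ free $\Z[q,q^{-1}]$-module with no rank obstruction whatsoever. The ``finite-rank-over-$\Z$'' bottleneck you invoke only applies to the image of the gluing morphism $\D_{\phi,\Gamma_c}$, but that map is only required to hit the four basic suture elements for the fixed quadrangulation, not the wound ones.

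The reason the paper instead works on a \emph{disc} with six boundary vertices is exactly to kill this escape route: $H_1=0$ there, so $\Z[H_1(\Sigma)]=\Z$, and all coefficient-ring flexibility disappears. The contradiction in the paper is then a pure sign obstruction over $\Z$: (a) trivial sutures have $c=0$ by a grading argument (lemma \ref{lem:optimistic_trivial_sutures_zero}); (b) the order-$3$ rotation $R$ of the hexagonal disc fixes the $\Z$-basis $\{|\0\1\rangle,|\1\0\rangle\}$ up to an order-$3$ integer matrix, pinning down $c(\Gamma_\pm)=-|\0\1\rangle-|\1\0\rangle$; and (c) the morphism $\Phi$ to the square sends $\Gamma_{-+}\mapsto\Gamma_+$, $\Gamma_\pm\mapsto\Gamma_+$, $\Gamma_{+-}\mapsto$ trivial sutures, which by $\Z$-linearity of $\D_\Phi$ gives $\1 = \D_\Phi(-|\0\1\rangle-|\1\0\rangle) = -\1 - 0$, a contradiction. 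The essential input is the bypass-triple structure interacting with vanishing of trivial sutures — not winding, and not the size of $H_1$. If you want to salvage an annulus-based proof you would need to find an $\Z$-level sign conflict that persists after tensoring up to $\Z[q,q^{-1}]$, which is essentially what the mod-$2$ reduction in section \ref{sec:reduction_mod_2} goes the other way on; but the disc example is cleaner precisely because nothing is ``up to units''.
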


\begin{lem}
\label{lem:optimistic_trivial_sutures_zero}
In an optimistic twisted SQFT, $c(\Gamma) = 0$ for any trivial set of sutures $\Gamma$.
\end{lem}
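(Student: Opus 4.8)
I will reprise the $\Z_2$ argument that a set of sutures containing a contractible closed curve has vanishing suture element, and upgrade it using torsion‑freeness of the coefficient modules. Two ingredients drive the proof. The first is an algebraic observation: by conditions (i) and (iii) of Definition~\ref{defn:optimistic_twisted_SQFT}, $\V(\Sigma,V) \cong \Z[H_1(\Sigma)] \otimes_\Z \bigotimes_i \V(\Sigma_i^\square, V_i^\square)$ is, as a $\Z$‑module, a tensor product over $\Z$ of free $\Z$‑modules ($\Z[H_1(\Sigma)]$ is free over $\Z$ because $H_1(\Sigma)$ is free abelian, and each $\V(\Sigma_i^\square,V_i^\square) \cong \Z\0 \oplus \Z\1$); hence $\V(\Sigma,V)$ is a free $\Z$‑module, and in particular $\bigcap_{k \geq 0} 2^k \V(\Sigma,V) = 0$. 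The second is a surgery relation: one first checks that in an optimistic twisted SQFT the suture elements of a bypass triple $(\Gamma_1,\Gamma_2,\Gamma_3)$ satisfy a relation $u_1 c(\Gamma_1) + u_2 c(\Gamma_2) + u_3 c(\Gamma_3) = 0$ with each $u_i$ a unit of $\Z[H_1(\Sigma)]$ (the twisted analogue of the bypass relation of \cite{Me12_itsy_bitsy}, obtained by reprising the proof there, the coefficient ring playing no essential role).

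Now let $\Gamma$ be a trivial set of sutures on $(\Sigma,V)$; it contains an innermost contractible simple closed curve $\delta$, bounding an embedded disc $D \subset \Sigma$ with $\Int D \cap \Gamma = \emptyset$. Choose an attaching arc $c$ supported in a neighbourhood of $\delta$ whose three bypass resolutions are: $\Gamma$ itself; and two sets of sutures $\Gamma', \Gamma''$, isotopic to one another, each obtained from $\Gamma$ by replacing $\delta$ with a pair of disjoint nested contractible circles inside $D$. Then $\Gamma'$ is again trivial, and its innermost contractible circle again bounds a disc with empty interior lying inside $D$, so the same construction applies to $\Gamma'$. Applying the bypass relation to this triple and using that the coefficient of $c(\Gamma)$ is a unit gives either $c(\Gamma) = 0$ outright (if the unit coefficients of $c(\Gamma')$ and $c(\Gamma'')$ cancel) or $c(\Gamma) = 2\,w\,c(\Gamma')$ for some unit $w$; in either case $c(\Gamma) \in 2\,\V(\Sigma,V)$. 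Iterating — each step replacing the current trivial set of sutures by the analogous one with an extra nested circle — produces trivial sets $\Gamma = \Gamma^{(0)}, \Gamma^{(1)}, \Gamma^{(2)}, \dots$ with $c(\Gamma^{(j)}) = 2\,w_j\,c(\Gamma^{(j+1)})$ for units $w_j$ (or else $c(\Gamma^{(j)}) = 0$ at some finite stage). Hence $c(\Gamma) \in 2^k\,\V(\Sigma,V)$ for every $k \geq 0$, and by the first ingredient $c(\Gamma) = 0$.

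The main obstacle is the surgery input: establishing the bypass relation in the optimistic‑twisted setting (carefully reprising \cite{Me12_itsy_bitsy} over a general coefficient ring and checking that the argument there does not secretly use $2 = 0$), together with verifying the precise local picture of the chosen bypass triple near $\delta$ — namely that its two non‑trivial resolutions agree up to isotopy and are again trivial with an innermost circle bounding a disc with empty interior. Everything else is formal. It is worth emphasising that this argument never equates $c(\Gamma)$ with a $\Z[H_1(\Sigma)]$‑combination of suture elements of \emph{non‑trivial} configurations — in particular it does not touch $c(\Gamma_+) = \1$ or $c(\Gamma_-) = \0$ — so it does not collapse the whole theory; it is precisely the presence of a contractible curve, yielding a bypass triple of this special ``doubling'' form, that makes the powers of $2$ appear.
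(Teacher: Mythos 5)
Your strategy is genuinely different from the paper's, and it contains a concrete error in the geometric input that the $2$-adic argument relies on. The issue is the proposed bypass triple. You claim an attaching arc $c$ near $\delta$ whose three resolutions are $\Gamma$ itself and two isotopic configurations $\Gamma',\Gamma''$, each obtained from $\Gamma$ by replacing $\delta$ with a pair of disjoint nested contractible circles inside $D$. But bypass surgery preserves Euler class (this is stated in section 2.1 of the paper and is part of condition (iv)/(v) of the optimistic definition). Replacing a single contractible circle, bounding a $\pm$ disc, by two nested circles changes the Euler class: the disc contribution $\pm 1$ to $\chi(R_\pm)$ is replaced by an annulus ($\chi = 0$) of sign $\pm$ plus an innermost disc ($\chi = 1$) of sign $\mp$, so $e$ shifts by $\mp 2$. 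Hence $e(\Gamma') \neq e(\Gamma)$, and the claimed triple cannot be a bypass triple. (More symptomatically: even if one granted the relation $c(\Gamma) = -2\,c(\Gamma')$, the two sides would lie in different Euler-graded summands of $\V(\Sigma,V)$, which by condition (iv) immediately forces both to vanish without any $2$-adic iteration — a sign that the relation is vacuous because the configuration is impossible.) Bypass surgery on a contractible circle can change it to three circles of various nesting patterns, not two, and those configurations are not of the form you need for the doubling relation.

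Your auxiliary ingredients are fine: $\V(\Sigma,V)$ is indeed a free $\Z$-module, so $\bigcap_k 2^k \V(\Sigma,V) = 0$, and the bypass relation (with exact signs, no unit ambiguity) does hold in the optimistic theory by the rotation-morphism argument used in Proposition~\ref{prop:no_optimistic_TSQFT}. But without a correct local bypass triple producing a $2$-divisibility, the iteration has nothing to iterate. The paper's proof sidesteps all of this: trivial sutures on the vacuum $(\Sigma^\emptyset,V^\emptyset)$ have Euler class $\pm 2$, but $\V(\Sigma^\emptyset,V^\emptyset) \cong \Z$ is concentrated in grading $0$, so the suture element of such a configuration is forced to be $0$ by the grading axiom alone; functoriality via a vacuum inclusion then handles general trivial sutures. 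That grading observation is precisely the same Euler-class arithmetic that invalidates your bypass triple, applied directly.
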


\begin{proof}
On the vacuum, consider a set of sutures $\Gamma_\circ^\pm$ which contains a properly embedded arc together with a contractible closed loop enclosing a $\pm$ region. We have $e(\Gamma_\circ^\pm) = \pm 2$, so $c(\Gamma)$ lies in the $\pm 2$-graded summand of $\V(\Sigma^\emptyset, V^\emptyset)$. But $\V(\Sigma^\emptyset, V^\emptyset)$ just consists of the coefficient ring $\Z$ itself, which has grading $0$, hence $c(\Gamma_\circ^\pm) = 0$. 

Now for any trivial sutures $\Gamma$ on any occupied $(\Sigma,V)$, there is a decorated morphism $(\phi, \Gamma_c)$ including the vacuum into $(\Sigma,V)$ and sending $\Gamma^+_\circ$ or $\Gamma^-_\circ \mapsto \Gamma$. Then $c(\Gamma) = \D_{\phi, \Gamma_c} c(\Gamma^\pm_\circ) = \D_{\phi, \Gamma_c} 0 = 0$.
\end{proof}

\begin{proof}[Proof of proposition \ref{prop:no_optimistic_TSQFT}]
Consider $(\Sigma,V)$ the disc with $6$ points, with the quadrangulation shown in figure \ref{fig:optimistic_sutures}. We call the top square $(\Sigma_1^\square, V_1^\square)$ an the bottom square $(\Sigma_2^\square, V_2^\square)$ as shown, and consider the sutures $\Gamma_{-+}, \Gamma_\pm, \Gamma_{+-}$ shown. Now $\V(\Sigma,V)$ is $4$-dimensional over $\Z$, and the $0$-graded summand $\V(\Sigma,V;0)$ is $2$-dimensional with ordered basis $( | \0 \1 \rangle, | \1 \0 \rangle) = (c(\Gamma_{-+}), c(\Gamma_{+-}))$, as shown.

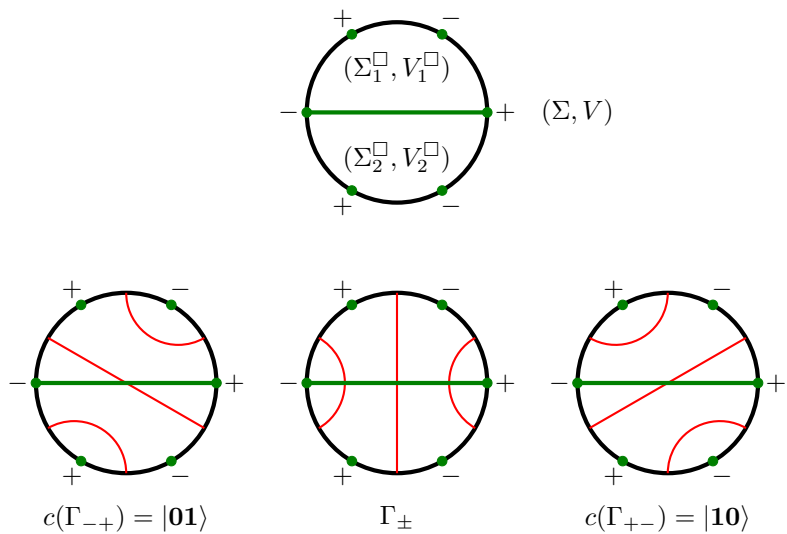
\begin{figure}
\begin{center}

\begin{tikzpicture}[
scale=1.2, 
suture/.style={thick, draw=red},
boundary/.style={ultra thick}, 
decomposition/.style={ultra thick, draw=green!50!black}]

\draw [boundary] (0,3) circle (1 cm);
\draw [decomposition] (-1,3) -- (1,3);
\foreach \angle in {0,120,240}
\draw (0,3) ++ (\angle:1.2) node {$+$};
\foreach \angle in {60,180,300}
\draw (0,3) ++ (\angle:1.2) node {$-$};
\foreach \angle in {0,60,120,180,240,300}
\fill [green!50!black, draw=green!50!black] (0,3) ++ (\angle:1) circle (1.5 pt);
\draw (0,3.5) node {$(\Sigma_1^\square, V_1^\square)$};
\draw (0,2.5) node {$(\Sigma_2^\square, V_2^\square)$};
\draw (2,3) node {$(\Sigma,V)$};

\draw [boundary] (3,0) circle (1 cm); 	
\draw [boundary] (0,0) circle (1 cm);
\draw [boundary] (-3,0) circle (1 cm);

\draw [suture] (30:1) arc (120:240:0.57735);
\draw [suture] (0,1) -- (0,-1);
\draw [suture] (210:1) arc (-60:60:0.57735);
\draw (0,-1.5) node {$\Gamma_\pm$};

\draw [suture] (3,0) ++ (150:1) arc (-120:0:0.57735);
\draw [suture] (3,0) ++ (30:1) -- ($ (3,0) + (210:1) $);
\draw [suture] (3,0) ++ (-30:1) arc (60:180:0.57735);
\draw (3,-1.5) node {$c(\Gamma_{+-}) = | \1 \0 \rangle$};

\draw [suture] (-3,1) arc  (180:300:0.57735);
\draw [suture] ($ (-3,0) + (-30:1) $) -- ($ (-3,0) +  (150:1) $);
\draw [suture] (-3,-1) arc (0:120:0.57735);
\draw (-3,-1.5) node {$c(\Gamma_{-+}) = | \0 \1 \rangle$};

\foreach \x in {-3,0,3}
{
\draw [decomposition] ($ (\x,0) + (-1,0) $) -- ($ (\x,0) + (1,0) $);
\foreach \angle in {0,60,120,180,240,300}
\fill [green!50!black, draw=green!50!black]  (\x,0) ++ (\angle:1) circle (1.5 pt);
\foreach \angle in {0,120,240}
\draw (\x,0) ++ (\angle:1.2) node {$+$};
\foreach \angle in {60,180,300}
\draw (\x,0) ++ (\angle:1.2) node {$-$};
}

\end{tikzpicture}

\caption{Sutures on the disc with $6$ points in optimistic twisted SQFT.}
\label{fig:optimistic_sutures}
\end{center}
\end{figure}

Now consider the decorated morphism $R : (\Sigma,V) \To (\Sigma,V)$ shown below, which rotates the disc $120^\circ$ anticlockwise. It has degree $0$, and we determine the $0$-graded component of $\D_R$, which is an abelian group homomorphism $\V(\Sigma,V;0) \To \V(\Sigma,V;0)$ described by a $2 \times 2$ integer matrix. We observe that $R$ sends the sutures $\Gamma_{-+} \mapsto \Gamma_{+-}$, and $R^3$ sends $\Gamma_{-+} \mapsto \Gamma_{-+}$ and $\Gamma_{+-} \mapsto \Gamma_{+-}$; so the integer matrix we seek has order $3$ and first column $(0,1)^T$. The only such matrix is $\begin{bmatrix} 0 & -1 \\ 1 & -1 \end{bmatrix}$. It follows that for the third set of sutures $\Gamma_\pm$ shown, $c(\Gamma_\pm ) = - |\0 \1 \rangle - | \1 \0 \rangle$. In particular, $c(\Gamma_{-+}) + c(\Gamma_{+-}) + c(\Gamma_|) = 0$.

Finally, consider the decorated morphism $(\Phi, \Gamma_c) : (\Sigma,V) \To (\Sigma^\square,V^\square)$ shown in figure \ref{fig:Phi}, from $(\Sigma,V)$ to the occupied square. It sends $\Gamma_{-+} \mapsto \Gamma_+$, $\Gamma_\pm \mapsto \Gamma_+$, and $\Gamma_{+-}$ to trivial sutures. 

\begin{figure}
\begin{center}

\begin{tikzpicture}[
scale=1.2, 
suture/.style={thick, draw=red},
boundary/.style={ultra thick}]

\draw [boundary] (0,0) circle (1 cm);
\draw [boundary] (0,0) circle (2 cm);

\draw [suture] (30:1) -- (45:2);
\draw [suture] (-30:1) -- (-45:2);
\draw [suture] (-90:1) -- (-135:2);
\draw [suture] (-150:1) to [bend left=45] (135:2);
\draw [suture] (90:1) arc (30:210:0.5);

\foreach \angle in {0,60,120,180,240,300}
\fill [green!50!black, draw=green!50!black]  (\angle:1) circle (1.5 pt);
\foreach \angle in {0,90,180,270}
\fill [green!50!black, draw=green!50!black]  (\angle:2) circle (1.5 pt);
\foreach \angle in {0,120,240}
\draw (\angle:0.8) node {$+$};
\foreach \angle in {60,180,300}
\draw (\angle:0.8) node {$-$};
\foreach \angle in {0,180}
\draw (\angle:2.2) node {$+$};
\foreach \angle in {90,270}
\draw (\angle:2.2) node {$-$};

\end{tikzpicture}

\caption{The morphism $(\Phi, \Gamma_c)$ giving a contradiction in optimistic twisted SQFT.}
\label{fig:Phi}
\end{center}
\end{figure}
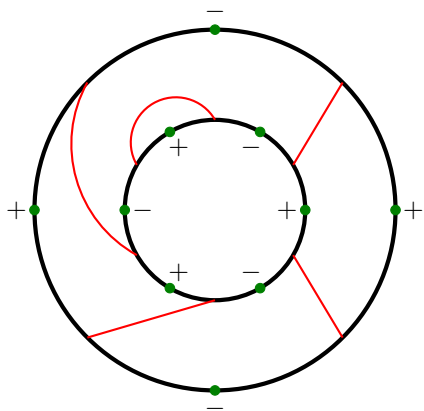

Thus $\D_{\Phi, \Gamma_c}$ must send $| \0 \1 \rangle \mapsto | \1 \rangle$, $| \1 \0 \rangle \mapsto 0$ and $- | \0 \1 \rangle - | \1 \0 \rangle \mapsto | \1 \rangle$, a contradiction.
\end{proof}

As the optimistic approach fails, it is natural, and consistent with quantum theory, and contact elements in Heegaard Floer homology \cite{HKM09}, to allow ambiguity up to units. We might hope, however, to relinquish as little certainty as possible. The suture elements $c(\Gamma)$ and the homomorphisms $\D_{\phi, \Gamma_c}$ are separate. But if we require suture elements $c(\Gamma)$ to remain unambiguous, then any requirement on homomorphisms to respect suture elements, i.e. $\D_{\phi,\Gamma_c}(c(\Gamma)) = c(\Gamma \cup \Gamma_c)$, makes the homomorphisms unambiguous also.

The possibility does remain, however, to retain certainty over the graded module homomorphisms $\D_{\phi, \Gamma_c}$ but allow the suture elements $c(\Gamma)$ to be ambiguous up to units. Thus we might define a \emph{sanguine twisted SQFT}, similarly to an optimistic twisted SQFT, except that:
\begin{enumerate}
\item
$c$ assigns to each isotopy class of sutures $\Gamma$ on $(\Sigma,V)$ an element $c(\Gamma)$ \emph{up to multiplication by units}; that is, $c(\Gamma) \subset \V(\Sigma,V)$ is the $\Z[H_1(\Sigma)]^\times$-orbit of a single element.
\item
A quadrangulation $(\Sigma_i^\square, V_i^\square)$ still gives a tensor decomposition $\V(\Sigma,V) \cong \Z[H_1(\Sigma)] \otimes \bigotimes_i \V(\Sigma_i^\square, V_i^\square)$, but now if we have basic sutures $\Gamma$ restricting to $\Gamma_i$ on $(\Sigma_i^\square, V_i^\square)$, then $c(\Gamma)$ is given by $\Z[H_1(\Sigma)]^\times \otimes \bigotimes_i c(\Gamma_i)$, i.e. every possible tensor product of elements of the $c(\Gamma_i)$, tensored also with units in $\Z[H_1(\Sigma)]$.
\item
Suture elements are still respected, but now we cannot guarantee equality $\D_{\phi, \Gamma_c} c(\Gamma) = c(\Gamma \cup \Gamma_c)$, as a graded module homomorphism may not send a $\Z[H_1(\Sigma)]^\times$-orbit to a whole $\Z[H_1(\Sigma')]^\times$-orbit. Rather we require that $\D_{\phi, \Gamma_c} c(\Gamma) \subseteq c(\Gamma \cup \Gamma_C)$.
\item
Basic sutures still give bases, but now we have, on the square $(\Sigma^\square, V^\square)$, that $c(\Gamma_+) = \{\pm \1\}$ and $c(\Gamma_-) = \{ \pm \0\}$, for some $\0, \1 \in \V(\Sigma^\square, V^\square)$. However $\0, \1$ still form a free basis for $\V(\Sigma^\square,V^\square)$ as $\Z$-module.
\end{enumerate}

Our sanguine attitude, however, is not justified.
\begin{prop}
\label{prop:no_sanguine_TSQFT}
No sanguine twisted SQFT exists.
\end{prop}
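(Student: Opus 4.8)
The plan is to follow the template of the proof of Proposition~\ref{prop:no_optimistic_TSQFT}: use functoriality, the Euler-class grading, and bypass relations to pin suture elements down so rigidly that a decorated morphism to an occupied square forces an impossible equation in a free $\Z$-module. As a first step I would record the sanguine analogue of Lemma~\ref{lem:optimistic_trivial_sutures_zero}, namely that a trivial set of sutures $\Gamma$ still has $c(\Gamma)=0$. The argument is unchanged: on a vacuum, a set of sutures built from a properly embedded arc and a contractible loop enclosing a $\pm$ region has Euler class $\pm 2$, so its suture element lies in a graded summand of $\V(\Sigma^\emptyset,V^\emptyset)\cong\Z$ which is zero; hence $0$ lies in that suture element, and since any $\Z[H_1]^\times$-orbit containing $0$ equals $\{0\}$, the suture element is $0$; pushing forward along a decorated morphism from the vacuum yields $c(\Gamma)=0$ for every trivial $\Gamma$.

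The real difference from the optimistic case is that the disc with six points no longer suffices. There, the rotation $R$ only forces $c(\Gamma_\pm)=\{\pm(c(\Gamma_{-+})\pm c(\Gamma_{+-}))\}$ and determines the induced map $\D_\Phi$ to the square only up to an overall sign on the grading-$0$ summand; the discrepancy ``$2\,\1=0$'' that contradicts the optimistic definition is now simply absorbed into those free signs. So I would instead work on a larger occupied surface whose bypass and symmetry combinatorics over-constrain the residual freedom. Two natural candidates: (a) a disc with eight or more boundary points, where inside a single Euler-class stratum one set of sutures lies in two distinct bypass triples, so that compatibility of the triple relations forces a product of the relevant ``bypass signs'' to be $-1$; or (b) a disjoint union of several cyclically permuted copies of an occupied surface with nonzero first homology, exploiting that the permutation morphism $\rho$ satisfies $\D_\rho^{\,k}=\Id$ while being equivariant over the order-$k$ ring automorphism $\rho_*$, which constrains the units that $\D_\rho$ may introduce on a $\rho$-invariant basic suture element. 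Either way one extracts a genuine $\Z/2$- or unit-valued ``monodromy'' among the suture elements that cannot be removed by rechoosing representatives.

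With this over-constrained data, I would then pick a decorated morphism $\Psi$ from the surface to an occupied square, the analogue of $\Phi$ in figure~\ref{fig:Phi}, sending some of the relevant sutures to $\Gamma_+$ and others to trivial (hence confining) sutures. Since $\Psi$ is a morphism of discs, $\D_\Psi$ is an honest graded $\Z$-module homomorphism whose image on the relevant graded summand is the free rank-one module $\Z\1$; applying it to a bypass relation among the over-constrained sutures and invoking $c(\text{trivial})=0$ produces, exactly as in the optimistic proof, an equation of the form $2\,\1=0$ in a free $\Z$-module, the desired contradiction.

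The hard part will be verifying that the residual sign/unit freedom is genuinely insufficient, i.e. that the monodromy written down is nonzero. This is a finite but delicate tracking of signs and units through compositions of decorated creations, standard gluings, folds and zips, and it is made subtler by the phenomenon noted in the introduction: a graded module homomorphism between free modules that fixes every basis element up to units need not be ``the identity up to units'', so one must control the homomorphisms $\D_{\phi,\Gamma_c}$ themselves and not merely their values on suture elements. Identifying the smallest occupied surface on which the over-determination actually bites, together with a convenient choice of symmetry (or bypass web) and projection to a square, is the substance of the argument.
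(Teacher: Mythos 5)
You correctly reduce to showing a sign/unit obstruction, and your opening lemma that $c(\Gamma)=\{0\}$ for trivial $\Gamma$ matches the paper. But you then assert that ``the disc with six points no longer suffices'' and propose to hunt for an over-constrained surface or a cyclic permutation of disjoint unions, leaving the heart of the argument (``verifying that the residual sign/unit freedom is genuinely insufficient'') unresolved. This is a genuine gap, and it stems from a misreading of the sanguine definition. In a sanguine twisted SQFT the graded module homomorphisms $\D_{\phi,\Gamma_c}$ are \emph{not} ambiguous; only the suture elements are $\Z[H_1]^\times$-orbits. In particular $\D_\Phi$ is not ``determined only up to an overall sign'' on the grading-$0$ summand --- it is a fixed integer matrix, and $\D$ is an honest functor. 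This unambiguity is exactly the lever the paper uses, and your proposal leaves it on the table.

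The paper's route is to \emph{de-ambiguate the suture elements} rather than to enlarge the surface. One first shows $c(\Gamma^\emptyset)=\{\pm1\}$ in $\V(\Sigma^\emptyset,V^\emptyset)\cong\Z$, then proves that the two inclusions $\iota_1,\iota_2$ of the vacuum into the double vacuum induce \emph{equal} (not merely $\pm$-equal) maps $\D_{\iota_1}=\D_{\iota_2}\colon\Z\to\Z$; the proof of this uses the three inclusions of the double vacuum into the triple vacuum and functoriality to rule out $\D_{\iota_1}=-\D_{\iota_2}$. With that in hand one defines a canonical representative $s(\Gamma)\in c(\Gamma)$ for every $\Gamma$ by pushing $s(\Gamma^\emptyset)=1$ forward along any vacuum inclusion, and checks via a factoring-through-the-double-vacuum argument that $s(\Gamma)$ is well-defined and respected by all $\D_{\phi,\Gamma_c}$. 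Now the optimistic argument on the disc with six points applies verbatim to the elements $s(\Gamma)$: $\D_R$ is an integer matrix of order $3$ with first column $(0,1)^T$, forcing $s(\Gamma_{-+})+s(\Gamma_{+-})+s(\Gamma_\pm)=0$; applying $\D_{\Phi,\Gamma_c}$ and using $s(\text{trivial})=0$ yields $2s(\Gamma_+)=0$ in a free $\Z$-module, a contradiction. Your alternatives (a) and (b) might conceivably be made to work, but you have not carried them out, and they would be substantially more intricate than the paper's short, structural argument that exploits the one piece of rigidity the sanguine definition actually retains.
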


The proof of this proposition is a little more involved than the optimistic case. The idea is to use well-defined $\D_{\phi, \Gamma_c}$ to obtain enough well-defined single suture elements $s(\Gamma) \in c(\Gamma)$ to apply the idea of the proof in the optimistic case.

We repeatedly use the following fact: for any set of sutures $\Gamma$ on any occupied surface $(\Sigma,V)$, there is a decorated morphism $(\phi, \Gamma_c)$ from the vacuum $(\Sigma^\emptyset, V^\emptyset)$ to $(\Sigma,V)$ which sends the vacuum sutures $\Gamma^\emptyset$ to $\Gamma$; we call it a \emph{vacuum inclusion}.

First, consider trivial sutures. For trivial sutures $\Gamma_\circ^\pm$ on the vacuum, we have $c(\Gamma_\circ^\pm) = \{0\}$, having a bad grading, as in the optimistic case. A vacuum inclusion then gives the following.
\begin{lem}
\label{lem:sanguine_trivial_sutures_zero}
In a sanguine twisted SQFT, $c(\Gamma) = \{0\}$ for any trivial set of sutures $\Gamma$.
\qed
\end{lem}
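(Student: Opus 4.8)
The plan is to mimic the proof of Lemma~\ref{lem:optimistic_trivial_sutures_zero}, adapting each step to the fact that in the sanguine setting $c(\Gamma)$ is a $\Z[H_1(\Sigma)]^\times$-orbit rather than a single element. The only new algebraic input is the trivial observation that the unit-orbit of $0$ is $\{0\}$, so that any unit-orbit containing $0$ must equal $\{0\}$.

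First I would treat the vacuum. On $(\Sigma^\emptyset, V^\emptyset)$ consider the trivial sutures $\Gamma_\circ^\pm$ consisting of a properly embedded arc together with a contractible loop enclosing a $\pm$ region, so that $e(\Gamma_\circ^\pm) = \pm 2$. By the grading condition (Euler class gives grading), inherited unchanged from the optimistic definition, every element of $c(\Gamma_\circ^\pm)$ lies in the $(\pm 2)$-graded summand of $\V(\Sigma^\emptyset, V^\emptyset)$. But $\V(\Sigma^\emptyset, V^\emptyset) \cong \Z$ is concentrated in grading $0$, so this summand vanishes; hence $c(\Gamma_\circ^\pm) = \{0\}$, consistent with being the unit-orbit of $0$. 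This establishes Lemma~\ref{lem:sanguine_trivial_sutures_zero} for the vacuum.

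Next, for an arbitrary trivial set of sutures $\Gamma$ on an occupied surface $(\Sigma,V)$, I would argue exactly as in the optimistic case: there is a decorated morphism $(\phi, \Gamma_c) \colon (\Sigma^\emptyset, V^\emptyset) \To (\Sigma,V)$ including the vacuum into $(\Sigma,V)$ and carrying $\Gamma_\circ^+$ or $\Gamma_\circ^-$ to $\Gamma$ (using that $\Gamma$ contains a contractible loop, taken to be the image of the loop in $\Gamma_\circ^\pm$, with $\Gamma_c$ supplying the remaining strands). Applying the sanguine version of ``suture elements are respected'' gives $\D_{\phi,\Gamma_c}\bigl(c(\Gamma_\circ^\pm)\bigr) \subseteq c(\Gamma)$; since $c(\Gamma_\circ^\pm) = \{0\}$ and any (graded) module homomorphism sends $0$ to $0$, we obtain $0 \in c(\Gamma)$. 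As $c(\Gamma)$ is by definition a single $\Z[H_1(\Sigma)]^\times$-orbit, and the orbit of $0$ is $\{0\}$, we conclude $c(\Gamma) = \{0\}$.

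The argument is essentially formal; the only point requiring care is the explicit existence of the vacuum inclusion realizing $\Gamma$, which is the same topological construction already asserted in the proof of Lemma~\ref{lem:optimistic_trivial_sutures_zero}. That construction, together with the remark that a units-orbit meeting $\{0\}$ equals $\{0\}$, is all that is needed, and I expect no genuine obstacle.
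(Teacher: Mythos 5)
Your proof is correct and matches the paper's argument, which is compressed into the sentence immediately preceding the lemma: the paper observes that $c(\Gamma_\circ^\pm) = \{0\}$ on the vacuum ``having a bad grading, as in the optimistic case,'' and then appeals to a vacuum inclusion. You spell out the same two steps (grading on the vacuum, then push forward along a vacuum inclusion), and you make explicit the small but necessary observation that a $\Z[H_1(\Sigma)]^\times$-orbit containing $0$ must equal $\{0\}$, which the paper leaves implicit.
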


Next, we consider vacuum sutures.
\begin{lem}
\label{lem:sanguine_vacuum_sutures_1}
$c(\Gamma^\emptyset) = \{\pm 1\} \in \Z \cong \V(\Sigma^\emptyset, V^\emptyset)$.
\end{lem}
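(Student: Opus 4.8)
The plan is to obtain the statement directly from condition (ii) in the definition of sanguine twisted SQFT (quadrangulations give tensor decompositions), applied to the vacuum $(\Sigma^\emptyset,V^\emptyset)$ equipped with its null quadrangulation $Q$. The preparatory steps are: (1) record that the vacuum is a disc with two vertices, so $N=1$ and $\chi(\Sigma^\emptyset)=1$, whence its index is $I(\Sigma^\emptyset,V^\emptyset)=N-\chi(\Sigma^\emptyset)=0$ --- so $Q$ has no squares and no internal edges; (2) note that $H_1(\Sigma^\emptyset)=0$, so the coefficient ring is $\Z[H_1(\Sigma^\emptyset)]=\Z$ with units $\{\pm 1\}$, and $\V(\Sigma^\emptyset,V^\emptyset)\cong\Z$ as the definition already stipulates for the null quadrangulation; (3) check that $\Gamma^\emptyset$ is a basic set of sutures for $Q$ --- the vacuum sutures form a single properly embedded arc cutting $\Sigma^\emptyset$ into $R_+$ and $R_-$ and contain no closed curve, hence are nontrivial, and the requirement that they meet each internal edge of $Q$ once (equivalently, restrict basically to every square of $Q$) holds vacuously.

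With (1)--(3) in hand, condition (ii) says $c(\Gamma^\emptyset)$ is the orbit $\Z[H_1(\Sigma^\emptyset)]^\times\otimes\bigotimes_{i\in\emptyset}c(\Gamma_i)$. The tensor product over the empty index set is the ground element $1\in\Z$, so this orbit is $\{\pm 1\}\cdot 1=\{\pm 1\}\subseteq\Z\cong\V(\Sigma^\emptyset,V^\emptyset)$, which is exactly the claim. One may also sanity-check the grading: $e(\Gamma^\emptyset)=\chi(R_+)-\chi(R_-)=1-1=0$, consistent with $c(\Gamma^\emptyset)$ lying in the degree-$0$ part, which is all of $\Z$.

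There is essentially no hard step here; the argument is bookkeeping. The only points needing a moment's care are that the null quadrangulation genuinely falls under condition (ii) (which the definition explicitly asserts), that $\Gamma^\emptyset$ qualifies as basic (which rests only on its being nontrivial), and that the empty tensor product --- of the modules and of the suture-element orbits alike --- is normalised so that the answer comes out as the unit orbit $\{\pm 1\}$ rather than something larger. An alternative, morphism-theoretic route would be to push $\Gamma_+$ (or $\Gamma_-$) forward along a fold of the occupied square onto the vacuum; but extracting $\{\pm 1\}$ that way would additionally require knowing that the induced map $\V(\Sigma^\square,V^\square)\to\V(\Sigma^\emptyset,V^\emptyset)$ sends the $\Z$-basis element $\1$ (resp. $\0$) to a unit, so the direct argument above is the cleaner one.
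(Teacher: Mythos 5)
Your argument is correct but takes a genuinely different route from the paper. The paper proves the lemma by functoriality: it considers a vacuum inclusion $(\Sigma^\emptyset,V^\emptyset)\To(\Sigma^\square,V^\square)$ sending $\Gamma^\emptyset$ to $\Gamma_+$, observes that the induced module map $\Z \To \Z\0\oplus\Z\1$ carries $c(\Gamma^\emptyset)$ into $c(\Gamma_+)=\{\pm\1\}$, and concludes that $c(\Gamma^\emptyset)$ must consist of units since a $\Z$-linear map $\Z\To\Z^2$ sends non-units to imprimitive elements. This uses conditions (iii) and (iv) of the sanguine definition rather than condition (ii). Your argument instead reads the basic-sutures clause of condition (ii) as applying to the null quadrangulation with the empty tensor product normalised to $1$, which gives $c(\Gamma^\emptyset)=\Z^\times\cdot 1=\{\pm 1\}$ in one stroke. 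That reading is defensible — the definition's parenthetical explicitly brings the null quadrangulation under condition (ii), and $\Gamma^\emptyset$ is vacuously basic — but it does lean on an interpretive point (whether the ``$c(\Gamma)=\Z[H_1(\Sigma)]^\times\otimes\bigotimes_i c(\Gamma_i)$'' clause is meant to bind for an empty collection of squares) that the paper's morphism-theoretic proof avoids entirely. One small inaccuracy: the morphism-theoretic alternative you sketch at the end (a fold of the square onto the vacuum) goes in the opposite direction from what the paper actually does (a vacuum inclusion into the square); the paper's direction is cleaner because a map out of $\Z$ is determined by a single vector, making the primitivity argument immediate, whereas your fold would indeed require a further argument about the image of $\1$.
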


\begin{proof}
As the vacuum has ``null quadrangulation'', we have $\V(\Sigma^\emptyset, V^\emptyset) \cong \Z$. Consider a vacuum inclusion into the square with standard positive sutures $\Gamma_+$. The corresponding module homomorphism $\Z \To \Z |\0 \rangle \oplus \Z | \1 \rangle$ sends $c(\Gamma^\emptyset)$ to $c(\Gamma_+) = \{\pm | \0 \rangle \}$. Hence $c(\Gamma^\emptyset)$ is a $\Z^\times$-orbit of $\Z$ consisting of primitive elements, so $c(\Gamma^\emptyset) = \{\pm 1\}$. 
\end{proof}

We can choose an isomorphism $\V(\Sigma^\emptyset, V^\emptyset) \cong \Z$ and define $s(\Gamma^\emptyset) = 1 \in \Z$.

We next consider the ``double vacuum'' $(\Sigma^\emptyset \sqcup \Sigma^\emptyset, V^\emptyset \sqcup V^\emptyset)$, which has double vacuum sutures $\Gamma^\emptyset \sqcup \Gamma^\emptyset$. Like the vacuum, it has a null quadrangulation, and we have
\[
\V(\Sigma^\emptyset \sqcup \Sigma^\emptyset, V^\emptyset \sqcup V^\emptyset) \cong \V(\Sigma^\emptyset, V^\emptyset) \otimes \V(\Sigma^\emptyset, V^\emptyset) \cong \Z \otimes_\Z \Z \cong \Z.
\]
The double sutures must thus have, via this ``null quadrangulation'', $c(\Gamma^\emptyset \sqcup \Gamma^\emptyset) = c(\Gamma^\emptyset) \otimes c(\Gamma^\emptyset) = \{\pm 1\} \otimes \{\pm 1\} \subseteq \Z \otimes \Z$, which under the isomorphism $\Z \otimes \Z \cong \Z$ corresponds to $c(\Gamma^\emptyset \sqcup \Gamma^\emptyset) = \{\pm 1\}$. 

There are decorated morphisms $\iota_1, \iota_2: (\Sigma^\emptyset, V^\emptyset) \To (\Sigma^\emptyset \sqcup \Sigma^\emptyset, V^\emptyset \sqcup V^\emptyset)$ given by the identity from vacuum onto the first and second copies of the vacuum respectively; with complementary vacuum sutures on the other copy of the vacuum. There are corresponding maps $\D_{\iota_1}, \D_{\iota_2} : \Z \To \Z \otimes \Z \cong \Z$.

\begin{lem}
\label{lem:sanguine_inclusion}
$\D_{\iota_1} = \D_{\iota_2}$.
\end{lem}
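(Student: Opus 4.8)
The coefficient ring $\Z[H_1(\Sigma^\emptyset \sqcup \Sigma^\emptyset)]$ is simply $\Z$, and $\V(\Sigma^\emptyset \sqcup \Sigma^\emptyset) \cong \Z$, so each of $\D_{\iota_1}, \D_{\iota_2} \colon \Z \To \Z$ is a graded module homomorphism of degree $0$ over $\Id_\Z$, that is, multiplication by an integer $n_1$, resp.\ $n_2$. The plan is to produce a single surjective morphism $g \colon (\Sigma^\emptyset \sqcup \Sigma^\emptyset) \To (\Sigma^\emptyset)$ whose composite with either $\iota_j$ is decorated-isotopic to the identity, and then cancel $\D_g$.

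I would take $g$ to be the standard gluing that identifies a boundary edge of the first vacuum with a boundary edge of the second, with the orientation convention arranged so that the two vacuum suture arcs join up across the glued edge. Then $g$ is surjective, hence a decorated morphism, and it carries $\Gamma^\emptyset \sqcup \Gamma^\emptyset$ to the vacuum sutures $\Gamma^\emptyset$ on $\Sigma^\emptyset$. Once again every coefficient ring in sight is $\Z$, so $\D_g \colon \Z \To \Z$ is multiplication by some integer $m$.

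The geometric heart of the argument is the claim that $g \circ \iota_1$ and $g \circ \iota_2$ are each decorated-isotopic to $\Id_{(\Sigma^\emptyset, V^\emptyset)}$. Consider $g \circ \iota_1$: its image is the sub-disc $D_1$ of the target $\Sigma^\emptyset$ which is the $g$-image of the first vacuum, its complement is the sub-disc $D_2$ which is the $g$-image of the second vacuum (meeting $D_1$ along the now-interior glued edge), and its complementary sutures are the $g$-image of the second copy's vacuum arc, namely a single properly embedded arc in $D_2$ from a point of the glued edge to $\partial \Sigma^\emptyset$. This is precisely the ``vacuum bump'' configuration of figure \ref{fig:decorated_surface_isotopy_singularity}, so the vacuum-removal singularity of a decorated isotopy lets us shrink $D_2$ away, expanding the image to all of $\Sigma^\emptyset$ and deleting the arc; what remains is a surjective decorated self-morphism of the vacuum with no complementary sutures, which is decorated-isotopic to $\Id_{(\Sigma^\emptyset, V^\emptyset)}$. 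Exchanging the roles of the two copies handles $g \circ \iota_2$ identically.

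Functoriality of $\D$ then gives $\D_g \circ \D_{\iota_1} = \D_{\Id_{(\Sigma^\emptyset, V^\emptyset)}} = \Id_\Z = \D_g \circ \D_{\iota_2}$, i.e.\ $m n_1 = 1 = m n_2$ in $\Z$; hence $m, n_1, n_2 \in \{\pm 1\}$ with $n_1 = m = n_2$, so $\D_{\iota_1} = \D_{\iota_2}$. The step demanding genuine care --- and the main obstacle --- is the geometric claim: one must verify against the precise definitions that the standard gluing $g$ with the stated suture behaviour exists (the orientation bookkeeping, so that vertex signs match and the suture arcs join correctly) and that, after performing $g$, the image and complementary sutures of each $\iota_j$ really do sit inside the target vacuum in exactly the position to which the vacuum-removal singularity of a decorated isotopy applies. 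Morally this lemma just says that the swap homeomorphism of $\Sigma^\emptyset \sqcup \Sigma^\emptyset$ acts trivially on $\V \cong \Z$, which would be immediate from naturality of the disjoint-union/tensor structure; but since that naturality is not built into the definition of a sanguine twisted SQFT, this geometric detour seems to be required. No feature of twisted coefficients enters, as every coefficient ring here is $\Z$.
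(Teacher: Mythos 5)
Your proof is correct, but it takes a genuinely different route from the paper's. The paper's argument is combinatorial: it first uses Lemma~\ref{lem:sanguine_vacuum_sutures_1} to pin down $\D_{\iota_1}, \D_{\iota_2} \in \{\pm 1\}$, assumes $\D_{\iota_1} = -\D_{\iota_2}$, introduces the triple vacuum with six inclusions $\iota_{ij}$, and chases the compositional identities $\iota_{12}\circ\iota_1 = \iota_{31}\circ\iota_2$, $\iota_{12}\circ\iota_2 = \iota_{32}\circ\iota_2$, $\iota_{31}\circ\iota_1 = \iota_{32}\circ\iota_1$ to the contradiction $-\D_{\iota_{31}} = \D_{\iota_{31}}$. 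You instead glue the two vacua back into one with a standard gluing $g$ and show each $g\circ\iota_j$ is decorated-isotopic to the identity, so $\D_g\D_{\iota_1} = \Id_\Z = \D_g\D_{\iota_2}$ forces $\D_{\iota_1}=\D_{\iota_2}$. Your argument is shorter, needs neither the triple vacuum nor the preceding lemma on $c(\Gamma^\emptyset)$, and uses only functoriality plus decorated-isotopy invariance; the price, as you correctly flag, is that it rests on the geometry of the vacuum-removal singularity. That geometric step does hold: the complement of the image of $g\circ\iota_1$ is a disc with vacuum sutures whose arc joins the image boundary to $\partial\Sigma'$ (the configuration of figure~\ref{fig:decorated_surface_isotopy_singularity}), and after the removal what remains is a surjective self-morphism of the vacuum which, being a bijection on boundary edges, is a homeomorphism and hence (the disc with two signed boundary vertices has trivial mapping class group) isotopic to the identity. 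The paper's route is more robust in avoiding this geometric verification but at the cost of more bookkeeping; yours is cleaner but leans harder on the isotopy definitions from the prequel.
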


\begin{proof}
Consider both $\D_{\iota_1}, \D_{\iota_2}$ as maps $\Z \To \Z$. As both $\iota_1, \iota_2$ take $\Gamma^\emptyset \mapsto \Gamma^\emptyset \sqcup \Gamma^\emptyset$, they take $c(\Gamma^\emptyset) = \{\pm 1\}$ to $c(\Gamma^\emptyset \sqcup \Gamma^\emptyset) = \{\pm 1\}$, and hence $\D_{\iota_1} = \pm \D_{\iota_2}$. Suppose that $\D_{\iota_1} = - \D_{\iota_2}$.

We will now consider several decorated morphisms $\iota_{ij}$ from the ``double vacuum'' $(\Sigma^\emptyset \sqcup \Sigma^\emptyset, V^\emptyset \sqcup V^\emptyset)$ to the ``triple vacuum'' $(\Sigma^\emptyset \sqcup \Sigma^\emptyset \sqcup \Sigma^\emptyset, V^\emptyset \sqcup V^\emptyset \sqcup V^\emptyset)$. The map $\iota_{ij}$ takes the first copy of the vacuum to the $i$'th copy of the vacuum, and the second copy of the vacuum to the $j$'th copy, and has complementary vacuum sutures on the remaining vacuum. Thus for every pair $i \neq j$ of integers $i,j \in \{1,2,3\}$ we have such an $\iota_{ij}$. 

As with the double vacuum, the triple vacuum has a ``null quadrangulation'' and $\V(\Sigma^\emptyset \sqcup \Sigma^\emptyset \sqcup \Sigma^\emptyset, V^\emptyset \sqcup V^\emptyset \sqcup V^\emptyset) \cong \V(\Sigma^\emptyset, V^\emptyset)^{\otimes 3} \cong \Z^{\otimes 3} \cong \Z$. The ``triple vacuum sutures'' then must be $c(\Gamma^\emptyset \sqcup \Gamma^\emptyset \sqcup \Gamma^\emptyset) = c(\Gamma^\emptyset) \otimes c(\Gamma^\emptyset) \otimes c(\Gamma^\emptyset) = \{\pm 1\} \otimes \{\pm 1\} \otimes \{\pm 1\} \subset \Z^{\otimes 3}$, which under the isomorphism with $\Z$ gives $\{\pm 1\}$. Thus each $\D_{\iota_{ij}}$ is a homomorphism $\Z \To \Z$ which takes $c(\Gamma^\emptyset \sqcup \Gamma^\emptyset) = \{\pm 1\}$ to $c(\Gamma^\emptyset \sqcup \Gamma^\emptyset \sqcup \Gamma^\emptyset) = \{ \pm 1\}$, hence each $\D_{\iota_{ij}}$ is multiplication by $\pm 1$.

Now we use functoriality of $\D$, repeatedly using $\D_{\iota_1} = - \D_{\iota_2}$. Since $\iota_{12} \circ \iota_1 = \iota_{31} \circ \iota_2$ we have $\D_{\iota_{12}} = - \D_{\iota_{31}}$. Since $\iota_{12} \circ \iota_2 = \iota_{32} \circ \iota_2$ we have $\D_{\iota_{12}} = \D_{\iota_{32}}$. And since $\iota_{31} \circ \iota_1 = \iota_{32} \circ \iota_1$ we have $\D_{\iota_{31}} = \D_{\iota_{32}}$. Putting these equalities together gives $-\D_{\iota_{31}} = \D_{\iota_{12}} = \D_{\iota_{32}} = \D_{\iota_{31}}$, a contradiction as $\D_{\iota_{31}}$ is an isomorphism $\Z \To \Z$.
\end{proof}

We may now choose an isomorphism $\V(\Sigma^\emptyset \sqcup \Sigma^\emptyset, V^\emptyset \sqcup V^\emptyset) \cong \Z$ so that $D_{\iota_1} = \D_{\iota_2} : \Z \To \Z$ is the identity. We then define $s(\Gamma^\emptyset \sqcup \Gamma^\emptyset) = 1$ so $\D_{\iota_1} s(\Gamma^\emptyset) = \D_{\iota_2} s(\Gamma^\emptyset) = s(\Gamma^\emptyset \sqcup \Gamma^\emptyset)$.

For any sutures $\Gamma$ on any occupied $(\Sigma,V)$, we now define $s(\Gamma)$ as follows. Take a vacuum inclusion $(\phi, \Gamma_c):(\Sigma^\emptyset, V^\emptyset) \To (\Sigma,V)$ which sends $\Gamma^\emptyset \mapsto \Gamma$, and define $s(\Gamma) = \D_{\phi, \Gamma_c} s(\Gamma^\emptyset)$. We now show that these $s(\Gamma)$ are well defined, so that we have something close to an optimistic twisted SQFT.
\begin{lem}
Consider two distinct decorated morphisms $(\phi, \Gamma_c)$, $(\phi', \Gamma'_c)$ from the vacuum to $(\Sigma,V)$, sending $\Gamma^\emptyset \mapsto \Gamma$. Then $\D_{\phi, \Gamma_c} s(\Gamma^\emptyset) = \D_{\phi', \Gamma'_c} s(\Gamma^\emptyset)$.
\end{lem}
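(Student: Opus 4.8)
The plan is to reduce the statement to the already-proved case of two vacuum inclusions into a \emph{disjoint union} of vacua, by factoring both $(\phi, \Gamma_c)$ and $(\phi', \Gamma'_c)$ through a common intermediate. The basic observation is that any vacuum inclusion $(\Sigma^\emptyset, V^\emptyset) \To (\Sigma,V)$ sending $\Gamma^\emptyset \mapsto \Gamma$ can be written as a composition: first include the vacuum as one component of a disjoint union of vacua $(\Sigma^\emptyset, V^\emptyset) \To \bigsqcup_k (\Sigma^\emptyset, V^\emptyset)$ with complementary vacuum sutures on the other copies, and then apply a surjective decorated morphism $\bigsqcup_k (\Sigma^\emptyset, V^\emptyset) \To (\Sigma,V)$. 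Indeed, since $(\Sigma,V)$ with sutures $\Gamma$ can be cut into squares with basic sutures, and each such square is the image of a decorated creation from a smaller surface, one can build $(\Sigma, \Gamma, V)$ from a disjoint union of vacua by a sequence of decorated creations (producing new squares) followed by standard gluings, folds, and zips; the choice of which vacuum copy ``becomes'' $\Gamma$ corresponds precisely to the choice of the inclusion $\iota_i$ studied in Lemma \ref{lem:sanguine_inclusion}.

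The key steps, in order: (1) First I would establish that for a disjoint union of $k$ vacua, the map $\D_{\iota_i}$ including the vacuum as the $i$'th copy (with complementary vacuum sutures elsewhere), followed by the null-quadrangulation identification $\V(\bigsqcup_k \Sigma^\emptyset) \cong \Z$, is independent of $i$. This is the $k$-fold generalisation of Lemma \ref{lem:sanguine_inclusion}, and it follows by the same functoriality argument: any two of the inclusions $\iota_i, \iota_j$ into $k$ vacua both factor, after composing with a suitable inclusion into $k+1$ vacua, through maps that Lemma \ref{lem:sanguine_inclusion} (applied to appropriate pairs of copies) forces to agree; alternatively, induct on $k$ using that $\iota_i$ into $k$ vacua composed with an inclusion of one vacuum into two agrees with $\iota_i$ into $k+1$ vacua. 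With the chosen identifications, $\D_{\iota_i} s(\Gamma^\emptyset) = 1$ for every $i$. (2) Next, given a vacuum inclusion $(\phi, \Gamma_c)$ sending $\Gamma^\emptyset \mapsto \Gamma$, I would factor it as $(\phi, \Gamma_c) = (\psi, \Gamma_\psi) \circ \iota_i$ where $\iota_i$ is an inclusion of the vacuum into a disjoint union $W$ of vacua as its $i$'th copy, and $(\psi, \Gamma_\psi): W \To (\Sigma,V)$ is a surjective decorated morphism (a composition of decorated creations, standard gluings, folds, zips) sending the disjoint-vacuum sutures to $\Gamma$. Crucially, $(\psi, \Gamma_\psi)$ does not depend on $i$ — only the location of the distinguished vacuum copy does. (3) Then, by functoriality, $\D_{\phi,\Gamma_c} s(\Gamma^\emptyset) = \D_{\psi,\Gamma_\psi}\bigl(\D_{\iota_i} s(\Gamma^\emptyset)\bigr) = \D_{\psi,\Gamma_\psi}(1)$, which manifestly does not depend on the choice of $i$. (4) Finally, to compare $(\phi,\Gamma_c)$ with a second inclusion $(\phi',\Gamma'_c)$, I would realise both as factoring through the \emph{same} disjoint union $W$ of vacua and the \emph{same} surjective morphism $(\psi,\Gamma_\psi)$, differing only in the choice of distinguished copy $i$ versus $i'$; then step (3) gives $\D_{\phi,\Gamma_c}s(\Gamma^\emptyset) = \D_{\psi,\Gamma_\psi}(1) = \D_{\phi',\Gamma'_c}s(\Gamma^\emptyset)$. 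If the two given inclusions a priori use different intermediate data, one enlarges $W$ and chooses a common surjective morphism by the usual manoeuvre of inserting extra decorated creations that are immediately destroyed, exactly as in the isotopy arguments of \cite{Me12_itsy_bitsy}.

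\textbf{The main obstacle} I expect is step (2) and (4): showing that \emph{any} two vacuum inclusions can be arranged to factor through a common surjective morphism $(\psi,\Gamma_\psi)$ from a common disjoint union of vacua, with only the choice of distinguished copy varying. This is essentially a ``two vacuum inclusions are related by a sequence of elementary moves'' statement, and making it precise requires care with decorated isotopy and with the moves (decorated creation / standard gluing / fold / zip) — one must check that any difference between $(\phi,\Gamma_c)$ and $(\phi',\Gamma'_c)$ can be absorbed into relabelling vacuum copies after passing to a large enough $W$. Once this combinatorial normal-form statement is in hand, the rest is a short functoriality computation using the $k$-fold version of Lemma \ref{lem:sanguine_inclusion}.
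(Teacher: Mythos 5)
There is a genuine gap in your step (2), and it is more than a technical annoyance. You claim to write a vacuum inclusion $(\phi,\Gamma_c):(\Sigma^\emptyset,V^\emptyset)\To(\Sigma,V)$ as $\iota_i$ into a disjoint union of vacua followed by a \emph{surjective} decorated morphism $(\psi,\Gamma_\psi):\bigsqcup_k(\Sigma^\emptyset,V^\emptyset)\To(\Sigma,V)$, and you justify this by saying $(\psi,\Gamma_\psi)$ is built from decorated creations together with gluings, folds, and zips. But those two claims contradict each other: a composition involving even one decorated creation has image a proper subsurface, so it is not surjective. And a genuinely surjective morphism from a disjoint union of vacua cannot reach a general target: surjective morphisms are compositions of standard gluings (index-preserving), folds (index $-1$), and zips (index $-2$), and the disjoint union of $k$ vacua has index $0$, so the image must have index $\leq 0$, i.e.\ index $0$. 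For any $(\Sigma,V)$ with $I(\Sigma,V)>0$ the surjection you want simply does not exist. Consequently the factorisation $(\phi,\Gamma_c)=(\psi,\Gamma_\psi)\circ\iota_i$ with $\psi$ surjective is unavailable, and steps (3)--(4) have nothing to stand on.

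The fix, and the route the paper actually takes, is much lighter. You do not need surjectivity, and you do not need a $k$-fold version of Lemma~\ref{lem:sanguine_inclusion}: use only the double vacuum. Since $\D_{\phi,\Gamma_c}$ depends only on the decorated-isotopy class (an assumption built into the definition of a sanguine twisted SQFT), isotope $(\phi,\Gamma_c)$ and $(\phi',\Gamma'_c)$ so that the image discs of $\phi$ and $\phi'$ in $\Sigma$ are disjoint; both still send $\Gamma^\emptyset\mapsto\Gamma$. The union of these two disjoint discs is the image of a single decorated morphism $(\zeta,\Gamma_\zeta)$ from the double vacuum $(\Sigma^\emptyset\sqcup\Sigma^\emptyset, V^\emptyset\sqcup V^\emptyset)$ into $(\Sigma,V)$, with $(\phi,\Gamma_c)=(\zeta,\Gamma_\zeta)\circ\iota_1$ and $(\phi',\Gamma'_c)=(\zeta,\Gamma_\zeta)\circ\iota_2$. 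Functoriality and $\D_{\iota_1}=\D_{\iota_2}$ (the already-proved Lemma~\ref{lem:sanguine_inclusion}) then give $\D_{\phi,\Gamma_c}s(\Gamma^\emptyset)=\D_{\zeta,\Gamma_\zeta}\D_{\iota_1}s(\Gamma^\emptyset)=\D_{\zeta,\Gamma_\zeta}\D_{\iota_2}s(\Gamma^\emptyset)=\D_{\phi',\Gamma'_c}s(\Gamma^\emptyset)$. Note $(\zeta,\Gamma_\zeta)$ is just a vacuum inclusion of two discs, not a surjection; the only normal-form work needed is the isotopy to disjointness, which is immediate for small discs near $\Gamma$.
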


We write $s(\Gamma) = \D_{\phi, \Gamma_c} s(\Gamma^\emptyset)$ and $s'(\Gamma) = \D_{\phi', \Gamma'_c}(c(\Gamma^\emptyset))$; we must show $s(\Gamma) = s'(\Gamma)$.

\begin{proof}
By decorated isotopy of $(\phi, \Gamma_c)$ and $(\phi', \Gamma'_c)$ as necessary we may assume that the image discs of $\phi$ and $\phi'$ on $\Sigma$ are disjoint. Then $\phi, \phi'$ both factor through a decorated morphism $(\zeta, \Gamma_\zeta)$ from the double vacuum $(\Sigma^\emptyset \sqcup \Sigma^\emptyset, V^\emptyset \sqcup V^\emptyset)$ to $(\Sigma,V)$, where $\zeta$ includes the two disjoint discs into $\Sigma$. That is, we have decorated morphisms

\begin{center}
\begin{tikzpicture}[scale=1]
\draw (0,0) node {$(\Sigma^\emptyset, V^\emptyset)$};
\draw (5,0) node {$(\Sigma,V)$};
\draw (2.5,3) node {$(\Sigma^\emptyset \sqcup \Sigma^\emptyset, V^\emptyset \sqcup V^\emptyset)$};

\draw [->] (0.7,0.1) to node [above] {$(\phi, \Gamma_c)$} (4.3,0.1);
\draw [->] (0.7,-0.1) to node [below] {$(\phi', \Gamma'_c)$} (4.3,-0.1);

\draw [->] (0.1,0.3) to node [above left] {$\iota_1$} (2.1,2.7);
\draw [->] (0.3,0.3) to node [below right] {$\iota_2$} (2.3,2.7);

\draw [->] (2.7, 2.7) to node [above right] {$(\zeta, \Gamma_\zeta)$} (4.8,0.3);

\end{tikzpicture}
\end{center}
where $(\zeta, \Gamma_\zeta) \circ \iota_1 = (\phi, \Gamma_c)$, $(\zeta, \Gamma_\zeta) \circ \iota_2 = (\phi', \Gamma'_c)$. (This is a complicated way of saying that two disjoint embeddings of a disc can be regarded as an embedding of two discs!)

From lemma \ref{lem:sanguine_inclusion} we have $\D_{\iota_1} = \D_{\iota_2}$ and we have chosen an isomorphism $\V(\Sigma^\emptyset, V^\emptyset)$ so that $\D_{\iota_1} = \D_{\iota_2} = 1$. We now have, using functoriality of $\D$,
\[
s(\Gamma) = \D_{\phi, \Gamma_c} s(\Gamma^\emptyset) = \D_{\zeta,\Gamma_\zeta} \D_{\iota_1} s(\Gamma^\emptyset) 
= \D_{\zeta, \Gamma_\zeta} \D_{\iota_2} s(\Gamma^\emptyset) = \D_{\phi', \Gamma'_c} s(\Gamma^\emptyset) = s'(\Gamma).
\]
\end{proof}

(Note that this proof relied upon being able to replace $(\phi, \Gamma_c)$ and $(\phi', \Gamma'_c)$ with decorated-isotopic morphisms. As such it relies on the maps $\D_{\phi, \Gamma_c}$ depending only on the decorated-isotopy class of $(\phi, \Gamma_c)$. We will discuss this requirement later in section \ref{sec:definition}.)

\begin{lem}
The suture elements $s(\Gamma)$ are respected: for any sutures $\Gamma$ on $(\Sigma,V)$, and decorated morphism $(\phi, \Gamma_c): (\Sigma,V) \To (\Sigma',V')$,
\[
\D_{\phi,\Gamma_c} s(\Gamma) = s(\Gamma \cup \Gamma_c).
\]
\end{lem}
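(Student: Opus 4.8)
The plan is to reduce the statement to functoriality of $\D$ together with the well-definedness of $s$ just established. First I would choose a vacuum inclusion $(\psi, \Gamma_\psi): (\Sigma^\emptyset, V^\emptyset) \To (\Sigma,V)$ sending $\Gamma^\emptyset \mapsto \Gamma$, so that by definition $s(\Gamma) = \D_{\psi, \Gamma_\psi} s(\Gamma^\emptyset)$.

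Next I would form the composite decorated morphism $(\phi, \Gamma_c) \circ (\psi, \Gamma_\psi): (\Sigma^\emptyset, V^\emptyset) \To (\Sigma',V')$, which is again a decorated morphism since $\DOS$ is a category. I claim this composite is again a vacuum inclusion, now for the sutures $\Gamma \cup \Gamma_c$ on $(\Sigma',V')$: indeed $(\psi, \Gamma_\psi)$ sends $\Gamma^\emptyset \mapsto \Gamma$ and $(\phi, \Gamma_c)$ sends $\Gamma \mapsto \Gamma \cup \Gamma_c$, so the composite sends $\Gamma^\emptyset \mapsto \Gamma \cup \Gamma_c$. Concretely, the image of $\psi$ is a disc carrying the filled-in vacuum suture, and the complement of $\phi \circ \psi$ in $\Sigma'$ carries $\Gamma_\psi$ (pushed forward by $\phi$) together with $\Gamma_c$, so filling in the vacuum suture produces exactly $\Gamma \cup \Gamma_c$.

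Then I would invoke the definition of $s(\Gamma \cup \Gamma_c)$ --- which, by the preceding lemma on well-definedness of $s$, may be computed using \emph{any} vacuum inclusion sending $\Gamma^\emptyset$ to $\Gamma \cup \Gamma_c$, in particular the composite above --- and functoriality of $\D$:
\[
s(\Gamma \cup \Gamma_c) = \D_{(\phi,\Gamma_c) \circ (\psi, \Gamma_\psi)} \, s(\Gamma^\emptyset) = \D_{\phi, \Gamma_c}\bigl( \D_{\psi, \Gamma_\psi} s(\Gamma^\emptyset) \bigr) = \D_{\phi, \Gamma_c} \, s(\Gamma),
\]
which is the claim. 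Note that no ambiguity or unit issues intervene here, since $s$ takes single values and $\D$ is an honest functor in a sanguine twisted SQFT.

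The only real content is the middle step: verifying that composing a vacuum inclusion for $\Gamma$ with $(\phi, \Gamma_c)$ genuinely yields a vacuum inclusion for $\Gamma \cup \Gamma_c$, i.e. that the complementary-suture bookkeeping works out as stated (this is essentially the defining property $\D_{\phi,\Gamma_c} c(\Gamma) \subseteq c(\Gamma \cup \Gamma_c)$ read at the level of which sutures are produced). Once that is in hand, the result is immediate from functoriality.
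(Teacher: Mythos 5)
Your proposal is correct and is essentially the paper's own argument: the paper likewise takes a vacuum inclusion $\iota$ sending $\Gamma^\emptyset \mapsto \Gamma$ (realised concretely as a small disc meeting $\Gamma$ in one arc), observes that its composite with $(\phi,\Gamma_c)$ is a vacuum inclusion sending $\Gamma^\emptyset \mapsto \Gamma\cup\Gamma_c$, and concludes from well-definedness of $s$ together with functoriality of $\D$. Your write-up just spells out the complementary-suture bookkeeping for the composite a little more explicitly than the paper does.
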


\begin{proof}
Taking a small disc on $(\Sigma,V)$ intersecting $\Gamma$ in an arc, and its image under $\phi$ on $\Sigma'$, we obtain vacuum inclusions into $(\Sigma,V)$ and $(\Sigma',V')$, which send sutures $\Gamma^\emptyset$ on the vacuum to $\Gamma$ and $\Gamma'$ respectively, so that the following diagram commutes.

\begin{center}
\begin{tikzpicture}[scale=1]
\draw (2.5,3) node {$(\Sigma^\emptyset, V^\emptyset)$};
\draw (0,0) node {$(\Sigma,V)$};
\draw (5,0) node {$(\Sigma',V')$};

\draw [->] (0.7,0) to node [above] {$(\phi, \Gamma_c)$} (4.3,0);
\draw [->] (2.2,2.7) to node [above left] {$\iota$} (0.2,0.3);
\draw [->] (2.7,2.7) to node [above right] {$\iota'$} (4.8,0.3);
\end{tikzpicture}
\end{center}
We then have, by definition $s(\Gamma') = \D_{\iota'} s(\Gamma^\emptyset) = \D_{\phi,\Gamma_c} \D_\iota s(\Gamma^\emptyset) = \D_{\phi,\Gamma_c} s(\Gamma)$.
\end{proof}

We can now complete the proof of nonexistence of sanguine twisted SQFT along similar lines as the optimistic case.
\begin{proof}[Proof of proposition \ref{prop:no_sanguine_TSQFT}]
As in the proof of proposition \ref{prop:no_optimistic_TSQFT} Let $(\Sigma,V)$ be a quadrangulated disc with 6 vertices. We have $\V(\Sigma,V;0)$ 2-dimensional over $\Z$ with ordered basis $(s(\Gamma_{-+}), s(\Gamma_{+-}))$.

Considering again the rotation morphism $R$ and the corresponding module homomorphism in grading $0$, since $R$ sends $\Gamma_{-+} \mapsto \Gamma_{+-}$ and $R^3$ sends sutures to themselves, we again obtain $s(\Gamma_{-+}) + s(\Gamma_{+-}) + s(\Gamma_\pm) = 0$. 

And considering $(\Phi, \Gamma_c): (\Sigma,V) \To (\Sigma^\square, V^\square)$ again we find $\D_{\Phi, \Gamma_c}$ sends
\[
s(\Gamma_{-+}) \mapsto s(\Gamma_+), \quad
s(\Gamma_{+-}) \mapsto 0, \quad
- s(\Gamma_{-+}) - s(\Gamma_{+-}) \mapsto s(\Gamma_+).
\]
Adding these gives $2s(\Gamma_+) = 0$, a contradiction as $s(\Gamma_+) \in c(\Gamma_+) = \{\pm \1 \} \subset \Z$.
\end{proof}

This completes the proof of proposition \ref{prop:necessity_of_ambiguity}. Twisted SQFT is therefore consigned to eternal ambiguity; but only up to units, which is not a particularly costly price to pay.

\subsection{Definition of twisted SQFT}
\label{sec:definition}

Having learned our lesson from the previous section, we cannot obtain a bona fide functor from $\DOS$, the decorated occupied surface category, to a graded module category. We consider graded module homomorphisms up to units, and suture elements up to units. A twisted SQFT is thus a `` functor up to units''. 

When we speak of an element of an $R$-module $M$ ``up to units'', we really mean the orbit of an element of $M$ under the multiplicative action of the group of units $R^\times$. We require each $c(\Gamma)$ to be such an orbit, i.e. a $\Z[H_1(\Sigma)]^\times$-orbit in $\V(\Sigma,V)$.

The following lemma shows that, even when module homomorphisms are ambiguous up to units, they never mix up such orbits.
\begin{lem}
\label{lem:preserving_unit_orbits}
Let $M, M'$ be graded modules over rings $R,R'$, and let $f: M \To M'$ be a graded module homomorphism over a ring homomorphism $\bar{f}$. If $m_1, m_2 \in M$ lie in the same $R^\times$-orbit, then $f(m_1), f(m_2)$ lie in the same $R'^\times$-orbit. That is, $f$ sends an $R^\times$-orbit of $M$ into an $R'^\times$-orbit of $M'$.
\end{lem}

\begin{proof}
If $m' = rm$ for some $r \in R^\times$, then $f(m') = f(rm) = \bar{f}(r) \; f(m)$, where $\bar{f}(r)$ is a unit, as any ring homomorphism preserves units.
\end{proof}

We now define a twisted sutured quadrangulated field theory. 
\begin{defn}
\label{defn:twisted_sqft}
A \emph{twisted sutured quadrangulated field theory} (twisted SQFT) is a pair $(\mathcal{D},c)$, where
\begin{enumerate}
\item
$\mathcal{D}$ associates:
\begin{enumerate}
\item
to an occupied surface $(\Sigma,V)$ a graded $\Z[H_1(\Sigma)]$-module $\V(\Sigma,V)$, and 
\item
to a decorated-isotopy class of decorated morphism $(\phi, \Gamma_c) \; : \; (\Sigma,V) \To (\Sigma',V')$, a graded module homomorphism $\D_{\phi, \Gamma_c} : \V(\Sigma,V) \To \V(\Sigma',V')$ over the ring homomorphism $\phi_*: \Z[H_1(\Sigma)] \To \Z[H_1(\Sigma')]$, well defined up to multiplication by units in $\Z[H_1(\Sigma')]$;
\end{enumerate}
\item
$c$ assigns to each (isotopy class of) sutures $\Gamma$ on $(\Sigma,V)$ a $\Z[H_1(\Sigma)]^\times$-orbit $c(\Gamma)$ of $\V(\Sigma,V)$.
\end{enumerate}
The pair $(\D,c)$ satisfy the following conditions:
\begin{enumerate}
\item
$\D$ is ``a functor up to units''. That is:
\begin{enumerate}
\item
To an identity morphism $(\Sigma,V) \To (\Sigma,V)$, $\D$ assigns the identity map $\V(\Sigma,V) \To \V(\Sigma,V)$, up to units.
\item
To the composition of two decorated morphisms $(\Sigma,V) \stackrel{(\phi, \Gamma_c)}{\To} (\Sigma',V') \stackrel{(\phi',\Gamma'_c)}{\To} (\Sigma'',V'')$, $\D$ assigns the composition $\V(\Sigma,V) \stackrel{\D_{\phi,\Gamma_c}}{\To} \V(\Sigma',V') \stackrel{\D_{\phi',\Gamma'_c}}{\To} \V(\Sigma'',V'')$ of graded module homomorphisms, up to units. 
\end{enumerate}
\item
Quadrangulations give tensor decompositions. For any (isotopy class of) quadrangulation $Q$ of $(\Sigma,V)$ with squares $(\Sigma_i^\square, V_i^\square)$,
\[
\V(\Sigma,V) \cong \Z[H_1(\Sigma)] \otimes \bigotimes_i \V(\Sigma_i^\square, V_i^\square)
\]
where the tensor products are over $\Z$. (This includes the ``null quadrangulation'' on an occupied vacuum $(\Sigma^\emptyset, V^\emptyset)$, which has $\V(\Sigma^\emptyset, V^\emptyset) \cong \Z$.) If $\Gamma$ is a basic set of sutures on $(\Sigma,V)$ restricting to $\Gamma_i$ on $(\Sigma_i^\square, V_i^\square)$ then under this isomorphism $c(\Gamma) = \Z[H_1(\Sigma)]^\times \otimes \bigotimes_i c(\Gamma_i)$.
\item
Suture elements are respected: for any sutures $\Gamma$ on $(\Sigma,V)$,
\[
\mathcal{D}_{\phi,\Gamma_c}(c(\Gamma)) \subseteq c(\Gamma \cup \Gamma_c).
\]
\item
Basic sutures give bases: $c(\Gamma_+) = \{ \pm \1\} $ and $c(\Gamma_-) = \{ \pm \0 \}$ for two elements $\0, \1 \in \V(\Sigma^\square, V^\square)$, which form a free basis for $\V(\Sigma^\square, V^\square)$ as a $\Z$-module.
\item
Euler class gives grading. Set the grading $e(\0) = -1$, $e(\1) = 1$ on $\V(\Sigma^\square, V^\square)$ and extend to obtain a $\Z$-grading on any $\V(\Sigma,V)$ (any element of any coefficient ring has grading $0$). Then for any set of sutures $\Gamma$, every element of $c(\Gamma)$ has grading $e(\Gamma)$.
\end{enumerate}
\end{defn}

We now elaborate on some aspects of this definition, and note some simple properties.

\emph{Graded module homomorphisms up to units.}
As we have mentioned, the graded module homomorphisms $\D_{\phi, c}$ are defined ``up to units'', and should properly be considered as equivalence classes up to multiplication by units. But there is some further subtlety arising from gradings. We may post-multiply a graded module homomorphism by a unit to obtain another homomorphism --- but in fact, we may multiply by a separate unit in each graded summand. That is, given a graded module homomorphism $f = \bigoplus_e f_e  : \V(\Sigma,V) \To \V(\Sigma',V')$ with graded summands $f_e: \V(\Sigma,V;e) \To \V(\Sigma',V';e+\deg (f))$, we may multiply each $f_e$ by a separate unit $z_e \in \Z[H_1(\Sigma')]^\times$ and the result is another graded module homomorphism $\bigoplus_e z_e f_e$. When we say ``up to units'', we mean that $f$ is only well defined up to multiplying by a unit in each summand in this way. These form an equivalence class of graded module homomorphisms and strictly speaking $\D_{\phi, \Gamma_c}$ is this collection of homomorphisms.

\emph{``Functor up to units'' and categorical considerations.}
The first property of $\D$ expresses precisely ``functoriality up to units''. In the untwisted $\Z_2$ case, there are no units other than $1$, so ``up to units'' becomes redundant, and SQFT is a bona fide functor. It would be more satisfactory if $\D$ could be expressed cleanly as a bona fide functor between appropriate ``quotient'' categories; this can be done, but involves greater technical detail, and we leave it to subsequent work.

\emph{Naturality of $\V(\Sigma,V)$.}
Considering decorated morphisms which are \emph{homeomorphisms}, functoriality up to units gives $\V(\Sigma,V)$ properties of ``naturality up to units'' (see \cite{Juhasz_Thurston12} for a full discussion of naturality). In particular, if $\phi: (\Sigma,V) \To (\Sigma',V')$ is a homeomorphism then $\D_{\phi, \emptyset}$ is an isomorphism. (More precisely, an equivalence class up to units of isomorphisms.) This follows since $\D$ respects composition and identity up to units, and since $(\phi, \emptyset)$ has inverse decorated morphism $(\phi^{-1}, \emptyset)$.

\emph{Decorated-isotopy classes.}
We have specified, unlike in \cite{Me12_itsy_bitsy}, that the homomorphisms $\D_{\phi, \Gamma_c}$ only depend on the \emph{decorated-isotopy class} of $(\phi, \Gamma_c)$. In the $\Z_2$ case the uniqueness of suture elements made the invariance of homomorphisms under decorated isotopy clear \cite[lemma 8.3]{Me12_itsy_bitsy}. It turns out also in the twisted case that imposing decorated-isotopy invariance is redundant, but this requires further work which we defer to the sequel.

\emph{Free modules and bases.}
Writing $\V(\Sigma,V) \cong \Z[H_1(\Sigma)] \otimes \bigotimes_i \V(\Sigma_i^\square, V_i^\square)$, expresses $\V(\Sigma,V)$ as a free module over $\Z[H_1(\Sigma)]$. Each $\V(\Sigma_i^\square, V_i^\square)$ is a free 2-dimensional $\Z$-module; their tensor product over $\Z$ is a $2^{I(\Sigma,V)}$-dimensional $\Z$-module (recall $I(\Sigma,V)$ is the number of squares); taking the tensor product with $\Z[H_1(\Sigma)]$ gives a free $2^{I(\Sigma,V)}$-dimensional $\Z[H_1(\Sigma)]$-module. Given a quadrangulation $Q$ of $(\Sigma,V)$, we naturally obtain a basis of $\V(\Sigma,V)$ over $\Z[H_1(\Sigma,V)]$. Let the $I(\Sigma,V)$ squares of $Q$ be $(\Sigma_i^\square, V_i^\square)$, and take a basis $\0$, $\1$ of each $\V(\Sigma_i^\square, V_i^\square)$. Then a basis of $\V(\Sigma,V)$ over $\Z[H_1(\Sigma)]$ is given by 
\[
\e_1 \otimes \e_2 \otimes \cdots \otimes \e_{I(\Sigma,V)} = | \e_1 \e_2 \cdots \e_{I(\Sigma,V)} \rangle,
\]
where each $\e_i$ is $\0$ or $\1$. These $2^{I(\Sigma,V)}$ basis elements consist of a representative of each $c(\Gamma)$, over all basic sets of sutures $\Gamma$. When $(\Sigma,V)$ is the vacuum with the null quadrangulation, we have $\V(\Sigma,V) \cong \Z$ and we write a basis over $\Z$ simply as $1$ (not $\1$).

\emph{Bases for graded components.}
Writing the graded decomposition of $\V(\Sigma,V)$ as $\bigoplus_e \V(\Sigma,V;e)$, we then obtain a basis for each $\V(\Sigma,V;e)$. Let $n_0, n_1$ denote the number of $\0$'s and $\1$'s respectively among $\e_1, \ldots, \e_{I(\Sigma,V)}$. Then $\V(\Sigma,V;e)$ has basis over $\Z[H_1(\Sigma)]$ given by those $| \e_1 \e_2 \cdots \e_{I(\Sigma,V)} \rangle$ satisfying $n_1 - n_0 = e$. These basis elements are precisely those which are suture elements for basic sutures with Euler class $e$. It follows that $\V(\Sigma,V;e)$ is nontrivial precisely for $e$ of the same parity as $I(\Sigma,V)$ and satisfying $|e| \leq I(\Sigma,V)$. This corresponds to the fact that $e(\Gamma)$ satisfies the same properties, for nontrivial sutures $\Gamma$.

\emph{Sutures up to homeomorphism vs isotopy.}
Two sets of sutures $\Gamma,\Gamma'$ on $(\Sigma,V)$ may be homeomorphic in the sense that there is a homeomorphism $\phi: \Sigma \To \Sigma$ taking $\Gamma$ to $\Gamma'$; but such $\Gamma,\Gamma'$ are in general not isotopic. For instance, if $\Gamma$ is a set of sutures on a disc, and we rotate the disc, the resulting sutures $\Gamma'$ will in general be non-isotopic. Suture elements $c(\Gamma)$ are invariant under isotopy of sutures, but in general not under homeomorphism; so $c(\Gamma), c(\Gamma')$ will in general be different.

\emph{Keeping track of suture element orbits.}
We saw in lemma \ref{lem:preserving_unit_orbits} that each $\D_{\phi, \Gamma_c}: \V(\Sigma,V) \To \V(\Sigma',V')$ sends a $\Z[H_1(\Sigma)]^\times$-orbit into a $\Z[H_1(\Sigma')]^\times$-orbit. Note, however, that $\phi_*: \Z[H_1(\Sigma)] \To \Z[H_1(\Sigma')]$ might not be surjective, and so the image of a $c(\Gamma)$ under $\D_{\phi,\Gamma_c}$ might not be an entire $\Z[H_1(\Sigma')]^\times$-orbit. This is why we write $c(\Gamma \cup \Gamma_c) \subseteq c(\Gamma')$, rather than equality. Given the ambiguity in $\D_{\phi, \Gamma_c}$, however, we may post-multiply $\D_{\phi, \Gamma_c}$ by a unit to hit any given element of $c(\Gamma')$.

\emph{Gradings of decorated morphisms.}
Finally, we note that the degree of each $\D_{\phi, \Gamma_c}$ is given by the amount by which is shifts Euler classes, i.e. the degree of the decorated morphism $\deg(\phi, \Gamma_c)$.

\subsection{First properties of suture elements}
\label{sec:first_properties}

We can now develop some basic properties of suture elements. 

Some of the arguments used in the optimistic and sanguine cases apply immediately for bona fide twisted SQFT. For instance, the proof of lemma \ref{lem:sanguine_vacuum_sutures_1} immediately gives a proof that vacuum sutures have suture elements $\pm 1$.
\begin{lem}
Let $(\Sigma^\emptyset, V^\emptyset)$ be the vacuum, and $\Gamma^\emptyset$ the vacuum sutures. Then $\V(\Sigma^\emptyset, V^\emptyset) \cong \Z$ and $c(\Gamma^\emptyset) = \{ \pm 1 \}$.
\qed
\end{lem}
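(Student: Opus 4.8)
The plan is to mirror the proof of Lemma~\ref{lem:sanguine_vacuum_sutures_1}, which carries over essentially verbatim to the bona fide twisted setting, since the only units in the relevant coefficient rings are $\pm 1$. First I would dispose of the statement $\V(\Sigma^\emptyset, V^\emptyset) \cong \Z$: the vacuum is a disc, hence contractible, so $H_1(\Sigma^\emptyset) = 0$ and $\Z[H_1(\Sigma^\emptyset)] \cong \Z$; moreover the vacuum carries the null quadrangulation (no squares), so condition (ii) of Definition~\ref{defn:twisted_sqft} gives $\V(\Sigma^\emptyset, V^\emptyset) \cong \Z[H_1(\Sigma^\emptyset)] \otimes (\text{empty tensor product}) \cong \Z$. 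I would fix such an isomorphism once and for all and call the resulting generator $1 \in \Z$.

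Next I would pin down $c(\Gamma^\emptyset)$ by mapping into the occupied square. Since $c(\Gamma^\emptyset)$ is a $\Z[H_1(\Sigma^\emptyset)]^\times = \{\pm 1\}$-orbit in $\Z$, it has the form $\{\pm n\}$ for some integer $n \geq 0$. Take a vacuum inclusion $(\phi, \Gamma_c): (\Sigma^\emptyset, V^\emptyset) \To (\Sigma^\square, V^\square)$ sending $\Gamma^\emptyset$ to the standard positive sutures $\Gamma_+$ (embed the disc as a small sub-disc of the square meeting $\Gamma_+$ in a single arc). Both surfaces are contractible, so $\phi_* \colon \Z \To \Z$ is the identity, and $\D_{\phi, \Gamma_c}$ is just an abelian group homomorphism $\Z \To \V(\Sigma^\square, V^\square) = \Z\0 \oplus \Z\1$, determined by $v := \D_{\phi, \Gamma_c}(1)$ (well defined up to sign, which will not matter).

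I would then invoke condition (iii), that suture elements are respected, together with condition (iv): $\D_{\phi, \Gamma_c}(c(\Gamma^\emptyset)) \subseteq c(\Gamma_+) = \{\pm\1\}$. The left-hand side is $\{\pm n v\}$, so $n v = \pm\1$. Since $\1$ is a member of a free $\Z$-basis of $\V(\Sigma^\square, V^\square)$, it is primitive, which forces $n = 1$ (and incidentally $v = \pm\1$). Hence $c(\Gamma^\emptyset) = \{\pm 1\}$, as claimed.

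I do not expect any real obstacle; the content is entirely bookkeeping. The one point requiring a little care is the distinction between orbits and elements — in particular observing that $\{0\} \not\subseteq \{\pm\1\}$ because $\1 \neq 0$, so the argument automatically excludes the degenerate possibility $c(\Gamma^\emptyset) = \{0\}$ without a separate appeal to the grading. Alternatively, as the surrounding text indicates, one may simply cite the computation in the proof of Lemma~\ref{lem:sanguine_vacuum_sutures_1} directly.
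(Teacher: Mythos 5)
Your proof is correct and is essentially the argument the paper intends: the paper cites the proof of the analogous lemma in the sanguine case (null quadrangulation gives $\V(\Sigma^\emptyset,V^\emptyset)\cong\Z$, then a vacuum inclusion into the square forces $c(\Gamma^\emptyset)$ to consist of primitive elements of $\Z$, hence $\{\pm 1\}$). Your write-up is a more careful expansion of that same argument, correctly handling the orbit-inclusion $\D_{\phi,\Gamma_c}(c(\Gamma^\emptyset))\subseteq c(\Gamma_+)$ rather than equality, and noting explicitly that primitivity of $\1$ rules out both $n>1$ and $n=0$.
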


As for trivial sutures, the proofs of lemmas \ref{lem:optimistic_trivial_sutures_zero} and \ref{lem:sanguine_trivial_sutures_zero} show that suture elements are zero.
\begin{lem}
Suppose $\Gamma$ is a set of sutures on occupied $(\Sigma,V)$ which contains a contractible closed loop. Then $c(\Gamma) = \{0\}$.
\qed
\end{lem}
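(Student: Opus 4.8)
The plan is to mirror the proofs of lemmas~\ref{lem:optimistic_trivial_sutures_zero} and~\ref{lem:sanguine_trivial_sutures_zero}: first establish vanishing for a standard local model on the vacuum using the grading condition, then propagate it to an arbitrary trivial set of sutures via a vacuum inclusion and the fact that suture elements are respected.

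First I would set up the model. On the vacuum $(\Sigma^\emptyset, V^\emptyset)$, let $\Gamma_\circ^\pm$ denote the sutures consisting of a properly embedded arc (the vacuum sutures) together with a small contractible loop enclosing a $\pm$ region; a direct count of $\chi(R_+) - \chi(R_-)$ gives $e(\Gamma_\circ^\pm) = \pm 2$. Since the vacuum carries a null quadrangulation, $\V(\Sigma^\emptyset, V^\emptyset) \cong \Z$, and this is concentrated in grading $0$ (all of the coefficient ring $\Z[H_1(\Sigma^\emptyset)] = \Z$ has grading $0$). By condition (v) of the definition of twisted SQFT, every element of $c(\Gamma_\circ^\pm)$ has grading $\pm 2$; but the grading-$(\pm 2)$ summand of $\V(\Sigma^\emptyset, V^\emptyset)$ is trivial, so $c(\Gamma_\circ^\pm) = \{0\}$.

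Next I would transport this conclusion. Given any $\Gamma$ on $(\Sigma,V)$ containing a contractible closed loop, there is a vacuum inclusion, i.e.\ a decorated morphism $(\phi, \Gamma_c) : (\Sigma^\emptyset, V^\emptyset) \To (\Sigma,V)$, sending $\Gamma_\circ^+$ or $\Gamma_\circ^-$ to $\Gamma$: one embeds a disc containing the contractible loop (enlarged so as to meet $\Gamma$ in the loop together with a single connecting arc) so that $\Gamma$ restricts to the model $\Gamma_\circ^\pm$ on the image disc and to $\Gamma_c$ on the complement. By condition (iii) (suture elements respected), $\D_{\phi, \Gamma_c}(c(\Gamma_\circ^\pm)) \subseteq c(\Gamma_\circ^\pm \cup \Gamma_c) = c(\Gamma)$. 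Every representative of $\D_{\phi,\Gamma_c}$ is an additive map, hence sends $0$ to $0$, so from $c(\Gamma_\circ^\pm) = \{0\}$ we get $0 \in c(\Gamma)$. Finally, $c(\Gamma)$ is a $\Z[H_1(\Sigma)]^\times$-orbit, and the unit orbit of $0$ is $\{0\}$, whence $c(\Gamma) = \{0\}$.

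The only step needing any care is the construction of the vacuum inclusion realizing $\Gamma$ from the model $\Gamma_\circ^\pm$ --- that is, checking that the contractible loop can be localized, together with a suitable connecting arc, inside an embedded disc on which the sutures look exactly like $\Gamma_\circ^\pm$. This is precisely the routine verification already used in lemmas~\ref{lem:optimistic_trivial_sutures_zero} and~\ref{lem:sanguine_trivial_sutures_zero}, so I would simply invoke it; no new difficulty appears in the honest twisted setting, since the argument uses only additivity of the $\D_{\phi,\Gamma_c}$ and that suture elements are unit-orbits respected by $\D$.
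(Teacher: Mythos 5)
Your argument is correct and reproduces exactly what the paper's \texttt{\textbackslash qed} points to: the paper omits the proof, simply asserting that the proofs of lemmas \ref{lem:optimistic_trivial_sutures_zero} and \ref{lem:sanguine_trivial_sutures_zero} (grading forces $c(\Gamma_\circ^\pm)=\{0\}$ on the vacuum, then transport via a vacuum inclusion) carry over to bona fide twisted SQFT. You have fleshed out that two-step argument faithfully, including the small but necessary observations that $\D_{\phi,\Gamma_c}$ is additive so sends $0 \mapsto 0$, and that the $\Z[H_1(\Sigma)]^\times$-orbit of $0$ is $\{0\}$.
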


We can also demonstrate a relation between suture elements of bypass triples. As mentioned earlier, in mod-2 SQFT one can show that if $\Gamma_0, \Gamma_1, \Gamma_2$ is a bypass triple then $c(\Gamma_0) + c(\Gamma_1) + c(\Gamma_2) = 0$. A similar result was shown by Massot \cite[Lemma 18]{Massot09} in the twisted case. We prove it in our context, using methods similar to \cite[sec. 8.4]{Me12_itsy_bitsy}.

\begin{prop}
In a twisted SQFT, if $\Gamma_0, \Gamma_1, \Gamma_2$ form a bypass triple, then there exist $c_i \in c(\Gamma_i)$ such that
\[
c_0 + c_1 + c_2 = 0.
\]
\end{prop}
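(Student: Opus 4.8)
The plan is to reduce the general bypass relation to the single model computation already carried out in the proof of Proposition~\ref{prop:no_sanguine_TSQFT}. Recall from the discussion of bypass surgery that a bypass triple $\Gamma_0,\Gamma_1,\Gamma_2$ on $(\Sigma,V)$ differs from the standard triple only inside a small disc neighbourhood $D$ of the attaching arc $c$; on $D$ the three sets of sutures are exactly the three sets $\Gamma_{-+}$, $\Gamma_{+-}$, $\Gamma_\pm$ appearing (in the guise of sutures on the hexagon) in figure~\ref{fig:optimistic_sutures}, and outside $D$ all three agree. So the first step is to exhibit a decorated morphism realising this: take the standard disc-with-six-points $(\Sigma_6,V_6)$ carrying the sutures $\Gamma_{-+},\Gamma_{+-},\Gamma_\pm$, and build a decorated morphism $(\phi,\Gamma_c): (\Sigma_6,V_6)\To(\Sigma,V)$ whose image is (an isotoped copy of) $D$ and whose complementary sutures $\Gamma_c$ are the common part of $\Gamma_0,\Gamma_1,\Gamma_2$ on $\Sigma\setminus D$. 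By construction $\phi$ sends the three model sutures to $\Gamma_0,\Gamma_1,\Gamma_2$ respectively (up to relabelling).

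Second, I would pin down a single suture element in the model. By the vacuum-inclusion machinery already set up (and used to define the $s(\Gamma)$ in the sanguine discussion, or directly from condition (ii) of Definition~\ref{defn:twisted_sqft} applied to the hexagon's quadrangulation), choose representatives $s(\Gamma_{-+}) = |\0\1\rangle$ and $s(\Gamma_{+-}) = |\1\0\rangle$ of $c(\Gamma_{-+})$ and $c(\Gamma_{+-})$ in the $0$-graded summand $\V(\Sigma_6,V_6;0)$, which is free of rank $2$ over $\Z$ on these. The rotation morphism $R$ of figure~\ref{fig:optimistic_sutures} has degree $0$, cyclically permutes the three sutures, and $R^3$ fixes each; hence in grading $0$ it acts by an integer matrix of order $3$ sending the first basis vector to the second, forcing it (up to units, but the matrix entries are in $\Z$) to be $\begin{bmatrix} 0 & -1 \\ 1 & -1\end{bmatrix}$ or its inverse. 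Either way one reads off a representative $s(\Gamma_\pm) = -|\0\1\rangle - |\1\0\rangle$, so that $s(\Gamma_{-+}) + s(\Gamma_{+-}) + s(\Gamma_\pm) = 0$ in $\V(\Sigma_6,V_6)$.

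Third, push this relation forward. Apply $\D_{\phi,\Gamma_c}$, which is a graded module homomorphism (well defined up to units) over $\phi_*$, to the equation $s(\Gamma_{-+}) + s(\Gamma_{+-}) + s(\Gamma_\pm) = 0$. Since $\D_{\phi,\Gamma_c}$ is additive, the image equation reads
\[
\D_{\phi,\Gamma_c}\bigl(s(\Gamma_{-+})\bigr) + \D_{\phi,\Gamma_c}\bigl(s(\Gamma_{+-})\bigr) + \D_{\phi,\Gamma_c}\bigl(s(\Gamma_\pm)\bigr) = 0,
\]
and by condition (iii) of Definition~\ref{defn:twisted_sqft} each term $\D_{\phi,\Gamma_c}(s(\Gamma_{\bullet}))$ lies in $c(\Gamma_\bullet \cup \Gamma_c) = c(\Gamma_i)$. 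Setting $c_i := \D_{\phi,\Gamma_c}(s(\Gamma_{\bullet}))$ for the appropriate matching of indices gives $c_i \in c(\Gamma_i)$ with $c_0 + c_1 + c_2 = 0$, as required. (One fixed choice of representative of the up-to-units map $\D_{\phi,\Gamma_c}$ must be used for all three terms simultaneously; since the relation is additive and homogeneous, any such choice works.)

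The main obstacle I anticipate is the first step: constructing the decorated morphism $(\phi,\Gamma_c)$ carefully enough that it genuinely takes the three model sutures to $\Gamma_0,\Gamma_1,\Gamma_2$ and has the right complementary sutures, and verifying that the bypass triple on $(\Sigma,V)$ is indeed ``standard in a disc'' in the precise sense needed — i.e.\ that a neighbourhood of the attaching arc together with the surrounding sutures can be identified with the hexagon model. This is essentially the local model for bypass surgery and should follow from the definitions of attaching arc and bypass surgery recalled before figure~\ref{fig:bypass_surgery}, together with the fact (also recalled) that any nontrivial sutures on a quadrangulated surface can be normalised by bypasses; but it requires a little care to state cleanly. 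Everything after that is formal: additivity of graded module homomorphisms plus condition (iii), exactly as in \cite[sec.\ 8.4]{Me12_itsy_bitsy}.
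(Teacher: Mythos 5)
Your overall strategy is exactly the paper's: reduce to the hexagon model via a decorated morphism, determine the rotation matrix $M$ on the $0$-graded summand, read off a representative of $c(\Gamma_\pm)$, and push the resulting relation forward by additivity and condition (iii). The first and third steps are fine and match the paper.

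The gap is in the second step. You assert that $M$ is an integer matrix \emph{of order $3$} sending $|\0\1\rangle$ \emph{exactly} to $|\1\0\rangle$, and conclude $M = \begin{bmatrix} 0 & -1 \\ 1 & -1\end{bmatrix}$ (``or its inverse''). Neither of these two assumptions is justified by the twisted SQFT axioms alone. Condition (iii) only gives $M\,|\0\1\rangle = \pm|\1\0\rangle$, and since $R^3$ is the identity morphism and $\D$ is a functor only \emph{up to units}, you may only conclude $M^3 = \pm I$ (one verifies that requiring $M^3$ merely diagonal with $\pm 1$ entries already forces the two diagonal entries equal). The possibility $M^3 = -I$ is genuinely on the table, and in that case the representative of $c(\Gamma_\pm)$ is a unit multiple of $-|\0\1\rangle + |\1\0\rangle$, not of $-|\0\1\rangle - |\1\0\rangle$; your claimed identity $s(\Gamma_{-+}) + s(\Gamma_{+-}) + s(\Gamma_\pm) = 0$ then fails for those fixed representatives. (Also, the ``or its inverse'' option is vacuous: the inverse of your matrix does not have first column of the form $(0,\pm 1)^T$, so it cannot arise.) The paper sidesteps all this by not trying to pin $M$ down: it only shows that all solutions to the constraints have $\alpha, \beta \in \{\pm 1\}$ in $M = \begin{bmatrix} 0 & \alpha \\ \pm 1 & \beta \end{bmatrix}$, and then chooses representatives $-\alpha\,|\0\1\rangle \in c(\Gamma_{-+})$, $-\beta\,|\1\0\rangle \in c(\Gamma_{+-})$, $\alpha\,|\0\1\rangle + \beta\,|\1\0\rangle \in c(\Gamma_\pm)$, which sum to zero in every case. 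Replacing your fixed choice of representatives by this sign-adapted one closes the gap, and the rest of your argument then goes through unchanged.
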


\begin{proof}
Consider, as in the proof of proposition \ref{prop:no_optimistic_TSQFT} and figure \ref{fig:optimistic_sutures}, the disc with 6 vertices $(\Sigma,V)$, the quadrangulation $(\Sigma^\square_1, V^\square_1)$, $(\Sigma^\square_2, V^\square_2)$, the sets of sutures $\Gamma_{-+}, \Gamma_\pm, \Gamma_{+-}$, and the decorated morphism $R$ which rotates sutures $120^\circ$ anticlockwise. The corresponding $\D_R$ has degree $0$ and we consider its $0$-graded component. Now $V(\Sigma,V;0)$ has basis $(| \0 \1 \rangle, | \1 \0 \rangle )$, with respect to which $\D_R$ is given by a $2 \times 2$ integer matrix up to sign. Since $R$ takes $\Gamma_{-+} \mapsto \Gamma_{+-}$, and $R^3$ sends $\Gamma_{-+} \mapsto \Gamma_{-+}$, $\Gamma_{+-} \mapsto \Gamma_{+-}$, the $0$-graded summand of $\D_R$ has a $2 \times 2$ matrix representative $M$ such that
\[
M = \begin{bmatrix} 0 & \alpha \\ \pm 1 & \beta \end{bmatrix},
\quad
M^3 = \begin{bmatrix} \pm 1 & 0 \\ 0 & \pm 1 \end{bmatrix},
\]
for some $\alpha,\beta \in \Z$. There are four such matrices, and all of them have $\alpha = \pm 1$, $\beta = \pm 1$.

Now $\Gamma_{-+}, \Gamma_\pm, \Gamma_{+-}$ form a bypass triple and have suture elements $c(\Gamma_{-+}) = \pm |\0 \1 \rangle$, $c(\Gamma_{+-}) = \pm | \1 \0 \rangle $, and $c(\Gamma_\pm) = \pm (\alpha |\0 \1 \rangle + \beta | \1 \0 \rangle ) $. As each of $\alpha, \beta \in \{ \pm 1\}$, we have representatives
\[
-\alpha |\0 \1 \rangle \in c(\Gamma_{-+}),
\quad
-\beta | \1 \0 \rangle \in c(\Gamma_{+-}),
\quad
\alpha | \0 \1 \rangle + \beta | \1 \0 \rangle \in c(\Gamma_\pm)
\]
which sum to $0$.

Now any bypass triple of sutures $\Gamma_0,\Gamma_1,\Gamma_2$ on an occupied surface $(\Sigma',V')$ can be obtained via a decorated morphism $(\phi, \Gamma_c) : (\Sigma,V) \To (\Sigma',V')$, which takes the triple $\Gamma_{-+}, \Gamma_\pm, \Gamma_{+-}$ to $\Gamma_0, \Gamma_1, \Gamma_2$ respectively. Taking a representative graded module homomorphism $\D_{\phi, \Gamma_c}$, the three suture elements $-\alpha |\0 \1 \rangle$, $- \beta |\1 \0 \rangle$ and $\alpha |\0\1\rangle + \beta |\1\0 \rangle$ are taken by $\D_{\phi, \Gamma_c}$ to suture elements of $\Gamma_0, \Gamma_1, \Gamma_2$ which sum to zero. 
\end{proof}

The bypass relation can be used to simplify basic sutures, and express suture elements of general sutures in terms of basis elements. 

Mod 2, when there is no ambiguity, we can take a quadrangulation $Q$ and, given any sutures $\Gamma$, we can successively simplify $\Gamma$ along the internal edges of the $Q$ by bypass surgeries. In this way, $c(\Gamma)$ can be written as a sum of basis suture elements.

In the twisted case, we have the difficulty that signs and units become uncertain each time we apply the bypass relation. However, in \cite{Me10_Sutured_TQFT, Me11_torsion_tori} we developed methods for keeping track of signs of certain representatives of certain suture elements. In a subsequent paper, we will apply those methods to the present context; in the next section, we show what can be achieved by reducing twisted SQFT mod 2.

\subsection{Reduction to mod 2 SQFT}
\label{sec:reduction_mod_2}

It turns out that we may simply drop all the group ring paraphernalia from twisted SQFT and reduce mod 2, and the result is untwisted SQFT. In this section, we prove this result.

Recall that the coefficient ring $\Z[H_1(\Sigma)]$ is a Laurent polynomial ring: specifically, if $H_1(\Sigma)$ has generators $A_1, A_2, \ldots, A_m$, then $\Z[H_1(\Sigma)]$ is a Laurent polynomial ring in the variables $q_1 = e^{A_1}, \ldots, q_m = e^{A_m}$, using exponential notation.

There is thus a canonical ring homomorphism $\Z[H_1(\Sigma)] \To \Z$ obtained by setting all $q_i \mapsto 1$ or $A_i \mapsto 0$; equivalently, by taking sums of coefficients of Laurent polynomials. Composing with reduction mod 2, we obtain a canonical ring homomorphism
\[
r_\Sigma: \Z[H_1(\Sigma)] \To \Z_2.
\]
Note that $r_\Sigma$ takes all units of $\Z[H_1(\Sigma)]$ to units of $\Z_2$, i.e. to $1$.

Now we show how to reduce graded modules, graded module homomorphisms, and suture elements.

Each $\V(\Sigma,V)$ is a free $\Z[H_1(\Sigma)]$-module; a quadrangulation gives a basis $|\0 \cdots \0 \rangle$, $\ldots$, $| \1 \cdots \1 \rangle$. We define the reduced graded $\Z_2$-module (vector space) $\V_r(\Sigma,V)$ to be the free $\Z_2$-module on the same basis, with the same gradings. There is then a canonical ``reduction mod 2'' graded module homomorphism
\[
R_{\Sigma,V}: \V(\Sigma,V) \To \V_r(\Sigma,V)
\quad
\text{over the ring homomorphism}
\quad
r_\Sigma: \Z[H_1(\Sigma)] \To \Z_2,
\]
which has degree $0$ and is the identity on basis elements.

From a graded module homomorphism $f: \V(\Sigma,V) \To \V(\Sigma',V')$ of degree $n$, over a ring homomorphism $\bar{f}: \Z[H_1(\Sigma)] \To \Z[H_1(\Sigma')]$, with summands $f = \bigoplus_e f_e$, where $f_e: \V(\Sigma,V;e) \To \V(\Sigma',V';e+n)$, we define a graded linear $\Z_2$-vector space map $f^r: \V_r(\Sigma,V) \To \V_r (\Sigma',V')$ (or equivalently, a graded $\Z_2$-module homomorphism over the identity ring homomorphism $\Z_2 \To \Z_2$), with summands $f^r = \bigoplus_e f^r_e$, where $f^r_e: \V_r (\Sigma,V;e) \To \V_r(\Sigma',V';e+n)$, such that the following diagram of graded module homomorphisms commutes.
\[
\begin{array}{ccc}
\V(\Sigma,V;e) & \stackrel{f_e}{\To} & \V(\Sigma',V';e+n) \\
\downarrow R_{\Sigma,V} && \downarrow R_{\Sigma',V'} \\
\V_r(\Sigma,V;e) & \stackrel{f^r_e}{\To} & \V_r(\Sigma',V';e+n)
\end{array}
\]
As each $\V(\Sigma,V;e)$ is free and $R_{\Sigma,V}$ is the identity on basis elements, we can define each $f^r_e$ on each basis element to be the mod-2 reduced image of $f_e$ applied to the same basis element. This gives a graded linear $\Z_2$-vector space map.

Given a graded module homomorphism $\D_{\phi, \Gamma_c}$, which is well-defined up to units in each summand, we thus obtain a reduced graded linear map $\D_{\phi,\Gamma_c}^r$ between graded $\Z_2$-vector spaces, which is well-defined up to units in $\Z_2^\times$ in each summand. As $\Z_2^\times = \{1\}$, the reduced graded linear map $\D_{\phi, \Gamma_c}^r$ is in fact well-defined.

Finally, for any $c(\Gamma) \subset \V(\Sigma,V)$, the set $c(\Gamma)$ is a $\Z[H_1(\Sigma)]^\times$-orbit. Its reduced image $R_{\Sigma,V} c(\Gamma)$ is a $\Z_2^\times$-orbit in $\V_r(\Sigma,V)$, i.e. a single element, which we can call $c_r(\Gamma)$.

We now show that this ``reduced'' structure gives an (untwisted) SQFT, in the sense of \cite[defn. 8.1]{Me12_itsy_bitsy}, recapitulated in definition \ref{def:SQFT}.

(Technically, we have defined twisted SQFT to associate $\D_{\phi, \Gamma_c}$, and hence $\D^r_{\phi,\Gamma_c}$, to a decorated isotopy class of decorated morphism, while in (untwisted) SQFT they are associated to a specific decorated morphism. As mentioned earlier, the difference is redundant, but strictly speaking, given a decorated morphism $(\phi, \Gamma_c)$, we associate to it the graded module homomorphism of its decorated isotopy class.)

\begin{prop}
\label{prop:mod-2_reduction}
The assignments $(\Sigma,V) \rightsquigarrow \V_r(\Sigma,V)$, $(\phi,\Gamma_c) \rightsquigarrow \D^r_{\phi,\Gamma_c}$ and $\Gamma \rightsquigarrow c_r(\Gamma)$ form a sutured quadrangulated field theory.
\end{prop}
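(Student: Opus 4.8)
The plan is to verify, one by one, each clause of Definition \ref{def:SQFT} for the reduced data $(\V_r, \D^r, c_r)$, in each case reading off the statement from the corresponding twisted clause in Definition \ref{defn:twisted_sqft} and observing that ``modulo units in each graded summand'' becomes vacuous once we reduce to $\Z_2$, since $\Z_2^\times = \{1\}$. First I would check that $\D^r$ is a functor $\DOS \To \Z_2\mathcal{VS}$: functoriality of $\D$ up to units, combined with the fact that $r_\Sigma$ sends every unit of $\Z[H_1(\Sigma)]$ to $1$, shows that the reduced maps compose strictly and send identities to identities. The key mechanical point, to be recorded once and reused, is that for composable decorated morphisms the square defining $f^r$ can be stacked: since $R_{\Sigma,V}$ is the identity on basis elements and the defining diagram commutes, $(g\circ f)^r = g^r \circ f^r$ on basis elements, hence everywhere. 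Well-definedness of $\D^r_{\phi,\Gamma_c}$ independent of the unit ambiguity is exactly the observation already made in the text: a per-summand unit $z_e \in \Z[H_1(\Sigma')]^\times$ reduces to $1 \in \Z_2$, so $(z_e f_e)^r = f^r_e$.

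Next I would check conditions (i)--(iv) of Definition \ref{def:SQFT}. For (i), the tensor decomposition: twisted SQFT gives $\V(\Sigma,V) \cong \Z[H_1(\Sigma)] \otimes \bigotimes_i \V(\Sigma_i^\square, V_i^\square)$ with a preferred basis of the form $|\e_1 \cdots \e_n\rangle$; by construction $\V_r(\Sigma,V)$ is the free $\Z_2$-module on the same basis, which is precisely $\bigotimes_i \V_r(\Sigma_i^\square, V_i^\square)$ over $\Z_2$ (the extra $\Z[H_1(\Sigma)]$ factor collapses, reducing to $\Z_2$). The compatibility of a basic suture element with this decomposition is inherited: if $\Gamma$ is basic restricting to $\Gamma_i$, then $c(\Gamma) = \Z[H_1(\Sigma)]^\times \otimes \bigotimes_i c(\Gamma_i)$ reduces to the single element $\bigotimes_i c_r(\Gamma_i)$. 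For (ii), that suture elements are respected: from $\D_{\phi,\Gamma_c}(c(\Gamma)) \subseteq c(\Gamma \cup \Gamma_c)$ and the commuting square defining $\D^r$ we get $\D^r_{\phi,\Gamma_c}(c_r(\Gamma)) = R_{\Sigma',V'}(\D_{\phi,\Gamma_c}(\text{rep}))$, and since any representative in $c(\Gamma \cup \Gamma_c)$ reduces to the same $\Z_2$-element $c_r(\Gamma \cup \Gamma_c)$, we obtain the equality $\D^r_{\phi,\Gamma_c}(c_r(\Gamma)) = c_r(\Gamma \cup \Gamma_c)$. For (iii), basic sutures give a basis: $c(\Gamma_+) = \{\pm\1\}$, $c(\Gamma_-) = \{\pm\0\}$ reduce to $c_r(\Gamma_+) = \1$, $c_r(\Gamma_-) = \0$, and by definition these are the two basis elements of $\V_r(\Sigma^\square, V^\square)$, which is therefore $2$-dimensional. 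For (iv), the grading: $R_{\Sigma,V}$ is degree-$0$ and $\V_r$ is defined with the same gradings as $\V$, $e(\0) = -1$, $e(\1) = 1$, and ``every element of $c(\Gamma)$ has grading $e(\Gamma)$'' passes to ``$c_r(\Gamma)$ has grading $e(\Gamma)$''.

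I do not expect any serious obstacle here; the proposition is essentially bookkeeping, and the one lemma worth isolating is the naturality square: for any decorated morphism the diagram relating $f_e$, $f^r_e$, $R_{\Sigma,V}$, $R_{\Sigma',V'}$ commutes by construction, and this is what powers both functoriality and the ``suture elements respected'' clause. The only mild subtlety, which I would flag but not belabour, is the decorated-isotopy-class issue already noted parenthetically in the excerpt: twisted SQFT assigns $\D_{\phi,\Gamma_c}$ to a decorated-isotopy class whereas untwisted SQFT of Definition \ref{def:SQFT} assigns it to a decorated morphism; the resolution is simply to assign to $(\phi,\Gamma_c)$ the reduction of the homomorphism of its decorated-isotopy class, and the axioms are insensitive to this. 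I would close by remarking that, having exhibited the reduced structure as an honest SQFT, all properties of untwisted SQFT from \cite{Me12_itsy_bitsy} apply to it, which is the point exploited in the next section.
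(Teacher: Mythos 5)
Your proof is correct and follows essentially the same approach as the paper's: verify the axioms of untwisted SQFT one by one, observing that unit ambiguity vanishes because $\Z_2^\times = \{1\}$. The paper's own proof is terser but structurally identical; your explicit isolation of the naturality square relating $f_e$, $f_e^r$, $R_{\Sigma,V}$, $R_{\Sigma',V'}$, and your flagging of the decorated-isotopy-class subtlety, make the bookkeeping more transparent without changing the argument.
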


\begin{proof}
The assignments are all of the correct type. We first show that $\D^r$ is a functor, using the fact that $\D$ is a ``functor up to units''. For an identity morphism $\phi: (\Sigma,V) \To (\Sigma,V)$ we have $\D_{\phi,\emptyset}$ is the identity up to units; so $\D^r_{\phi,\emptyset}$ is the identity. For a composition $(\Sigma,V) \stackrel{(\phi,\Gamma_c)}{\To} (\Sigma',V') \stackrel{(\phi',\Gamma'_c)}{\To} (\Sigma',V'')$, the graded module homomorphism of the composition $\D_{(\phi', \Gamma'_c) \circ (\phi,\Gamma_c)}$ is equal to the composition of the graded module homomorphisms $\D_{\phi',\Gamma'_c} \circ \D_{\phi,\Gamma_c}$, up to units; reducing mod 2, the reduced map $\D^r_{(\phi', \Gamma'_c) \circ (\phi,\Gamma_c)}$ is equal to the composition $\D^r_{\phi',\Gamma'_c} \circ \D^r_{\phi,\Gamma_c}$. So we have a functor $\D^r$.

The property of twisted SQFT that quadrangulations give tensor decompositions of modules, once reduced mod 2, immediately gives the required tensor decomposition property of mod-2 SQFT. The fact that $\D_{\phi,\Gamma_c} c(\Gamma) \subseteq c(\Gamma \cup \Gamma_c)$, reducing mod 2 (where suture elements become single elements) gives immediately $\D^r_{\phi,\Gamma_c} c_r(\Gamma) = c_r (\Gamma \cup \Gamma_c)$. The property that basic sutures give bases in twisted SQFT immediately says the same in mod-2 SQFT. And the Euler grading property also carries over immediately.

Thus the definition of a twisted SQFT is satisfied.
\end{proof}

\subsection{Properties inherited from SQFT}
\label{sec:inheritance}

Proposition \ref{prop:mod-2_reduction} tells us that a twisted SQFT, reduced mod 2, has the properties of untwisted SQFT. We immediately obtain the following results; their SQFT analogues were all mentioned in section \ref{sec:background}.

\begin{lem}
If $\Gamma_1$, $\Gamma_2$ are sets of sutures on $(\Sigma_1,V_1)$, $(\Sigma_2,V_2)$ then, reduced mod 2, $c_r(\Gamma_1 \sqcup \Gamma_2) = c_r(\Gamma_1) \otimes c_r(\Gamma_2) \in \V_r((\Sigma_1, V_1) \sqcup (\Sigma_2,V_2)) = \V_r (\Sigma_1, V_1) \otimes \V_r (\Sigma_2, V_2)$.
\end{lem}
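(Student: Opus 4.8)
The plan is to deduce this directly from Proposition \ref{prop:mod-2_reduction}, which identifies the reduced triple $(\V_r, \D^r, c_r)$ as a bona fide (untwisted) SQFT in the sense of Definition \ref{def:SQFT}. The disjoint-union formula $c(\Gamma_1 \sqcup \Gamma_2) = c(\Gamma_1) \otimes c(\Gamma_2)$ is one of the properties of untwisted SQFT established in \cite{Me12_itsy_bitsy} (and recalled in section \ref{sec:background}), so it transfers verbatim to $c_r$, giving the result in one line. The remaining paragraphs sketch how one would reprove it from the axioms, which is the content hidden behind that citation.

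First I would choose quadrangulations $Q_1$ of $(\Sigma_1,V_1)$ and $Q_2$ of $(\Sigma_2,V_2)$; their disjoint union $Q_1 \sqcup Q_2$ is a quadrangulation of $(\Sigma_1,V_1) \sqcup (\Sigma_2,V_2)$ whose set of squares is the union of those of $Q_1$ and $Q_2$. Condition (i) of Definition \ref{def:SQFT}, applied to $Q_1 \sqcup Q_2$, $Q_1$ and $Q_2$, then yields the tensor identification $\V_r((\Sigma_1,V_1)\sqcup(\Sigma_2,V_2)) \cong \bigotimes_i \V_r(\Sigma^\square_i) \cong \V_r(\Sigma_1,V_1) \otimes \V_r(\Sigma_2,V_2)$, compatible with the decompositions of the two factors. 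If moreover $\Gamma_1$ and $\Gamma_2$ happen to be basic for $Q_1$ and $Q_2$, then $\Gamma_1 \sqcup \Gamma_2$ is basic for $Q_1 \sqcup Q_2$, and condition (i) applied to $Q_1 \sqcup Q_2$ says that $c_r(\Gamma_1 \sqcup \Gamma_2)$ corresponds to $\bigotimes_i c_r(\Gamma^\square_i)$, which regroups as $\left( \bigotimes_{i \in Q_1} c_r(\Gamma^\square_i) \right) \otimes \left( \bigotimes_{i \in Q_2} c_r(\Gamma^\square_i) \right) = c_r(\Gamma_1) \otimes c_r(\Gamma_2)$.

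To pass to general sutures I would invoke the reduction to basic sutures recalled in section \ref{sec:background}: if $\Gamma_j$ is nontrivial it can be converted to basic sutures by a sequence of bypass surgeries keeping the sutures nontrivial at every stage, so by the bypass relation for $c_r$ its suture element is a $\Z_2$-linear combination of basic suture elements; a bypass surgery on $\Gamma_1$ (resp.\ $\Gamma_2$) is supported in $\Sigma_1$ (resp.\ $\Sigma_2$) and hence is a bypass surgery on $\Gamma_1 \sqcup \Gamma_2$, so expanding both sides and using bilinearity of $\otimes$ over $\Z_2$ closes the argument. If instead some $\Gamma_j$ contains a contractible loop, then $c_r(\Gamma_j) = 0$, while $\Gamma_1 \sqcup \Gamma_2$ is then also trivial, so both sides vanish.

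The only genuine obstacle is the bookkeeping in the last paragraph: one must check that the splitting of the tensor factor into the $\Sigma_1$- and $\Sigma_2$-pieces is natural enough to be unaffected both by changing quadrangulations and by localized bypass surgeries. Since exactly this was carried out for untwisted SQFT in \cite{Me12_itsy_bitsy}, the honest proof is simply the one-line appeal to Proposition \ref{prop:mod-2_reduction} together with that paper.
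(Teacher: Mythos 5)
Your proposal matches the paper's proof exactly: the paper just cites Proposition~\ref{prop:mod-2_reduction} together with the disjoint-union formula for untwisted SQFT (\cite[prop.~8.7]{Me12_itsy_bitsy}), which is precisely your ``one-line appeal.'' The extra paragraphs sketching how one would reprove the cited result are a correct unpacking of what sits behind that reference, but not needed for the proof as written.
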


\begin{proof}
Proposition \ref{prop:mod-2_reduction} and \cite[prop. 8.7]{Me12_itsy_bitsy}.
\end{proof}

\begin{lem}
If $\phi: (\Sigma,V) \To (\Sigma',V')$ is a standard gluing then reduced map, $\D^r_{\phi,\emptyset} : \V_r(\Sigma,V) \To \V_r(\Sigma',V')$ is the identity $\bigotimes_i \V_r(\Sigma_i^\square, V_i^\square) \To \bigotimes_i \V_r (\Sigma_i^\square, V_i^\square)$, where $(\Sigma_i^\square, V_i^\square)$ are the squares of a quadrangulation.
\end{lem}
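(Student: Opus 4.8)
The approach is to reduce the statement to the corresponding fact in untwisted SQFT. By Proposition \ref{prop:mod-2_reduction}, the reduced assignments $(\Sigma,V)\rightsquigarrow\V_r(\Sigma,V)$, $(\phi,\Gamma_c)\rightsquigarrow\D^r_{\phi,\Gamma_c}$, $\Gamma\rightsquigarrow c_r(\Gamma)$ form a bona fide (untwisted) SQFT. One could therefore simply quote the property of SQFT recalled in section \ref{sec:background} (``a standard gluing preserves a quadrangulation, and the isomorphism is the identity on each corresponding tensor factor'', proved in \cite{Me12_itsy_bitsy}), exactly as the preceding lemma was deduced from \cite[prop.\ 8.7]{Me12_itsy_bitsy}. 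For completeness I would also give the short self-contained argument directly from the reduced SQFT axioms, as follows.

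First I would fix a quadrangulation $Q$ of $(\Sigma,V)$ with squares $(\Sigma_i^\square,V_i^\square)$, $i=1,\dots,n$ where $n=I(\Sigma,V)$. A standard gluing $\phi$ glues two non-consecutive boundary edges, so $I(\Sigma',V')=N'-\chi(\Sigma')=(N-1)-(\chi-1)=n$; and, as recalled in section \ref{sec:background}, $\phi$ carries $Q$ to a quadrangulation $Q'$ of $(\Sigma',V')$. Since $\phi$ restricts to an embedding of $\Int\Sigma$ and the two glued edges are boundary edges of $Q$ (the glued arc becoming a new internal edge of $Q'$), the squares of $Q'$ are canonically the images of the $(\Sigma_i^\square,V_i^\square)$; this is the identification of squares under which the statement is phrased, and it gives the compatible isomorphisms $\V_r(\Sigma,V)\cong\bigotimes_i\V_r(\Sigma_i^\square,V_i^\square)\cong\V_r(\Sigma',V')$.

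Next I would run over basic sets of sutures for $Q$. Let $\Gamma$ be basic for $Q$, restricting to $\Gamma_i\in\{\Gamma_+,\Gamma_-\}$ on each square, so that by the tensor-decomposition axiom of (reduced) SQFT $c_r(\Gamma)=|\e_1\cdots\e_n\rangle$ with $\e_i\in\{\0,\1\}$. Its image $\phi(\Gamma)$ is a basic set of sutures for $Q'$ restricting to the \emph{same} $\Gamma_i$ on the corresponding square (the suture arcs crossing the two glued edges at their midpoints are joined up by the orientation-respecting gluing), so $c_r(\phi(\Gamma))=|\e_1\cdots\e_n\rangle$ under the above identification. As $\phi$ is surjective its complementary sutures are empty, so the ``suture elements respected'' axiom gives $\D^r_{\phi,\emptyset}\,c_r(\Gamma)=c_r(\phi(\Gamma))$, i.e.\ $\D^r_{\phi,\emptyset}$ fixes $|\e_1\cdots\e_n\rangle$. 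Letting $\Gamma$ range over all $2^n$ basic sutures for $Q$, these basis elements span $\V_r(\Sigma,V)$, and since $\D^r_{\phi,\emptyset}$ is linear it is the identity $\bigotimes_i\V_r(\Sigma_i^\square,V_i^\square)\To\bigotimes_i\V_r(\Sigma_i^\square,V_i^\square)$.

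The only mildly delicate point — the main obstacle — is making precise that a standard gluing carries $Q$ to a quadrangulation whose squares are \emph{canonically} those of $Q$, so that ``the identity on corresponding tensor factors'' is unambiguous, and that basic sutures go to basic sutures with matching local types. This is exactly the content recalled in section \ref{sec:background} and established in \cite{Me12_itsy_bitsy}, so it requires no new work; everything else is a formal consequence of the reduced SQFT axioms (equivalently, of Proposition \ref{prop:mod-2_reduction}).
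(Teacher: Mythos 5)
Your proof is correct and takes essentially the same route as the paper: reduce mod $2$ via Proposition \ref{prop:mod-2_reduction} to land in a bona fide untwisted SQFT, and then invoke the fact that standard gluings give identity isomorphisms there (which is \cite[lemma 8.2]{Me12_itsy_bitsy}). The additional self-contained argument you give — basic sutures for $Q$ map to basic sutures for the image quadrangulation $Q'$ with the same local types, the glued boundary edges becoming a new internal edge, so $\D^r_{\phi,\emptyset}$ fixes the $2^n$ basis elements $|\e_1\cdots\e_n\rangle$ and is therefore the identity by linearity — is sound, but it is exactly the content the paper disposes of by citation; the paper's own proof is simply those two references.
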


\begin{proof}
Proposition \ref{prop:mod-2_reduction} and \cite[lemma 8.2]{Me12_itsy_bitsy}.
\end{proof}

\begin{lem}
Any graded module homomorphism in twisted SQFT, reduced mod 2, is a composition of digital creation and general digital annihilation operators.
\end{lem}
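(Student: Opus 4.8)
The plan is to deduce this directly from Proposition \ref{prop:mod-2_reduction} and the main result of \cite{Me12_itsy_bitsy}. A graded module homomorphism ``in twisted SQFT'' is, by definition, one of the form $\D_{\phi,\Gamma_c}$ for a decorated morphism $(\phi,\Gamma_c)$, and I would first observe that its mod-2 reduction $\D^r_{\phi,\Gamma_c}$ is exactly the graded linear map that the reduced theory $(\V_r, \D^r, c_r)$ attaches to $(\phi,\Gamma_c)$ --- this is built into the construction of $\D^r$ in the proof of Proposition \ref{prop:mod-2_reduction}. Since that proposition tells us $(\V_r,\D^r,c_r)$ is an (untwisted) SQFT in the sense of Definition \ref{def:SQFT}, the main theorem of \cite{Me12_itsy_bitsy}, which says that in any SQFT the linear map of a decorated morphism is a composition of digital creation and general digital annihilation operators, applies to it and yields the claim.

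To spell this out, I would factor $(\phi,\Gamma_c)$ as a composition of elementary decorated morphisms --- decorated creations, standard gluings, folds and zips --- as recalled in section \ref{sec:background}. By functoriality up to units of $\D$, the homomorphism $\D_{\phi,\Gamma_c}$ agrees, up to a unit in each graded summand, with the corresponding composition of the elementary maps; and since $\D^r$ is an honest functor (the ambiguity is killed because $\Z_2^\times=\{1\}$), $\D^r_{\phi,\Gamma_c}$ is the genuine composition of the reduced elementary maps. I would then identify each reduced elementary map using \cite{Me12_itsy_bitsy}: a decorated creation reduces to a digital creation operator $x\mapsto x\otimes\1$ or $x\mapsto x\otimes\0$; a standard gluing reduces to the identity on the matching tensor factors; and a fold or zip, after tightening the resulting slack quadrangulation by slack square collapse, reduces to a general digital annihilation operator. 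Composing, $\D^r_{\phi,\Gamma_c}$ is a composition of digital creation and general digital annihilation operators.

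The step requiring the most care --- though it is really bookkeeping rather than a genuine obstacle --- is checking that the quadrangulations used along the way, with respect to which $\V_r$ and its distinguished basis are defined, are compatible with mod-2 reduction: one needs the tensor-decomposition isomorphisms of twisted SQFT to reduce mod 2 to those of the reduced SQFT, and one needs the induced (slack) quadrangulations after each elementary morphism to be the same whether formed before or after reduction. This follows at once from the definitions, since $\V_r(\Sigma,V)$ is by construction the free $\Z_2$-module on the quadrangulation basis and $R_{\Sigma,V}$ is the identity on basis elements. With this compatibility noted, the lemma is a formal consequence of Proposition \ref{prop:mod-2_reduction} together with the main result of \cite{Me12_itsy_bitsy}.
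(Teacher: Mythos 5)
Your proposal is correct and takes exactly the paper's route: the paper's proof is precisely to cite Proposition \ref{prop:mod-2_reduction} together with the main theorem of \cite{Me12_itsy_bitsy}. The extra detail you supply about factoring through elementary decorated morphisms is really a recapitulation of the argument behind that cited theorem rather than a new ingredient, so the approaches coincide.
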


\begin{proof}
Proposition \ref{prop:mod-2_reduction} and \cite[thm. 1.1]{Me12_itsy_bitsy}.
\end{proof}

\begin{lem}
If $\Gamma$ is a confining set of sutures then every element of $c(\Gamma)$, reduced mod 2, is $0$.
\end{lem}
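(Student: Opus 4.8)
The plan is to deduce the statement immediately from the mod-2 reduction, in exactly the same spirit as the other lemmas of this section. First I would invoke proposition \ref{prop:mod-2_reduction}: the reduced data $(\Sigma,V) \rightsquigarrow \V_r(\Sigma,V)$, $(\phi,\Gamma_c) \rightsquigarrow \D^r_{\phi,\Gamma_c}$, $\Gamma \rightsquigarrow c_r(\Gamma)$ form an untwisted SQFT in the sense of definition \ref{def:SQFT}. Then I would apply the corresponding property of untwisted SQFT --- that a confining set of sutures has vanishing suture element, recorded in section \ref{sec:background} and established in \cite{Me12_itsy_bitsy} --- to conclude $c_r(\Gamma) = 0$.

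The remaining step is to translate this back into a statement about $c(\Gamma)$ itself. For this I would use that $c_r(\Gamma)$ was by construction the reduced image $R_{\Sigma,V}(c(\Gamma))$ of the entire $\Z[H_1(\Sigma)]^\times$-orbit $c(\Gamma)$, and that $R_{\Sigma,V}$ is a graded module homomorphism over $r_\Sigma \colon \Z[H_1(\Sigma)] \To \Z_2$, which sends every unit of $\Z[H_1(\Sigma)]$ to $1$. Hence $R_{\Sigma,V}$ is constant on the unit orbit $c(\Gamma)$, so $R_{\Sigma,V}(c) = c_r(\Gamma) = 0$ for every representative $c \in c(\Gamma)$; since the mod-2 reduction of a representative means precisely its image under $R_{\Sigma,V}$, this is the desired conclusion.

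I do not expect a genuine obstacle here. The only mild point of care is the bookkeeping: one must read the statement as ``$R_{\Sigma,V}(c) = 0$ for every $c \in c(\Gamma)$'' rather than the weaker ``$c_r(\Gamma) = 0$'', and one must confirm that the untwisted-SQFT input is genuinely available --- which it is, being exactly the assertion ``confining sutures $\Gamma$ give $c(\Gamma) = 0$'' recalled from \cite{Me12_itsy_bitsy} in section \ref{sec:background}, noting that it subsumes, but is strictly more general than, the trivial-sutures lemma already proved above.
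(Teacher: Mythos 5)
Your argument is correct and is exactly the paper's proof: cite proposition \ref{prop:mod-2_reduction} to reduce to the untwisted SQFT, then invoke the result from \cite{Me12_itsy_bitsy} (following Massot) that confining sutures have vanishing suture element mod 2. Your final bookkeeping paragraph, observing that $R_{\Sigma,V}$ kills all of $c(\Gamma)$ rather than just some representative, is a sensible explicit unpacking of what the paper treats as immediate.
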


\begin{proof}
Mod-2, this is \cite[thm. 8.8]{Me12_itsy_bitsy}, following Massot \cite{Massot09}. In any case it is then immediate from proposition \ref{prop:mod-2_reduction}.
\end{proof}

Some versions of the above properties hold more generally in twisted SQFT, without reducing mod 2, but require further background. For instance, the graded module homomorphism of a standard gluing in twisted SQFT is clearly (plus or minus) the identity on each basis element, but showing it is (plus or minus) the identity on each graded summand requires further work. As mentioned in section \ref{sec:first_properties}, we will apply the methods of \cite{Me10_Sutured_TQFT, Me11_torsion_tori} to resolve these issues in the sequel.

However, we have not yet answered an even more basic question: whether a twisted SQFT exists. The rest of this paper is devoting to answering that question in the affirmative with sutured Floer homology.

\section{Sutured Floer homology with twisted coefficients}
\label{sec:twisted_SFH}

\subsection{Background: the untwisted case}
\label{sec:SFH_background}

We begin with a brief summary of sutured Floer homology, following \cite{Ju06}. Initially we consider the untwisted case; we defer twisted coefficients to section \ref{sec:introducing_twisted_coeffs}.

Sutured Floer homology is an invariant of balanced sutured 3-manifolds, introduced by Juh\'{a}sz in \cite{Ju06}, based on that hat-version $\widehat{HF}$ of Ozsv\'{a}th-Szab\'{o}'s Heegaard Floer homology \cite{OS04Closed}.

A \emph{sutured 3-manifold} $(M, \Gamma)$ is (for our purposes) a 3-manifold with no closed components, and $(\partial M, \Gamma)$ a sutured surface, with each boundary component having nonempty sutures. (Note however that the notion of sutured 3-manifold is more standard than ``sutured surface'' in this paper, and predates it: see e.g. \cite{Gabai83}). Our definition is not identical to Gabai's original definition.) 

Thus, $\Gamma$ is an embedded oriented 1-submanifold of $\partial M$, containing curves on each component of $\partial M$, such that
\begin{enumerate}
\item $\partial M \backslash \Gamma = R_+ \cup R_-$, where $R_\pm$ are oriented as $\pm \partial M$, and
\item $\overline{\partial R_\pm} = \Gamma$ as oriented 1-manifolds.
\end{enumerate}
A sutured 3-manifold is \emph{balanced} if $\chi(R_+) = \chi(R_-)$. 

Sutured Floer homology (SFH) associates to a balanced sutured 3-manifold $(M, \Gamma)$ a module $SFH(M,\Gamma; R)$ over a coefficient ring $R$. The coefficient ring may be $\Z_2$, $\Z$ or the group ring $\Z[H_2(M)]$; we respectively call these three cases \emph{mod-2}, \emph{signed} and \emph{twisted} SFH. When the coefficient ring is clear or unimportant we omit it and simply write $SFH(M,\Gamma)$.

A \emph{sutured Heegaard diagram} is an oriented surface $S$ (possibly disconnected) with two sets of pairwise disjoint simple closed curves $\{\alpha_i\}_{i=1}^k$ and $\{\beta_i\}_{i=1}^l$ in the interior of $S$. From a sutured Heegaard diagram we can construct a 3-manifold $M$ by thickening $S$ into $S \times [0,1]$ and gluing thickened discs to each $\alpha_i \times \{0\}$ and $\beta_i \times \{1\}$. This $M$ naturally has sutures $\Gamma$ given by $\partial S \times \{\frac{1}{2}\}$, $R_+$ given by $S \times \{1\}$ surgered along the $\beta_i$ (together with $\partial S \times (\frac{1}{2},1]$), and $R_-$ given by $S \times \{0\}$ surgered along the $\alpha_i$ (together with $\partial S \times [0, \frac{1}{2})$). We say that $(S, \alpha, \beta)$ is a \emph{Heegaard decomposition} of $(M, \Gamma)$. Juh\'{a}sz shows \cite[proposition 2.9]{Ju06} that $(M, \Gamma)$ is balanced iff $k=l$, $S$ has no closed components, and the elements of $\alpha$ and $\beta$ are both linearly independent in $H_1(S, \Q)$. We call such a diagram \emph{balanced}. All sutured Heegaard diagrams in this paper are balanced, hence of the form $(S, \alpha, \beta)$ where $\alpha = \{\alpha_i\}_{i=1}^k$, $\beta = \{\beta_i\}_{i=1}^k$. 

Every balanced sutured 3-manifold has a Heegaard decomposition (necessarily balanced), and any two Heegaard diagrams for a balanced sutured $(M, \Gamma)$ are diffeomorphic after a finite sequence of Heegaard moves.

From a balanced sutured Heegaard diagram $(S, \alpha, \beta)$, we can consider the components of $S \backslash \left( \bigcup \alpha \cup \bigcup \beta \right)$. Some of these components contain points of $\partial S$ and are \emph{boundary-adjacent}; the others are disjoint from $\partial S$ and we call them \emph{internal}. Let $D_1, \ldots, D_m$ be the internal components, oriented from $S$, and let $D(S,\alpha,\beta)$ be the free abelian group on $\{D_1, \ldots, D_m\}$; so $D(S,\alpha,\beta) \cong \Z^m$. Elements of $D(S,\alpha,\beta)$ are called \emph{domains}. A domain has well-defined coefficients, which are integers; a domain is \emph{positive} if all its coefficients are positive; similarly a domain may be negative, non-positive or non-negative. A domain also has a well-defined boundary, which is an integer combination of arcs of $\alpha$ and $\beta$ curves. 

A domain is \emph{periodic} if its boundary is an integer combination of whole $\alpha$ and $\beta$ curves. The periodic domains form a subgroup of $D(S, \alpha, \beta)$ which we denote by $\P(S,\alpha,\beta)$. A periodic domain can be regarded as a homology between $\alpha$ and $\beta$ curves, so taking boundaries of periodic domains gives a homomorphism $\P(S,\alpha,\beta) \To H_1(\alpha) \cap H_1(\beta)$ (where we regard $H_1(\alpha), H_1(\beta) \subset H_1(S)$) --- which is injective (as $H_2(S) = 0$) and surjective (as any homology between $\alpha$ and $\beta$ curves is given by a periodic domain), hence an isomorphism. 

A balanced sutured Heegaard diagram is \emph{admissible} if every nonzero periodic domain has both positive and negative coefficients. Every balanced Heegaard diagram is isotopic to an admissible one.

Sutured Floer homology (for our purposes) considers holomorphic curves in the \emph{symmetric product} $\Sym^k S$ of a Heegaard surface, with boundary conditions imposed by the $\alpha$ and $\beta$ curves. 

The symmetric product $\Sym^k S = \frac{S^{\times k}}{S_k}$ is a $2k$-manifold given by the quotient of the $k$-fold Cartesian product $S^{\times k}$ by the action of the symmetric group $S_k$ permuting coordinates. We denote the image of the point $(x_1, \ldots, x_k) \in S^{\times k}$ in $\Sym^k S$ as a set $\{x_1, \ldots, x_k\}$. In $\Sym^k S$ there are $k$-dimensional tori $\T_\alpha$ and $\T_\beta$ given by the products of the $\alpha$ and $\beta$ curves; that is,
\[
\T_\alpha = \frac{\alpha_1 \times \cdots \times \alpha_k}{S_k}
\quad \text{and} \quad
\T_\beta = \frac{\beta_1 \times \cdots \times \beta_k}{S_k}.
\]
Note that as the $\alpha_i$ are disjoint, $\T_\alpha$ is just an embedded torus in $\Sym^k S$ homeomorphic to $(S^1)^k$; likewise for $\T_\beta$. For generic $\alpha$ and $\beta$, $\T_\alpha \cap \T_\beta$ will be a finite set of points in $\Sym^k S$.

Let $\Disc = \{z \in \C \; : \; |z| \leq 1 \}$ be the unit disc, and let $\partial \Disc_+ = \{z \in \partial \Disc \; : \; \Re z \geq 0\}$, $\partial \Disc_- = \{z \in \partial \Disc \; : \; \Re z \leq 0 \}$ be its right and left boundaries respectively. Given $x,y \in \T_\alpha \cap \T_\beta$, a \emph{Whitney disc} from $x$ to $y$ is a continuous map $u: \Disc \To \Sym^k S$ satisfying $u(i) = y$, $u(\partial \Disc_+) \subset \T_\alpha$, $u(\partial \Disc_-) \subset \T_\beta$ and $u(-i) = x$. There is a well-defined notion of homotopy of a Whitney disc from $x$ to $y$, and $\pi_2(x,y)$ denotes the set of homotopy classes of such Whitney discs.

A Whitney disc $u$ determines a domain $D(u) \in D(\Sigma,\alpha,\beta)$ as follows. For an internal component $D_i$ of $S \backslash \left( \bigcup \alpha \cup \bigcup \beta \right)$, choose a point $z_i \in D_i$ and let $n_{z_i} (u)$ be the algebraic intersection number of $u$ (or a transverse perturbation thereof) with $\{ z_i \} \times \Sym^{k-1} S$. Then we define $D(u) = \sum_{i=1}^m n_{z_i} (u) \; D_i$. This domain does not change under homotopy of a Whitney disc, and hence a homotopy class $\phi \in \pi_2(x,y)$ determines a domain $D(\phi)$.

Consider $x,y \in \T_\alpha \cap \T_\beta$ and a domain $D \in D(\Sigma,\alpha,\beta)$. The boundary $\partial D$ consists of an integer combination of arcs in $\alpha$ and $\beta$; each $\alpha_i \cap \partial D$ (resp. $\beta_i \cap \partial D$) is an integer combination of arcs in $\alpha_i$ (resp. $\beta_i$). For each $i$, all of the sets $x \cap \alpha_i$, $x \cap \beta_i$, $y \cap \alpha_i$ and $y \cap \beta_i$ consist of single points. We say $D$ \emph{connects $x$ to $y$} if
\[
\partial (\alpha_i \cap \partial D) = (x \cap \alpha_i) - (y \cap \alpha_i)
\quad \text{and} \quad
\partial (\beta_i \cap \partial D) = (x \cap \beta_i) - (y \cap \beta_i).
\]
We denote by $D(x,y)$ the set of domains connecting $x$ to $y$. If $\phi \in \pi_2(x,y)$ then $D(\phi) \in D(x,y)$; thus we have a map $D: \pi_2 (x,y) \To D(x,y)$.

The sutured Floer chain complex $CF(\Sigma,\alpha,\beta)$ is freely generated, as a module over the coefficient ring $R$, by the points of $\T_\alpha \cap \T_\beta$. The differential $\partial: CF(S,\alpha,\beta) \To CF(S,\alpha,\beta)$ counts certain index-1 holomorphic representatives of Whitney discs. 

Specifically, given $\phi \in \pi_2(x,y)$, and fixing an appropriate almost complex structure on $\Sym^k S$, let $\M(\phi)$ denote the moduli space of holomorphic representatives of $\phi$, i.e. holomorphic maps $\Disc \To \Sym^k S$ which are Whitney discs from $x$ to $y$ in the homotopy class of $\phi$. The expected dimension of $\M(\phi)$ (if transversality holds) is given by the Maslov class $\mu(\phi)$, which is determined by $\phi$ alone; alternatively it is determined by the domain $D(\phi)$, and we may write $\mu(D)$. There is an $\R$-action on $\M(\phi)$ by holomorphic automorphisms of $\Disc$ fixing $\pm i$; denote the quotient by $\widehat{\M}(\phi)$. When $\mu(\phi) = 1$ then the reduced moduli space $\widehat{\M}(\phi)$ consists of finitely many signed points.

The differential counts index-1 holomorphic Whitney discs from a given $x \in \T_\alpha \cap \T_\beta$ to various other points $y \in \T_\alpha \cap \T_\beta$. We define
\[
\partial x = \sum_{y \in \T_\alpha \cap \T_\beta} y \sum_{\substack{\phi \in \pi_2 (x,y) \\\ \mu(\phi) = 1}} \# \widehat{\M}(\phi).
\]
Admissibility of $(S,\alpha,\beta)$ guarantees that for every $x,y$, the set of \emph{non-negative} domains in $D(x,y)$ is finite; the domain of any holomorphic Whitney disc is of this type, by positivity of intersection. One can show that $\partial^2 = 0$ and the homology depends only on $(M, \Gamma)$, not on the choice of Heegaard decomposition, almost complex structure or any other choices. This homology $SFH(M,\Gamma;R)$ is the \emph{sutured Floer homology} of $(M, \Gamma)$ with coefficients in $R$. When $R = \Z_2$, we simply count curves modulo $2$. Over $\Z$, we use the fact that points of $\widehat{\M}(\phi)$ are signed and perform a signed count.

Each $x \in \T_\alpha \cap \T_\beta$ has an associated \emph{spin-c} structure $\s(x)$. If there is a Whitney disc connecting $x$ and $y$ then $\s(x) = \s(y)$. In fact, a Whitney disc connecting $x$ to $y$ exists, i.e. $\pi_2(x,y) \neq \emptyset$, iff $\s(x) = \s(y)$. The space $\Spin^c(M,\Gamma)$ is affine over $H^2(M, \partial M) \cong H_1(M)$.
	
The chain complex $(CF(S,\alpha,\beta),\partial)$ thus splits as a direct sum over spin-c structures, $(CF(S, \alpha, \beta), \partial) \cong \bigoplus_\s (CF(S, \alpha, \beta, \s), \partial_\s)$, where the summand $CF(S, \alpha, \beta, \s)$ is the submodule generated by intersection points $x \in \T_\alpha \cap \T_\beta$ with $\s(x) = \s$, and $\partial_\s$ is the restriction of $\partial$ to this subgroup.

A sutured surface $(\Sigma, \Gamma)$ in a 3-manifold naturally describes a \emph{contact structure} in a neighbourhood of the surface, i.e. a non-integrable 2-plane field. The surface is \emph{convex} and $\Gamma$ is its dividing set \cite{Gi91}. Hence we can regard a sutured 3-manifold as a 3-manifold with a germ of a contact structure on its boundary. We may then speak of a contact structure on $(M, \Gamma)$ as an extension of this structure near the boundary.

Sutured Floer homology provides invariants of contact structures. Let $\xi$ denote a contact structure on a balanced sutured 3-manifold $(M, \Gamma)$. With $R = \Z_2$, $\Z$ or $\Z[H_2(M)]$, there is a subset $c(\xi) \subset SFH(-M,-\Gamma;R)$ naturally associated to $\xi$, which is an $R^\times$-orbit of a single element of $SFH(-M,-\Gamma;R)$. (The minus signs refer to reversed orientation but are essentially irrelevant for our purposes.) When $R = \Z_2$, this $c(\xi)$ reduces to a single element of $SFH(-M, -\Gamma; \Z_2)$. See \cite{HKM09, Ghiggini_Honda08}.

\subsection{SQFT from SFH}
\label{sec:SQFT_from_SFH}

As shown in \cite{Me12_itsy_bitsy}, SFH with $\Z_2$ coefficients naturally gives an SQFT, by the following construction which we now recall. From an occupied surface $(\Sigma,V)$, we may define a balanced sutured 3-manifold $(\Sigma \times S^1, V \times S^1)$. The circle $S^1$ is oriented as usual. The sutures are the curves $\{v\} \times S^1$ where $v \in V$. We orient each suture positively or negatively around $S^1$ accordingly as $v \in V_+$ or $V_-$.

We set
\[
\V(\Sigma,V) = SFH(-\Sigma \times S^1, -V \times S^1),
\]
which Honda--Kazez--Mati\'{c} proved is equal to
\[
\bV^{\otimes I(\Sigma,V)} = \left( \Z_2 \0 \oplus \Z_2 \1 \right)^{\otimes I(\Sigma,V)},
\]
where $\bV$ is a standard 2-dimensional $\Z_2$-vector space with basis elements $\0, \1$ having grading $-1$ and $1$ respectively.

Note that we can equivalently use $V \times S^1$ or $F \times S^1$ as sutures. As mentioned in section \ref{sec:background} an occupied surface $(\Sigma,V)$ is equivalent to a sutured background $(\Sigma,F)$; $V$ and $F$ determine each other. We can give points of $F$ signs by declaring that $f \in F$ is positive (resp. negative) if, proceeding along the oriented boundary $\partial \Sigma$ from $f$, the next point of $V$ encountered is negative (resp. positive). We can write $F = F_+ \sqcup F_-$ and form a sutured 3-manifold $(\Sigma \times S^1, F \times S^1)$ homeomorphic to $(\Sigma \times S^1, V \times S^1)$. Thus, at our convenience, we may use $V \times S^1$ or $F \times S^1$ as sutures; we say that $\Sigma \times S^1$ has \emph{$V$-sutures} or \emph{$F$-sutures} accordingly.

A set of sutures $\Gamma$ on $(\Sigma,V)$ gives a contact structure $\xi_\Gamma$ on $\Sigma \times I$, and gluing $\Sigma \times \{0\}$ to $\Sigma \times \{1\}$ gives a contact structure on $\Sigma \times S^1$. The suture element is then defined as $c(\Gamma) = c(\xi_\Gamma)$. On the solid (``square'') torus $(\Sigma^\square \times S^1, V^\square \times S^1)$ there are (up to isotopy) precisely two tight contact structures, corresponding to the two basic sets of sutures on the occupied square.

Work of Honda--Kazez--Mati\'{c} \cite{HKM08} essentially then gives, for any decorated occupied surface morphism, a linear map of sutured Floer homology, as required of the functor $\mathcal{D}$; it respects suture/contact elements. A gluing theorem of Juh\'{a}sz \cite{Ju08} gives tensor decompositions from quadrangulations, and computations in \cite{HKM08} or \cite{Me09Paper, Me10_Sutured_TQFT} show that the square's vector space is 2-dimensional with the desired basis. Sutured Floer homology is naturally graded by spin-c structures, which correspond to Euler classes of contact structures, which determine Euler classes of sutures.

Thus, we obtain SQFT from SFH. We now turn to twisted coefficients, and using similar ideas, try to produce a twisted SQFT from SFH.

\subsection{Introducing twisted coefficients}
\label{sec:introducing_twisted_coeffs}

Twisted coefficients were considered by Ozsv\'{a}th--Szab\'{o} \cite{OS04Prop} for Heegaard Floer homology, and by Massot \cite{Massot09} and Ghiggini--Honda  \cite{Ghiggini_Honda08} for sutured Floer homology. As Ghiggini--Honda note, the extension of sutured Floer homology to the case of twisted coefficients is analogous to the closed case. 

To explain how twisted coefficients apply to $SFH$, and extend some results that we will need, we follow closely the arguments of \cite{OS04Closed} and \cite{OS04Prop}, adapting arguments there to the case of $SFH$.

Roughly, the extra feature in the definition of SFH with twisted coefficients is that the differential $\partial$ contains an extra term $e^{A(\phi)}$, where $A(\phi) \in H_2(M)$ is associated to the homotopy class $\phi \in \pi_2(x,y)$ of a Whitney disc. But to define $A(\phi)$ we first need to discuss the homology and homotopy of various spaces involved.

\subsection{Homology of the symmetric product}
\label{sec:homology_symmetric_product}

We now consider the symmetric product $\Sym^k S$ in some detail, adapting relevant statements of \cite{OS04Closed, OS04Prop} from the closed to the sutured case.

There are two maps
\[
i \; : \; S \To \Sym^k S,
\quad
j \; : \; H_1 ( \Sym^k S ) \To H_1 (S)
\]
defined as follows. Choose a basepoint $x_0 \in S$. Then $i$ is defined by $x \mapsto \{x, x_0, \ldots, x_0\}$. To define $j$ we note that a curve in general position in $\Sym^k S$ gives a map from a $k$-fold cover of $S^1$ to $S$; then $j$ is the corresponding map on homology. As a cobordism between curves in general position in $\Sym^k S$ gives a map from a branched cover of a surface to $S$, $j$ is well-defined.

\begin{lem}[Cf. Lemma 2.6 of \cite{OS04Closed}]
The maps $i$ and $j$ induce inverse isomorphisms on $H_1$. Further, the abelianization map $\pi_1(\Sym^k S) \To H_1(\Sym^k S)$ is an isomorphism. Thus
\[
H_1(S) \cong H_1(\Sym^k S) \cong \pi_1 (\Sym^k S).
\]
\end{lem}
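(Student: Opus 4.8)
The plan is to adapt the proof of Ozsv\'{a}th--Szab\'{o}'s Lemma 2.6 in \cite{OS04Closed} from the closed case to the sutured case, noting that $S$ is now a surface with nonempty boundary (so $H_2(S) = 0$ and $S$ is homotopy equivalent to a wedge of circles). First I would verify that $j \circ i_* = \Id$ on $H_1(S)$: for a loop $\gamma$ in $S$, the composite $i \circ \gamma$ is a loop in $\Sym^k S$ of the form $\{\gamma(t), x_0, \ldots, x_0\}$, which in general position recovers (up to the trivial constant strands) the original $\gamma$, so $j$ applied to it gives back $[\gamma]$. This direction is essentially formal once the maps are correctly set up.

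The substantive direction is that $i_* \circ j = \Id$ on $H_1(\Sym^k S)$, equivalently that $i_*$ is surjective on $H_1$. Here I would use the standard stratification of $\Sym^k S$ by the ``diagonal'' strata, or more concretely the fact that the quotient map $S^{\times k} \to \Sym^k S$ induces a surjection on $H_1$ (since $\pi_1$ of a quotient by a finite group action with connected orbit space is a quotient of $\pi_1$ of the cover, extended appropriately — more precisely, any loop in $\Sym^k S$ can be perturbed to avoid the codimension-$2$ diagonal and then lifted). A loop in $S^{\times k}$ decomposes, on $H_1$, as a sum of loops in the individual factors, and each factor loop maps into $\Sym^k S$ by a map homotopic to $i$ composed with the factor inclusion. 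Thus the image of $i_*$ together with the relations from the symmetric group action generate everything, and one checks the symmetric group merely identifies the images of the $k$ factors, so $i_*$ alone is already surjective. Combined with $j \circ i_* = \Id$, this gives that $i_*$ and $j$ are inverse isomorphisms $H_1(S) \cong H_1(\Sym^k S)$.

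For the final claim, that $\pi_1(\Sym^k S) \to H_1(\Sym^k S)$ is an isomorphism, i.e. $\pi_1(\Sym^k S)$ is abelian, I would argue as in \cite{OS04Closed}: since $S$ deformation retracts onto a wedge of circles (a graph $G$), $\Sym^k S$ is homotopy equivalent to $\Sym^k G$, and one computes $\pi_1$ of a symmetric product of a graph directly, or invokes the general fact that $\pi_1(\Sym^k X)$ is abelian for $k \geq 2$ whenever $X$ is a connected CW complex (the usual ``swapping'' argument: two loops supported in disjoint coordinate slots commute, and for $k \geq 2$ every loop is homotopic to one supported in a single slot which can then be moved into whichever slot is convenient). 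The case $k = 1$ is trivial since then $\Sym^1 S = S$ and the statement is about the surface itself — though here $S$ need not have abelian $\pi_1$, so one should note that in the balanced sutured setting $k \geq 1$ and the abelianization statement is only asserted (and only needed) in a form compatible with this; most likely the intended reading is $k \geq 1$ with the understanding that for the applications $k$ is the genus-type quantity and the relevant statement is the $H_1$ isomorphism. I expect the main obstacle to be handling this $k=1$ edge case cleanly and making precise the lifting/perturbation argument for loops avoiding the diagonal in the bounded (non-closed) surface setting; everything else transfers verbatim from \cite{OS04Closed} since the proofs there only used $H_2(S) = 0$, which holds a fortiori when $\partial S \neq \emptyset$.
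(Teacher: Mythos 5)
Your sketch takes a genuinely different route from the paper for the abelianity of $\pi_1(\Sym^k S)$, and that route as written has a gap. The paper follows Ozsv\'ath--Szab\'o's argument verbatim: starting from a null-homologous loop $\gamma$ in $\Sym^k S$, it extracts via $j$ a null-homologous map $\hat\gamma \colon C \to S$ from a $k$-fold cover $C$ of $S^1$, extends $\hat\gamma$ to a map $\bar\gamma$ on a bounding surface $F$, then (after stabilising the genus of $F$) realises $F$ as a $k$-fold branched cover $\pi$ of the disc and exhibits $z \mapsto i(\bar\gamma(\pi^{-1}(z)))$ as a null-homotopy of $\gamma$. Your ``swapping'' argument instead wants to show $i_*$ is surjective and then separate commuting loops into disjoint slots. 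The gap is in the surjectivity step: a generic loop in $\Sym^k S$ avoiding the big diagonal lifts to $S^{\times k}$ only as a \emph{path}, whose endpoints differ by the monodromy permutation of the $S_k$-cover, not as a loop. So the reduction ``a loop in $S^{\times k}$ decomposes, on $H_1$, as a sum of loops in the individual factors'' does not apply to the lift, and the conclusion ``$i_*$ alone is already surjective'' is not established. The same unjustified surjectivity is what you invoke when you assert that every loop is homotopic to one ``supported in a single slot.'' The standard repair is to observe separately that the meridian (transposition) loops around the diagonal are null-homotopic in $\Sym^k S$, so the monodromy can be absorbed before lifting; that is true but needs to be said. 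The branched-cover argument avoids the issue entirely by working directly with the $k$-fold cover of the circle and the disc, never lifting into $S^{\times k}$.

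Your worry about $k=1$ is well-founded, but it is really an observation about the lemma's scope that the paper shares, not a defect in your route versus theirs: for $k=1$, $\Sym^1 S = S$, and $\pi_1(S)$ is free, hence non-abelian unless $S$ is a disc or annulus. The paper's branched-cover argument also degenerates there (a $1$-fold branched cover of the disc has no branch points and is itself a disc, so $F$ would have to be planar, which is not guaranteed). Since the Heegaard blocks constructed later in the paper produce diagrams with $k=1$ on a $4$-punctured sphere, this is genuinely worth flagging. One last remark: the paper dismisses the ``$i$ and $j$ inverse on $H_1$'' claim in a single sentence as clear. You tried to flesh it out, which is good practice, but the surjectivity half of that elaboration rests on the same loop-lifting claim and inherits the gap above.
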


\begin{proof}
That $i$ and $j$ are inverse on $H_1$ is clear; hence they are isomorphisms. It remains to show $\pi_1(\Sym^k S)$ is abelian; equivalently, any null-homologous $\gamma: S^1 \To \Sym^k S$ is null-homotopic. Using the map $j$, a null-homologous $\gamma$ gives a null-homologous map $\hat{\gamma}: C \To S$, where $C$ is a $k$-fold cover of $S^1$. Hence there is a map $\bar{\gamma}: F \To S$, where $F$ is a 2-manifold with boundary $C$ and $\bar{\gamma}|_C = \hat{\gamma}$. As in the proof in \cite{OS04Closed}, increasing the genus of $F$ if necessary, we can assume $F$ is a $k$-fold branched cover of the disc $\Disc$, with covering map $\pi: F \To \Disc$. Then the map $\Disc \To \Sym^k S$ defined by $z \mapsto i(\bar{\gamma}(\pi^{-1}(z)))$ is a null-homotopy of $\gamma$.
\end{proof}

In $H_1 (S)$ there are $\Z$ subgroups generated by each $[\alpha_i]$ and $[\beta_i]$. In a balanced sutured Heegaard diagram, the curves of $\alpha$ are linearly independent in $H_1 (S)$, and similarly for $\beta$, so we have inclusions of $\Z^k$ subgroups $H_1(\alpha), H_1(\beta) \subset H_1(S)$. 

In $\Sym^k S$ we have the inclusion $\T_\alpha \hookrightarrow \Sym^k S$, and a subgroup of $H_1(\Sym^k S)$ given by its image. But $H_1(\T_\alpha)$ is generated by loops of the form $\alpha_i \times 0 \times \cdots \times 0$, where $0$ denotes a constant loop. The images of these loops in $H_1(\Sym^k S)$ are $i_* [\alpha_i]$, for $i = 1, \ldots, k$, which are linearly independent since $i_*$ is an isomorphism and the $[\alpha_i]$ are linearly independent in $H_1(S)$. Thus the inclusion $\T_\alpha \hookrightarrow \Sym^k S$ induces an injective map on $H_1$. As both the torus $\T_\alpha$ (homeomorphic to $(S^1)^k$) and $\Sym^k S$ have abelian fundamental groups, the inclusion also induces an injective map on $\pi_1$. The same applies for $\T_\beta$. We record these results.

\begin{lem}
\label{lem:subgroup_isomorphisms}
There are abelian group inclusions induced by inclusions of spaces
\[
H_1(\alpha), H_1(\beta) \subset H_1(S), \quad
H_1 (\T_\alpha), H_1(\T_\beta) \subset H_1(\Sym^k S), \quad
\pi_1 (\T_\alpha), \pi_1(\T_\beta) \subset \pi_1 (\Sym^k S).
\]
Under the isomorphisms $H_1(S) \cong H_1(\Sym^k S) \cong \pi_1 (\Sym^k S)$ induced by $i,j$ and abelianization, we have $H_1(\alpha) \cong H_1(\T_\alpha) \cong \pi_1(\T_\alpha) \cong \Z^k$ and $H_1(\beta) \cong H_1(\T_\beta) \cong \pi_1(\T_\beta) \cong \Z^k$.
\qed
\end{lem}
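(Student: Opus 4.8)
The plan is to reduce everything to the previously-established isomorphisms $H_1(S) \cong H_1(\Sym^k S) \cong \pi_1(\Sym^k S)$ and to the two defining features of a balanced sutured Heegaard diagram that are in play here: the $\alpha$-curves (and likewise the $\beta$-curves) are linearly independent in $H_1(S)$, and the $\alpha_i$ are pairwise disjoint, so that $\T_\alpha$ is literally an embedded $k$-torus $\cong (S^1)^k$. I will do the $\alpha$ case; the $\beta$ case is verbatim the same.

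First I would set up the homology of $\T_\alpha$. Disjointness of the $\alpha_i$ gives $H_1(\T_\alpha) \cong \pi_1(\T_\alpha) \cong \Z^k$, with abelianization $\pi_1(\T_\alpha) \to H_1(\T_\alpha)$ an isomorphism and a $\Z^k$-basis of $H_1(\T_\alpha)$ furnished by the $k$ coordinate circles, the $i$-th of which traverses $\alpha_i$ while the remaining coordinates stay at fixed basepoints. I would then compute the image of this basis under $j : H_1(\Sym^k S) \to H_1(S)$ (composed with the torus inclusion): such a coordinate loop, put in general position, presents a map from a $k$-fold (disconnected) cover of $S^1$ to $S$ which is a parametrization of $\alpha_i$ on one component and constant on the others, so $j$ sends it to $[\alpha_i] \in H_1(S)$; equivalently $i_*[\alpha_i] \in H_1(\Sym^k S)$ is exactly this coordinate-circle class and $j \circ i_* = \mathrm{id}$. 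This is the only place any real checking happens, and it amounts to unwinding the definitions of $i$ and $j$ from the preceding lemma together with a little basepoint bookkeeping (each $\alpha_j$ lies on the same component of $S$ as its basepoint, so the constant coordinates can be slid to $x_0$ without changing homology classes).

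Next I would deduce injectivity and identify the subgroups. The composite $H_1(\T_\alpha) \hookrightarrow H_1(\Sym^k S) \xrightarrow{j} H_1(S)$ carries the standard $\Z^k$-basis of $H_1(\T_\alpha)$ to $[\alpha_1], \ldots, [\alpha_k]$, which are linearly independent by the balanced condition; hence the composite is injective with image precisely the subgroup $H_1(\alpha)$, and in particular the inclusion-induced map $H_1(\T_\alpha) \to H_1(\Sym^k S)$ is injective. Since $H_1(S)$ is torsion-free (each component of $S$ has nonempty boundary) and $[\alpha_1], \ldots, [\alpha_k]$ are $\Q$-linearly independent, the subgroup $H_1(\alpha)$ they generate is free of rank $k$, so $H_1(\alpha) \cong H_1(\T_\alpha) \cong \Z^k$, the identification being realized by $j$ (and by $i$ in the other direction). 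For fundamental groups, both $\T_\alpha$ and $\Sym^k S$ have abelian $\pi_1$ — the latter by the preceding lemma — so the map $\pi_1(\T_\alpha) \to \pi_1(\Sym^k S)$ is identified, via the abelianization isomorphisms, with $H_1(\T_\alpha) \to H_1(\Sym^k S)$ and is therefore also injective, with $\pi_1(\T_\alpha) \cong \Z^k$ sitting inside $\pi_1(\Sym^k S)$ compatibly. Assembling the commuting square of isomorphisms induced by $i$, $j$ and abelianization then yields all the claimed identifications $H_1(\alpha) \cong H_1(\T_\alpha) \cong \pi_1(\T_\alpha) \cong \Z^k$, and the same argument with $\beta$ in place of $\alpha$ completes the proof. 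The ``hard part'' is not conceptual but a matter of care: correctly describing what $j$ does to a coordinate circle of $\T_\alpha$ and making sure the basepoint and disconnectedness subtleties in the definitions of $i$ and $j$ cause no trouble; everything else is formal given the previous lemma and the definition of \emph{balanced}.
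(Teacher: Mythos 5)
Your proof is correct and follows essentially the same line as the paper's: identify the coordinate circles as generators of $H_1(\T_\alpha)\cong\Z^k$, check that under $i$ and $j$ they correspond to $[\alpha_i]\in H_1(S)$, invoke linear independence of the $[\alpha_i]$ from the balanced condition to get injectivity, and transfer to $\pi_1$ via the abelianization isomorphisms. The only quibble is the parenthetical about sliding constant coordinates to $x_0$ — this fails verbatim if $S$ is disconnected and $\alpha_j$ lives on a different component from $x_0$ — but it is superfluous, since your computation of $j$ on a coordinate loop (constant factors contribute zero in $H_1(S)$) already establishes $j\circ i_*=\mathrm{id}$ and hence the needed identification.
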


On the other hand, $M$ is obtained by gluing discs to $S \times [0,1]$ along $\alpha \times \{0\}$ and $\beta \times \{1\}$, so we have
\[
H_1(M) \cong \frac{H_1 (S)}{H_1(\alpha) + H_1(\beta)} \cong \frac{ H_1 (\Sym^k S) }{ H_1 (\T_\alpha) + H_1 (\T_\beta) }.
\]

\subsection{Homotopy classes of Whitney discs}
\label{sec:homotopy_classes_Whitney_discs}

We now consider the set $\pi_2(x,y)$ of homotopy classes of Whitney discs, adapting \cite[sec. 2.4]{OS04Closed} to the sutured case.

The set $\pi_2(x,y)$ has some algebraic structure: a Whitney disc $u$ from $x$ to $y$, and a Whitney disc $v$ from $y$ to $z$, can be spliced together to give a Whitney disc $u*v$ from $x$ to $z$. This operation $*$ descends to homotopy classes: from $\phi \in \pi_2(x,y)$ and $\psi \in \pi_2(y,z)$ we obtain $\phi * \psi: \pi_2(x,z)$. 

Any $\pi_2(x,x)$ in fact forms an abelian group: it is not difficult to construct an identity and inverse of a Whitney disc from $x$ to $x$; and in a similar way as for any second homotopy group, the operation $*$ is commutative. Moreover, any $\phi \in \pi_2(x,y)$ has an inverse $\phi^{-1} \in \pi_2(y,x)$ such that $\phi * \phi^{-1} \in \pi_2(x,x)$ and $\phi^{-1} * \phi \in \pi_2 (y,y)$ are both the identity.

We will first consider Whitney discs from an intersection point to itself, i.e. $\pi_2(x,x) \cong H_2(M)$, before moving on to $\pi_2(x,y)$ in general.

Let $\Omega(\T_\alpha, \T_\beta)$ be the space of paths in $\Sym^k S$ from $\T_\alpha$ to $\T_\beta$, with basepoint the constant path $x$. We then have $\pi_2(x,x) \cong \pi_1 \Omega(\T_\alpha, \T_\beta)$. Evaluating at the endpoints of such a path gives a map $\ev: \Omega(\T_\alpha, \T_\beta) \To \T_\alpha \times \T_\beta$. We then have a Serre fibration
\[
\Omega \Sym^k S \To \Omega(\T_\alpha, \T_\beta) \stackrel{\ev}{\To} \T_\alpha \times \T_\beta.
\]
The homotopy long exact sequence of this fibration includes maps
\[
\begin{array}{ccccccc}
\pi_1(\Omega \Sym^k s) &\To& \pi_1 \Omega(\T_\alpha, \T_\beta) &\To& \pi_1(\T_\alpha \times \T_\beta) &\To& \pi_0 \Omega \Sym^k S.
\end{array}
\]
We simplify some of these terms. First, $\pi_1 \Omega \Sym^k S \cong \pi_2 \Sym^k S = 0$, by the argument in the proof of \cite[prop. 2.7]{OS04Closed}, since $S$ deformation retracts to a wedge of circles. We have seen $\pi_1 \Omega(\T_\alpha, \T_\beta) \cong \pi_2(x,x)$.  And $\pi_0 \Omega \Sym^k S \cong \pi_1 (\Sym^k S) \cong H_1 (\Sym^k S)$. So the above exact sequence becomes
\[
0 \To \pi_2(x,x) \To \pi_1 (\T_\alpha \times \T_\beta) \stackrel{f}{\To} H_1(\Sym^k S).
\]
Thus $\pi_2(x,x) \cong \ker f$. The map $f$ takes a pair of (homotopy classes of) loops in $\gamma_\alpha: S^1 \To \T_\alpha$ and $\gamma_\beta: S^1 \To \T_\beta$, based at $x$, to the element $[\gamma_\alpha] - [\gamma_\beta]$ in $H_1(\Sym^k S)$. Thus $\ker f \cong \pi_1(\T_\alpha) \cap \pi_1(\T_\beta)$, where we consider $\pi_1(\T_\alpha)$, $\pi_1(\T_\beta)$ as subgroups of $\pi_1(\Sym^k S)$, using lemma \ref{lem:subgroup_isomorphisms}.

On the other hand, if we consider the long exact sequence of the spaces $\alpha \cup \beta \hookrightarrow S \hookrightarrow M$, we obtain
\[
0 \cong H_2(S) \To H_2(M) \To H_1(\alpha \cup \beta) \stackrel{g}{\To} H_1(S) \To H_1(M).
\]
Hence $\ker g \cong H_2(M)$; but also, as $g$ is induced by inclusion, $\ker g \cong H_1(\alpha) \cap H_1(\beta)$. The isomorphism $H_1(\alpha) \cap H_1(\beta) \cong H_2(M)$ can be understood geometrically as follows: a homology class in $H_1(\Sigma)$ that is both a linear combination of $[\alpha_i]$'s and a linear combination of $[\beta_i]$'s is a 1-cycle on $S$ that bounds surfaces on both sides of $\Sigma$; hence it gives an 2-cycle in $M$. Considering lemma \ref{lem:subgroup_isomorphisms} again, we have $H_1(\alpha) \cap H_1(\beta) \cong H_1(\T_\alpha) \cap H_1(\T_\beta)$ so $\ker g \cong \ker f$.  Putting together the isomorphisms $\pi_2(x,x) \cong \ker f \cong \ker g \cong H_2(M) \cong H_1(\alpha) \cap H_1(\beta)$, we have proved the following.
\begin{lem}
There is an isomorphism $\pi_2(x,x) \cong H_2(M)$, which takes $\phi = [u] \in \pi_2(x,x)$, where $u: \Disc \To \Sym^k S$ is a Whitney disc from $x$ to $x$, to an element of $H_2(M)$ as follows. The restrictions 
$u|_{\partial \Disc_+}$ and $u|_{\partial \Disc_-}$ give homologous loops in $H_1(\T_\alpha)$ and $H_1(\T_\beta)$, hence an element of $H_1(\alpha) \cap H_1(\beta) \cong H_2(M)$.
\qed
\end{lem}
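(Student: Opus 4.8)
The plan is to assemble the chain of isomorphisms prepared above and then identify the composite concretely. First I would identify $\pi_2(x,x)$ with $\pi_1\Omega(\T_\alpha,\T_\beta)$, the fundamental group of the space of paths in $\Sym^k S$ from $\T_\alpha$ to $\T_\beta$ based at the constant path $x$: a loop of such paths is precisely the homotopy-theoretic data of a Whitney disc from $x$ to $x$. Then I feed the evaluation fibration $\Omega \Sym^k S \To \Omega(\T_\alpha,\T_\beta) \stackrel{\ev}{\To} \T_\alpha \times \T_\beta$ into its homotopy long exact sequence. The required inputs are $\pi_1 \Omega\Sym^k S \cong \pi_2 \Sym^k S = 0$ --- which holds because $S$ deformation retracts onto a wedge of circles, so the argument of \cite[prop.~2.7]{OS04Closed} applies verbatim --- together with $\pi_0 \Omega\Sym^k S \cong \pi_1 \Sym^k S \cong H_1(\Sym^k S)$, the latter abelian by the lemma identifying $H_1(S)\cong H_1(\Sym^k S)\cong\pi_1(\Sym^k S)$. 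This truncates the sequence to $0 \To \pi_2(x,x) \To \pi_1(\T_\alpha\times\T_\beta) \stackrel{f}{\To} H_1(\Sym^k S)$, where $f(\gamma_\alpha,\gamma_\beta)=[\gamma_\alpha]-[\gamma_\beta]$; hence $\pi_2(x,x)\cong\ker f$, and since $\ker f$ consists of pairs $(\gamma_\alpha,\gamma_\beta)$ with $[\gamma_\alpha]=[\gamma_\beta]$ in $H_1(\Sym^k S)$, we get $\pi_2(x,x)\cong\pi_1(\T_\alpha)\cap\pi_1(\T_\beta)$, intersection taken inside $\pi_1(\Sym^k S)$.

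Next I handle the algebraic-topological side. From the inclusions $\alpha\cup\beta\hookrightarrow S\hookrightarrow M$ (recall $M$ is $S\times[0,1]$ with discs glued along $\alpha\times\{0\}$ and $\beta\times\{1\}$), the long exact sequence gives $0\cong H_2(S)\To H_2(M)\To H_1(\alpha\cup\beta)\stackrel{g}{\To} H_1(S)\To H_1(M)$, so $H_2(M)\cong\ker g$. Since $g$ is induced by inclusion and the $\alpha_i$, $\beta_i$ are linearly independent in $H_1(S)$, $\ker g\cong H_1(\alpha)\cap H_1(\beta)$; geometrically this is the subgroup of classes in $H_1(S)$ that are simultaneously combinations of $[\alpha_i]$'s and of $[\beta_j]$'s, i.e.\ $1$-cycles on $S$ bounding on both the $\alpha$-side and the $\beta$-side, hence $2$-cycles in $M$. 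Now I glue the two descriptions via lemma~\ref{lem:subgroup_isomorphisms}: under $H_1(S)\cong H_1(\Sym^k S)\cong\pi_1(\Sym^k S)$ the subgroup $H_1(\alpha)$ corresponds to $H_1(\T_\alpha)$ and to $\pi_1(\T_\alpha)$, and likewise for $\beta$; therefore $\pi_1(\T_\alpha)\cap\pi_1(\T_\beta)\cong H_1(\alpha)\cap H_1(\beta)$. Chaining everything yields $\pi_2(x,x)\cong\pi_1(\T_\alpha)\cap\pi_1(\T_\beta)\cong H_1(\alpha)\cap H_1(\beta)\cong H_2(M)$.

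To obtain the explicit formula in the statement I would unwind the maps on a concrete Whitney disc $u\colon\Disc\To\Sym^k S$. Under $\pi_2(x,x)\cong\pi_1\Omega(\T_\alpha,\T_\beta)$ the class $[u]$ is the loop sending an angular parameter to the corresponding ``vertical slice'' path of $u$; the map of the fibration sequence into $\pi_1(\T_\alpha\times\T_\beta)$ reads off the two endpoints of these paths, which sweep out exactly $u|_{\partial\Disc_+}$ in $\T_\alpha$ and $u|_{\partial\Disc_-}$ in $\T_\beta$. Membership in $\ker f$ says $[u|_{\partial\Disc_+}]=[u|_{\partial\Disc_-}]$ in $H_1(\Sym^k S)$, giving a class in $H_1(\T_\alpha)\cap H_1(\T_\beta)$, which the identifications of lemma~\ref{lem:subgroup_isomorphisms} carry to $H_1(\alpha)\cap H_1(\beta)$ and then the boundary isomorphism carries to $H_2(M)$ --- precisely the recipe stated.

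I expect the main obstacle to be this last bookkeeping step: checking that the abstract connecting homomorphisms of the fibration long exact sequence genuinely unwind to ``restrict $u$ to its two boundary arcs,'' while keeping careful track of which arc maps into $\T_\alpha$ versus $\T_\beta$ and of the orientation signs, and then verifying compatibility with the geometric identification $H_1(\alpha)\cap H_1(\beta)\cong H_2(M)$. The purely homotopy-theoretic inputs ($\pi_2\Sym^k S=0$ and the abelianness of $\pi_1\Sym^k S$) are comparatively painless here, precisely because in the sutured setting $S$ retracts onto a graph, so the closed-case arguments of \cite{OS04Closed} transfer without modification.
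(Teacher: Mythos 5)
Your argument is essentially identical to the paper's: both identify $\pi_2(x,x)$ with $\pi_1\Omega(\T_\alpha,\T_\beta)$, run the homotopy long exact sequence of the evaluation fibration using $\pi_2\Sym^kS=0$ and $\pi_0\Omega\Sym^kS\cong\pi_1\Sym^kS\cong H_1(\Sym^kS)$ to get $\pi_2(x,x)\cong\ker f\cong\pi_1(\T_\alpha)\cap\pi_1(\T_\beta)$, then match this against $\ker g\cong H_1(\alpha)\cap H_1(\beta)\cong H_2(M)$ from the pair sequence of $\alpha\cup\beta\hookrightarrow S\hookrightarrow M$, identifying the two kernels via lemma~\ref{lem:subgroup_isomorphisms}. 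The only addition is your final paragraph unwinding the connecting maps on a concrete Whitney disc, which the paper leaves implicit in its statement; that bookkeeping is correct and a reasonable thing to make explicit.
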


We can now turn from $\pi_2(x,x)$ to $\pi_2(x,y)$ in general. Recall that $\pi_2(x,y) \neq \emptyset$ iff the spin-c structures of $x$ and $y$ are equal, $\s(x) = \s(y)$. 

Given a point $x \in \T_\alpha \cap \T_\beta$ with spin-c structure $\s$, a \emph{complete set of paths for $\s$} is a choice of homotopy classes $\theta_y \in \pi_2(x,y)$, for each $y \in \T_\alpha \cap \T_\beta$ with spin-c structure $\s$. (Cf. \cite[defn. 2.12]{OS04Closed}.) We take $\theta_x$ to be the constant Whitney disc at $x$.

For any $y,z \in \T_\alpha \cap \T_\beta$ with spin-c structure $\s$, we then obtain a map $\pi_2(y,z) \To \pi_2(x,x)$ given by $u \mapsto \theta_y * u * \theta_z^{-1}$. Since Whitney discs have inverses with respect to the splicing operation $*$, the complete set of paths for $\s$ gives a bijection $\pi_2(y,z) \cong \pi_2(x,x)$. Following this with the isomorphism $\pi_2(x,x) \cong H_2(M)$ of the previous lemma, we have a bijection $A_{y,z} : \pi_2(y,z) \To H_2(M)$.

Indeed, for any $x,y \in \T_\alpha \cap \T_\beta$ (same spin-c structure or not), we can define a map $A_{x,y}: \pi_2 (x,y) \To H_2 (M)$ as follows. If $x,y$ are in the same spin-c class then we use the map just described; if not, we take $A_{x,y}$ to be the zero map.

The collection of maps $A_{x,y}$, over all $x,y \in \T_\alpha \cap \T_\beta$, forms an \emph{additive assignment} similar to \cite[defn. 2.12]{OS04Closed}. That is, we have a collection of functions $A_{x,y}: \pi_2(x,y) \To H_2(M)$, for each $x,y \in \T_\alpha \cap \T_\beta$, such that for any $x,y,z \in \T_\alpha \cap \T_\beta$ and any $\phi \in \pi_2(x,y)$, $\psi \in \pi_2(y,z)$ we have
\[
A_{x,y} (\phi) + A_{y,z} (\psi) = A_{x,z} (\phi * \psi).
\]
In fact, each $\phi \in \pi_2(x,y)$ implicitly includes the data of $x$ and $y$, so we may simply write $A(\phi)$ instead of $A_{x,y}(\phi)$. Thus $A(\phi) + A(\psi) = A(\phi * \psi)$.

\subsection{Twisted coefficient sutured Floer homology}
\label{sec:twisted_coefficient_SFH}

We can now define sutured Floer homology with twisted coefficients, adapting \cite[sec. 8]{OS04Prop} for our purposes. As discussed by Ghiggini--Honda \cite{Ghiggini_Honda08}, extending sutured Floer homology to twisted coefficients is a straightforward generalisation of the closed case.

The twisted chain complex $CF(S, \alpha, \beta)$ is defined from an admissible sutured Heegaard decomposition $(S,\alpha,\beta)$ of $(M,\Gamma)$, as in the untwisted case, but is now freely generated by $\T_\alpha \cap \T_\beta$ over the coefficient ring $\Z[H_2(M)]$. (In the closed case Ozsv\'{a}th--Szab\'{o} define it over $\Z[H^1(M)]$; and obviously $H^1(M) \cong H_2(M)$ by Poincar\'{e} duality in the closed case.) When we wish to emphasise the coefficients we write $CF(S,\alpha,\beta; \Z[H_2(M)])$.

The differential is defined as:
\[
\partial x = \sum_{y \in \mathbb{T}_\alpha \cap \mathbb{T}_\beta} y \sum_{\substack{\phi \in \pi_2(x,y) \\\ \mu(\phi) = 1}} \# \widehat{\M}(\phi) \; e^{A(\phi)} .
\]
This definition is similar to that in the untwisted case, except that we insert an additional factor $e^{A(\phi)} \in \Z[H_2(M)]$, using the additive assignment $A: \pi_2(x,y) \To H_2(M)$, for each homotopy class of Whitney disc $\phi$ counted in the differential. Again $\pi_2(x,y)$ denotes the space of homotopy classes of Whitney discs between $x$ and $y$, $\widehat{\M}(\phi)$ is the moduli space of holomorphic discs in the homotopy class of $\phi$ (modulo the $\R$-action), $\mu(\phi)$ the Maslov class, and $\# \widehat{\M}(\phi)$ a signed count of points. Again the chain complex splits as a direct sum over spin-c structures, $(CF(S,\alpha,\beta),\partial) \cong \bigoplus_\s (CF(S,\alpha,\beta,\s),\partial_\s)$.

Ozsv\'{a}th--Szab\'{o} prove \cite[thm. 8.2]{OS04Prop} in the closed case that the homology $\widehat{HF}(S,\alpha,\beta)$ of the chain complex is independent of all choices involved --- Heegaard decomposition, almost complex structures, etc. --- and so obtain an invariant $\widehat{HF}(M)$ of a closed 3-manifold. The argument carries over immediately in the sutured case to show that the homology of $SFH(S,\alpha,\beta)$ is similarly independent, and thus we obtain an invariant $SFH(M, \Gamma; \Z[H_2(M)])$ of a balanced sutured 3-manifold $(M,\Gamma)$, which is a $\Z[H_2(M)]$-module. As in the untwisted case, it splits as a direct sum over spin-c structures, $SFH(M,\Gamma;\Z[H_2(M)]) \cong \bigoplus_\s SFH(M,\Gamma,\s;\Z[H_2(M)])$.

\subsection{Surface decompositions with twisted coefficients}
\label{sec:decompositions_twisted}

In SQFT we consider cutting and gluing occupied surfaces along arcs; correspondingly, we need to consider cutting and gluing sutured 3-manifolds along surfaces, and its effect on $SFH$ with twisted coefficients.

In \cite{Ju08}, Juh\'{a}sz considered various types of sutured manifold decompositions, and their effects on $SFH$ with integer coefficients. We need one of his results, extended to twisted coefficients, for our purposes. In fact, we need specific details of it, such as lemma \ref{lem:decomp_homology} below, and a version (corollary \ref{cor:Juhasz_decomp_adapted}) specially adapted to our purposes. 

Therefore, we will state a generalisation of Juh\'{a}sz' theorem to twisted coefficients, and consider its proof in some detail, closely following the original proof, though there are significant extra details.

(We note that there are other similar gluing results in the literature; however they are not as explicit as the result we need. In \cite{HKM08} Honda--Kazez--Mati\'{c} give a gluing isomorphism theorem in just the case we need, for $\Z$ coefficients (lemma 7.9). This theorem is based on the same authors' work in \cite{HKM09} (theorems 6.1, 6.2), which gives an alternative proof of a weaker version of Juh\'{a}sz' result.)

First, some language, following \cite{Ju08}. Let $(M, \Gamma)$ be a balanced sutured 3-manifold. A \emph{decomposing surface} $U$ in $(M, \Gamma)$ is a properly embedded oriented surface with no closed components, such that each component of $\partial U \subset \partial M$ either runs along a component of $\Gamma$, or intersects $\Gamma$ transversely \cite[defn. 2.7]{Ju08}. (Note this differs slightly from Juh\'{a}sz' definition: firstly, we have no toroidal sutures; and secondly, we require sutured manifolds to have nonempty sutures on each boundary component, so we do not allow $U$ to have closed components.) Cutting along $U$ (more precisely, removing a product neighbourhood $N(U)$) gives a \emph{sutured manifold decomposition} $(M, \Gamma) \stackrel{U}{\rightsquigarrow} (M',\Gamma')$, where $M' = M \backslash N(U)$. In $\partial M'$, two copies $U'_+$ and $U'_-$ of $U$ exist, along which a normal vector respectively points out of or in to $M'$. The sutures on $M'$ are given by
\begin{align*}
\Gamma' &= (\Gamma \cap M') \cup (U'_+ \cap R_-(\Gamma)) \cup (U'_- \cap R_+(\Gamma)),\\
R_+(\Gamma') &= ((R_+(\Gamma) \cap M') \cup U'_+) \backslash \Gamma',\\
R_- (\Gamma') &= ((R_-(\Gamma) \cap M') \cup U'_-) \backslash \Gamma'.
\end{align*}
(Thus, the sutures on $U'_\pm$ are all boundary-parallel.) We define a map $\iota \; : \; M' \hookrightarrow M$ to re-glue the two copies $U'_\pm$ back together into $U$.

An oriented simple closed curve $C \subset R_\pm$ is \emph{boundary-coherent} if $[C] \neq 0 \in H_1(R_\pm)$, or if $[C]=0 \in H_1(R_\pm)$ and $C$ is oriented as the boundary of its interior (which lies in the oriented $R_\pm$). 

A spin-c structure $\s$ on $(M, \Gamma)$ is \emph{outer} with respect to $U$ if there is a unit vector field $v$ on $M$ representing $\s$ such that for all $p \in U$, $v_p \neq - (\nu_U)$, where $\nu_U$ is the unit normal vector field on $U$ with respect to some Riemannian metric on $M$. Otherwise, $\s$ is \emph{inner}. Let $O_U$ denote the set of outer spin-c structures with respect to $U$.

Only the most general theorem of \cite{Ju08} (theorem 1.3) is applicable in our context, as we often have $H_2(M) \cong H_1(\Sigma)$ nonzero, and $U$ non-separating. Generalised to twisted coefficients it is as follows. This is a precise version of theorem \ref{thm:gluing_rough} above.

\begin{thm}
\label{thm:Juhasz_gluing}
Let $(M, \Gamma)$ be a balanced sutured manifold and let $(M, \Gamma) \stackrel{U}{\rightsquigarrow} (M', \Gamma')$ be a sutured manifold decomposition. Suppose that $U$ is open and that for every component $W$ of $R_\pm(\Gamma)$, the set of closed components of $U \cap W$ consists of parallel oriented boundary-coherent simple closed curves. Then there exist admissible sutured Heegaard decompositions $(S, \alpha, \beta)$ of $(M, \Gamma)$ and $(S', \alpha', \beta')$ of $(M', \Gamma')$ such that the following chain complexes are isomorphic:
\[
\left( \Z[H_2(M)] \otimes_{\Z[H_2(M')]} CF(S', \alpha', \beta'; \Z[H_2(M')]), 1 \otimes \partial \right) \cong \bigoplus_{\s \in O_U} \left( CF (S, \alpha, \beta, \s; \Z[H_2(M)]), \partial \right).
\]
Hence
\[
SFH(M', \Gamma'; \Z[H_2(M)]) \cong \bigoplus_{\s \in O_U} SFH(M, \Gamma, \s; \Z[H_2(M)]).
\]
In particular, $SFH(M', \Gamma'; \Z[H_2(M)])$ is a direct summand of $SFH(M, \Gamma; \Z[H_2(M)])$.
\end{thm}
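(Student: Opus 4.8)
The plan is to follow Juh\'{a}sz' proof of his Theorem 1.3 in \cite{Ju08} essentially line by line, the genuinely new content being the bookkeeping of the twisted coefficients $e^{A(\phi)}$ under the inclusion $\iota: M' \hookrightarrow M$. First, $\iota$ induces $\iota_*: H_2(M') \To H_2(M)$ and hence a ring homomorphism $\Z[H_2(M')] \To \Z[H_2(M)]$; this is precisely the structure map making the tensor product $\Z[H_2(M)] \otimes_{\Z[H_2(M')]} CF(S', \alpha', \beta'; \Z[H_2(M')])$ meaningful, and by definition the homology of this complex with differential $1 \otimes \partial$ is $SFH(M', \Gamma'; \Z[H_2(M)])$. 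Thus it suffices to produce the claimed chain-level isomorphism: the homology isomorphism follows at once by passing to homology, and the final ``direct summand'' assertion follows from it together with the spin-c splitting $SFH(M, \Gamma; \Z[H_2(M)]) \cong \bigoplus_\s SFH(M, \Gamma, \s; \Z[H_2(M)])$, since the subsum over $\s \in O_U$ is a direct summand.

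The first substantive step is to recall, following \cite{Ju08}, that under the stated hypotheses on $U$ one may construct a \emph{surface diagram}: an admissible balanced sutured Heegaard decomposition $(S, \alpha, \beta)$ of $(M, \Gamma)$ in which $U$ sits in a standard position, together with an induced admissible balanced decomposition $(S', \alpha', \beta')$ of $(M', \Gamma')$ obtained by cutting. The hypotheses --- $U$ open, and the closed components of $U \cap W$ parallel and boundary-coherent --- are exactly those under which Juh\'{a}sz' construction goes through, and admissibility is arranged by his winding argument, which is insensitive to the coefficient ring. The key combinatorial output, again as in \cite{Ju08}, is a distinguished subset $P \subseteq \T_\alpha \cap \T_\beta$ of ``outer'' generators, in natural bijection with $\T_{\alpha'} \cap \T_{\beta'}$, such that $\bigoplus_{\s \in O_U} CF(S, \alpha, \beta, \s)$ is precisely the (direct summand) subcomplex of $CF(S, \alpha, \beta)$ generated by $P$.

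Next I would upgrade this bijection of generators to a chain isomorphism over twisted coefficients. On one hand, as in \cite{Ju08}, for $x, y \in P$ any homotopy class $\phi \in \pi_2(x, y)$ admitting a holomorphic representative has domain $D(\phi)$ disjoint from the part of $S$ lying ``behind'' $U$, so $\phi$ descends to a class $\phi' \in \pi_2(x', y')$ in the cut diagram with the same reduced moduli space, giving a sign-preserving bijection between the Whitney discs counted in the two differentials. On the other hand --- and this is the new point --- one must match the homology labels. Choosing a complete set of paths in $(S', \alpha', \beta')$ and transporting it via the bijection to a complete set of paths in $(S, \alpha, \beta)$ supported on $P$, the identification $\pi_2(x, x) \cong H_2(M)$ of section \ref{sec:homotopy_classes_Whitney_discs} becomes the composite of $\pi_2(x', x') \cong H_2(M')$ with $\iota_*$, because the loops in $\T_\alpha, \T_\beta$ whose difference represents the homology class all lie in the cut-open region. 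Hence $A(\phi) = \iota_*(A'(\phi'))$ for corresponding discs, and the $\Z[H_2(M)]$-linear (after base change) map sending each $x'$ to the corresponding $x$ intertwines $1 \otimes \partial'$ with the restriction of $\partial$ to the span of $P$. This gives the chain isomorphism, and the theorem follows.

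The main obstacle is the construction and analysis of the surface diagram: putting $U$ in standard position, identifying the outer generators $P$ and proving they form a direct summand realising $O_U$, and showing that holomorphic discs between them avoid the region behind $U$. This is the technical heart and must be reproduced faithfully from \cite{Ju08}; none of it depends on the coefficient ring, but it is where the real work lies. By comparison, the twisted bookkeeping --- verifying $A(\phi) = \iota_*(A'(\phi'))$ via compatible complete sets of paths, and noting admissibility is coefficient-independent --- is routine once the homological identifications of sections \ref{sec:homology_symmetric_product}--\ref{sec:homotopy_classes_Whitney_discs} are in hand.
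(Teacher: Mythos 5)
Your proposal is correct and follows essentially the same route as the paper's proof: reuse Juh\'{a}sz' surface-diagram machinery verbatim (construction of a good, admissible, adapted diagram; splitting $CF(S,\alpha,\beta)$ into outer and inner subcomplexes; the bijection of generators and holomorphic discs via the cut map $p\colon S'\to S$, made combinatorial by passing to a nice diagram), then add the twisted bookkeeping by choosing compatible complete sets of paths on both sides and verifying $A(\phi)=\iota_*(A'(\phi'))$. The one place your sketch is informal --- ``the loops in $\T_\alpha,\T_\beta$ whose difference represents the class all lie in the cut-open region'' --- is exactly what the paper isolates as lemma \ref{lem:decomp_homology}: the map on $H_1(\alpha)\cap H_1(\beta)\cong H_2(M)$ induced by $p$ agrees with $\iota_*$ because, after isotoping $S'$ so that $P_A\subset R_+(\Gamma')$ and $P_B\subset R_-(\Gamma')$, the restriction of $\iota$ to $S'$ is $p$; stating this explicitly (rather than as an aside about the cut-open region) is what makes the additive-assignment comparison rigorous. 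Minor: you overload $P$ both for the quasi-polygon and for the set of outer generators, which could mislead a reader of \cite{Ju08}.
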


Here the tensor product is taken with respect to  $\iota_* :  \Z[H_2(M')] \To \Z[H_2(M)]$, which expresses $\Z[H_2(M)]$ as a module over $\Z[H_2(M')]$. Note that we really want an isomorphism involving $SFH(M', \Gamma'; \Z[H_2(M')])$, while the last isomorphism involves $SFH(M', \Gamma'; \Z[H_2(M)])$, which is the homology of the chain complex obtained by using $\Z[H_2(M)]$, as a $\Z[H_2(M')]$-module via $\iota_*$, rather than $\Z[H_2(M')]$ itself. In corollary \ref{cor:Juhasz_decomp_adapted} we obtain an isomorphism of this desired type.

Before proceeding to the proof, we need some preliminary definitions and discussion.

A \emph{Heegaard diagram adapted to a decomposing surface} $U \subset (M, \Gamma)$ is \cite[defn. 4.3]{Ju08} a quadruple $(S, \alpha, \beta, P)$ where:
\begin{enumerate}
\item
$(S, \alpha, \beta)$ is a balanced Heegaard diagram for $(M, \Gamma)$
\item
$P \subset S$ is a ``quasi-polygon'', i.e. a closed subsurface of $S$ (oriented like $S$), whose boundary consists of edges and vertices (however a boundary component can consist of a single closed edge).
\item
The vertices of $P$ (i.e. intersections of adjacent edges) are precisely $P \cap \partial S$.
\item
$\partial P = A \cup B$, where $A$ and $B$ are unions of pairwise disjoint edges of $P$ (i.e. the edges of $\partial P$ alternate between $A$ and $B$).
\item
The edges of $A$ avoid $\beta$, and the edges of $B$ avoid $\alpha$, i.e. $\alpha \cap B = \beta \cap A = \emptyset$.
\item
$U$ is given, up to isotopy through decomposing surfaces, by rounding the corners of $P \times \{\frac{1}{2}\} \cup (A \times [\frac{1}{2},1]) \cup (B \times [0, \frac{1}{2}])$.
\end{enumerate}

A decomposing surface $U$ in $(M, \Gamma)$ is \emph{good} if it has nonempty boundary, and each component of $\partial U$ intersects both $R_+(\Gamma)$ and $R_- (\Gamma)$ \cite[defn. 4.6]{Ju08}. The diagram $(S, \alpha, \beta, P)$ is called \emph{good} if $A$ and $B$ have no closed components. See figure \ref{fig:nice_diagram} for an illustration, where $P$ is a square. With $U$ placed as in (vi) above, we see that:
\begin{enumerate}
\item
$U$ is homeomorphic to $P$. In particular, $U$ deformation retracts onto $P \times \{\frac{1}{2}\}$ by retracting $A \times [\frac{1}{2}, 1]$ and $B \times [0, \frac{1}{2}]$ back to $A \times \{\frac{1}{2}\}$ and $B \times \{\frac{1}{2}\}$ respectively.
\item
$U \cap \Gamma$ is given by $(P \cap \partial S) \times \{\frac{1}{2}\}$.
\item
$U \cap R_+(\Gamma)$ is given by $A \times \{1\} \cup \partial A \times (\frac{1}{2}, 1]$. In particular, a component of $\partial U$ intersects $R_+(\Gamma)$ iff the corresponding component of $\partial P$ contains an edge of $A$.
\item
$U \cap R_-(\Gamma)$ is given by $B \times \{0\} \cup \partial B \times [0, \frac{1}{2})$. In particular, a component of $\partial U$ intersects $R_- (\Gamma)$ iff the corresponding component of $\partial P$ contains an edge of $B$.
\end{enumerate}
See figure \ref{fig:nice_decomposition}. Hence, $U$ is good iff a Heegaard diagram adapted to $U$ is good.

\begin{figure}
\begin{center}
\def\svgwidth{250pt}
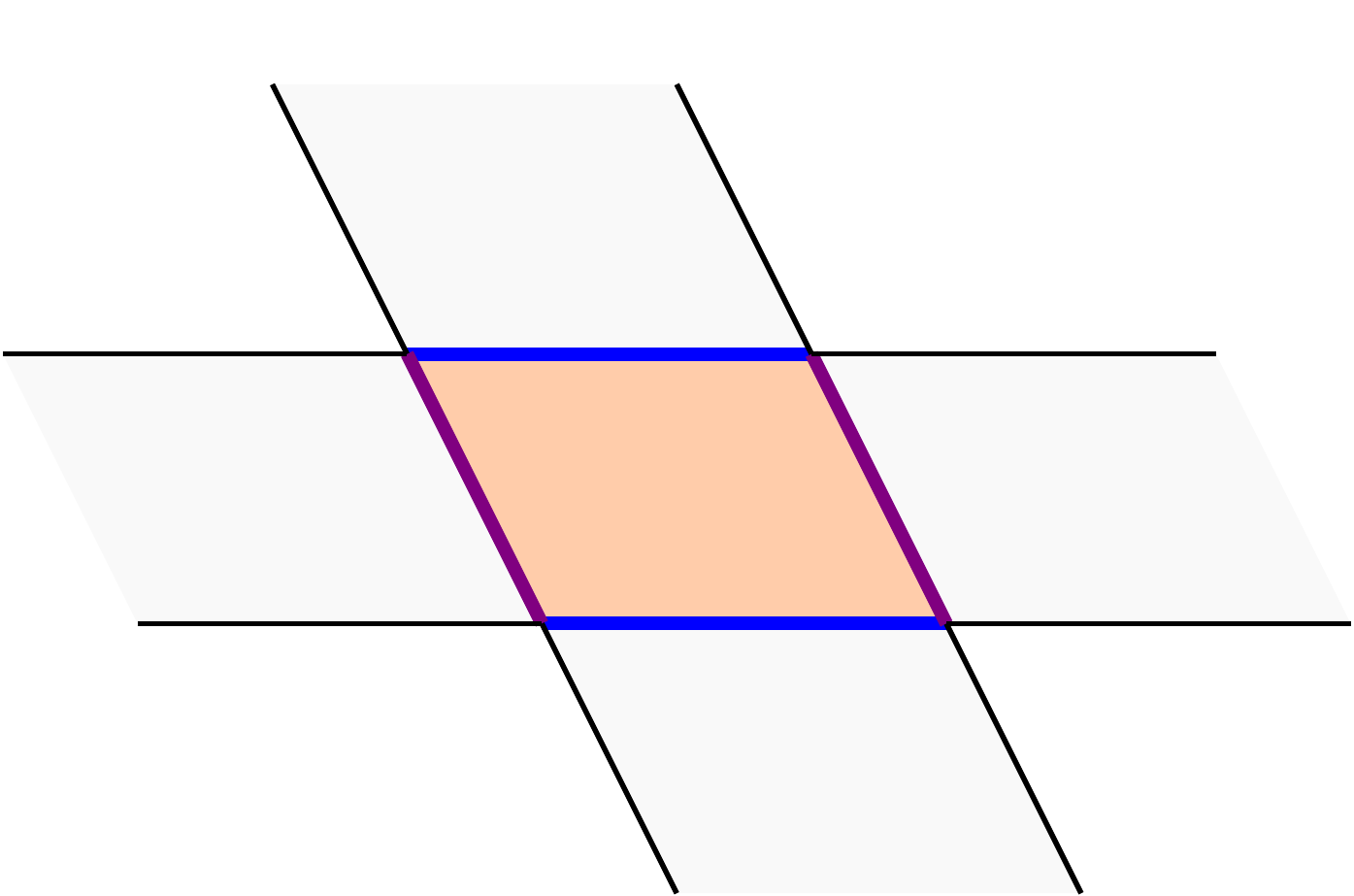
\caption{A Heegaard diagram adapted to a decomposing surface. As $P$ (the darkly shaded region) is a square with $A$  (purple) and $B$ (blue) each consisting of two arcs, the diagram is good.}
\label{fig:nice_diagram}
\end{center}
\end{figure}

\begin{figure}
\begin{center}
\def\svgwidth{300pt}
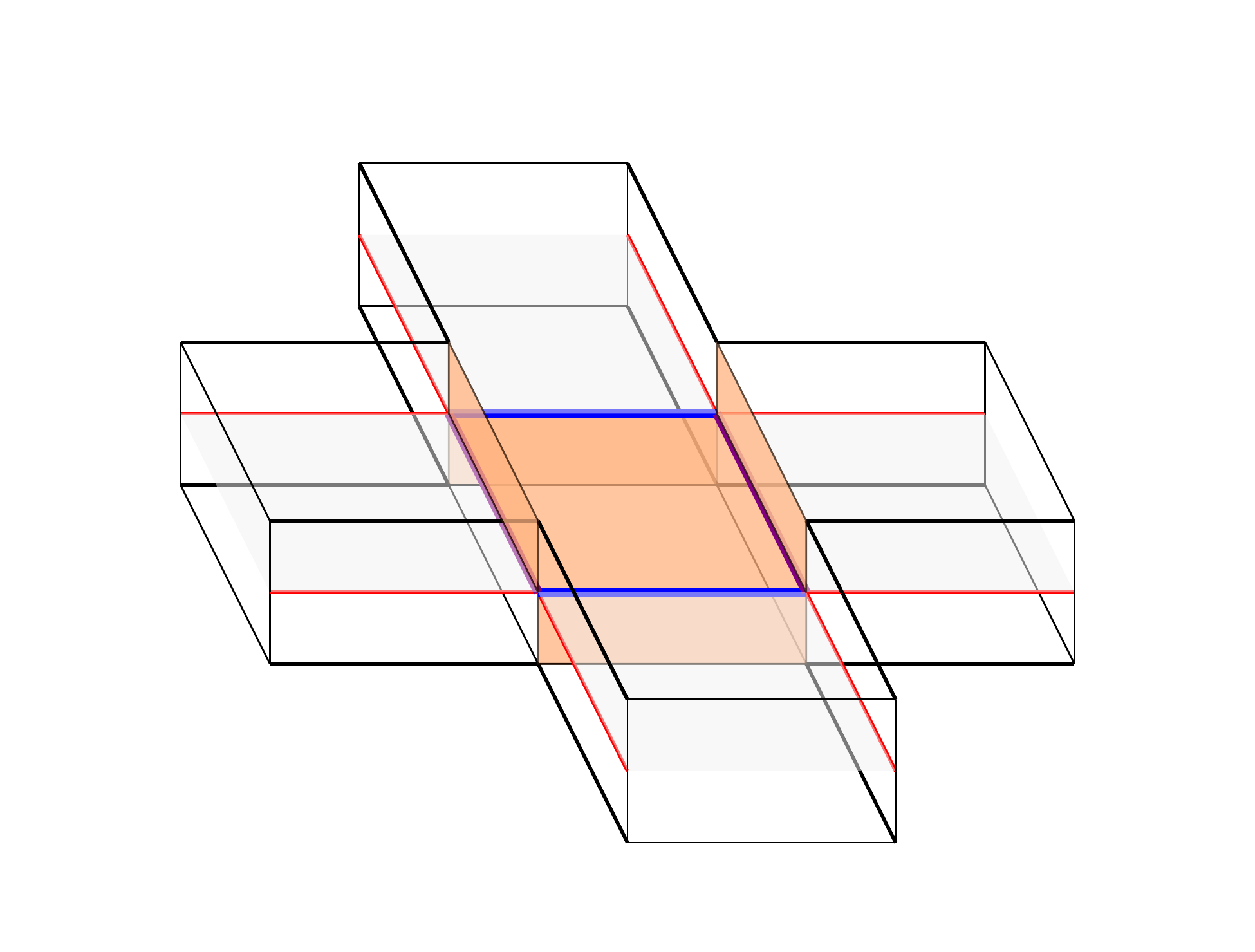
\caption{A good decomposing surface and its relation to the adapted Heegaard diagram of figure \ref{fig:nice_diagram}. Sutures are in red, $S \times \{1/2\}$ light grey, and $U$ is drawn as $P \times \{\frac{1}{2}\} \cup (A \times [\frac{1}{2},1]) \cup (B \times [0, \frac{1}{2}])$. }
\label{fig:nice_decomposition}
\end{center}
\end{figure}

The sutured manifold decomposition $(M, \Gamma) \stackrel{U}{\rightsquigarrow} (M', \Gamma')$ corresponds to an operation on sutured Heegaard diagrams as follows \cite[defn. 5.1]{Ju08}; see figure \ref{fig:decomposition_cut}. From a diagram $(S, \alpha, \beta, P)$ adapted to $U$, we remove $P$ from $S$, and then glue two separate copies $P_A, P_B$ of $P$ back in to $S \backslash P$, gluing $P_A$ to $S \backslash P$ along $A$, and $P_B$ to $S \backslash P$ along $B$. The result is a surface $S' = (S \backslash P) \cup P_A \cup P_B$. There is a natural smooth map $p: S' \To S$ which is the identity on $S' \backslash (P_A \cup P_B) \cong S \backslash P$ and which maps $P_A \cup P_B \to P$ in $2$-to-$1$ fashion. We define curves $\alpha' = \{\alpha'_i\}_{i=1}^k$ and $\beta' = \{\beta'_i\}_{i=1}^k$ on $S'$ as follows. On $P$ there may be both $\alpha$ and $\beta$ curves; we draw the $\alpha$ curves on $P_A$ and the $\beta$ curves on $P_B$. As $\alpha \cap B = \beta \cap A = \emptyset$, combining these curves with the existing $\alpha$ and $\beta$ curves on $S \backslash P$, gives closed curves $\alpha'_i$, $\beta'_i$ as desired. Note $p: S' \To S$ restricts to give homeomorphisms $\alpha'_i \cong \alpha_i$, $\beta'_i \cong \beta_i$. We obtain a sutured Heegaard diagram $(S', \alpha', \beta')$, which is a sutured Heegaard decomposition of $(M', \Gamma')$ \cite[prop. 5.2]{Ju08}. If $(S, \alpha, \beta)$ is admissible then so is $(S', \alpha', \beta')$ \cite[proof of prop. 7.6]{Ju08}.

\begin{figure}
\begin{center}
\def\svgwidth{250pt}
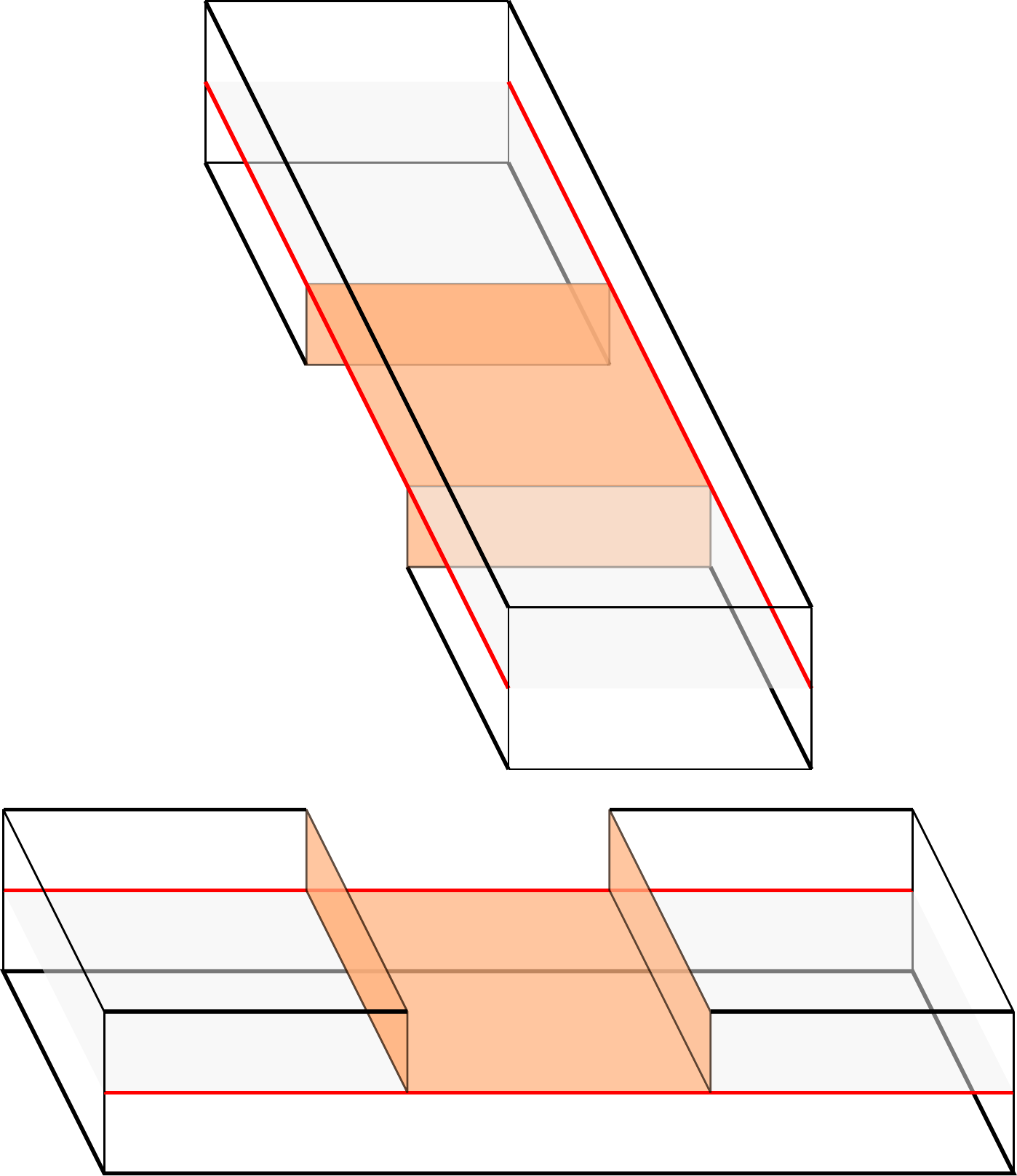
\caption{Decomposition of the sutured 3-manifold along $U$, and its effect on the sutured Heegaard surface: $P$ is split into two copies $P_A$ and $P_B$.}
\label{fig:decomposition_cut}
\end{center}
\end{figure}

In fact, up to homotopy the map $p: S' \To S$ extends to the map $\iota: M' \cong M \backslash N(U) \To M$, in the following sense. We may isotope $S'$ in $M'$ relative to boundary, so that $P_A \subset R_+(\Gamma')$ and $P_B \subset R_- (\Gamma')$, and then the restriction of $\iota$ to $S'$ is given by $p$. In particular, the maps induced by $p$ from $H_1 (S') \To H_1(S)$, $H_1(\alpha') \To H_1(\alpha)$ and $H_1(\beta') \To H_1(\beta)$ are equal to those induced by $\iota$, and so we have the following.

\begin{lem}
\label{lem:decomp_homology}
The map $p: S' \To S$ induces a map
\[
\bar{p} \; : \; H_2(M') \cong H_1(\alpha') \cap H_1(\beta') \To H_1(\alpha) \cap  H_1(\beta) \cong H_2(M)
\]
which is equal to $\iota_*$.
\qed
\end{lem}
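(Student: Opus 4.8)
The plan is to deduce the lemma from naturality of the identification $H_2 \cong H_1(\alpha) \cap H_1(\beta)$, using the geometric fact recalled just above that, after isotoping $S'$ in $M'$, the map $p \colon S' \To S$ is the restriction of $\iota \colon M' \To M$.

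First I would make the isomorphism $H_1(\alpha)\cap H_1(\beta) \cong H_2(M)$ explicit, following the geometric description in the excerpt: a class $\gamma \in H_1(\alpha)\cap H_1(\beta) \subset H_1(S)$ may be written both as $\gamma = \sum_i a_i[\alpha_i]$ and as $\gamma = \sum_i b_i[\beta_i]$; then $\sum_i a_i D_i^\alpha$ caps $\gamma$ off by the $\alpha$-compressing discs on one side of $S$ in $M$, $\sum_i b_i D_i^\beta$ caps it off by the $\beta$-compressing discs on the other side, and the closed $2$-cycle $\sum_i a_i D_i^\alpha - \sum_i b_i D_i^\beta$ represents $\gamma$ in $H_2(M)$. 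The same description applies to $(M',\alpha',\beta')$. Since $p$ restricts to homeomorphisms $\alpha'_i \cong \alpha_i$ and $\beta'_i \cong \beta_i$, we get $p_*[\alpha'_i] = [\alpha_i]$ and $p_*[\beta'_i] = [\beta_i]$; hence $p_*$ sends $H_1(\alpha')\cap H_1(\beta')$ into $H_1(\alpha)\cap H_1(\beta)$, taking the class with coefficient data $(\{a_i\},\{b_i\})$ to the class with the same data. This restriction of $p_*$ is precisely the map $\bar p$ of the statement.

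Next I would invoke the isotopy of $S'$ in $M'$ (rel $\partial M'$) placing $P_A \subset R_+(\Gamma')$ and $P_B \subset R_-(\Gamma')$, after which $\iota|_{S'}$ is homotopic to $p$. Under this identification $\iota$ carries the $\alpha'$-handlebody of $M'$ into the $\alpha$-handlebody of $M$: a compressing disc $D_i^{\alpha'}$ with boundary $\alpha'_i$ maps to a disc in $M$ with boundary $\iota(\alpha'_i) = \alpha_i$, hence one homologous rel boundary to $D_i^\alpha$, and similarly on the $\beta$-side. Therefore $\iota$ sends the $2$-cycle $\sum_i a_i D_i^{\alpha'} - \sum_i b_i D_i^{\beta'}$ to a $2$-cycle homologous to $\sum_i a_i D_i^{\alpha} - \sum_i b_i D_i^{\beta}$, i.e. $\iota_*$ preserves coefficient data. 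Comparing with the previous paragraph shows $\iota_*$ and $\bar p$ coincide under the two identifications. Equivalently, $p$ and $\iota$ define a morphism between the homology long exact sequences of $\alpha'\cup\beta' \hookrightarrow S' \hookrightarrow M'$ and $\alpha\cup\beta \hookrightarrow S \hookrightarrow M$, and one chases this through the connecting isomorphisms.

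The only real obstacle is bookkeeping: one must check that the homotopy from $\iota|_{S'}$ to $p$ can be taken compatibly with the handle structures, so that compressing discs map to compressing discs up to homotopy rel boundary, and that it is the full coefficient data $(\{a_i\},\{b_i\})$ that is preserved, not merely the individual curves. Both are immediate once one notes that $p$ is a homeomorphism on each $\alpha'_i$ and each $\beta'_i$ and that $H_2(S) = H_2(S') = 0$, so there is nothing subtle beyond assembling the pieces.
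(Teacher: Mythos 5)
Your proposal takes essentially the same route as the paper. The paper gives this lemma no separate proof: the \verb|\qed| follows the statement because the preceding paragraph has just observed that, after isotoping $S'$ rel boundary so that $P_A \subset R_+(\Gamma')$ and $P_B \subset R_-(\Gamma')$, the restriction of $\iota$ to $S'$ is $p$, and therefore the maps $H_1(S')\to H_1(S)$, $H_1(\alpha')\to H_1(\alpha)$, $H_1(\beta')\to H_1(\beta)$ induced by $p$ and by $\iota$ coincide; one then just reads off the claim through the identifications $H_2(M)\cong H_1(\alpha)\cap H_1(\beta)$ and $H_2(M')\cong H_1(\alpha')\cap H_1(\beta')$. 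You reconstruct exactly this, adding a correct and slightly more explicit description of the isomorphism $H_1(\alpha)\cap H_1(\beta)\cong H_2(M)$ via $\alpha$- and $\beta$-compressing discs and a remark about compatibility of handle structures. That extra detail is sound but not a different argument; it is what the paper leaves implicit.
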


Given $(S, \alpha, \beta, P)$ adapted to $U$, each point of $\alpha_i \cap \beta_j$ either lies in $P$ or $S \backslash P$. An intersection point $x \in \T_\alpha \cap \T_\beta$ is called \emph{inner} if $x \cap P \neq \emptyset$, i.e. $x$ contains a point of some $\alpha_i \cap \beta_j$ in $P$; otherwise $x$ is \emph{outer} \cite[defn. 5.4]{Ju08}. We think of the quasi-polygon $P$ as forming an ``inner'' part of $S$. A point $x \in \T_\alpha \cap \T_\beta$ is inner or outer, respectively as its spin-c structure is inner or outer with respect to $U$, i.e. $\s(x) \in O_U$ \cite[lem. 5.5]{Ju08}. 

Following Sarkar--Wang \cite{SaWa}, a sutured Heegaard diagram $(S, \alpha, \beta)$ is \emph{nice} if every internal component of $S \backslash (\alpha \cup \beta)$ is a bigon or square. A diagram $(S, \alpha, \beta, P)$ adapted to a decomposing surface $U$ is \emph{nice} if every component of $S \backslash (\alpha \cup \beta \cup A \cup B)$ disjoint from $\partial S$ is a bigon or square \cite[defn. 6.1]{Ju08}. When Heegaard diagrams are nice, we can count holomorphic curves directly from the combinatorics of the Heegaard diagram.

We can now proceed to the proof of theorem \ref{thm:Juhasz_gluing}. Most of the proof in \cite{Ju08} carries over without change. We summarise the proof, detailing the extra steps required for twisted coefficients.

\begin{proof}
A decomposing surface $U$ satisfying the hypotheses of the theorem can be made good by an isotopy which preserves the set of outer spin-c structures \cite[lem. 4.5]{Ju08}. We can then find a Heegaard diagram $(S, \alpha, \beta, P)$ adapted to $U$ \cite[prop. 4.4]{Ju08}, which is necessarily good, and isotope the $\alpha$ curves to make $(S, \alpha, \beta)$ admissible, keeping $(S, \alpha, \beta, P)$ adapted to $U$ \cite[prop. 4.8]{Ju08}.

Denote the subgroups of $CF(S, \alpha, \beta; \Z[H_2(M)])$ generated by outer and inner intersection points by $O_P$ and $I_P$ respectively; so $CF(S, \alpha, \beta; \Z[H_2(M)] ) \cong O_P \oplus I_P$ and $O_P \cong \bigoplus_{\s \in O_U} CF(S, \alpha, \beta, \s; \Z[H_2(M)])$. As $\pi_2(x,y) \neq \emptyset$ iff $\s(x)=\s(y)$, the differential $\partial$ respects this splitting and we have \cite[cor. 5.8]{Ju08}
\[
CF(S, \alpha, \beta; \Z[H_2(M)] ) = \left( O_P, \partial|_{O_P} \right) \oplus \left( I_P, \partial|_{I_P} \right).
\]

Turning to $(S', \alpha', \beta')$, we see that intersection points $\alpha'_i \cap \beta'_j$ correspond bijectively (via $p: S' \To S$) to the intersection points $\alpha_i \cap \beta_j$ outside $P$; so $\T_{\alpha'} \cap \T_{\beta'}$ corresponds bijectively to outer intersection points of $\T_\alpha \cap \T_\beta$. Hence the generators of $CF(S', \alpha', \beta')$ (over any coefficient ring) correspond bijectively to those of $O_P$.

Using certain permissible moves (isotopies or handleslides), the diagram $(S, \alpha, \beta, P)$ can be made nice, while remaining adapted to $U$ and admissible \cite[thm. 6.4]{Ju08}; moreover $(S, \alpha, \beta)$ is also nice \cite[prop. 7.5]{Ju08}. The holomorphic curves counted in the differentials on $CF(S', \alpha', \beta')$ (over any coefficient ring) and $O_P$ correspond bijectively; indeed this bijection is essentially induced by the map $p: S' \To S$ \cite[prop. 7.6]{Ju08}. Over $\Z$ coefficients, this implies that
\[
SFH(M', \Gamma'; \Z) \cong H(CF(S', \alpha', \beta'; \Z)) \cong H(O_P, \partial |_{O_P}; \Z) \cong \bigoplus_{\s \in O_U} SFH(M, \Gamma, \s; \Z).
\]

Over twisted coefficients, however, $CF(S',\alpha',\beta')$ has coefficient ring $\Z[H_2(M')]$, while $(O_P, \partial|_{O_P}) \subset CF(S,\alpha,\beta)$ has coefficient ring $\Z[H_2(M)]$. Using $\iota_*: H_2(M') \To H_2(M)$, we may tensor $CF(S',\alpha',\beta')$ by $\Z[H_2(M)]$ to obtain two complexes over the same coefficient ring. The two complexes are then free modules over $\Z[H_2(M)]$, with a bijection between generators induced by $p$, hence isomorphic as $\Z[H_2(M)]$-modules:
\[
\Z[H_2(M)] \otimes_{\Z[H_2(M')]} CF(S', \alpha', \beta'; \Z[H_2(M')]) \cong O_P.
\]

It remains to show the differentials are isomorphic. Consider a holomorphic disc with homotopy class $\phi' \in \pi_2(x',y')$ contributing to $\partial x'$, where $x' \in \T_{\alpha'} \cap \T_{\beta'}$ is a generator of $CF(S', \alpha', \beta' ; \Z[H_2(M')] )$. Then $\partial x'$ contains a term $\pm e^{A'(\phi')} y'$, where $A': \pi_2(x',y') \To  H_2(M')$ is the assignment described in section \ref{sec:homotopy_classes_Whitney_discs}. After tensoring with $\Z[H_2(M)]$ using $\iota_*$, the term in $\partial x'$ becomes $\pm e^{\iota_* A'(\phi')} y'$.

The corresponding holomorphic disc contributing to the differential in $O_P$ has a homotopy class $\phi \in \pi_2(x,y)$. Here $x = p^k(x')$, $y = p^k(y')$, where $p^k : \Sym^k S' \To \Sym^k S$ is the map naturally induced by $p: S' \To S$. Then $\partial x$ contains a corresponding term $\pm e^{A(\phi)} y$ with the same sign $\pm$. Here $A: \pi_2(x,y) \To H_2(M)$ is a corresponding assignment. It remains to show that $\iota_* A'(\phi') = A(\phi)$. 

Recall from section \ref{sec:homotopy_classes_Whitney_discs} that $A': \pi_2(x',y') \To H_2(M')$ is defined by using a complete set of paths to give an isomorphism $\pi_2(x',y') \cong \pi_2(z',z')$, where $z' \in \T_{\alpha'} \cap \T_{\beta'}$ is some basepoint in the same spin-c class as $x'$ and $y'$; then we use the boundaries of a periodic domain to define an element of $H_1(\alpha') \cap H_1(\beta') \cong H_2(M')$. Similarly, $A: \pi_2(x,y) \To H_2(M)$ uses a complete set of paths for an isomorphism $\pi_2(x,y) \cong \pi_2(z,z)$, where $z = p^k (z')$, and then takes the boundary of a periodic domain to find an element of $H_1(\alpha) \cap H_1(\beta) \cong H_2(M)$.

We may compose a Whitney disc $u: \Disc \To \Sym^k S'$ from $x'$ to $y'$ with $p^k$ to obtain a Whitney disc $p^k \circ u: \Disc \To \Sym^k S$ from $x$ to $y$. Moreover a homotopy class $\phi \in \pi_2(x',y')$ determines a well-defined homotopy class $p^k \circ \phi \in \pi_2(x,y)$. So we obtain a map $p_*: \pi_2(x',y') \To \pi_2(x,y)$. In fact this map gives a bijection between the holomorphic discs in $CF(S',\alpha',\beta'; \Z[H_2(M')])$ and $O_P$, so $p_* (\phi') = \phi$.

We choose complete sets of paths on $(S', \alpha', \beta')$ and $(S, \alpha, \beta)$ so they correspond under $p^k$. In particular, we take $\theta'_{y'} \in \pi_2(z',y')$, $\theta'_{x'} \in \pi_2(z',x')$ and $\theta_y \in \pi_2(z,y)$, $\theta_x \in \pi_2(z,x)$ so that $p_* \theta'_{x'} = \theta_x$ and $p_* \theta'_{y'} = \theta_y$.

To find $A'(\phi')$, we take $\theta'_{y'} * \phi' * \theta_{x'}^{'-1} \in \pi_2(x',x')$, consider its domain $D(\theta'_{y'} * \phi' * \theta_{x'}^{'-1}) \in D(S',\alpha',\beta')$ and from its boundary obtain an element of $H_1(\alpha') \cap H_1(\beta') \cong H_2(M')$. Similarly, $A(\phi)$ is given via the boundary of the domain $D(\theta_y * \phi * \theta_x^{-1})$.

Now consider $\iota_* A'(\phi')$. By lemma \ref{lem:decomp_homology}, up to homotopy, $\iota$ restricts to $S'$ as $p$. So $\iota_*$ maps $D(\theta'_{y'} * \phi' * \theta_{x'}^{'-1})$ to $D( p_*(\theta'_{y'} * \phi' * \theta_{x'}^{'-1})) = D(\theta_y * \phi * \theta_x^{-1})$. It follows that $\iota_* A'(\phi') = A(\phi)$ as desired.
\end{proof}

We now obtain a useful corollary of the above theorem adapted to our purposes in defining twisted SQFT.
\begin{cor}
\label{cor:Juhasz_decomp_adapted}
Let $(M, \Gamma) \stackrel{U}{\rightsquigarrow} (M', \Gamma')$ be a sutured manifold decomposition, and $(S, \alpha, \beta, P)$ a sutured Heegaard diagram adapted to $U$, such that:
\begin{enumerate}
\item 
$(S, \alpha, \beta, P)$ is good
\item
$(S, \alpha, \beta)$ is admissible
\item
The map $\iota_*: H_2(M') \To H_2(M)$ is injective with image a direct summand of $H_2(M)$.
\item
$P$ contains no points of $\alpha \cap \beta$.
\end{enumerate}
Then taking a tensor product with $\Z[H_2(M)]$ over $\iota_*: \Z[H_2(M')] \To \Z[H_2(M)]$ gives an isomorphism
\[
\Z[H_2(M)] \otimes_{\Z[H_2(M')]} SFH(M', \Gamma'; \Z[H_2(M')])
\cong
SFH(M, \Gamma; \Z[H_2(M)]).
\]
\end{cor}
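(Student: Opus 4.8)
The plan is to reduce the statement to Theorem \ref{thm:Juhasz_gluing}, using hypotheses (iv) and (iii) to respectively pin down the set of spin-c structures that appears and to correct the coefficient ring.

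First I would run (the proof of) Theorem \ref{thm:Juhasz_gluing} with the given adapted diagram $(S,\alpha,\beta,P)$ as input: hypotheses (i) and (ii) say exactly that $(S,\alpha,\beta,P)$ is good, admissible and adapted to $U$, which is all the construction needs. This produces $(S',\alpha',\beta')$ and, after the niceness moves, the chain isomorphism
\[
\Z[H_2(M)] \otimes_{\Z[H_2(M')]} CF(S',\alpha',\beta';\Z[H_2(M')]) \;\cong\; \bigoplus_{\s\in O_U} CF(S,\alpha,\beta,\s;\Z[H_2(M)]) \;=\; O_P ,
\]
the last equality being the definition of $O_P$ as the submodule spanned by the outer intersection points. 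Now hypothesis (iv) enters: if $P$ contains no point of $\alpha\cap\beta$, then no $x\in\T_\alpha\cap\T_\beta$ meets $P$, so every intersection point is outer and $O_P$ is the whole complex; equivalently, by \cite[lem. 5.5]{Ju08} no inner spin-c structure supports a generator, so $SFH(M,\Gamma,\s;\Z[H_2(M)])=0$ for every $\s\notin O_U$, and hence $\bigoplus_{\s\in O_U}SFH(M,\Gamma,\s;\Z[H_2(M)])=SFH(M,\Gamma;\Z[H_2(M)])$. Taking homology of the displayed chain isomorphism therefore gives
\[
H\!\left(\Z[H_2(M)]\otimes_{\Z[H_2(M')]}CF(S',\alpha',\beta';\Z[H_2(M')]),\,1\otimes\partial\right)\;\cong\;SFH(M,\Gamma;\Z[H_2(M)]).
\]

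It remains to commute the tensor product past the homology on the left-hand side, and this is where hypothesis (iii) is used. Since $H_2(M')$ and $H_2(M)$ are free abelian and $\iota_*$ realises $H_2(M')$ as a direct summand, write $H_2(M)\cong\iota_*(H_2(M'))\oplus K$ with $K$ free abelian; then $\Z[H_2(M)]\cong\Z[H_2(M')]\otimes_\Z\Z[K]$ is a \emph{free} $\Z[H_2(M')]$-module, a basis being $\{e^{k}\}_{k\in K}$ for any set-theoretic section of $H_2(M)\to K$. A free module is flat, so $\Z[H_2(M)]\otimes_{\Z[H_2(M')]}(-)$ is exact and commutes with taking homology of $(CF(S',\alpha',\beta';\Z[H_2(M')]),\partial)$. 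Thus the left-hand side above equals $\Z[H_2(M)]\otimes_{\Z[H_2(M')]}SFH(M',\Gamma';\Z[H_2(M')])$, and combining this with the previous display yields the claimed isomorphism.

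The main obstacle I anticipate is the coefficient bookkeeping: one must keep the three objects $SFH(M',\Gamma';\Z[H_2(M')])$, the homology of the tensored complex, and $SFH(M,\Gamma;\Z[H_2(M)])$ distinct, and verify that the flatness step genuinely removes all $\mathrm{Tor}$ contributions — this is precisely what the direct-summand condition in (iii) buys, and without it one would only obtain an isomorphism up to a universal-coefficients correction term. A secondary point worth a line is that the niceness/admissibility moves invoked inside the proof of Theorem \ref{thm:Juhasz_gluing} do not disturb the conclusion drawn from (iv): it is enough to have observed, for the one diagram we were handed, that every intersection point is outer, since this already forces $SFH(M,\Gamma,\s;\Z[H_2(M)])=0$ for all inner $\s$.
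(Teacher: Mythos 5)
Your proof is correct and follows essentially the same route as the paper's: run Theorem \ref{thm:Juhasz_gluing} on the given adapted diagram, use hypothesis (iv) to deduce that $SFH(M,\Gamma)$ is supported on outer spin-c structures (so taking homology of the chain isomorphism yields $SFH(M,\Gamma)$), and use hypothesis (iii) to commute the tensor product with homology. You make the flatness step slightly more explicit than the paper, which simply asserts that kernels and images of $\partial'$ commute with $\Z[H_2(M)]\otimes_{\Z[H_2(M')]}(-)$, whereas you justify this by writing $\Z[H_2(M)]\cong\Z[H_2(M')]\otimes_\Z\Z[K]$ and observing it is free (hence flat) over $\Z[H_2(M')]$; and your final paragraph correctly identifies and resolves the one subtle point, namely that the niceness moves inside Theorem \ref{thm:Juhasz_gluing} may reintroduce intersection points in $P$, which is harmless because hypothesis (iv) on the original diagram already forces the inner spin-c summands of $SFH(M,\Gamma)$ to vanish.
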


\begin{proof}
As $(S, \alpha, \beta)$ is admissible, the twisted coefficient chain complex $CF(S, \alpha, \beta; \Z[H_2(M)])$ is well defined, with homology $SFH(M,\Gamma;\Z[H_2(M)])$. As $P$ contains no points of $\alpha \cap \beta$, all intersection points are outer; hence all nontrivial spin-c summands of $SFH(M,\Gamma;\Z[H_2(M)])$ are outer.

In order to have bijections between holomorphic curves counted in differentials, we need \emph{nice} Heegaard diagrams --- which are not assumed. So, as $(S, \alpha, \beta, P)$ is good and $(S, \alpha, \beta)$ is admissible, as above, by permissible moves on $\alpha$ and $\beta$ curves we  obtain a good and nice surface diagram $(S, \alpha_N, \beta_N, P)$ adapted to $U$ such that $(S, \alpha_N, \beta_N)$ is admissible and nice \cite[thm. 6.4, prop. 7.5]{Ju08}. However there might now be intersection points $\alpha_N \cap \beta_N$ in $P$, so we have a possibly nontrivial decomposition into outer and inner complexes:
\[ 
(CF(S, \alpha_N, \beta_N ; \Z[H_2(M)] ), \partial) \cong (O_P, \partial|_{O_P}) \oplus (I_P, \partial|_{I_P}).
\]
As all inner spin-c summands of $SFH(M,\Gamma;\Z[H_2(M)])$ are zero, however, we know $SFH(M,\Gamma;\Z[H_2(M)]) = H(O_P, \partial|_{O_P})$.

Cutting out $P$ and gluing in $P_A, P_B$ we obtain a sutured Heegaard diagram $(S', \alpha'_N, \beta'_N)$ for $(M', \Gamma')$, which is admissible. Then $CF(S', \alpha'_N, \beta'_N ; \Z[H_2(M')])$ is free over $\Z[H_2(M')]$ with generators in bijective correspondence with those of $O_P$. Moreover there is now a bijective correspondence between holomorphic curves counted in the chain complexes $CF(S', \alpha'_N, \beta'_N; \Z[H_2(M')])$ and $(O_P, \partial|_{O_P})$. As above, after tensoring with $\Z[H_2(M)]$ we obtain isomorphic differentials:
\[
(O_P, \partial|_{O_P}) \cong (\Z[H_2(M)] \otimes_{\Z[H_2(M')]} CF(S', \alpha'_N, \beta'_N), 1 \otimes \partial' ).
\]
Hence
\[
SFH(M, \Gamma; \Z[H_2(M)]) = H(O_P, \partial|_{O_P})
\cong H( \Z[H_2(M)] \otimes_{\Z[H_2(M')]} CF(S', \alpha'_N, \beta'_N ), 1 \otimes \partial' ).
\]
Now as $\iota_*$ injects $H_2(M')$ as a direct summand of $H_2(M)$, we have $\ker \partial|_{O_P} \cong \Z[H_2(M)] \otimes_{\Z[H_2(M')]} (\ker \partial')$ and $\im \partial|_{O_P} \cong \Z[H_2(M)] \otimes_{\Z[H_2(M')]} (\im \partial')$. Thus $H(\partial|_{O_P}) \cong \Z[H_2(M)] \otimes_{\Z[H_2(M')]} H(\partial')$; that is,
\begin{align*}
H \left( \Z[H_2(M)] \otimes_{\Z[H_2(M')]} CF(S', \alpha'_N, \beta'_N), 1 \otimes \partial' \right) 
&\cong \Z[H_2(M)] \otimes_{\Z[H_2(M')]} H \left( CF(S', \alpha'_N, \beta'_N), \partial' \right) \\
&\cong \Z[H_2(M)] \otimes_{\Z[H_2(M')]} SFH(M', \Gamma'; \Z[H_2(M')])
\end{align*}
giving the desired result.
\end{proof}

\subsection{Contact elements and gluing with twisted coefficients}
\label{sec:contact_gluing}

As mentioned above in section \ref{sec:SFH_background}, twisted $SFH$ provides invariants of contact structures. A contact structure $\xi$ on $(M, \Gamma)$ gives a contact invariant $c(\xi) \subset SFH(M,\Gamma;\Z[H_2(M)])$, which is the $\Z[H_2(M)]^\times$-orbit of a single element. So $c(\xi)$ is ``a contact element'', ambiguous up to multiplication by units in the coefficient ring. (Alternatively we could take $c(\xi) \in SFH(M, \Gamma; \Z[H_2(M)]) / \Z[H_2(M)]^\times$.)

Contact invariants in Heegaard Floer homology were introduced for closed manifolds in \cite{OSContact}, extended to the sutured case with integer coefficients in \cite{HKM09}, and to twisted coefficients in \cite{Ghiggini_Honda08}. 

Ghiggini--Honda \cite{Ghiggini_Honda08} extended a TQFT-type property of SFH and contact invariants to the case of twisted coefficients. In our language of module homomorphisms we may state it as follows. 
\begin{thm} \cite[Theorem 12]{Ghiggini_Honda08}
\label{thm:twisted_SFH_TQFT_map}
Let $\iota \; : \; (M', \Gamma') \hookrightarrow (M, \Gamma)$ be an inclusion of balanced sutured manifolds, with $M'$ lying in the interior of $M$. Let $\xi$ be a contact structure on $(M \backslash \Int \; M', \Gamma \cup \Gamma')$. Then there is a natural module homomorphism 
\[
\Phi_\xi \; : \; SFH(-M', -\Gamma' ; \Z[H_2(M')] ) \To SFH(-M, -\Gamma; \Z[H_2(M)] ),
\]
over the ring homomorphism $\iota_* : \Z[H_2(M')] \To \Z[H_2(M)]$. For any contact structure $\xi'$ on $(M', \Gamma')$, we have $\Phi_\xi c(\xi') \subseteq c(\xi \cup \xi')$.
\end{thm}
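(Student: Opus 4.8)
The plan is to adapt the Honda--Kazez--Mati\'{c} construction of the gluing map from \cite{HKM09}, following Ghiggini--Honda, and to check that the twisted weights $e^{A(\phi)}$ transform correctly under $\iota_*$ using exactly the homological bookkeeping developed for Theorem \ref{thm:Juhasz_gluing}. First I would fix the geometry: write $N = M \setminus \Int M'$, equipped with $\xi$ and with convex boundary $\partial M' \cup \partial M$ carrying dividing sets $\Gamma'$ and $\Gamma$. A partial open book decomposition of $(N;\Gamma',\Gamma)$ compatible with $\xi$ (Giroux; Honda--Kazez--Mati\'{c}) produces a balanced sutured Heegaard diagram $(S, \alpha, \beta)$ for $(-M,-\Gamma)$ which contains, as a sub-diagram, a balanced Heegaard diagram $(S_0, \alpha_0, \beta_0)$ for $(-M',-\Gamma')$, together with ``extra'' curves $\alpha_\xi, \beta_\xi$ and a distinguished point $\mathbf{x}_\xi \in \T_{\alpha_\xi} \cap \T_{\beta_\xi}$ --- the contact (EH) generator of $\xi$ --- positioned so that within the extra region no holomorphic Whitney disc emanates from $\mathbf{x}_\xi$. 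Using the admissibility technology of this paper (the reasoning behind Propositions \ref{prop:Heegaard_block_decomposition} and \ref{prop:Heegaard_block_decomposition_admissible}, or \cite[prop. 4.8]{Ju08}) one further isotopes the $\alpha$-curves so that $(S, \alpha, \beta)$ is admissible (and, if needed for combinatorial disc counts, nice), preserving everything above. One then defines $f : CF(S_0, \alpha_0, \beta_0; \Z[H_2(M')]) \To CF(S, \alpha, \beta; \Z[H_2(M)])$ on generators by $y \mapsto y \cup \mathbf{x}_\xi$, extended to be equivariant over $\iota_* : \Z[H_2(M')] \To \Z[H_2(M)]$.

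The crux is that $f$ is a chain map. By positivity of intersection and the structure of the partial open book diagram, any Maslov-index-$1$ class $\phi \in \pi_2(y \cup \mathbf{x}_\xi, z)$ admitting a holomorphic representative has domain supported in $S_0$; hence $z = y' \cup \mathbf{x}_\xi$ and $\phi$ comes from a unique $\phi_0 \in \pi_2(y, y')$ in $S_0$ by juxtaposition with the constant disc at $\mathbf{x}_\xi$, with $\widehat{\M}(\phi) \cong \widehat{\M}(\phi_0)$ sign-preservingly. What remains, and is new over twisted coefficients, is the identity $A(\phi) = \iota_* A(\phi_0)$ in $H_2(M)$. This is proved just as at the end of the proof of Theorem \ref{thm:Juhasz_gluing}: isotoping $S_0$ into position compatible with $\partial M'$, the inclusion $S_0 \hookrightarrow S$ extends up to homotopy to $\iota : M' \hookrightarrow M$, so that the map $H_1(\alpha_0) \cap H_1(\beta_0) \To H_1(\alpha) \cap H_1(\beta)$, i.e. $H_2(M') \To H_2(M)$, coincides with $\iota_*$ (a special case of Lemma \ref{lem:decomp_homology}); choosing a complete set of paths on $S_0$ and extending each by $\cup\, \mathbf{x}_\xi$ to one on $S$, the periodic-domain definition of the additive assignment $A$ sends the weight of $\phi_0$ to $\iota_*$ of it. Hence $\partial(y \cup \mathbf{x}_\xi) = (\partial' y) \cup \mathbf{x}_\xi$ after base change, so $f$ is a chain map over $\iota_*$ and induces $\Phi_\xi := f_*$ on homology.

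For well-definedness, invariance of twisted $SFH$ (the sutured version of \cite[thm. 8.2]{OS04Prop}, cf. section \ref{sec:twisted_coefficient_SFH}) shows $\Phi_\xi$ is independent of the partial open book, the Heegaard moves, the complete set of paths, and the sign conventions, up to the usual ambiguities of twisted theory; these multiply $f$ by a unit of $\Z[H_2(M)]^\times$, which is precisely the claimed ``up to units'' of a module homomorphism and is harmless since $c$ is only an orbit. For the contact-element statement, given $\xi'$ on $(M',\Gamma')$ choose a partial open book for $(-M',-\Gamma',\xi')$; its contact generator $\mathbf{x}_{\xi'}$ represents $c(\xi')$. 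Gluing this open book to that of $\xi$ on $N$ yields a partial open book for $(-M,-\Gamma,\xi\cup\xi')$ of the form used above, whose contact generator is exactly $\mathbf{x}_{\xi'} \cup \mathbf{x}_\xi = f(\mathbf{x}_{\xi'})$ and represents $c(\xi\cup\xi')$. Since $c(\xi')$ is the $\Z[H_2(M')]^\times$-orbit of $[\mathbf{x}_{\xi'}]$ and $\Phi_\xi$ is $\iota_*$-equivariant, $\Phi_\xi c(\xi')$ lies in the $\Z[H_2(M)]^\times$-orbit of $[\mathbf{x}_{\xi'} \cup \mathbf{x}_\xi]$, i.e. $\Phi_\xi c(\xi') \subseteq c(\xi\cup\xi')$ --- an inclusion, not equality, precisely because $\iota_*$ need not be surjective, mirroring the situation for decorated morphisms in twisted SQFT.

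The main obstacle is the second paragraph: arranging the partial-open-book diagram so that simultaneously $\mathbf{x}_\xi$ admits no outgoing holomorphic disc within the extra region (so that $f$ is genuinely a chain map) and the whole diagram is admissible over twisted coefficients --- which is exactly where this paper's analysis of periodic domains and admissibility (via tape/ribbon graphs) is genuinely needed --- and then verifying $A(\phi) = \iota_* A(\phi_0)$, which, though formally parallel to the computation closing the proof of Theorem \ref{thm:Juhasz_gluing}, requires care in matching complete sets of paths and the identifications $\pi_2(x,x) \cong H_2(\,\cdot\,)$ across the inclusion. The remaining ingredients --- positivity of intersection, invariance of $SFH$, and gluing of partial open books --- carry over from the untwisted case essentially verbatim.
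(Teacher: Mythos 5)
The paper does not prove this statement; it is quoted directly as Ghiggini--Honda's Theorem~12, and the only remark the paper makes about its proof (in the final theorem's proof) is that ``the $\Phi_\xi$ are defined, over $\Z$ or twisted coefficients, at the chain level simply by tensoring with a contact class\ldots these proofs extend without difficulty to twisted coefficients.'' There is therefore no in-paper proof to compare against; you are reconstructing the Ghiggini--Honda/Honda--Kazez--Mati\'{c} argument.

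As a reconstruction, your sketch is sound and captures the correct shape of the argument: the partial open book for $\xi$ on $M\setminus\Int M'$, the contact generator $\mathbf{x}_\xi$, the chain-level map $y \mapsto y \cup \mathbf{x}_\xi$, the positivity-of-intersection argument that holomorphic index-$1$ discs are supported in the sub-diagram $S_0$, and well-definedness up to units. You have also correctly isolated the one genuinely new ingredient over twisted coefficients --- the identity $A(\phi) = \iota_* A(\phi_0)$ --- and your mechanism for proving it (the inclusion $S_0 \hookrightarrow S$ extending to $\iota$ up to homotopy, so that $H_1(\alpha_0)\cap H_1(\beta_0) \to H_1(\alpha)\cap H_1(\beta)$ computes $\iota_*$, together with compatible complete sets of paths) is the right one and parallels what the paper proves for decompositions. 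Two small caveats: the appeal to Lemma \ref{lem:decomp_homology} should be to an \emph{analog} of it rather than a ``special case,'' since that lemma concerns the fold map $p : S' \to S$ from Juh\'{a}sz's decomposition, not a surface inclusion --- though the argument is the same in spirit; and your admissibility step leans on this paper's tape-graph machinery, which is specific to product manifolds $\Sigma\times S^1$ and not available in the generality of the statement, so one should instead invoke the general isotopy argument of \cite[prop.\ 4.8]{Ju08} (or the corresponding step in HKM) there. Neither point is a gap in the conceptual approach.
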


Saying that $\Phi_\xi$ is a module homomorphism over $\iota_*$ means that 
\[
\text{if } A \in H_2(M') \text{ and } x \in SFH(-M',-\Gamma';\Z[H_2(M')]), \text{ then } \Phi_\xi ( e^A \cdot x ) = e^{\iota_* A} \Phi_\xi x.
\]

\section{Heegaard decompositions for twisted SQFT}
\label{sec:TSQFT_Heegaard}
\label{sec:twisted_SFH_on_products}

We finally turn to the precise balanced sutured 3-manifolds we need for twisted SQFT: those of the form $(M, \Gamma) = (\Sigma \times S^1, V \times S^1)$ where $(\Sigma,V)$ is an occupied surface. We give explicit Heegaard decompositions and computations of $SFH$ with twisted coefficients for these manifolds, and demonstrate that they form a twisted SQFT.

Note that the coefficient ring is the group ring of $H_2(M) \cong H_1(\Sigma)$ --- this is immediate from the K\"{u}nneth formula or by deformation retraction of $\Sigma$ onto a 1-skeleton. So $SFH(M,\Gamma)$ is a $\Z[H_1(\Sigma)]$-module.

As mentioned in section \ref{sec:what_this_paper_does}, the Heegaard decompositions discussed here are essentially those of \cite{Massot09}. In proposition 14 of that paper, Massot gives the following calculation of twisted $SFH$:
\[
SFH(-\Sigma \times S^1, - V \times S^1) \cong {\bf V}^{\otimes I(\Sigma,V)} \cong \left( \Z[H_1(\Sigma)]_{(-1)} \oplus \Z[H_1(\Sigma)]_{(1)} \right)^{\otimes I(\Sigma,V)}
\]
where ${\bf V}$ is a free 2-dimensional module on $\Z[H_1(\Sigma)]$, and subscripts $(\pm1)$ graded copies of $\Z[H_1(\Sigma)]$. 

Given a quadrangulation, the number of tensor factors is the same as the number of squares. In fact we will obtain an explicit correspondence between generators of the Heegaard Floer chain complex and basic sutures on the quadrangulated surface. (We did not show this directly in \cite{Me12_itsy_bitsy}, as in that simpler case we could rely on abstract gluing theorems and use contact/suture elements.) Thus, our computations show directly how $SFH$ relates to topology. 

While \cite{Massot09} gives an explicit Heegaard decomposition, details relating to admissibility are omitted; and it seems that these details turn out to be rather subtle. However, they are illuminated by considering quadrangulations of occupied surfaces --- and, additionally, certain ribbon graph-type objects which we call \emph{tape graphs}, which arise as ``spines'' of occupied surfaces. Tape graphs may be interesting in their own right. In any case, we give Heegaard decompositions which are explicitly admissible.

These details, and the demonstration that they form a twisted SQFT, occupy the rest of this paper.

\subsection{Spines of occupied surfaces}
\label{sec:spines_of_occupied_surfaces}

We introduce a notion of \emph{spine} of an occupied surface, derived from a quadrangulation.

\begin{defn}
Let $Q$ be a quadrangulation of an occupied surface $(\Sigma,V)$. The \emph{positive spine} $G_Q^+$ (resp. negative spine $G_Q^-$) of $Q$ is the graph $G$, embedded in $\Sigma$, obtained by joining the positive (resp. negative) vertices of each square of $Q$ by a diagonal.
\end{defn}
In the following we will focus on $G_Q^+$ for definiteness, but similar results obviously apply also to $G_Q^-$.

Note that a spine may have loops and multiple edges. For instance, if $\Sigma$ has positive genus and $N = 1$, then $G_Q^+$ consists of multiple loops at one vertex. 

The vertices of $G_Q^+$ are precisely $V_+$ and there are $N$ of them; and there is one edge for each square of $Q$, hence $I(\Sigma,V) = N - \chi(\Sigma)$ edges. Thus $\chi(G_Q^\pm) = \chi(\Sigma)$; in fact we have the following.

\begin{lem}
The surface $\Sigma$ deformation retracts onto $G_Q^+$. Moreover, cutting $\Sigma$ along $G_Q^+$ produces $N$ discs, each of which has precisely $1$ negative vertex.
\end{lem}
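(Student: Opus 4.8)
The plan is to build the deformation retraction square-by-square and then analyse the complement. First I would recall that a quadrangulation $Q$ cuts $(\Sigma,V)$ into $I(\Sigma,V)$ squares glued along internal edges; equivalently, $\Sigma$ is obtained by gluing occupied squares along boundary edges running between vertices of opposite sign. On a single occupied square $(\Sigma^\square,V^\square)$, with its two positive vertices joined by a diagonal $d$, the square visibly deformation retracts onto $d$: push the two triangles on either side of $d$ in towards $d$, keeping the endpoints (the positive vertices) fixed. This retraction fixes the positive vertices and sweeps each negative vertex and its two adjacent boundary edges onto $d$. I would then argue that these per-square retractions are compatible along internal edges. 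An internal edge of $Q$ joins a positive to a negative vertex; under the retraction on each adjacent square, the internal edge is collapsed so that its negative endpoint slides to a positive vertex along the diagonal. One must choose the retractions on the two squares sharing an internal edge so that they agree on that edge — this is possible because the edge is collapsed to (an endpoint lying on) the diagonal on each side, and a common parametrisation of the collapse can be chosen. Patching the square retractions gives a deformation retraction of all of $\Sigma$ onto the union of diagonals, which is exactly $G_Q^+$. (Since $G_Q^+$ has vertex set $V_+$ with $N$ vertices and one edge per square, $\chi(G_Q^+) = N - I(\Sigma,V) = \chi(\Sigma)$, consistent with the retraction.)

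For the second assertion, I would cut $\Sigma$ along $G_Q^+$ and count pieces. Cutting each square along its positive diagonal splits it into two triangles, each triangle having one negative vertex (and two ``half'' positive vertices on the cut). When we reassemble the triangles along the internal and boundary edges of $Q$ — none of which lie on $G_Q^+$, since internal edges go between opposite signs and $G_Q^+$ consists only of the square diagonals — the triangles glue together along these edges. Each resulting component is a surface built from triangles, all of whose boundary is made of arcs of $G_Q^+$ (i.e. diagonal halves) together with the original vertices; since we have severed every loop of $G_Q^+$ at a vertex, the complement is simply connected, hence a disc. The negative vertices are untouched by the cutting (they are not on $G_Q^+$), and each negative vertex of $\Sigma$ survives as a vertex of exactly one complementary disc; conversely, an Euler-characteristic count of the $2 I(\Sigma,V)$ triangles glued along $2 I(\Sigma,V)$ boundary-plus-internal edges identifies the number of components as $N - \chi(\Sigma) + \chi(\Sigma) = N$ — wait, more carefully, $\chi$ of the cut-open surface equals $\chi(\Sigma) - \chi(G_Q^+) + (\text{number of vertices of } G_Q^+) = \chi(\Sigma) - \chi(\Sigma) + N = N$, and since each component is a disc (Euler characteristic $1$) there are exactly $N$ of them, each containing exactly one negative vertex.

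The main obstacle I expect is making the patching of per-square deformation retractions rigorous, i.e. checking that the retractions on adjacent squares can be chosen to agree (smoothly, or at least continuously) along the shared internal edges and at the vertices, so that they glue to a global deformation retraction; one clean way to finesse this is to define the retraction directly on $\Sigma$ by collapsing a regular neighbourhood of $G_Q^+$ appropriately, or to invoke the standard fact that a CW/handle structure in which every $2$-handle is attached along a loop based at the spine deformation retracts onto its $1$-skeleton. The disc-counting step is then essentially an Euler-characteristic bookkeeping argument, which I would keep brief, using that each component of the complement is simply connected (every generator of $\pi_1$ of a component would come from a loop in $G_Q^+$, all of which have been cut) and hence a planar surface with one boundary circle, i.e. a disc, containing precisely one negative vertex.
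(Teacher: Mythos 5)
Your plan reverses the paper's order: you propose to build the deformation retraction first (square-by-square) and then count the complementary pieces via Euler characteristic, whereas the paper establishes the disc decomposition first, by grouping triangles around each negative vertex, and then reads off the retraction from that structure. The paper's order is the more efficient one, and your reversed order runs into a genuine patching problem.

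The gap is in the square-by-square retraction at a negative vertex $v_-$ of valence $\geq 2$ (i.e.\ where three or more squares of $Q$ meet). You correctly observe that for two squares sharing an internal edge $v_- p$, compatibility forces the retraction to collapse that edge to the positive endpoint $p$; in particular $v_-$ must map to $p$. But $v_-$ typically lies on several internal edges $v_- p_1, \dots, v_- p_{k}$, and the same reasoning in each adjacent pair of squares forces $v_- \mapsto p_1, v_- \mapsto p_2$, etc., which is inconsistent. So the per-square retractions do \emph{not} agree on a neighbourhood of $v_-$; they can only be made to agree if, within each triangle $T_i$ incident to $v_-$, one allows the edge $v_- p_i$ to map onto a \emph{path} in $G_Q^+$ rather than a single point. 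Designing such a retraction coherently requires working on the whole fan of triangles around $v_-$ at once --- and that fan is precisely the disc $D_{v}$ that the paper constructs directly. In other words, your finessing alternatives (collapse a regular neighbourhood of $G_Q^+$, or invoke a CW-complex spine argument) do work, but they presuppose exactly the disc-with-free-boundary-arc structure that your plan tries to prove \emph{after} the retraction. This is the circularity to be wary of.

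For the second part, your Euler characteristic calculation $\chi(\mathrm{cut}) = \chi(\Sigma) + I = N$ is correct, but the step ``since we have severed every loop of $G_Q^+$, each piece is simply connected, hence a disc'' is loose: it needs $\pi_1(\Sigma) \cong \pi_1(G_Q^+)$ (i.e.\ the retraction, which is exactly what your patching argument has not yet nailed down) plus an argument that cutting kills all of $\pi_1$ in each piece, and it does not by itself pin down that each piece contains \emph{exactly} one negative vertex (as opposed to $N$ discs distributing $N$ negative vertices unevenly). The paper's argument sidesteps all of this at once: it exhibits each piece as an explicit finite fan of triangles around a single negative vertex $v \in \partial\Sigma$, which is visibly a disc with a free boundary arc and precisely one negative vertex, and then the retraction of each such disc onto its positive-to-positive boundary edges is immediate. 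I would recommend reorganising your proof in that order.
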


To be precise, cutting $\Sigma$ along $G_Q^+$ produces open discs; taking closure of these gives discs with signed vertices inherited from $(\Sigma,V)$. Within $(\Sigma,V)$, the union of one of these open discs and the adjacent edges and vertices may not be a disc, as in figure \ref{fig:annular_spine}.

\begin{figure}
\begin{center}
\def\svgwidth{100pt}
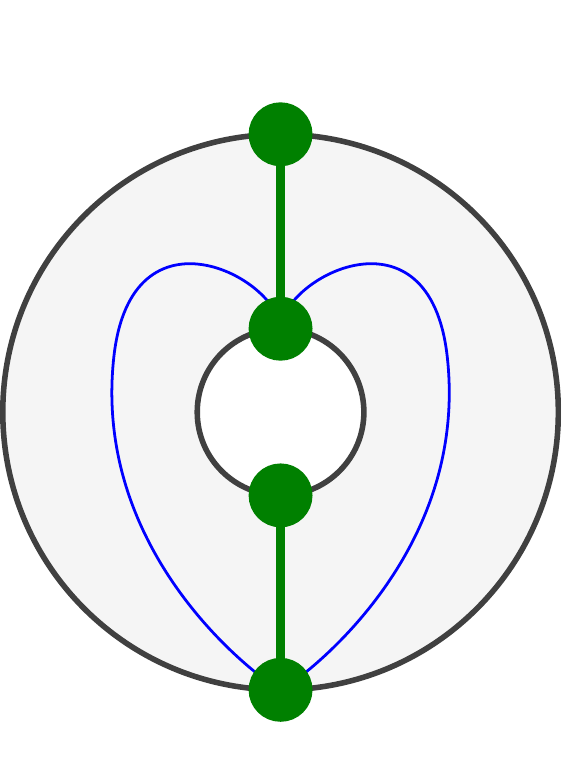
\caption{An annular occupied surface $(\Sigma,V)$ with two vertices on each boundary component. The green decomposition arcs give a quadrangulation with two squares. The blue arcs are diagonals forming the spine $G_Q^+$.}
\label{fig:annular_spine}
\end{center}
\end{figure}

\begin{proof}
Each edge of $G_Q^+$ cuts a square into two triangles; each triangle has two positive vertices and one negative vertex. After cutting along $G_Q^+$, the triangles are only glued along edges which run from $V_+$ to $V_-$. Thus, around each negative vertex $v$ there is a nonempty set of triangles, each of which has two positive vertices, glued along the edges emanating from $v$. As $v \in \partial \Sigma$, these triangles form a disc $D_v$ with one negative vertex $v$.

Further, the two edges adjacent to $v$ on the boundary of $D_v$ lie in $\partial \Sigma$, while all other boundary edges of $D_v$ run through the interior of $\Sigma$. So $D_v$ deformation retracts onto the union of edges between positive vertices, and together these give a deformation retraction of $\Sigma$ onto $G_Q^+$.
\end{proof}

As the only simply connected occupied surfaces are discs, we note that $\Sigma$ is a disc iff $G_Q^+$ is a tree.

Also, the following converse holds.

\begin{lem}
Let $G$ be an embedded graph in $(\Sigma,V)$, with vertices in $V_+$, which cuts $\Sigma$ into discs, each of which contains precisely one negative vertex. Then $G$ is the positive spine of a quadrangulation, unique up to isotopy.
\end{lem}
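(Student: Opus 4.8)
The plan is to reconstruct the quadrangulation directly from the complementary regions of $G$, and then to show that any quadrangulation with positive spine $G$ must induce the same pattern on those regions. First I would set up the discs. Since the vertices of $G$ lie in $V_+$, each $v \in V_-$ is disjoint from $G$ and hence lies in the closure of exactly one component of $\Sigma \setminus G$; by hypothesis each such closure contains exactly one negative vertex, so this gives a bijection $v \leftrightarrow D_v$ between $V_-$ and the complementary discs, and in particular there are $N$ of them. I would then read off the polygonal structure of $\partial D_v$: it is a cyclic sequence $v, w_1, \dots, w_\ell, v$ with all $w_i \in V_+$, in which the two edges incident to $v$ are boundary edges of $\Sigma$ and every other edge $w_i w_{i+1}$ is an edge of $G$. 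This uses the alternating-sign axiom (two boundary edges of $\Sigma$ cannot meet at a vertex of $V_+$, so consecutive boundary edges of $\Sigma$ along $\partial D_v$ are separated by edges of $G$) together with the fact that $\overline{D_v}$ contains only $v$ among the negative vertices (so its only boundary edges of $\Sigma$ are the two at $v$). The sole degenerate case is $\ell = 1$, when $\partial D_v$ consists of two boundary edges of $\Sigma$ and no edge of $G$; one checks this forces the component of $\Sigma$ containing $D_v$ to be a vacuum, where the statement is immediate (null quadrangulation, $G$ a single vertex), so that component is treated separately.

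For existence, cone each $D_v$ from $v$ by adding the properly embedded arcs $v w_2, \dots, v w_{\ell-1}$ (disjoint radii of the disc $D_v$), which cut $D_v$ into triangles $v w_i w_{i+1}$, each with one negative vertex $v$ and two positive vertices. Each edge $e$ of $G$ occurs exactly twice among the boundaries $\partial D_v$ (once from each side, possibly both in the same disc), bordering one triangle $T$ and one triangle $T'$; gluing $T$ to $T'$ along $e$ yields a disc with four vertices alternating in sign, i.e.\ a square, whose positive diagonal is $e$ (corners identified with one another are permitted, as with loops or multiple edges of $G$). The coning arcs are decomposing arcs, and cutting $\Sigma$ along them produces exactly these squares, the edges of $G$ being interior diagonals; so we obtain a quadrangulation $Q$ with $I(\Sigma,V) = N - \chi(\Sigma) = |E(G)|$ squares, using $\chi(G) = \chi(\Sigma)$ (as $\Sigma$ deformation retracts onto $G$ by pushing each $D_v$ toward the $G$-part of its boundary, just as in the preceding lemma). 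By construction $G_Q^+ = \bigcup_{e \in E(G)} e = G$.

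For uniqueness, let $Q'$ be any quadrangulation with $G_{Q'}^+ = G$ up to isotopy. By the preceding lemma, cutting $\Sigma$ along $G = G_{Q'}^+$ yields exactly the discs $D_v$, and each square of $Q'$, cut along its positive diagonal (an edge of $G$), splits into two triangles, one lying in each of the two adjacent discs. A triangle lying in $D_v$ has a negative vertex, necessarily $v$, and its two sides other than the $G$-edge are decomposing arcs of $Q'$ or boundary edges of $\Sigma$; since $v$ is the only negative vertex of $D_v$, every decomposing arc of $Q'$ in the interior of $D_v$ runs from $v$ to a vertex of $V_+$. Hence $Q'$ triangulates the polygon $D_v$ as a fan from $v$, and such a fan triangulation is unique up to isotopy, so $Q'$ induces the same triangulation on every $D_v$ as $Q$ does; re-gluing along $G$ gives $Q' = Q$ up to isotopy.

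The hard part will be establishing the polygonal structure of $\partial D_v$ and disposing cleanly of the non-generic configurations — loops and multiple edges of $G$, squares with some corners identified, and vacuum components — since these are exactly the situations where naive reasoning about $\partial D_v$ being an embedded polygon can break down. Once that structure is in hand, the coning, the re-gluing, and the uniqueness of fan triangulations are routine.
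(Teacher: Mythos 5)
Your proof is correct and takes essentially the same route as the paper's: cone each complementary disc $D_v$ from its unique negative vertex to triangulate it as a fan, then reglue the triangles across edges of $G$ into squares, and observe that the coning arcs form the quadrangulation with $G$ as its positive spine. The genuine addition is your explicit uniqueness argument --- the paper's proof constructs one quadrangulation and stops, while you observe that any $Q'$ with $G_{Q'}^+ = G$ must induce a fan triangulation of each $D_v$ from $v$ (since every decomposing arc interior to $D_v$ must have its negative endpoint at $v$), which is precisely what is needed to justify the ``unique up to isotopy'' claim in the statement; your handling of the $\ell=1$ vacuum case is likewise a small but legitimate point the paper leaves implicit.
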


Again, to be precise, we mean here that $G$ cuts $\Sigma$ into open discs, and taking their closure, we have precisely one negative vertex on each.

\begin{proof}
In each disc, the two edges adjacent to the negative vertex cannot be part of $G$. But all other edges must come from $G$.

In each disc, take a set of arcs joining the negative vertex to each non-adjacent positive vertex, and cutting the disc into triangles, each of which has one negative and two positive vertices. Let the set of all these arcs be $Q$; we show $Q$ is a quadrangulation. Cutting $\Sigma$ along $G$ and $Q$ produces these triangles, and regluing along an edge of $G$ glues two triangles together into an occupied square; as vertices alternate in sign around the boundary of such a square, none of its boundary edges come from $G$. Thus, regluing all edges of $G$ we obtain a collection of squares; so $Q$ cuts $\Sigma$ into squares and is a quadrangulation. The edges of $G$ are precisely the diagonals of those squares joining positive vertices, so $G=G_Q^+$.
\end{proof}

\subsection{Spines as ribbon graphs}
\label{sec:tape_graphs}

The spine of a quadrangulation is a graph, but it has more structure from its embedding in $\Sigma$. In particular, for each vertex $v$ of $G_Q^+$, the edges of $G_Q^+$ incident to $v$ have a natural total ordering --- in anticlockwise order. Precisely, this ordering can be given from the unit tangent space $UT_v \Sigma$, which is homeomorphic to a closed interval; an orientation on $\Sigma$ orders this interval.

This structure is similar to that of a \emph{ribbon} or \emph{fat graph}. But in ribbon graphs, the edges emanating from a vertex only have a \emph{cyclic} ordering, not a total ordering. Note that if the graph contains a loop, then both ends of an edge are at the same vertex; so the total ordering is really on \emph{half-edges}, splitting each edge into two at some arbitrary interior point. Each half-edge is incident to precisely one vertex. While the $d$ edges incident to a vertex $v$ of degree $d$ may not all be distinct, $v$ is always incident to precisely $d$ distinct half-edges. It is these half-edges around $v$ which are totally ordered.

We shall call our variant of a ribbon graph a \emph{tape graph}. 
\begin{defn}
A \emph{tape graph} is a finite graph, together with, for each vertex $v$, a total ordering of the half-edges incident to $v$.
\end{defn}

It will be useful to introduce several definitions for tape graphs.

At each vertex $v$ of a tape graph $G$, the total ordering on half-edges at $v$ has a first and last half-edge. We call the first and last half-edges at $v$ \emph{barrier half-edges} at $v$; we call their edges the \emph{barrier edges} at $v$. Other half-edges at $v$ are called \emph{internal}.

When we draw tape graphs in the plane, we draw the edges in order anticlockwise around each vertex, but use the following device to denote the total ordering. We enlarge (``blow up'') each vertex $v$ and draw it as an interval $I_v$. Edges are drawn in order anticlockwise around the interval, all emanating from one side of the interval, and totally ordered along the interval. So the barrier edges are the first and last edges emanating from the interval. See figure \ref{fig:blowing_up}

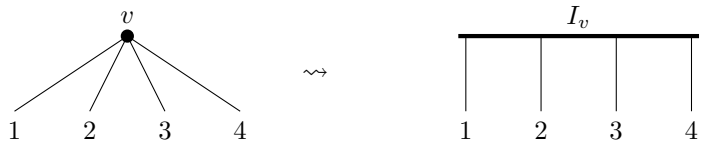
\begin{figure}
\begin{center}

\begin{tikzpicture}[
scale=0.5, 
boundary/.style={ultra thick}
]

\draw (0,2.5) node {$v$};
\fill [black] (0,2) circle (5pt);
\draw (0,2) -- (-3,0);
\draw (-3,-0.5) node {$1$};
\draw (0,2) -- (-1,0);
\draw (-1,-0.5) node {$2$};
\draw (0,2) -- (1,0);
\draw (1,-0.5) node {$3$};
\draw (0,2) -- (3,0);
\draw (3,-0.5) node {$4$};

\draw (5,1) node {$\rightsquigarrow$};

\draw (12,2.5) node {$I_v$};
\draw [boundary] (8.8,2) -- (15.2,2);
\draw (9,2) -- (9,0);
\draw (9,-0.5) node {$1$};
\draw (11,2) -- (11,0);
\draw (11,-0.5) node {$2$};
\draw (13,2) -- (13,0);
\draw (13,-0.5) node {$3$};
\draw (15,2) -- (15,0);
\draw (15,-0.5) node {$4$};

\end{tikzpicture}

\caption{Blowing up a vertex $v$ on a tape graph. Edges incident to $v$ are labelled in order.}
\label{fig:blowing_up}
\end{center}
\end{figure}

There is a well-defined \emph{thickening} of a tape graph. Edges are thickened into rectangles (``tapes''), vertices are thickened into discs, and thickened edges are glued to thickened vertices according to their ordering. A thickened tape graph is a surface with boundary $\Sigma$, and we may regard the boundary as consisting of \emph{sides} of thickened edges. We may thus speak of a \emph{side} of an edge, and a \emph{half-side} of a half-edge. If $\Sigma$ is orientable then sides (and half-sides) of edges inherit orientations from $\partial \Sigma$; the two sides of an edge are oriented in opposite directions. See figure \ref{fig:thickening}.

Furthermore, the half-sides at a vertex $v$ inherit an ordering; for each half-edge $h$ at $v$, as in figure \ref{fig:thickening} we write its two half-sides as $h_-, h_+$ where $h_- < h_+$. If $\Sigma$ is orientable, then for any edge $e$, with half-edges $h,h'$, the two sides of $e$ are $h_- \cup h'_+$ and $h'_- \cup h_+$; moreover the orientation on these half-sides induced by $\partial \Sigma$ points from $h_-$ to $h'_+$ and from $h'_-$ to $h_+$.

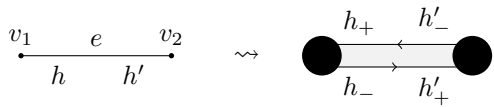
\begin{figure}
\begin{center}

\begin{tikzpicture}[
scale=0.5, 
boundary/.style={ultra thick}
]

\draw (0,0.5) node {$v_1$};
\fill [black] (0,0) circle (2pt);
\draw (0,0) -- node [above] {$e$} (4,0);
\draw (4,0.5) node {$v_2$};
\draw (1,-0.5) node {$h$};
\draw (3,-0.5) node {$h'$};
\fill [black] (4,0) circle (2pt);

\draw (6,0) node {$\rightsquigarrow$};

\fill [gray!10] (8,0.3) -- (12,0.3) -- (12,-0.3) -- (8,-0.3) -- cycle;
\fill [black] (8,0) circle (15pt);
\fill [black] (12,0) circle (15pt);
\draw [->] (12,0.3) -- node [above] {$h'_-$} (10,0.3);
\draw (10,0.3) -- node [above] {$h_+$} (8,0.3);
\draw [->] (8,-0.3) -- node [below] {$h_-$} (10,-0.3);
\draw (10,-0.3) -- node [below] {$h'_+$} (12,-0.3);
\end{tikzpicture}

\caption{Thickening a tape graph. Half-edges $h,h'$ on the graph become half-sides $h_\pm, h'_\pm$.}
\label{fig:thickening}
\end{center}
\end{figure}

We can use this idea to define orientability of a general tape graph $G$ (not necessarily a spine of a quadrangulation). We say $G$ is \emph{oriented} if, for each edge $e$, with half-edges $h,h'$, the two half-sides $h_- \cup h'_+$ form one side of $e$, and $h'_- \cup h_+$ form the other side. This corresponds precisely to when the orderings on edges at each vertex are compatible with an orientation on the thickened graph; a tape graph thickens into an oriented surface with boundary iff it is oriented.

At a vertex $v$ of degree $d$, let the incident half-edges be, in order, $h_1, \ldots, h_d$; so the incident half-sides are, in order, $h_{1-}, h_{1+}, \ldots, h_{d-}, h_{d+}$. We call the first and last half-sides $h_{1-}, h_{d+}$ the \emph{barrier half-sides} at $v$; and the sides to which they belong the \emph{barrier sides} at $v$; other sides and half-sides are called \emph{internal}. 

When $G = G_Q^+$ is the spine of a quadrangulation on $(\Sigma,V)$, a small neighbourhood of a thickened vertex $I_v$ is split by the $d$ incident half-edges into $d+1$ components, which we call \emph{wedges} at $v$. (We regard the half-edges as cutting through $I_v$.) These wedges have a natural total ordering, and we may label them in order $w_0, w_1, \ldots, w_d$. We call $w_0, w_d$ \emph{barrier wedges}, and other wedges \emph{internal}. Each internal wedge $w_i$ is bounded by $h_i$ and $h_{i+1}$; or more precisely, by the half-sides $h_{i+}$ and $h_{(i+1)-}$. As for the barrier wedges, $w_0$ is bounded by the end of the blown-up $I_v$, and $h_{1-}$; while $w_d$ is bounded by the other end of $I_v$, and $h_{d+}$. See figure \ref{fig:wedges}.  

\begin{figure}
\begin{center}

\begin{tikzpicture}[
scale=1, 
boundary/.style={ultra thick}
]

\draw [boundary] (-2.5,0) -- (2.5,0);
\draw (0,.5) node {$I_v$};
\draw (-2,0) -- (-2,-2);
\draw (0,0) -- (0,-2);
\draw (2,0) -- (2,-2);
\draw (-2,-2.5) node {$h_1$};
\draw (-2.5,-1.7) node {$h_{1-}$};
\draw (-1.5,-1.7) node {$h_{1+}$};
\draw (0,-2.5) node {$h_2$};
\draw (-0.5,-1.7) node {$h_{2-}$};
\draw (0.5,-1.7) node {$h_{2+}$};
\draw (2,-2.5) node {$h_3$};
\draw (1.5,-1.7) node {$h_{3-}$};
\draw (2.5,-1.7) node {$h_{3+}$};
\draw (2.5,-0.5) node {$w_3$};
\draw (1,-0.5) node {$w_2$};
\draw (-1,-0.5) node {$w_1$};
\draw (-2.5,-0.5) node {$w_0$};

\end{tikzpicture}

\caption{Wedges at a vertex of degree $3$.}
\label{fig:wedges}
\end{center}
\end{figure}
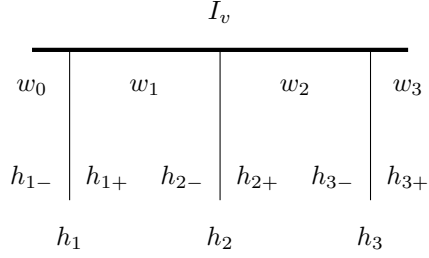

Any spine $G_Q^+$ of a quadrangulation $Q$ on an occupied surface $(\Sigma, V)$ is a tape graph; however that the converse is not true.

For instance, consider the oriented tape graph $G$ shown in figure \ref{fig:tape_graph_not_spine}. Suppose $G$ is the spine of a quadrangulation on an occupied surface $(\Sigma,V)$. Then $\Sigma$ is homeomorphic to the thickening of $G$, and each component of $\partial \Sigma$ contains a positive vertex, so each boundary component of the thickened $G$ must contain a vertex and its adjacent barrier half-sides. As the boundary component $C$ of the thickened $G$ shown fails this condition, $G$ is not the spine of any quadrangulation.

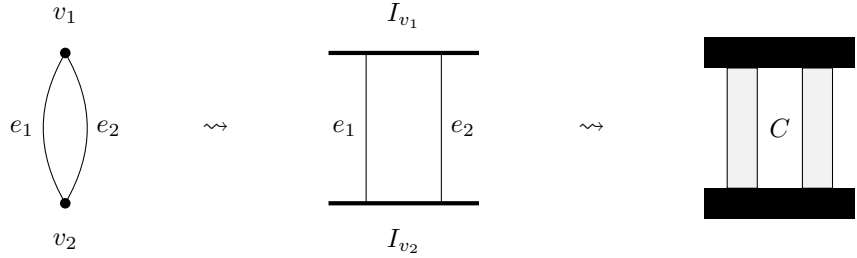
\begin{figure}
\begin{center}

\begin{tikzpicture}[
scale=1, 
boundary/.style={ultra thick}
]

\draw (0,1.5) node {$v_1$};
\draw (0,-1.5) node {$v_2$};
\fill [black] (0,-1) circle (2pt);
\fill [black] (0,1) circle (2pt);
\draw (0,-1) to [bend left=30] node [left] {$e_1$} (0,1);
\draw (0,-1) to [bend right=30] node [right] {$e_2$} (0,1);

\draw (2,0) node {$\rightsquigarrow$};

\draw (4.5,1.5) node {$I_{v_1}$};
\draw (4.5,-1.5) node {$I_{v_2}$};
\draw [boundary] (3.5,1) -- (5.5,1);
\draw [boundary] (3.5,-1) -- (5.5,-1);
\draw (4,1) -- node [left] {$e_1$} (4,-1);
\draw (5,1) -- node [right] {$e_2$} (5,-1);

\draw (7,0) node {$\rightsquigarrow$};

\draw [black, fill = gray!10] (8.8,0.8) -- (9.2,0.8) -- (9.2,-0.8) -- (8.8,-0.8) -- cycle;
\draw [black, fill = gray!10] (9.8,0.8) -- (10.2,0.8) -- (10.2,-0.8) -- (9.8,-0.8) -- cycle;
\fill [black] (8.5,1.2) -- (10.5,1.2) -- (10.5,0.8) -- (8.5,0.8) -- cycle;
\fill [black] (8.5,-1.2) -- (10.5,-1.2) -- (10.5,-0.8) -- (8.5,-0.8) -- cycle;
\draw (9.5,0) node {$C$};

\end{tikzpicture}

\caption{A tape graph which is not the spine of any quadrangulated surface: the graph, its blowup, and thickening.}
\label{fig:tape_graph_not_spine}
\end{center}
\end{figure}

However, the condition found above is necessary and sufficient to be a spine of a quadrangulation.
\begin{prop}
\label{prop:tape_graph_quadrangulation}
An oriented tape graph $G$ is the spine of a quadrangulation $Q$ on an occupied surface iff every boundary component of the thickening of $G$ contains a vertex and its adjacent barrier half-sides.
\end{prop}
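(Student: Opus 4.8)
The plan is to prove both directions, the forward one being essentially the sketch given just before the statement, and the reverse one being the substantial direction, which constructs the occupied surface from the tape graph and then invokes the converse lemma of Section~\ref{sec:spines_of_occupied_surfaces}.

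\emph{Forward direction.} Suppose $G = G_Q^+$ for a quadrangulation $Q$ of an occupied surface $(\Sigma,V)$, equipped with the tape-graph structure given by the anticlockwise ordering of diagonals at each positive vertex. First I would observe that the decomposition of $\Sigma$ into the squares of $Q$ exhibits $\Sigma$ as the thickening of $G$: each square is a neighbourhood of its diagonal (a thickened edge), and the squares incident to a positive vertex $v$ form a fan between the two boundary edges of $\partial\Sigma$ at $v$, which is exactly the local model of a thickened vertex $I_v$ with its incident thickened edges in tape order, the two boundary edges of $\partial\Sigma$ at $v$ lying in the two barrier wedges $w_0,w_d$. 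Thus there is an orientation-preserving homeomorphism $\Sigma \cong \Sigma_G$ carrying $G$ to $G$ and $V_+$ into the arcs $I_v$. Now each boundary component $C$ of $\Sigma\cong\Sigma_G$ contains a positive vertex $v$ (since $C\cap V\neq\emptyset$ and signs alternate along $C$), hence contains $I_v$; and the two boundary edges of $\partial\Sigma$ at $v$ lie in $C$ and contain the barrier wedges $w_0,w_d$, hence the barrier half-sides at $v$. This is the required condition.

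\emph{Reverse direction.} Suppose $G$ is an oriented tape graph each of whose thickened boundary components contains a vertex together with its adjacent barrier half-sides; in particular each boundary component meets some arc $I_v$. I would set $\Sigma = \Sigma_G$, the thickening, a compact oriented surface with boundary which deformation retracts onto $G$, with $G\cap\partial\Sigma$ equal to the set of vertices of $G$, each in the interior of its arc $I_v$. Choose $V_+$ to consist of one interior point of each $I_v$; let $\sigma_1,\dots,\sigma_s$ be the closures of the components of $\partial\Sigma\setminus V_+$; choose $V_-$ to consist of one interior point of each $\sigma_j$; and put $V = V_+\sqcup V_-$. Then $(\Sigma,V)$ is an occupied surface: each component of $\Sigma$ has boundary (being a thickened graph); vertices alternate in sign along each boundary component by construction; and each boundary component contains a vertex --- this is precisely where the hypothesis is used, ruling out a boundary circle carrying only a point of $V_-$. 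It then suffices, by the converse lemma of Section~\ref{sec:spines_of_occupied_surfaces}, to check that $G$ cuts $\Sigma$ into discs each containing exactly one negative vertex. For this I would describe $\overline{\Sigma\setminus G}$ explicitly: cutting each thickened edge along its core yields two rectangles, and cutting each thickened vertex disc $\hat v$ along the $\deg(v)$ half-edge cores emanating from the boundary point $v$ yields $\deg(v)+1$ sub-discs, one per wedge $w_i$ at $v$. Reassembling, each component of $\overline{\Sigma\setminus G}$ is a union of wedge-discs and edge-rectangles glued along boundary arcs; tracing its boundary, at a barrier wedge $w_0$ the component ``turns around'', runs along $I_v$ to the opposite barrier wedge $w_d$, and so its intersection with $\partial\Sigma$ is a single arc $\sigma_j$. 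An Euler-characteristic count, using $\chi(\Sigma)=\chi(G)=|V_+|-E$, then gives exactly $s=|V_+|$ components, each a disc meeting $\partial\Sigma$ in one $\sigma_j$ and hence containing exactly one point of $V_-$. Applying the converse lemma completes the proof and yields uniqueness of $Q$ up to isotopy.

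I expect the main obstacle to be the final step: making the decomposition of $\overline{\Sigma\setminus G}$ into wedge-discs and edge-rectangles precise, and in particular verifying that each component meets $\partial\Sigma$ in a single arc $\sigma_j$, so that the counts (number of components equal to $|V_+|=s$, one $\sigma_j$ per component) match up. The orientability hypothesis enters here through the identification of the two sides of each edge as $h_-\cup h'_+$ and $h'_-\cup h_+$, which dictates how the edge-rectangles attach to the wedge-discs and hence is what forces the components to be discs rather than annuli.
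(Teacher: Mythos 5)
Your reverse direction takes a different route from the paper's: rather than building the occupied surface from scratch by gluing fans of triangles onto $G$ (one fan for each maximal arc of a boundary component between consecutive breakpoints, which is what the paper does), you take $\Sigma$ to be the thickening $\Sigma_G$, decorate it, and reduce to the converse lemma of section \ref{sec:spines_of_occupied_surfaces}. That is a reasonable economy, and the forward direction and the set-up of $(\Sigma,V)$ are fine. But there is a genuine gap in the step you yourself flag as the obstacle.

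You attribute the claim that the components of $\overline{\Sigma_G\setminus G}$ are discs rather than annuli to the orientability of $G$. That is not what is doing the work. Orientability ensures the thickening is an orientable surface and fixes how each half-rectangle attaches to the wedges at its two ends, but it does not prevent a chain of wedges and half-rectangles from closing up. A concrete counterexample: the oriented tape graph with one vertex $v$ and one loop $e$ (half-edges $h<h'$, sides $h_-\cup h'_+$ and $h'_-\cup h_+$) thickens to an annulus, and cutting along $G$ produces one disc (the chain $w_0$ -- half-rectangle -- $w_2$) and one \emph{annulus}: the internal wedge $w_1$ is glued to the other half-rectangle along two disjoint arcs, once at each end of the loop. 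That annular component sits against the boundary circle of $\Sigma_G$ that has no breakpoint --- precisely the configuration the hypothesis of the proposition forbids. So orientability does not give you disc-ness; the breakpoint hypothesis does. What you should argue is that a component of $\overline{\Sigma_G\setminus G}$ is a linear chain (internal wedges have two half-rectangle neighbours, barrier wedges one, each half-rectangle two wedge neighbours), so each component is either a path terminating at barrier wedges (a disc meeting $\partial\Sigma_G$ in a single $\sigma_j$) or a cycle of internal wedges (an annulus whose $\partial\Sigma_G$ side is a whole boundary circle with no breakpoint). The hypothesis rules out the latter; the number of path-components is then the number of breakpoints, which is $|V_+|$, one per vertex, and your Euler-characteristic computation becomes a consistency check rather than the engine. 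As written, you invoke the hypothesis only to verify the occupied-surface condition, and the disc claim is left resting on the wrong hypothesis. Relatedly, the phrase ``runs along $I_v$ to the opposite barrier wedge $w_d$'' is misleading: the two barrier wedges at the same vertex are separated by $v$ on the barrier arc and generically lie in different components; a path-component runs from a barrier wedge at one vertex to a barrier wedge at a (generally different) vertex.
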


Let $C$ be a boundary component of a thickened oriented tape graph $G$. So $C$ is an oriented circle, consisting of a cyclic sequence of $k$ vertices $v_i$ and $k$ sides $s_i$, in order $v_0, s_0, v_1, s_1, \ldots, v_{k-1}, s_{k-1}, v_k = v_0$, where indices are taken modulo $k$. For each $i$, the sides $s_{i-1}, s_i$ are adjacent at the vertex $v_i$, and precisely one of the following occurs:
\begin{enumerate}
\item
the sides $s_{i-1}, s_i$ are consecutive at $v_i$, with $s_{i-1} < s_i$; or
\item
the sides $s_{i-1}, s_i$ are the barrier sides at $v_i$, with $s_{i-1}$ maximal and $s_i$ minimal, so $s_{i-1} > s_i$.
\end{enumerate}
Call $v_i$ a \emph{breakpoint} of $C$ if the second case occurs. 
The proposition can now be restated: an orientable $G$ is a spine iff each boundary component of its thickening contains a breakpoint.

\begin{proof}
The argument above shows that if $G$ is a spine then every boundary component contains a breakpoint.

Conversely, suppose $G$ is orientable and each every boundary component of its thickening contains a breakpoint. Consider a boundary component $C$ of the thickened $G$, which is a cyclic sequence of vertices and sides of edges, containing at least one breakpoint.

Breaking $C$ at its breakpoints splits $C$ into (bona fide, non-cyclic) subsequences (possibly just one subsequence). Each subsequence $C'$ is of the form $v_0, s_0, \ldots, v_{k-1}, s_{k-1}, v_k$, where: $k \geq 1$; $v_0, v_l$ are breakpoints; $s_0$ is the minimal side at $v_0$; $s_{k-1}$ is the maximal side at $v_k$; and for $1 \leq i \leq l-1$, the sides $s_{i-1} < s_i$ are consecutive sides at $v_i$. Every side of every edge in $G$ is contained in precisely one such subsequence.

For such a subsequence $C'$, we take $k$ oriented triangles $\Delta_0, \ldots, \Delta_{k-1}$. Each $\Delta_i$ has one negative vertex $\bar{u}_i$ and two positive vertices; let $\bar{e}_i$ be the positive-to-positive edge opposite $\bar{u}_i$. We glue the triangles into a fan, respecting signs, identifying all the $\bar{u}_i$ into a single vertex $\bar{u}$, and gluing the $\Delta_i$ in anticlockwise order around $\bar{u}$. The oriented boundary of the fan consists of an edge $\bar{e}_0^-$ of $\Delta_0$, the $k$ edges $\bar{e}_0, \ldots, \bar{e}_{k-1}$, and an edge $\bar{e}_{k-1}^+$ of $\Delta_{k-1}$. See figure \ref{fig:triangle_fan}. We glue this fan of triangles to $C'$, gluing each edge $\bar{e}_i$ to the side $s_i$.

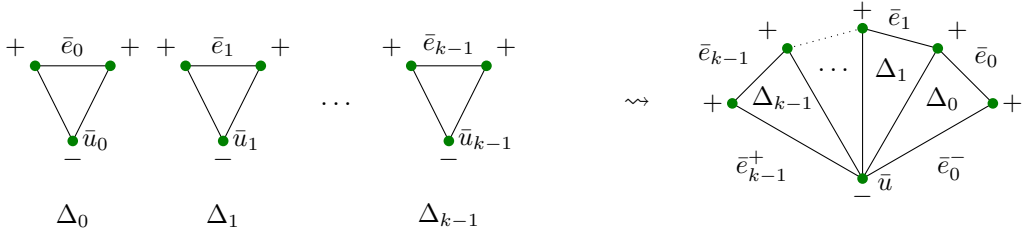
\begin{figure}
\begin{center}

\begin{tikzpicture}[
scale=1, 
boundary/.style={ultra thick}
]

\coordinate [label = above left:{$+$}] (0tl) at (0,0.5);
\coordinate [label = above right:{$+$}] (0tr) at (1,0.5);
\coordinate [label = below: {$-$}, label = right: {$\bar{u}_0$}] (0b) at (0.5,-0.5);
\coordinate [label = above left: {$+$}] (1tl) at (2,0.5);
\coordinate [label = above right:{$+$}] (1tr) at (3,0.5);
\coordinate [label = below: {$-$}, label = right: {$\bar{u}_1$}] (1b) at (2.5,-0.5);
\coordinate [label = above left: {$+$}] (ftl) at (5,0.5);
\coordinate [label = above right:{$+$}] (ftr) at (6,0.5);
\coordinate [label = below: {$-$}, label = right: {$\bar{u}_{k-1}$}] (fb) at (5.5,-0.5);
\coordinate [label = below: {$-$}] (gb) at (11,-1);
\coordinate [label = right: {$+$}] (gv0) at ($ (11,-1) + (30:2) $);
\coordinate [label = above right: {$+$}] (gv1) at ($ (11,-1) + (60:2) $);
\coordinate [label = above: {$+$}] (gv2) at ($ (11,-1) + (90:2) $);
\coordinate [label = above left: {$+$}] (gvsl) at ($ (11,-1) + (120:2) $);
\coordinate [label = left: {$+$}] (gvl) at ($ (11,-1) + (150:2) $);

\draw (0tl) -- node [above] {$\bar{e}_0$} (0tr) -- (0b) -- cycle;
\draw (0.5,-1.5) node {$\Delta_0$};
\draw (1tl) -- node [above] {$\bar{e}_1$} (1tr) -- (1b) -- cycle;
\draw (2.5,-1.5) node {$\Delta_1$};
\draw (4,0) node {$\ldots$};
\draw (ftl) -- node [above] {$\bar{e}_{k-1}$} (ftr) -- (fb) -- cycle;
\draw (5.5,-1.5) node {$\Delta_{k-1}$};

\draw (8,0) node {$\rightsquigarrow$};

\draw (gb) -- node [below right] {$\bar{e}_0^-$} (gv0) -- node [above right] {$\bar{e}_0$} (gv1) -- (gb) (gv1) -- node [above] {$\bar{e}_1$} (gv2) -- (gb) -- (gvsl) -- node [above left] {$\bar{e}_{k-1}$} (gvl) -- node [below left] {$\bar{e}_{k-1}^+$} (gb);
\draw [dotted] (gv2) -- (gvsl);
\draw [xshift = 11 cm, yshift = -1 cm] (105:1.5) node {$\ldots$};
\draw ($ (gb) + (0.3,-0.05) $) node {$\bar{u}$};
\draw ($ (gb) + (45:1.5) $) node {$\Delta_0$};
\draw ($ (gb) + (75:1.5) $) node {$\Delta_1$};
\draw ($ (gb) + (135:1.5) $) node {$\Delta_{k-1}$};

\foreach \point in {(0tl), (0tr), (0b), (1tl), (1tr), (1b), (ftl), (ftr), (fb), (gb), (gv0), (gv1), (gv2), (gvsl), (gvl)}
\fill [green!50!black] \point circle (2pt);

\end{tikzpicture}

\caption{Gluing triangles together into a fan, to construct a surface from its spine.}
\label{fig:triangle_fan}
\end{center}
\end{figure}

Gluing fans of triangles on to $G$ in this way, we obtain an oriented surface $\Sigma$. Precisely one triangle is glued to each side of each edge of $G$, so $\Sigma$ is a triangulated surface. The boundary of $\Sigma$ consists of the vertices of $G$, and the pairs of edges $e_{k-1}^+, e_0^-$ of each fan of triangles, which are joined at the negative vertices $v_-$. Thus the boundary contains vertices $V$ which alternate in sign, so $(\Sigma,V)$ is an occupied surface. The triangles are glued in pairs along positive-to-positive edges (i.e. the edges of $G$) to form a quadrangulation $Q$ of $(\Sigma,V)$, and $G$ is the positive spine of $Q$, $G = G_Q^+$.
\end{proof}

\subsection{Heegaard blocks}
\label{sec:Heegaard_blocks}

We will build Heegaard decompositions of $(\Sigma \times S^1, V \times S^1)$ out of building blocks which we call \emph{Heegaard blocks}, corresponding to the squares of a quadrangulation $Q$. That is, letting $(\Sigma^\square, V^\square)$ denote an occupied square, we construct a Heegaard decomposition of $(\Sigma^\square \times S^1, V^\square \times S^1)$. We will show in section \ref{sec:building_Heegaard} that the Heegaard decompositions of squares of $Q$ piece together to give a Heegaard decomposition of $(\Sigma \times S^1, V \times S^1)$.

We draw $\Sigma^\square \times S^1$ as $\Sigma^\square \times [-1,1]$, with $\Sigma^\square$ horizontal and $[-1,1]$ vertical, and with top and bottom $\Sigma^\square \times \{-1,1\}$ identified, so $S^1 \cong \R/2\Z$.

As noted in section \ref{sec:SQFT_from_SFH}, we may endow $\Sigma^\square \times S^1$ with $V$-sutures or $F$-sutures; the two are equivalent. We use $F$-sutures, so the sutures $\Gamma = F \times S^1$, where $F$ are the midpoints of edges of the square. Points of $F$ are signed as described in section \ref{sec:SQFT_from_SFH}, and sutures $F \times S^1$ oriented according to these signs (sutures go up/down in our pictures as $F$ is positive/negative). The sutures $\Gamma$ split $\partial (\Sigma^\square \times S^1)$ into four annuli; each annulus contains precisely one of the circles $\{v\} \times S^1$ where $v \in V$, so that $V_\pm \times S^1 \subset R_\pm$.

We now describe a \emph{positive Heegaard block}, a Heegaard decomposition of $(\Sigma^\square \times S^1, F \times S^1)$. Let $\gamma_+$ denote the standard basic sutures on the square; they cut $\Sigma^\square$ into three connected components. There are two quarter-circle neighbourhoods $N(V_+)$ of the positive vertices $V_+$, which together comprise $R_+(\gamma_+)$; the remaining component is $R_-(\gamma_+)$. Denote by $e_+$ the diagonal between positive vertices, and let $e'_+ \subset e_+$ be a shorter segment joining the two components of $\gamma_+$. See figure \ref{fig:Heegaard_block_top_view}; this is roughly the ``top view'' of a Heegaard block.

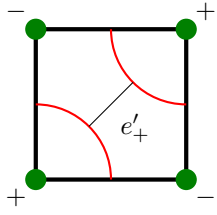
\begin{figure}
\begin{center}

\begin{tikzpicture}[
scale=2, 
boundary/.style={ultra thick}, 
vertex/.style={draw=green!50!black, fill=green!50!black},
suture/.style={thick, draw=red}
]

\coordinate [label = above left:{$-$}] (tl) at (0,1);
\coordinate [label = below left:{$+$}] (bl) at (0,0);
\coordinate [label = below right:{$-$}] (br) at (1,0);
\coordinate [label = above right:{$+$}] (tr) at (1,1);

\draw [boundary] (tl) -- (bl) -- (br) -- (tr) -- cycle;
\draw [suture] (0.5,0) arc (0:90:0.5);
\draw [suture] (0.5,1) arc (180:270:0.5);
\draw [black] ($ (bl) + (45:0.5) $) -- node [below right] {$e'_+$} ($ (tr) + (-135:0.5) $);

\foreach \point in {(tl), (bl), (br), (tr)}
\fill [green!50!black] \point circle (2pt);

\end{tikzpicture}

\caption{The shortened diagonal $e'_+$ between basic sutures on a square.}
\label{fig:Heegaard_block_top_view}
\end{center}
\end{figure}

Now consider $\Sigma^\square \times S^1$. The sutures $\gamma_+$ lift to two \emph{sheets} $\gamma_+ \times S^1$, which split $\Sigma^\square \times S^1$ into three connected components: two \emph{nooks}, which are neighbourhoods $N(V_+) \times S^1$ of $V_+ \times S^1$; and a \emph{bulk} region $R_-(\gamma_+) \times S^1$. The path $e'_+ \times \{0\}$ connects one sheet to the other. A neighbourhood $N(e'_+ \times \{0\})$ of this path forms a \emph{tunnel} connecting the two nooks. 

Define two subsets of $\Sigma^\square \times S^1$.
\begin{enumerate}
\item
$A = \left( R_-(\gamma_+) \times S^1 \right) \backslash N \left( e'_+ \times \{0\} \right)$. That is, $A$ consists of the bulk, with the tunnel drilled out. It is a genus 2 handlebody with $R_- \subset \partial A$.
\item
$B = \left( N(V_+) \times S^1 \right) \cup N \left( e'_+ \times \{0\} \right)$. That is, $B$ consists of the two nooks and tunnel joining them. It is a genus 2 handlebody with $R_+ \subset \partial B$.
\end{enumerate}
We have $A \cap B = \emptyset$ and $\bar{A} \cup \bar{B} = \Sigma^\square \times S^1$. The Heegaard surface $S$ is their common boundary, $S = \partial A \cap \partial B$. We orient $S$ as the boundary of $A$. Thus $S$ consists of the two sheets, with a 1-handle or \emph{tube} attached around $e'_+ \times \{0\}$; it is properly embedded in $\Sigma^\square \times S^1$, with oriented boundary $\partial S = \Gamma = F \times S^1$. It has genus $0$ and $4$ boundary components. See figure \ref{fig:Heegaard_block}.

\begin{figure}
\begin{center}
\def\svgwidth{240pt}
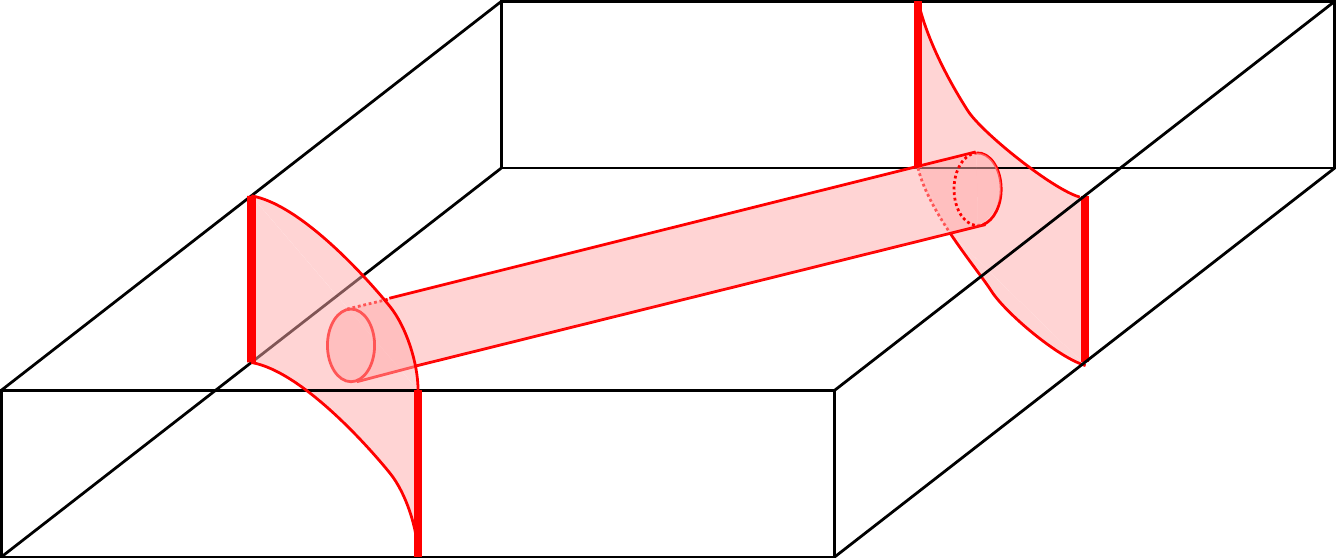
\end{center}
\caption{A Heegaard surface in a Heegaard block.}
\label{fig:Heegaard_block}
\end{figure}

To see the $\alpha$ curve of the Heegaard decomposition, consider the annulus $e'_+ \times S^1$ in the bulk region, lying over the shortened diagonal $e'_+$. If the tunnel has radius $r$, then this annulus intersects $\bar{A}$ in a rectangular-shaped disc $D_\alpha = e'_+ \times [r, 2-r]$ (recall $S^1 \cong \R/2\Z$), shaded light blue in the diagram. We set $\alpha = \partial D_\alpha$. This $\alpha$ is the union of four closed intervals: two vertical intervals $\partial e'_+ \times [r, 2-r]$, and two horizontal intervals $e'_+ \times \{r,2-r\}$. 

The disc $D_\alpha$ (green in figure \ref{fig:pos_Heegaard_block}) splits $A$ into two components ($A'$ and $A''$), which are neighbourhoods of $V_- \times S^1$. In fact, $A'$ deformation retracts onto $S \cup D_\alpha$, as does $A''$. Together, $A'$ and $A''$ deformation retract to cover the whole of $S \cup D_\alpha$; $A'$ covers one side of $D_\alpha$, and $A''$ the other.

The definition of $\beta$ is simpler: $\beta$ a simple closed curve around the tube, bounding a disc $D_\beta$ in the tunnel. In figure \ref{fig:pos_Heegaard_block} $D_\beta$ is shaded in light blue. The disc $D_\beta$ blocks the tunnel, splitting $B$ into two components ($B'$ and $B''$), which are neighbourhoods of the two nooks. Each component deformation retracts onto $S \cup D_\beta$. Together, $B'$ and $B''$ retract to cover the whole of $S \cup D_\beta$; $B'$ covers one side of $D_\beta$, and $B''$ covers the other.

We observe that $\Sigma^\square \times S^1$ is homeomorphic to a thickening $S \times [0,1]$ of $S$, with thickened discs glued to $\alpha \times \{0\}$ and $\beta \times \{1\}$. Under this homeomorphism, $R_-$ is given by $S \times \{0\}$ surgered along $\alpha \times \{0\}$, and $R_+$ is given by $S \times \{1\}$ surgered along $\beta \times \{1\}$. So $(S,\alpha,\beta)$ gives a sutured Heegaard decomposition of $(\Sigma^\square \times S^1, F \times S^1)$. Note $|\alpha \cap \beta| = 2$.

While this construction started with the basic positive sutures $\gamma_+$ on the square, we could just as well perform the same construction starting from $\gamma_-$. We can then define a \emph{negative Heegaard block}, in addition to the positive one described above. For our purposes, however, we only need positive Heegaard blocks as described above.

\begin{figure}
\begin{center}
\def\svgwidth{300pt}
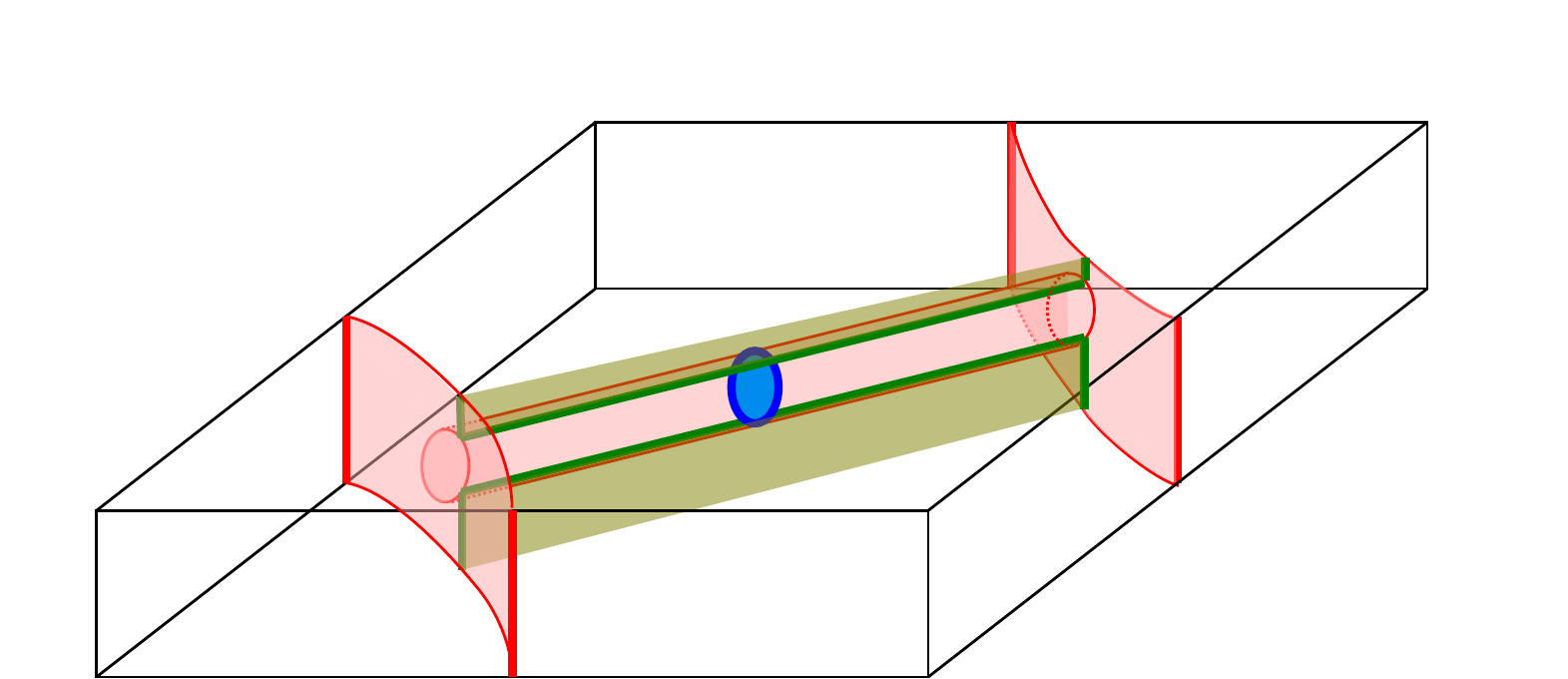
\caption{A positive Heegaard block. The $\alpha$ curve is in green; disc $D_\alpha$ pale green; $\beta$ in blue; $D_\beta$ pale blue.}
\label{fig:pos_Heegaard_block}
\end{center}
\end{figure}

\subsection{Building a Heegaard decomposition from basic sutures with Heegaard blocks}
\label{sec:building_Heegaard}

Suppose we now have an occupied surface $(\Sigma, V, F)$ with quadrangulation $Q$ consisting of squares $(\Sigma^\square_i, V^\square_i, F_i)$. Consider the corresponding balanced sutured 3-manifold with $F$-sutures $(\Sigma \times S^1, F \times S^1)$; as in the previous section, sutures $\Gamma = F \times S^1$ are oriented so that $V_\pm \times S^1 \subset R_\pm$. For each $i$ we have a positive Heegaard block $(S_i, \alpha_i, \beta_i)$, which is a sutured Heegaard decomposition of $(\Sigma^\square_i, F_i \times S^1)$. We now show how these Heegaard blocks piece together to give a Heegaard decomposition of $(\Sigma \times S^1, F \times S^1)$.

We can glue the squares $(\Sigma^\square_i, V^\square_i, F_i)$ to obtain $(\Sigma,V)$. Each time we glue a square to another, we identify a boundary edge $e$ with another; this gluing identifies vertices of the same sign, and a suture endpoint $f$ with another of opposite sign. Combining this gluing with the identity on $S^1$, we can identify the annular face $e \times S^1$ of a Heegaard block with another; this gluing identifies a suture $f \times S^1$ with another with opposite orientation, resulting in a sutured 3-manifold with $2$ fewer sutures.

The Heegaard surfaces $S_i$ also glue together. Each $S_i$ is oriented and properly embedded in $(\Sigma^\square_i \times S^, F_i \times S^1)$ with oriented boundary $F_i \times S^1$. Each gluing identifies one boundary component of some $S_i$ with another boundary component of some $S_j$ by an orientation-reversing map. The resulting surface is thus also oriented and properly embedded, with boundary given by the resulting sutures. Upon gluing, a bulk-minus-tunnel region $A_i$ now connects to another $A_j$; and a $B_i$ now connects to a $B_j$; but $A$ and $B$ regions never intersect.

When finally all squares are glued together into $(\Sigma,V,F)$, and all Heegaard blocks are glued into $(\Sigma \times S^1, F \times S^1)$, we obtain an oriented surface $S$, glued together from the $S_i$, with two sets of pairwise disjoint curves $\alpha = \{\alpha_i\}_{i=1}^{I(\Sigma,V)}$ and $\beta = \{\beta_i\}_{i=1}^{I(\Sigma,V)}$, and oriented boundary $\partial S = \Gamma = F \times S^1$. This surface $S$ splits $\Sigma \times S^1$ into two connected components: a ``bulk" region $A$ containing all the $A_i$, and a ``nook-and-tunnel" region $B$ containing all the $B_i$. 

The bulk region $A$ contains the discs $D_{\alpha_i}$. The deformation retractions of each $A_i$ onto $S_i \cup D_{\alpha_i}$ combine to give a deformation retraction of $A$ onto $S \cup \bigcup_{i=1}^{I(\Sigma,V)} D_{\alpha_i}$. Similarly, the nook-and-tunnel region $B$ contains the discs $D_{\beta_i}$ blocking all the tunnels, and the deformation retractions of each $B_i$ onto $S_i \cup D_{\beta_i}$ together give a deformation retraction of $B$ onto $S \cup \bigcup_{i=1}^{I(\Sigma,V)} D_{\beta_i}$. 

Thus the sutured 3-manifold $(\Sigma \times S^1, F \times S^1)$ is homeomorphic to a thickening $S \times [0,1]$ of $S$, with thickened discs glued to $\alpha \times \{0\}$ and $\beta \times \{1\}$, and oriented sutures $\Gamma = F \times S^1 = \partial S$. Moreover, $R_-$ is given by $S \times \{0\}$ surgered along the $\alpha_i$, and $R_+$ is given by $S \times \{1\}$, surgered along the $\beta_i$. So $(S, \alpha, \beta)$ is a sutured Heegaard decomposition of $(\Sigma \times S^1, F \times S^1)$. 

We have $|\alpha| = |\beta| = I(\Sigma,V)$, and $|\alpha_i \cap \beta_j| = 2 \delta_{ij}$. We record the above.
\begin{prop}
\label{prop:Heegaard_block_decomposition}
Given a quadrangulation $Q$ of an occupied surface $(\Sigma,V,F)$, there is a naturally associated Heegaard decomposition $(S, \alpha, \beta)$ of $(\Sigma \times S^1, F \times S^1)$ constructed out of Heegaard blocks, with $|\alpha| = |\beta| = I(\Sigma,V)$ and $|\alpha_i \cap \beta_j| = 2 \delta_{ij}$.
\qed
\end{prop}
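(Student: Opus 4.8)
The plan is to assemble the claimed Heegaard decomposition by gluing together the positive Heegaard blocks of Section~\ref{sec:Heegaard_blocks}, one per square of $Q$, and then to check that the pieces fit into a bona fide sutured Heegaard decomposition with the stated combinatorics. First I would recall that gluing the occupied squares $(\Sigma^\square_i, V^\square_i, F_i)$ of $Q$ along their boundary edges recovers $(\Sigma,V,F)$: each elementary gluing identifies a boundary edge $e$ of one square with a boundary edge of another, matching vertices of like sign and the two edge-midpoints (suture endpoints). Crossing with the identity on $S^1$, the same gluing identifies the annular face $e \times S^1$ of one positive Heegaard block with the corresponding annular face of another, identifying the suture over one midpoint with the suture over the other with opposite orientation; so each step reduces the number of sutures by two, and after all gluings the sutures are exactly $F \times S^1 \subset \partial(\Sigma \times S^1)$, oriented so that $V_\pm \times S^1 \subset R_\pm$.

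Next I would track the three ingredients of each block through the gluings: the Heegaard surface $S_i$, the ``bulk-minus-tunnel'' handlebody $A_i$, and the ``nook-and-tunnel'' handlebody $B_i$. Each $S_i$ is oriented and properly embedded in $\Sigma^\square_i \times S^1$ with $\partial S_i = F_i \times S^1$ a union of four circles over the edge-midpoints; each gluing of annular faces identifies one boundary circle of some $S_i$ with one of some $S_j$ by an orientation-reversing diffeomorphism (adjacent squares induce opposite orientations along the glued edge), so the union $S$ is again oriented and properly embedded with $\partial S = F \times S^1 = \Gamma$. Since the curves $\alpha_i,\beta_i$ lie in the interior of $S_i$ while the gluings take place along the boundary circles, the families $\alpha = \{\alpha_i\}$ and $\beta = \{\beta_i\}$ remain pairwise disjoint, with $\alpha_i \cap \beta_j \subset S_i \cap S_j$ empty for $i \neq j$ and $|\alpha_i \cap \beta_i| = 2$ from the block construction; hence $|\alpha| = |\beta| = I(\Sigma,V)$ and $|\alpha_i \cap \beta_j| = 2\delta_{ij}$. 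On the complementary side, each gluing joins an $A_i$ to an $A_j$ and a $B_i$ to a $B_j$ and never an $A$-region to a $B$-region, so $S$ separates $\Sigma \times S^1$ into a single bulk region $A = \bigcup_i A_i$ and a single nook-and-tunnel region $B = \bigcup_i B_i$ with $A \cap B = \emptyset$ and $\bar A \cup \bar B = \Sigma \times S^1$.

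Finally I would upgrade this to the sutured Heegaard decomposition statement: the discs $D_{\alpha_i} \subset A$ and $D_{\beta_i} \subset B$ survive gluing, and the block-level deformation retractions of $A_i$ onto $S_i \cup D_{\alpha_i}$ (respectively $B_i$ onto $S_i \cup D_{\beta_i}$) assemble into deformation retractions of $A$ onto $S \cup \bigcup_i D_{\alpha_i}$ and of $B$ onto $S \cup \bigcup_i D_{\beta_i}$. Thus $\Sigma \times S^1$ is homeomorphic to a thickening $S \times [0,1]$ with thickened discs glued along $\alpha \times \{0\}$ and $\beta \times \{1\}$, with $R_-$ given by $S \times \{0\}$ surgered along the $\alpha_i$, with $R_+$ given by $S \times \{1\}$ surgered along the $\beta_i$, and with $\Gamma = \partial S = F \times S^1$; this is exactly a sutured Heegaard decomposition of $(\Sigma \times S^1, F \times S^1)$, and it depends only on $Q$ (the auxiliary choices of shortened diagonals $e'_+$ within each block do not affect the homeomorphism type). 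I expect the one genuinely delicate point to be verifying that the $A$- and $B$-regions never become adjacent under gluing, i.e. that the two-handlebody decomposition of the complement of $S$ is respected globally; this reduces to checking that across every glued annular face $e \times S^1$ the sheet(s) of $S$ separate the bulk side from the nook side consistently in both adjacent blocks.
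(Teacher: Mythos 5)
Your proof is correct and follows essentially the same route as the paper's: glue positive Heegaard blocks along annular faces $e \times S^1$ matching the gluings of squares, track the surfaces $S_i$, handlebodies $A_i$, $B_i$, and discs $D_{\alpha_i}$, $D_{\beta_i}$ through the identifications, and observe that the compatible deformation retractions give the required thickened-surface structure while the curve counts are immediate from the block construction. The point you flag as delicate — that $A$-regions never meet $B$-regions across a glued face — is settled by the observation (which the paper makes at the gluing step) that each gluing identifies vertices of the same sign, so nooks glue to nooks and bulk glues to bulk.
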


As an aside, we can easily deduce the topology of $S$. Each $S_i$ is a $4$-punctured sphere, so $\chi(S_i) = -2$, so $\chi(S) = -2I(\Sigma,V) = -2N + 2 \chi(\Sigma)$. As $S$ has $2N$ boundary components and is orientable, $S$ has genus $1 - \chi(\Sigma)$.

We also note that, while we used positive Heegaard blocks for this construction, could choose a positive or negative Heegaard block arbitrarily in each square. The $\alpha$ and $\beta$ curves in general would be a mixture of ``bulk blocking" and ``tunnel blocking" curves, and the complementary components of $S$ would each a mixture of bulk and nook-and-tunnel regions.

\subsection{Heegaard decompositions and spines}
\label{sec:Heegaard_and_spines}

We now examine the relationship between the Heegaard decomposition $(S, \alpha, \beta)$ of $(\Sigma \times S^1, F \times S^1)$ described above, associated to the quadrangulation $Q$ of $(\Sigma, V, F)$, and the positive spine $G_Q^+$ of $Q$. The terminology of tape graphs conveniently describes several aspects of the Heegaard decomposition. 

Recall $S \subset \Sigma \times S^1$ is a collection of annular sheets around $V_+ \times S^1$, surgered by 1-handles (tubes) along (shortened) positive-to-positive diagonals of squares of $Q$, i.e. edges of $G_Q^+$.

Under the projection $\pi: \Sigma \times S^1 \To \Sigma$ onto the first coordinate, the sheets of $S$ project to closed arcs (intervals) $I_v$ properly embedded in $\Sigma$, bounding a neighbourhood of each $v \in V_+$. (In fact the $I_v$ together form sutures on $(\Sigma,V,F)$ with positive basic sutures on each square, and are oriented accordingly.) A tube/handle of $S$ running from $v_1$ to $v_2$, where $v_1, v_2 \in V_+$, projects to  a thickened edge in $\Sigma$ from $I_{v_1}$ to $I_{v_2}$.

Thus $\pi(S)$ can be regarded as a blown up, thickened positive spine $G_Q^+$, where each vertex $v$ is enlarged to an interval $I_v$, and edges drawn in order from one side of $I_v$. The orientation on $\Sigma$ totally orders rays emanating from $I_v$ anticlockwise, and this corresponds to the orientation of $I_v$ as a suture.

Each $\alpha$ curve projects under $\pi$ to a closed interval running along a thickened edge, down the middle, splitting it into its two sides. Each $\beta$ curve projects to a closed interval running across a thickened edge, splitting it into its two half-edges.

Consider now the components of $S \backslash (\alpha \cup \beta)$. These correspond precisely to the components of $\pi(S) \backslash (\pi(\alpha) \cup \pi(\beta))$, i.e. the components of the thickened spine $G_Q^+$ cut along $\pi(\alpha)$ and $\pi(\beta)$. Cutting along both $\pi(\alpha)$ and $\pi(\beta)$ curves splits each  thickened edge into its $4$ half-sides, and splits each ``blown-up vertex'' (interval) $I_v$ at the the centre of each thickened edge. The thickened $G_Q^+$ is therefore cut into precisely one piece for each \emph{wedge} of $G_Q^+$. 

Under this correspondence, the boundary-adjacent components (resp. internal) of $S \backslash (\alpha \cup \beta)$ correspond to barrier (resp. internal) wedges of $G_Q^+$.

Also, the wedges of $G_Q^+$ correspond precisely to the \emph{edges} of the quadrangulation $Q$, including both boundary and internal edges. The boundary edges correspond to the barrier wedges of $G_Q^+$, and the internal edges of $Q$ correspond to the internal wedges of $G_Q^+$.

\begin{prop}
\label{prop:wedge_component_bijection}
There are natural bijections between:
\begin{enumerate}
\item
the connected components of $S \backslash (\bigcup \alpha \cup \bigcup \beta)$
\item
the wedges of $G_Q^+$
\item
the edges of the $Q$ (both boundary and internal edges).
\end{enumerate}
Under these bijections, the boundary-adjacent (resp. internal) components of $S \backslash (\bigcup \alpha \cup \bigcup \beta)$ correspond to the barrier (resp. internal) wedges of $G_Q^+$ and the boundary (resp. internal) edges of $Q$.
\qed
\end{prop}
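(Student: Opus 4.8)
The plan is to establish the three bijections by composing two intermediate bijections, using the projection map $\pi: \Sigma \times S^1 \To \Sigma$ and the combinatorial description of wedges given in section \ref{sec:tape_graphs}. Most of the work has in fact already been done in the preceding discussion; the proposition is really a summary, so the proof should assemble the pieces carefully and verify that the correspondences match up on the "barrier versus internal" dichotomy.

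\begin{proof}[Proof sketch]
First I would make precise the correspondence between components of $S \backslash (\bigcup \alpha \cup \bigcup \beta)$ and components of $\pi(S) \backslash (\pi(\alpha) \cup \pi(\beta))$. Since each sheet of $S$ is an annulus $I_v \times S^1$ (up to the tubes) mapping by $\pi$ homeomorphically-in-the-first-coordinate onto a neighbourhood of $v \in V_+$, and each $\alpha$, resp. $\beta$, curve projects to a closed interval running lengthwise, resp. crosswise, along a thickened edge, the map $\pi$ restricted to $S$ is, away from the curves, a trivial $S^1$-bundle; hence $\pi$ induces a bijection on connected components of the complements. This gives bijection (i)$\leftrightarrow$(ii), since $\pi(S)$ is precisely the blown-up thickened spine $G_Q^+$ and cutting it along $\pi(\alpha) \cup \pi(\beta)$ cuts each thickened edge into its four half-sides while splitting each blown-up vertex interval $I_v$ at the midpoint of each incident thickened edge --- exactly the operation whose pieces are, by definition, the wedges of $G_Q^+$. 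Here I would appeal to the explicit local picture in figure \ref{fig:wedges}: the $d+1$ wedges at a vertex of degree $d$ are cut out by the $d$ incident half-edges through $I_v$, and the half-edges of $G_Q^+$ are exactly the images of the $\beta$ curves cutting the corresponding thickened edges.

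Next I would establish (ii)$\leftrightarrow$(iii). Each edge of $Q$ --- whether a boundary edge of $\Sigma$ or an internal decomposing arc --- lies in a unique square of $Q$ and is crossed once by $G_Q^+$ (the positive-to-positive diagonal of that square). Cutting a thickened edge of $G_Q^+$ across its midpoint and along its sides produces one wedge on each side of each endpoint; reading off which wedge lies on which edge of the square gives the assignment. More directly: in each square the diagonal $e_+$ separates the two positive vertices, and the four edges of the square are distributed among the wedges at the two endpoints of $e_+$, one edge to each wedge incident along that thickened edge; this is a bijection because $Q$ has $I(\Sigma,V)$ squares contributing $4I(\Sigma,V)$ edge-slots but each internal edge is shared by two squares --- and a count confirms consistency with the number of wedges, namely $\sum_v (d_v + 1) = 2I(\Sigma,V) + N$ wedges versus $N$ boundary edges (two barrier wedges per vertex, $N$ vertices, but each boundary component... ) --- so I would instead verify the bijection locally, square by square, avoiding a global Euler-characteristic count.

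Finally, to match the dichotomies, I would observe that a component of $S \backslash (\bigcup \alpha \cup \bigcup \beta)$ is boundary-adjacent iff it meets $\partial S = \Gamma = F \times S^1$ iff (under $\pi$) the corresponding wedge of $G_Q^+$ meets $\partial \Sigma$. By the thickening conventions of section \ref{sec:tape_graphs}, a wedge at a vertex $v$ meets $\partial(\text{thickened } G_Q^+)$ exactly when it is one of the two extreme wedges $w_0, w_d$, i.e.\ a barrier wedge, since the internal wedges are enclosed by half-edges on both sides. And the barrier wedges correspond precisely to the two edges of each square adjacent to a negative vertex --- which are exactly the boundary edges of $\Sigma$ --- while the internal wedges correspond to the positive-to-negative edges running through $\Int \Sigma$, i.e.\ the internal edges of $Q$. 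This completes the identification.

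I expect the main obstacle to be purely bookkeeping: keeping straight, in each square, which of the four edges corresponds to which of the four wedges incident along the diagonal, and checking that these local identifications are compatible across the gluings of squares (so that an internal edge shared by two squares gets a single consistent wedge). This requires care because the total ordering on half-edges at a vertex is induced by the orientation of $\Sigma$ (equivalently the orientation of the suture $I_v$), and one must confirm the barrier half-edges at each $v$ are exactly the two that run to negative vertices along boundary edges. Everything else is a formal consequence of the product structure $\Sigma \times S^1$ and the deformation-retraction description of $\pi(S)$ as a blown-up thickened spine.
\end{proof}
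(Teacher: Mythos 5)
Your overall strategy matches the paper's: project via $\pi \colon \Sigma \times S^1 \To \Sigma$ to identify the complementary components of $S$ with pieces of the blown-up, thickened spine $G_Q^+$, then read off the local picture at each positive vertex. The correspondence (i)$\leftrightarrow$(ii) is handled well, and the observation that boundary-adjacency passes to meeting $\partial \Sigma$ under $\pi$ is correct.

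However, your justification of the barrier/boundary dichotomy contains a genuine error. You assert that ``the barrier wedges correspond precisely to the two edges of each square adjacent to a negative vertex --- which are exactly the boundary edges of $\Sigma$.'' This does not hold up: vertices of a square alternate in sign, so \emph{every} edge of a square runs from a positive to a negative vertex and is thus ``adjacent to a negative vertex''; the phrase does not select a distinguished pair of edges, and in any case being incident to $V_-$ is not what separates boundary from internal edges of $Q$ --- both types are. The correct local picture at a positive vertex $v$ of degree $d$ is that, anticlockwise around $v$, the incident arcs alternate between edges of $Q$ and edges of $G_Q^+$, beginning and ending with the two boundary edges of $\partial \Sigma$ at $v$; hence the barrier wedges $w_0, w_d$ contain exactly those two boundary edges, and each internal wedge $w_1, \ldots, w_{d-1}$ contains one internal edge of $Q$. (You may have had in mind the lemma in section~\ref{sec:spines_of_occupied_surfaces} on the discs $D_v$ obtained by cutting $\Sigma$ along $G_Q^+$, where the two edges of $\partial D_v$ adjacent to the negative vertex $v$ do lie in $\partial \Sigma$ --- but the relevant object there is the disc $D_v$, not a square of $Q$.) There is also a minor slip in your count: there are $2N$ boundary edges, not $N$, and the tally $\sum_v(d_v+1) = 2I(\Sigma,V) + N$ wedges agrees with $2N + (2I(\Sigma,V)-N)$ edges of $Q$.
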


\subsection{Periodic domains}
\label{sec:periodic_domains}

For a Heegaard decomposition $(S, \alpha, \beta)$ of $(M, \Gamma) = (\Sigma \times S^1, F \times S^1)$ associated to a quadrangulation $Q$ of $(\Sigma, V, F)$, we now give an explicit description of the periodic domains.

Recall from section \ref{sec:SFH_background} that the periodic domains of $(S,\alpha,\beta)$ form a subgroup $\P(S, \alpha, \beta) \subset D(S, \alpha, \beta)$ with an isomorphism $\P (S, \alpha, \beta) \To H_1(\alpha) \cap H_1(\beta)$ given by taking boundaries. We also have $H_1(\alpha) \cap H_1(\beta) \cong H_2(M)$ (section \ref{sec:homotopy_classes_Whitney_discs}), and $H_2(M) = H_2(\Sigma \times S^1) \cong H_1(\Sigma)$. And as $\Sigma \simeq G_Q^+$ we have $H_1(\Sigma) \cong H_1(G_Q^+)$. So we have isomorphisms
\[
\P(S, \alpha, \beta) \cong H_1(\alpha) \cap H_1(\beta) \cong H_2(\Sigma \times S^1) \cong H_1(\Sigma) \cong H_1(G_Q^+).
\]

We will describe periodic domains in terms of the tape graph $G_Q^+$. Let $\Tree$ be a maximal tree of $G_Q^+$, so $H_1(G_Q^+)$ has free generators corresponding to the edges of $G_Q^+ \backslash \Tree$. Let $e$ be such an edge, with endpoints $v, w$. In $\Tree$, there is a unique path (without backtracking) from $v$ to $w$; combined with $e$, we obtain a simple loop $l_e$ (oriented arbitrarily) in $G_Q^+$. The $l_e$ form a basis for $H_1(G_Q^+)$.

A loop $l_e$ on $G_Q^+$ traverses some number $s$ of edges $e_i$ and vertices $v_i$: let these be
\[
v_0, e =  e_0, v_1, e_1, \ldots, v_{s-1}, e_{s-1}, v_s = v_0, e_s = e_0,
\]
with indices taken mod $s$. Each $e_i$ inherits an orientation from $l_e$. As $l_e$ is simple, the vertices and edges are all distinct. At each vertex $v_i$, there is a set of well-defined set of internal wedges between the two incident edges $e_{i-1}$ and $e_i$. Let the set of all these internal wedges, taken together over all $v_i$, be $W_e$.

Traversing each $e_i$ according to its orientation, we call the two half-edges of $e_i$ the \emph{start} half-edge (including $v_{i-1}$) and \emph{finish} half-edge (including $v_i$). The two sides of $e_i$ can be described as \emph{left} and \emph{right}. Every time we turn from an edge into a (blown-up) vertex or from a vertex onto an edge, we turn in a definite direction --- left or right. (Think of the edges and ``blown-up'' vertices of the tape graph $G_Q^+$ as roads, intersecting at right angles, along which we ride.) Note that we turn left (resp. right) from an edge $e_{i-1}$ into a vertex $v_i$, iff we proceed along the blown-up $I_{v_i}$ in the decreasing (resp. increasing) direction of the total ordering there, iff we turn left (resp. right) out of $v_i$ onto $e_i$. Correspondingly, at each vertex $v_i$ we may say we are \emph{on the left} (resp. \emph{right}). See figure \ref{fig:left_and_right}.

\begin{figure}
\begin{center}

\begin{tikzpicture}[
scale=1, 
boundary/.style={ultra thick}
]

\draw [boundary] (-2.5,0) -- (2.5,0);
\draw (0,.5) node {$I_{v_1}$};
\draw (-2,0) -- (-3,-2);
\draw (0,0) -- (0,-1);
\draw (2,0) -- (3,-1);
\draw [boundary] (-5.5,-2) -- (-2.5,-2);
\draw (-4,-2.5) node {$I_{v_2}$};
\draw (-5,-2) -- (-5,-1);
\draw [dashed, red, >->] (2.8, -1) -- (2, -0.2) -- (-2, -0.2) -- (-2.8, -1.8) -- (-4.8, -1.8) -- (-4.8,-1);

\end{tikzpicture}

\caption{Traversing these edges and vertices as per the dashed red line, we are on the left at $v_1$ and on the right at $v_2$.}
\label{fig:left_and_right}
\end{center}
\end{figure}

Each wedge $w \in W_e$ lies at some vertex $v_i$, where $l_e$ is on the left or right. Let $L_e$ and $R_e$ denote the wedges of $W_e$ at vertices on the left and right respectively.

By proposition \ref{prop:wedge_component_bijection}, the internal wedges of $G_Q^+$ are in natural bijection with the internal connected components of $S \backslash \left( \bigcup \alpha \cup \bigcup \beta \right)$, i.e. the generators of $D(S, \alpha, \beta)$. Write $D_w$ for generator of $D(S,\alpha,\beta)$ corresponding to the wedge $w$. Define a domain $D_e$ by
\[
D_e = \sum_{w \in R_e} D_w - \sum_{w \in L_e} D_w \in D(S, \alpha, \beta).
\]

\begin{lem}
For each edge $e$ of $G_Q^+ \backslash \Tree$, $D_e$ is a periodic domain.
\end{lem}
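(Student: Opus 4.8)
## Proof Proposal

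The plan is to verify that $D_e$ is periodic by computing its boundary $\partial D_e$ and checking that it is an integer combination of \emph{whole} $\alpha$ and $\beta$ curves. Recall (Proposition \ref{prop:wedge_component_bijection}) that the internal components $D_w$ of $S\backslash(\bigcup\alpha\cup\bigcup\beta)$ correspond to internal wedges $w$ of $G_Q^+$, and that each $\alpha$ curve runs down the middle of a thickened edge while each $\beta$ curve runs across it. So the boundary of a single generator $D_w$, for $w$ a wedge at a vertex $v_i$ between consecutive edges $e_{i-1}$ and $e_i$, consists of arcs: two arcs lying in the $\alpha$ curves of $e_{i-1}$ and $e_i$ (the portions of those $\alpha$-curves adjacent to the wedge), two arcs lying in the corresponding $\beta$ curves, plus arcs along $\partial S$ if $w$ is adjacent to a barrier wedge (but internal wedges contribute no $\partial S$ arcs). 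The key local computation is: when I sum $D_w$ over all internal wedges lying on one side of a single edge $e_i$ (there are wedges at both ends $v_{i-1}$ and $v_i$, on a given side), the $\beta$-arcs telescope and I recover (a multiple of) the whole $\beta$ curve $\beta_{e_i}$, and likewise the $\alpha$-arcs assemble along the edge; the arcs interior to each blown-up vertex $I_{v_i}$ cancel between adjacent wedges at that vertex.

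The main step is then a bookkeeping argument along the loop $l_e$. First I would fix, for each edge $e_i$ traversed by $l_e$, the notion of its left and right sides (oriented by $l_e$ as in Figure \ref{fig:left_and_right}), and note that $L_e$ (resp. $R_e$) collects exactly the internal wedges at the $v_i$ on the left (resp. right). Next, for each edge $e_i$ of the loop I would observe that the set of internal wedges contributing to $D_e$ that touch $e_i$ is precisely: all internal wedges at $v_{i-1}$ between $e_{i-1}$ and $e_i$, and all internal wedges at $v_i$ between $e_i$ and $e_{i+1}$, taken with sign $+$ if on the right and $-$ if on the left. Summing the boundaries, the contributions along $\alpha_{e_i}$ from these wedges, combined over the whole loop, must be checked to assemble into $\pm\alpha_{e_i}$ or cancel; similarly for $\beta_{e_i}$. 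The crucial cancellation is at each vertex $v_i$: the arcs of $\partial D_w$ lying inside the blown-up interval $I_{v_i}$ appear with opposite signs for the two wedges flanking a given half-edge, except at the transition governed by whether $l_e$ passes through a breakpoint; because $l_e$ turns consistently (left or right) as it enters and leaves $v_i$, exactly the arcs that \emph{don't} cancel at $v_i$ are the ones running along the half-sides of $e_{i-1}$ and $e_i$ — i.e. portions of whole $\alpha$ or $\beta$ curves.

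I expect the main obstacle to be the careful sign/orientation accounting at the vertices: one must show that the left/right assignment of $l_e$ at $v_i$ (equivalently, the direction of traversal along the totally-ordered $I_{v_i}$) makes the interior-vertex arcs cancel in pairs and leaves only whole curves. A clean way to sidestep much of the grinding is the following shortcut: by the isomorphism $\P(S,\alpha,\beta)\cong H_1(\alpha)\cap H_1(\beta)$ obtained by taking boundaries (Section \ref{sec:SFH_background}), it suffices to show $\partial D_e$ is a cycle in $H_1(\alpha)\cap H_1(\beta)$, and by the geometric description of that group (Section \ref{sec:homotopy_classes_Whitney_discs}) this is a 1-cycle on $S$ bounding on both the $\alpha$- and $\beta$-sides. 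So I would instead exhibit $D_e$ directly as the domain traced out by pushing the loop $l_e$ (thought of as living in $\pi(S)\simeq G_Q^+$) off to one side inside $S$: the region swept between $l_e$ and its pushoff, read through the wedge-to-component bijection, is exactly $\sum_{w\in R_e}D_w - \sum_{w\in L_e}D_w$, and its boundary is $l_e$ minus its pushoff, a difference of parallel copies of a class that closes up along whole $\alpha$ and $\beta$ curves since $l_e$ is a loop. This reduces the claim to checking that a pushoff of a simple loop in a thickened tape graph, cut by the mid-edge $\alpha$'s and cross-edge $\beta$'s, has the asserted combinatorial description — which is Proposition \ref{prop:wedge_component_bijection} together with the local picture at one edge and one vertex.
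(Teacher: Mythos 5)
Your first paragraph sketches the same direct approach as the paper's proof: compute $\partial D_e$ wedge by wedge, with signs determined by whether $l_e$ is on the left or right at each vertex, and verify that the $\alpha$- and $\beta$-arcs assemble into full curves. That is indeed the right strategy, but your description glosses over the detail that makes the lemma true. The $\alpha$-arcs of $\alpha_{e_i}$ \emph{always} combine into the full curve $\alpha_{e_i}$ (start arc at $v_{i-1}$ meeting finish arc at $v_i$), whereas the $\beta$-arcs along edges of $l_e$ either \emph{cancel} (when both endpoints of $e_i$ are on the same side) or combine into $\pm\beta_{e_i}$ (when the loop changes sides). Your ``telescoping'' claim --- that summing wedges on one side of an edge recovers a multiple of the whole $\beta_{e_i}$ --- is not quite the right local picture, and it is exactly the left/right sign convention in the definition of $D_e$ that makes the cancellations at the vertices happen. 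Without spelling that out, the sketch does not yet establish the claim.

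The proposed ``shortcut'' in your second paragraph has a more serious gap, and I would not rely on it. First, the appeal to the isomorphism $\P(S,\alpha,\beta)\cong H_1(\alpha)\cap H_1(\beta)$ is circular: that isomorphism is defined by taking boundaries of \emph{periodic} domains, so you cannot invoke it to decide whether a given domain is periodic. What must be shown is precisely that $\partial D_e$ is an integer combination of whole $\alpha$ and $\beta$ curves, and that is a condition on the chain, not a homological condition that can be read off from $H_1(\alpha)\cap H_1(\beta)$. Second, the pushoff picture does not produce $D_e$. The region swept between a simple loop $l_e$ and a parallel pushoff in $\Sigma$ is an annulus, hence a domain with coefficients all of one sign; but $D_e = \sum_{w\in R_e}D_w - \sum_{w\in L_e}D_w$ has, in general, coefficients of both signs (indeed that is exactly what admissibility later requires). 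The sign on each wedge depends on the turning direction of $l_e$ at the corresponding vertex, which a one-sided pushoff cannot see. Finally, even if a domain $D$ could be identified with a swept region, the claim that ``its boundary is $l_e$ minus its pushoff'' would not finish the proof: a pushoff curve is not a union of whole $\alpha$ and $\beta$ curves, and periodicity requires the boundary to be exactly such a union. The paper shows this by tracking, around $l_e$, the appearance of the start/finish arcs of each $\alpha_{e_i}$, the left/right arcs of each $\beta_{e_i}$, and the full $\beta_{e'}$ from intervening edges at each vertex --- none of which is captured by the pushoff heuristic.
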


To prove the lemma, we introduce orientations and terminology for $\alpha$ and $\beta$ curves. 

Each oriented edge $e_i$ of the loop $l_e$ has an $\alpha$ and a $\beta$ curve around the corresponding tube of $S$; we denote them $\alpha_{e_i}$ and $\beta_{e_i}$. Recall (section \ref{sec:Heegaard_blocks}) that an $\alpha$ curve consists of $4$ arcs in $\Sigma \times S^1$: $2$ vertical arcs $\partial e'_+ \times [r,2-r]$ and $2$ horizontal arcs $e'_+ \times \{r,2-r\}$ (where $e'_+$ is a shortened diagonal of a square and $r$ is tube radius). Orient $\alpha_{e_i}$ so that the $e'_+ \times \{r\}$ arc is oriented like $e_i$, i.e. from $v_i \times \{r\}$ to $v_{i+1} \times \{r\}$. Orient $\beta_{e_i}$ so that at each point of $\alpha_i \cap \beta_i$, tangent vectors to $(\alpha_i, \beta_i)$ form an oriented basis for the tangent space to $S$, agreeing with the orientation on $S$ (i.e. as the boundary of the ``bulk-minus-tunnels'' region $A$). For those $\alpha, \beta$ curves corresponding to edges not on $l_e$, we orient them arbitrarily. Note these orientations depend on the loop $l_e$.

Each $\alpha_{e_i}$ is cut by $\beta_{e_i}$ into two arcs, corresponding to the two half-edges (start and finish) of $e_i$: call them the \emph{start arc} $\alpha_{e_i}^S$ (closer to $v_{i-1}$) and \emph{finish arc} $\alpha_{e_i}^F$ (closer to $v_i$) respectively. Each $\beta_{e_i}$ is cut by $\alpha_{e_i}$ into two arcs, corresponding to the two sides (left and right) of $e_i$: call them the \emph{left arc} $\beta_{i_i}^L$ and \emph{right arc} $\beta_{e_i}^R$ respectively.

\begin{proof}
We compute $\partial D_e$, showing it is an integer combination of closed $\alpha$ and $\beta$ curves (not arcs).

The wedges of $W_e$ at a vertex $v_i$ on the loop $l_e$ are the wedges between edges $e_{i-1}$ and $e_i$. The loop $l_e$ is on the left (resp. right) at $v_i$ iff $e_i < e_{i-1}$ (resp. $e_{i-1} < e_i$). The sum of these wedges has boundary given by the finish arc $\alpha_{e_{i-1}}^F$, the start arc $\alpha_{e_i}^S$, an arc of $\beta_{i-1}$ and an arc of $\beta_i$ (left or right respectively as $v_i$ is on the left or right), and full $\beta$ curves $\beta_{e'}$ for each edge $e'$ incident to $v_i$ between $e_{i-1}$ or $e_i$. The $\beta_{e'}$ lie on edges that are not part of the loop $l_e$, so are oriented arbitrarily. See figure \ref{fig:on_the_left}.

\begin{figure}
\begin{center}
\def\svgwidth{400pt}
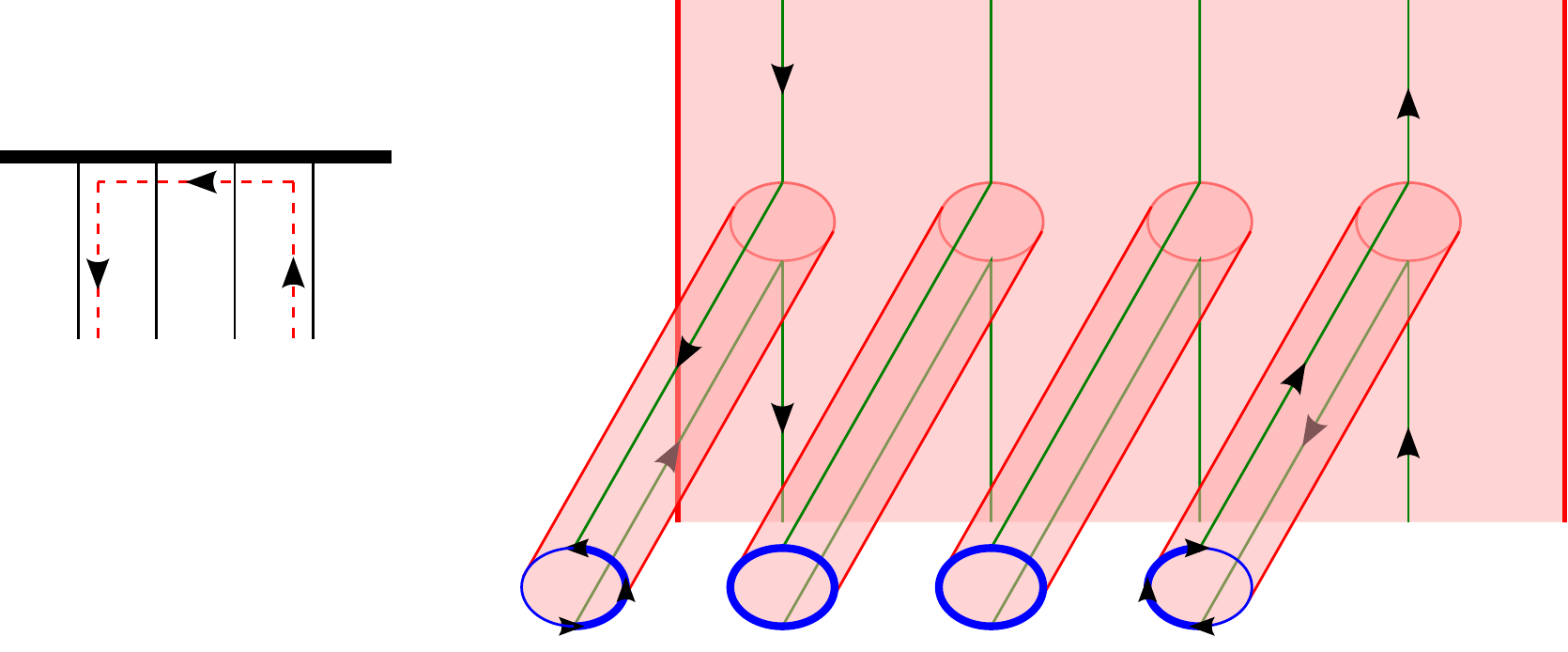
\caption{Domains associated to a vertex in the loop $l_e$; the vertex here is on the left.}
\label{fig:on_the_left}
\end{center}
\end{figure}

Recall $S$ is oriented as the boundary of the bulk region, and domains $D_w$ inherit orientations from $S$. If $v_i$ is on the right we obtain
\[
\partial \sum_{w \in W_e \text{ at } v_i} D_w = \alpha_{e_{i-1}}^F - \beta_{e_{i-1}}^R + \sum_{e_{i-1} < e' < e_i \text{ at } v_i} \left( \pm \beta_{e'} \right) + \alpha_{e_i}^s + \beta_{e_i}^R.
\]
If $v_i$ is on the left we obtain
\[
- \partial \sum_{w \in W_e \text{ at } v_i} D_w = \alpha_{e_{i-1}}^F + \beta_{e_{i-1}}^L + \sum_{e_i < e' < e_{i-1} \text{ at } v_i} \left( \pm \beta_{e'} \right) + \alpha_{e_i}^S - \beta_{e_i}^L.
\]
Now $\partial D_e$ is the sum of the expressions above over all vertices $v_i$ on $l_e$. Each $\alpha_{e_i}^S$ at $v_i$ is summed with $\alpha_{e_i}^F$ at $v_{i+1}$ to give $\alpha_{e_i}$. As for the $\beta$-arcs: when an edge $e_i$ runs between vertices $v_i, v_{i+1}$ which are both on the right (resp. left) we obtain terms $\beta_{e_i}^R$ (resp. $-\beta_{e_i}^L$) at $v_i$ and $-\beta_{e_i}^R$ (resp. $\beta_{e_i}^L$) at $v_{i+1}$ which cancel; when $v_i$ is on the right and $v_{i+1}$ on the left (resp. $v_i$ left and $v_{i+1}$ right), we obtain $\beta_{e_i}^R$ at $v_i$ and $\beta_{e_i}^L$ at $v_{i+1}$ (resp. $-\beta_{e_i}^L$ and $-\beta_{e_i}^R$) which sum to $\beta_{e_i}$ (resp. $-\beta_{e_i}$).

Thus we obtain a sum of closed $\alpha$ and $\beta$ curves, and $D_e$ is a periodic domain.
\end{proof}

The isomorphism $\P(S,\alpha,\beta) \To H_2(\Sigma \times S^1)$ is given by taking a periodic domain and capping off its boundary $\alpha$ and $\beta$ curves with discs. Now the domain $D_e$ has boundary, as computed above, consisting of $\alpha$ curves running along the edges of $l_e$, $\beta$ curves running along edges of $l_e$ where we change sides (left-to-right or right-to-left), and $\beta$ curves on edges lying between $e_{i-1}, e_i$ at each vertex $v_i$. When these curves are capped off with discs, we obtain a surface isotopic to $l_e \times S^1$. As the $l_e$ form a basis for $H_1(\Sigma)$, the $l_e \times S^1$ form a basis for $H_2(\Sigma \times S^1)$. We thus have the following.

\begin{prop}
As $e$ ranges over the edges of $G_Q^+ \backslash \Tree$, the periodic domains $D_e$ form a basis for $\P(S, \alpha, \beta)$.
\qed
\end{prop}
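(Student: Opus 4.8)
The plan is to exploit the chain of isomorphisms
\[
\P(S,\alpha,\beta) \cong H_1(\alpha)\cap H_1(\beta) \cong H_2(\Sigma\times S^1) \cong H_1(\Sigma) \cong H_1(G_Q^+)
\]
already established, and to check that under this composite the domain $D_e$ is carried to the homology class $[l_e \times S^1]$. Since the loops $l_e$, as $e$ ranges over the edges of $G_Q^+ \backslash \Tree$, form a basis of $H_1(G_Q^+)$, it then suffices to verify that the classes $[l_e\times S^1]$ form a basis of $H_2(\Sigma\times S^1)$; the proposition follows because an isomorphism carries a basis to a basis.

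First I would record that each $D_e$ is a periodic domain — this is exactly the preceding lemma — so that the boundary-capping isomorphism $\P(S,\alpha,\beta)\to H_2(\Sigma\times S^1)$ may legitimately be applied to it. Next I would invoke the explicit computation of $\partial D_e$ carried out above: its boundary is a sum of whole $\alpha$ curves along the edges $e_i$ of $l_e$, whole $\beta$ curves on those edges $e_i$ where the loop switches sides, and whole $\beta$ curves $\beta_{e'}$ on the edges lying between $e_{i-1}$ and $e_i$ at each vertex $v_i$ of $l_e$. Capping each of these curves off with a disc (the $\alpha$ curves bound discs $D_\alpha$ in the bulk $A$, the $\beta$ curves bound discs $D_\beta$ in the tunnels in $B$), the resulting closed surface is, as observed in the discussion immediately preceding the statement, isotopic in $\Sigma\times S^1$ to $l_e\times S^1$; hence the composite isomorphism sends $D_e \mapsto [l_e\times S^1]$.

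It then remains to confirm that $\{[l_e\times S^1]\}_e$ is a basis of $H_2(\Sigma\times S^1)$. Since $\Sigma$ deformation retracts onto $G_Q^+$, the classes $l_e$ form a basis of $H_1(\Sigma)$; as $\Sigma$ has nonempty boundary, $H_2(\Sigma) = 0$, and the K\"{u}nneth formula gives $H_2(\Sigma\times S^1) \cong H_1(\Sigma)\otimes H_1(S^1) \cong H_1(\Sigma)$, the isomorphism sending a $1$-cycle $c$ to $c\times S^1$. Thus $\{l_e\times S^1\}_e$ is a basis of $H_2(\Sigma\times S^1)$, and applying the inverse of the composite isomorphism shows that $\{D_e\}_e$ is a basis of $\P(S,\alpha,\beta)$.

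I expect there to be essentially no remaining obstacle: the substantive work — that $D_e$ is periodic, the precise form of $\partial D_e$, and the identification of the capped-off surface with $l_e\times S^1$ — has already been carried out in the lemma and discussion above. The only point requiring a little care is the bookkeeping in that last identification, namely checking that the discs capping the ``extra'' $\beta_{e'}$ curves on edges not traversed by $l_e$, together with the orientations inherited from $S$, do not perturb the resulting homology class away from $[l_e\times S^1]$; this is immediate from the explicit description of $\partial D_e$, since each such $\beta_{e'}$ bounds a compressing disc in a tunnel disjoint from the rest of the configuration.
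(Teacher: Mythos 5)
Your proposal is correct and follows essentially the same route as the paper: cap off $\partial D_e$ to obtain a surface isotopic to $l_e\times S^1$, observe that the classes $[l_e\times S^1]$ form a basis of $H_2(\Sigma\times S^1)$ because the $l_e$ form a basis of $H_1(\Sigma)\cong H_1(G_Q^+)$, and transfer back across the isomorphism $\P(S,\alpha,\beta)\cong H_2(\Sigma\times S^1)$. The paper leaves the K\"{u}nneth step implicit, but your unpacking of it adds no new content.
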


\subsection{Admissibility}
\label{sec:admissibility}

Given occupied $(\Sigma, V)$ with quadrangulation $Q$, we now have a Heegaard decomposition $(S, \alpha, \beta)$ and, after choosing a maximal tree $\Tree$ in $G_Q^+$, we have an explicit basis of $\P(S, \alpha, \beta)$.

Such a Heegaard decomposition is in general not admissible. In particular, if there exists an edge $e$ of $G_Q^+ \backslash \Tree$ such that the corresponding loop $l_e$ has all vertices on the same side, then the computations above show $D_e$ is a periodic domain with all coefficients of the same sign, violating admissibility. This problem cannot be remedied simply by choosing $\Tree$ more judiciously: for instance, the $(\Sigma,V)$ and $Q$ of figure \ref{fig:non_admissible}, with positive spine $G_Q^+$ shown, has only one maximal tree $\Tree$ (with no edges), and every loop $l_e$ has all vertices on the same side.

\begin{figure}
\begin{center}

\begin{tikzpicture}[
scale=1.7, 
boundary/.style={ultra thick}, 
vertex/.style={draw=green!50!black, fill=green!50!black},
suture/.style={thick, draw=red},
decomposition/.style={thick, draw=green!50!black}, 
vertex/.style={draw=green!50!black, fill=green!50!black},
>=triangle 90, 
decomposition glued1/.style={thick, draw=green!50!black, fill=green!50!black, postaction={nomorepostaction,decorate, decoration={markings,mark=at position 0.5 with {\arrow{>}}}}},
decomposition glued2/.style={thick, draw = green!50!black, fill=green!50!black, postaction={nomorepostaction, decorate, decoration={markings,mark=at position 0.5 with {\arrow{>>}}}}},
spine/.style={draw=blue}
]

\coordinate [label = above left: {$-$}] (tl) at (0,1);
\coordinate [label = below left: {$+$}] (bl) at (0,0);
\coordinate [label = above: {$+$}] (tm) at (1,1);
\coordinate [label = below: {$-$}] (bm) at (1,0);
\coordinate [label = above right: {$-$}] (tr) at (2,1);
\coordinate [label = below right: {$+$}] (br) at (2,0);

\fill [gray!10] (tl) -- (tr) -- (br) -- (bl) -- cycle;
\draw [boundary] (tl) -- (tm);
\draw [boundary] (bm) -- (br);
\draw [decomposition] (tm) -- (bm);
\draw [decomposition glued1] (bl) -- (tl);
\draw [decomposition glued1] (br) -- (tr);
\draw [decomposition glued2] (bl) -- (bm);
\draw [decomposition glued2] (tm) -- (tr);
\draw [spine] (bl) -- (tm) -- (br);
\draw (1,-0.5) node {$(\Sigma,V,Q)$};

\draw [spine] (4,0.7) arc (45:300:0.5);
\draw [spine] (4,0.7) arc (45:-30:0.5);
\draw [spine] (4,0.7) arc (135:495:0.5);
\draw (4,-0.5) node {$G_Q^+$};
\fill [black] (4,0.7) circle (2pt);

\draw [boundary] (5.5,0.75) -- (8,0.75);
\draw [spine] (6,0.75) arc (180:360:0.5);
\draw [spine] (6.5,0.75) arc (180:225:0.5);
\draw [spine] (7.5,0.75) arc (0:-115:0.5);
\draw (6.5,-0.5) node {$G_Q^+$ blown up};

\foreach \point in {tl, bl, tm, bm, tr, br}
\fill [green!50!black] (\point) circle (2pt);

\end{tikzpicture}

\caption{Quadrangulated $(\Sigma,V)$ with inadmissible Heegaard decomposition. The left diagram has edge markings indicating identifications; $(\Sigma,V)$ is a once-punctured torus with two vertices.}
\label{fig:non_admissible}
\end{center}
\end{figure}
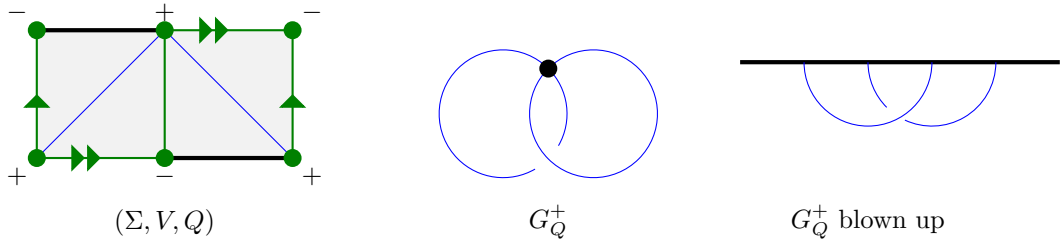

Juh\'{a}sz in \cite{Ju09} gives a general algorithm to render a balanced sutured Heegaard diagram admissible. We give an algorithm which is similar, but gentler on Heegaard curves and specific to our context.

The idea is to isotope $\beta$ curves so as to ensure that on each side of each $\beta$ curve there is a boundary-adjacent component of $S \backslash \left( \bigcup \alpha \cup \bigcup \beta \right)$. This ensures admissibility by the following argument. Any periodic domain $D$ is an integer combination of internal components of $S \backslash \left( \bigcup \alpha \cup \bigcup \beta \right)$, and has boundary $\partial D$ containing at least one $\beta$ curve $\beta_i$ with nonzero coefficient (otherwise the $\alpha$ curves would be linearly dependent in $H_1(S)$). As $D$ only consists of internal components, while $\beta_i$ has boundary-adjacent components on either side, $D$ must have positive and negative coefficients. Thus we have an admissible Heegaard diagram.

We isotope each $\beta_i$ as follows. Each $\beta_i$ in $(S, \alpha, \beta)$ is adjacent to $4$ components of $S \backslash \left( \bigcup \alpha \cup \bigcup \beta \right)$, two components on each side. These components may be boundary-adjacent or internal, corresponding to barrier or internal wedges of $G_Q^+$. For each side $\sigma$ of $\beta_i$ adjacent to only internal components, we isotope $\beta_i$ in the direction $\sigma$ by a finger move along an arc $\zeta_i^\sigma$ which we now define.

The arc $\zeta_i^\sigma$ starts on $\beta_i$ in the direction $\sigma$, and ends in a boundary-adjacent component of $S \backslash \left( \bigcup \alpha \cup \bigcup \beta \right)$. Recall that $S$ consists of sheets and tubes, and each tube has one $\alpha$ and one $\beta$ curve. Proceeding from $\beta_i$ in the direction $\sigma$, the tube is split by $\alpha_i$ into a left and right side. Take $\zeta_i^\sigma$ to run along the left side of the tube, turn left onto the sheet there, and proceed along that sheet, avoiding other tubes, to a boundary-adjacent component of $S \backslash \left( \bigcup \alpha \cup \bigcup \beta \right)$. (Projecting down to a thickened blown-up $G_Q^+$, $\zeta_i^\sigma$ proceeds from $\beta_i$ down the left side of a half-edge, turns left at the end vertex (blown up to an interval) and proceeds along the corresponding interval, almost to its end.) We may choose the arcs $\zeta_i^\sigma$, over all $i$ and $\sigma$, to be disjoint. Note $\zeta_i^\sigma \cap \beta_j = \emptyset$ for $i \neq j$; however $\zeta_i^\sigma$ may intersect several $\alpha_j$. For each $\zeta_i^\sigma$, we isotope $\beta_i$ by a finger move around $\zeta_i^\sigma$. See figure \ref{fig:make_admissible}.

\begin{figure}
\begin{center}
\def\svgwidth{400pt}
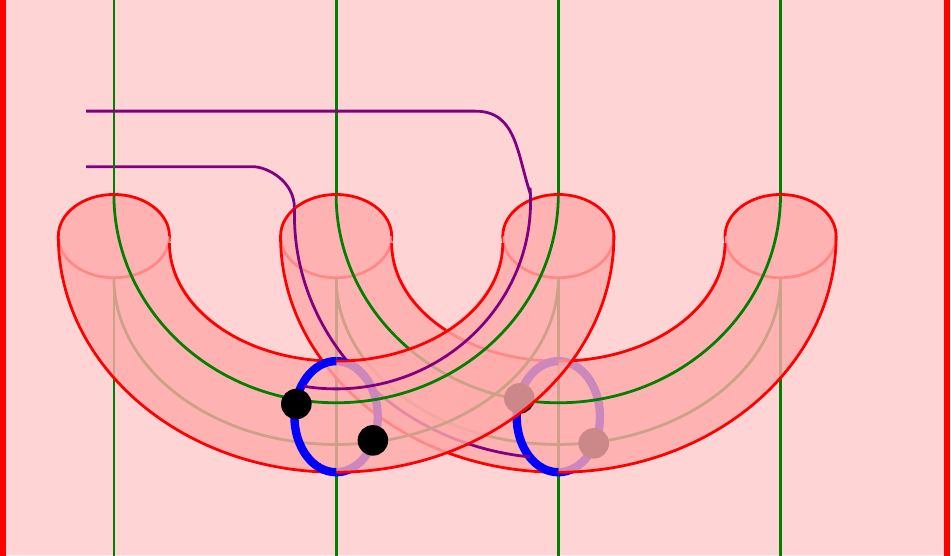
\caption{We make the Heegaard diagram of figure \ref{fig:non_admissible} admissible. The $\alpha$ curves are drawn in green, original $\beta$ curves in blue, and $\zeta$ curves in purple. We isotope the $\beta$ curves by finger moves along the $\zeta$ curves. The result is shown in figure \ref{fig:annulus_cut_decomposition}.}
\label{fig:make_admissible}
\end{center}
\end{figure}

Call the resulting curves of this isotopy $\beta^\Sigma = \{\beta^\Sigma_i\}_{i=1}^k$. As each $\beta^\Sigma_i$ has, on each side, boundary-adjacent components of $S \backslash \left( \bigcup \alpha \cup \bigcup \beta \right)$, we immediately have the following proposition.
\begin{prop}
\label{prop:Heegaard_block_decomposition_admissible}
$(S, \alpha, \beta^\Sigma)$ is an admissible balanced sutured Heegaard decomposition of $(\Sigma \times S^1, F \times S^1)$.
\qed
\end{prop}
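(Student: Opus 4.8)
The plan is to verify that the isotopy described just before the statement actually achieves admissibility, by combining the explicit description of periodic domains from Proposition \ref{prop:wedge_component_bijection} and the preceding lemmas with the elementary linear-algebra criterion for admissibility. The heart of the matter is the general argument already sketched in the paragraph preceding the proposition: every nonzero periodic domain $D$ is an integer combination of the internal components of $S \backslash (\bigcup \alpha \cup \bigcup \beta)$, its boundary $\partial D$ must contain at least one $\beta$ curve (now $\beta^\Sigma_i$) with nonzero coefficient --- otherwise $\partial D$ would be a nontrivial integer combination of the $\alpha_i$ alone, contradicting their linear independence in $H_1(S)$ --- and any $\beta^\Sigma_i$ has, on \emph{each} of its two sides, a boundary-adjacent component of $S \backslash (\bigcup \alpha \cup \bigcup \beta^\Sigma)$, which has coefficient $0$ in $D$ since $D$ involves only internal components. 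Hence crossing $\beta^\Sigma_i$ in $\partial D$ passes from a $0$-coefficient region to a region with the nonzero coefficient, forcing $D$ to have coefficients of both signs; this is precisely the admissibility condition.

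So the real content of the proof is the claim that, after the finger-move isotopies along the arcs $\zeta_i^\sigma$, each $\beta^\Sigma_i$ does indeed have a boundary-adjacent component on both sides. First I would recall that $(S,\alpha,\beta)$ is a balanced sutured Heegaard decomposition of $(\Sigma \times S^1, F \times S^1)$ by Proposition \ref{prop:Heegaard_block_decomposition}, and that isotoping $\beta$ curves preserves both the diffeomorphism type of the resulting sutured 3-manifold and balancedness, so $(S,\alpha,\beta^\Sigma)$ is still a balanced sutured Heegaard decomposition of the same manifold. Next I would go side-by-side through the construction of the $\zeta_i^\sigma$: for a side $\sigma$ of $\beta_i$ already adjacent to a boundary-adjacent component (barrier wedge of $G_Q^+$), nothing is done and that side still borders that component after all the other (disjoint) finger moves; for a side $\sigma$ adjacent only to internal components, the arc $\zeta_i^\sigma$ runs from $\beta_i$ along the left side of the tube, turns left onto the sheet, and terminates in a boundary-adjacent component, so the finger move pushes a thin tongue of $\beta_i$ into that boundary-adjacent component, giving $\beta^\Sigma_i$ a boundary-adjacent region on the $\sigma$ side. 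The disjointness of the $\zeta_i^\sigma$ (which we arranged) and the fact that $\zeta_i^\sigma \cap \beta_j = \emptyset$ for $i \neq j$ guarantee that these finger moves do not interfere with one another, and in particular do not destroy a boundary-adjacent region created or retained on another side or for another curve. One must also check that such a boundary-adjacent component genuinely exists at the end of each $\zeta_i^\sigma$ --- i.e. that proceeding along the appropriate sheet one does reach a barrier wedge; this follows from the fact that every sheet of $S$ projects (under $\pi$) to a blown-up vertex interval $I_v$ of $G_Q^+$, whose two ends are barrier wedges, so walking along the sheet avoiding other tubes reaches one of the barrier wedges at $I_v$, which corresponds to a boundary-adjacent component by Proposition \ref{prop:wedge_component_bijection}.

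I expect the main obstacle to be bookkeeping rather than conceptual: carefully confirming that a single finger move along $\zeta_i^\sigma$ --- which may cross several $\alpha_j$ --- changes the adjacency of $\beta^\Sigma_i$ in the intended way without accidentally making some other $\beta^\Sigma_j$ lose a boundary-adjacent neighbour, and that the ``avoiding other tubes'' clause can always be met (the sheet projecting to $I_v$ has enough room, since the tubes meeting it correspond to the finitely many edges at $v$ and one can route $\zeta_i^\sigma$ between them along $I_v$). Once these local pictures are pinned down, the proof assembles quickly: $(S,\alpha,\beta^\Sigma)$ is a balanced sutured Heegaard decomposition of $(\Sigma \times S^1, F \times S^1)$ by the isotopy invariance, it is admissible by the linear-algebra argument above using the two-sided boundary-adjacency of each $\beta^\Sigma_i$, and that is exactly the statement of Proposition \ref{prop:Heegaard_block_decomposition_admissible}. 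I would keep the write-up to a short paragraph restating the displayed admissibility argument, preceded by one sentence each for the isotopy invariance and for the two-sided boundary-adjacency claim with a pointer to the construction and to Proposition \ref{prop:wedge_component_bijection}.
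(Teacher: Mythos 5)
Your proposal is correct and takes essentially the same approach as the paper: admissibility follows from the linear-algebra criterion (any nonzero periodic domain must have some $\beta^\Sigma_i$ with nonzero coefficient in its boundary, since otherwise the $\alpha$ curves would be dependent in $H_1(S)$, and the boundary-adjacent components on either side of $\beta^\Sigma_i$ then force coefficients of both signs) together with the construction of the $\zeta_i^\sigma$ finger-move arcs ensuring each $\beta^\Sigma_i$ has a boundary-adjacent component on each side. The paper marks the proposition as immediate because this argument appears verbatim in the two paragraphs preceding it; the extra details you spell out --- that every $\zeta_i^\sigma$ indeed terminates in a barrier wedge of the sheet at $I_v$, and that the disjoint finger moves do not destroy one another's adjacencies --- are implicit in the paper's construction and consistent with it.
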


Note we have performed more isotopies than is strictly necessary. For instance, if $G_Q^+$ is a tree, i.e. $\Sigma$ is a disc, then there are no loops $l_e$, no periodic domains and the Heegaard diagram is automatically admissible; yet our construction may isotope the $\beta$ curves even in this case.

Also note that there is nothing special about the left direction; we could have constructed the $\zeta^\sigma_i$ by ``always turning right" rather than left. However the choice of a definite direction assists us in decomposing surfaces, as we see next.

Propositions \ref{prop:Heegaard_block_decomposition} and \ref{prop:Heegaard_block_decomposition_admissible} together amount to proposition \ref{prop:Heegaard_block_decomposition_rough}

\subsection{Decomposing and computing SFH}
\label{sec:computing_SFH}

With the admissible balanced sutured Heegaard decomposition $(S, \alpha, \beta^\Sigma)$, we can form the chain complex $CF(S, \alpha, \beta^\Sigma)$ and the differential $\partial$. We now compute the homology by successively decomposing $S$, using the results of section \ref{sec:decompositions_twisted}, in particular corollary \ref{cor:Juhasz_decomp_adapted}.

First we need a lemma about the $\beta^\Sigma$ curves. Recall the projection $\pi: \Sigma \times S^1 \To \Sigma$ can be regarded as flattening the tubes and sheets of the Heegaard surface $S$ into the thickened edges and blown-up vertices of $G_Q^+$. Each $\zeta^\sigma_i$ projects to an embedded arc from the midpoint of an edge, to the blown-up vertex (interval) at the end of that edge, turning left and proceeding along that interval, across any intervening wedges, to a barrier wedge. The observation is that certain wedges are never crossed by any $\zeta^\sigma_i$, hence never by any $\beta^\Sigma_i$.

Let $v$ be a vertex of $G_Q^+$ of degree $d \geq 2$, with edges and wedges $w_0 < e_1 < w_1 < e_2 < \cdots < e_d < w_d$, where $w_0, w_d$ are barrier wedges and $w_1, \ldots, w_{d-1}$ are internal. Figure \ref{fig:make_admissible} provides an example.

\begin{lem}
\label{lem:disjoint_wedge}
For any vertex $v \in G_Q^+$ of degree $d \geq 2$, the internal wedge $w_{d-1}$ is disjoint from $\pi(\beta^\Sigma)$.
\end{lem}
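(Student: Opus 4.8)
The plan is to track exactly which wedges can be crossed by an arc $\zeta_i^\sigma$, since $\beta^\Sigma$ differs from $\beta$ only by finger moves along these arcs. By construction, $\beta$ curves live on the tubes of $S$, so $\pi(\beta)$ sits in the middle of thickened edges and crosses no wedge at all; hence $\pi(\beta^\Sigma)$ meets a wedge $w$ at $v$ only if some $\zeta_i^\sigma$ does. So the statement reduces to: no $\zeta_i^\sigma$ crosses the wedge $w_{d-1}$ at $v$.

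Next I would unwind the definition of $\zeta_i^\sigma$ in the projected picture of the blown-up, thickened spine $G_Q^+$. An arc $\zeta_i^\sigma$ starts at the midpoint of some edge $e$, runs down one side of $e$, reaches the blown-up vertex $I_u$ at an endpoint $u$ of $e$, and then turns \emph{left} and proceeds along $I_u$ across intervening wedges until it reaches a barrier wedge of $I_u$, where it stops. The key point is the ``always turn left'' rule: upon arriving at $I_u$ along edge $e$, the arc proceeds in the decreasing direction of the total ordering at $u$, i.e. toward the wedge $w_0$ end of $I_u$, crossing only wedges with index strictly less than the wedge immediately adjacent to $e$ on the correct side. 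In particular, an arc arriving at $I_u$ never travels toward the $w_d$ end, so it never enters the last internal wedge $w_{d-1}$ from within $I_u$ (that wedge lies between $e_{d-1}$ and $e_d$, on the increasing side of every edge $e_i$ with $i < d$, and $\zeta$ always moves to the decreasing side). One still has to check the two ways an arc could a priori touch $w_{d-1}$: (a) as it travels along $I_v$ after arriving via some edge $e_i$ — excluded by the left-turn rule as just argued, since it would have to move toward the $w_d$ end; and (b) at the very start, when $\zeta_i^\sigma$ emerges from a $\beta$ curve on the tube over $e_d$ (the barrier edge $e_d$) into the sheet at $v$ — here the arc ``turns left onto the sheet,'' which by the orientation conventions of section \ref{sec:Heegaard_blocks} again means it enters the wedge $w_d$ side, i.e. $w_d$ itself, not $w_{d-1}$, and then it has already reached a barrier wedge and stops. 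So in every case $w_{d-1}$ is avoided.

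Concretely I would: (1) recall that $\pi(\beta) \cap (\text{wedges}) = \emptyset$, so it suffices to show each $\zeta_i^\sigma$ avoids $w_{d-1}$; (2) describe $\pi(\zeta_i^\sigma)$ explicitly as "down a half-side of an edge, turn left at the endpoint interval, march along the interval in the decreasing direction to the first barrier wedge, stop," citing the definition given just before Proposition \ref{prop:Heegaard_block_decomposition_admissible}; (3) observe that "decreasing direction, stopping at the first barrier wedge $w_0$" means the arc traverses a set of consecutive wedges of the form $\{w_0, w_1, \dots, w_j\}$ for some $j < d-1$ whenever it enters $I_v$ through an internal edge $e_{i}$ with $i \le d-1$ — and an arc entering through the barrier edge $e_d$ or emerging onto the sheet at the $w_d$ end immediately lands in the barrier wedge $w_d$ and halts, never reaching $w_{d-1}$; (4) conclude $w_{d-1}$ is crossed by no $\zeta_i^\sigma$, hence is disjoint from $\pi(\beta^\Sigma)$.

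The main obstacle I anticipate is (2)–(3): pinning down precisely, from the orientation conventions for $\alpha$, $\beta$ and the "turn left" prescription, that "turn left at the end vertex" translates to "move in the decreasing direction of the total order on $I_v$" and that the arc halts at $w_0$ rather than somehow wrapping around. This is a careful but routine bookkeeping check with the pictures (figures \ref{fig:wedges}, \ref{fig:make_admissible}); once the left-turn rule is correctly identified with the decreasing direction, the wedge $w_{d-1}$ — being the internal wedge adjacent to the $w_d$ barrier end — is manifestly on the "wrong side" and the disjointness is immediate.
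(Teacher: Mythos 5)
Your reduction of the lemma to the statement ``no arc $\zeta^\sigma_i$ crosses $w_{d-1}$'' is right, and your analysis in case (a) of arcs arriving at $v$ along an edge $e_j$ with $j<d$ is sound: once you commit to reading ``turn left'' as ``enter the wedge $w_{j-1}$ and proceed in the decreasing direction toward the barrier wedge $w_0$,'' such an arc only meets wedges $w_{j-1}, w_{j-2}, \ldots, w_0$ and stays well away from $w_{d-1}$.

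The problem is case (b). Having just fixed the convention that the left turn from an edge $e_j$ lands you in $w_{j-1}$ and sends you decreasing toward $w_0$, you cannot in the next breath claim that the left turn from $e_d$ lands you in $w_d$. Applied uniformly, your own convention says the left turn from $e_d$ enters $w_{d-1}$ --- exactly the wedge you are trying to exclude. (And if you instead adopted the opposite convention globally, so that a left turn from $e_j$ enters $w_j$ and proceeds toward $w_d$, then an arc from $e_{d-1}$ would cross $w_{d-1}$, and the lemma would simply fail. So the convention you used in (a) is forced, and it is incompatible with your claim in (b).) The real reason the $e_d$ case is harmless is not that the arc lands in $w_d$; it is that no arc $\zeta^\sigma_i$ is ever \emph{introduced} there. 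Recall from section \ref{sec:admissibility} that a finger $\zeta^\sigma_i$ is drawn only for a side $\sigma$ of $\beta_i$ whose adjacent components of $S\setminus(\alpha\cup\beta)$ are \emph{all internal}. On the side of the $\beta$ curve over $e_d$ facing $v$, the two adjacent components are $w_{d-1}$ (internal) and $w_d$ (a barrier wedge, hence boundary-adjacent); since one of them is boundary-adjacent, no finger is pushed from that side in the first place. That observation --- which your write-up never invokes --- is the entire content of the $e_d$ case in the paper's proof, and without it case (b) is a genuine gap.
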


\begin{proof}
Any $\beta^\Sigma_i$ with projection in $w_{d-1}$ must arise from a $\zeta^\sigma_i$ with projection passing through $w_{d-1}$. Because of the ``always turn left'' construction, such a $\zeta^\sigma_i$ must begin on $e_d$, proceed on the side $\sigma$ towards $v$ and turn left through $w_{d-1}$. But the $\beta$ curve on $e_d$ is adjacent to the boundary-adjacent component of $S \backslash \left( \bigcup \alpha \cup \bigcup \beta \right)$ corresponding to the barrier wedge $w_d$ on the same side $\sigma$, and so no such $\zeta^\sigma_i$ is ever drawn.
\end{proof}

If some vertex $v$ of $G_Q^+$ has degree $d \geq 2$, we now show how to decompose the occupied surface $(\Sigma, V, F)$, quadrangulation $Q$, sutured manifold $(M, \Gamma) = (\Sigma \times S^1, F \times S^1)$ and Heegaard decomposition $(S, \alpha, \beta^\Sigma)$ at the vertex $v$. We will decompose along the wedge $w_{d-1}$ at $v$. The internal wedge $w_{d-1}$ corresponds to an internal edge $a$ of the quadrangulation $Q$, from a positive to negative vertex, and cutting $(\Sigma,V,F)$ along $a$ produces an occupied surface $(\Sigma',V',F')$ with the same index but gluing number decreased by $1$, and a naturally associated quadrangulation $Q'$. We will decompose $(\Sigma,V,F)$ to $(\Sigma',V',F')$, $Q$ to $Q'$, and $(\Sigma \times S^1, F \times S^1)$ to $(\Sigma' \times S^1, F' \times S^1)$. However the requirements of section \ref{sec:decompositions_twisted} are a little subtle; the most obvious decomposing surface $a \times S^1$ is not good.

We thus isotope $a \times S^1$ to obtain a good decomposing surface $U$. Recall to be good, each component of $\partial U$ must intersect both $R_+$ and $R_-$. Let the boundary vertices of $a$ be $v_\pm \in V_\pm$, so $a \times S^1$ has boundary curves $v_\pm \times S^1 \subset R_\pm$. We isotope both curves $v_\pm \times S^1$ by finger moves, pushing a sub-arc of $v_\pm \times S^1$ in the direction of (oriented) $\partial \Sigma$, to intersect the next suture of $F \times S^1$ in two points. These two isotoped curves now intersect $\Gamma$ transversely in two points each, and bound an annulus $U$ isotopic to $a \times S^1$, which is a good decomposing surface. The sutured manifold decomposition of $(\Sigma \times S^1, F \times S^1)$ along $U$ is a balanced sutured 3-manifold naturally homeomorphic to $(\Sigma' \times S^1, F' \times S^1)$. See figure \ref{fig:decomposing_surface}.

\begin{figure}
\begin{center}
\def\svgwidth{400pt}
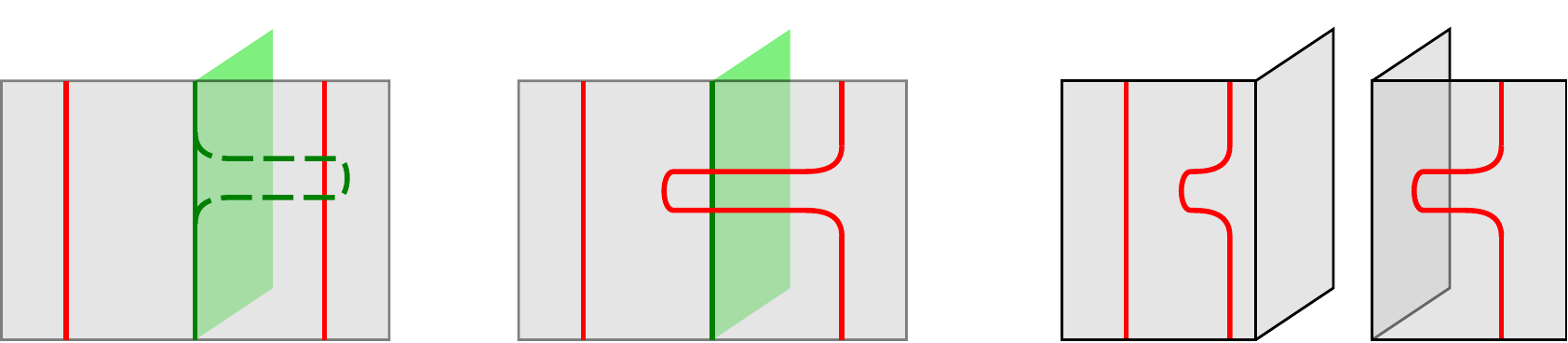
\caption{Isotopy of $a \times S^1$ to the good decomposing surface $U$. First isotope $v_\pm \times S^1$ (in green) to intersect sutures (left). This is equivalent to isotopy of sutures to intersect the surface (centre). Cutting along $U$ gives the result shown on right.}
\label{fig:decomposing_surface}
\end{center}
\end{figure}

We now find a Heegaard diagram $(S, \alpha, \beta^\Sigma, P)$ adapted to $U$. 

In $G_Q^+$, we have a blown-up vertex (interval) $I_v$; take a sub-interval $\I$ between the edges $e_{d-1}$ and $e_d$, i.e. in the wedge $w_{d-1}$. Then $\pi^{-1}(\I)$ is an embedded annulus $\mathcal{A}$ in $S$, which lies in the sheet of $S$ corresponding to the vertex $v$, between the tubes corresponding to $e_{d-1}$ and $e_d$; moreover, $\mathcal{A}$ is disjoint from $\alpha$ and (by lemma \ref{lem:disjoint_wedge}) from $\beta^\Sigma$. Let the two boundary curves of $\mathcal{A}$ be $c_-$, closer to $e_{d-1}$, and $c_+$, closer to $e_d$. 

We now isotope the $c_\pm$ by finger moves along arcs $\eta_\pm$, so that they still bound an annulus in $S$ isotopic to $\mathcal{A}$. After this isotopy, $c_\pm$ will become as a sequence of two arcs $A_\pm \cup B_\pm$, where $A_\pm$ avoids $\beta^\Sigma$ curves and $B_\pm$ avoids $\alpha$ curves.

Let $\eta_-$ be an arc which runs from $c_-$, along the sheet of $S$ at $v$, avoiding all tubes and $\beta^\Sigma_i$ curves, to the boundary component of $S$ at the initial wedge $w_0$. (This $\eta_-$ may intersect $\alpha$ curves.) We isotope $c_-$ along $\eta_-$ by a finger move to obtain a polygonal curve $A_- \cup B_-$ with vertices on $\partial S$, as shown in figure \ref{fig:annulus_cut_decomposition}. The arc $B_-$ is a short boundary-parallel arc; $A_-$ consists of most of $c_-$, and two arcs parallel to $\eta_-$. So $A_-$ avoids all $\beta^\Sigma$ curves, and $B_-$ avoids all $\alpha$ curves.

In the thickened $G_Q^+$, there is a boundary component which includes the wedge $w_{d-1}$, and the two adjacent sides of $e_{d-1}$ and $e_d$. As we trace out this boundary component we never change sides of an edge, and whenever we enter a vertex we exit at the next edge on the same side; eventually we arrive at a barrier wedge, by proposition \ref{prop:tape_graph_quadrangulation}. We take a corresponding arc $\eta_+$ in $S$ which starts on $c_+$, runs into the tube corresponding to $e_d$, and then runs around tubes and sheets of $S$, never crossing an $\alpha$ curve, and whenever entering a sheet exiting at the next tube on the same side; eventually arriving at $\partial S$. (Note $\eta_+$ will intersect $\beta^\Sigma$ curves.)  We now isotope $c_+$ along $\eta_+$ by a finger move to obtain a polygonal curve $A_+ \cup B_+$, as shown in figure \ref{fig:annulus_cut_decomposition}. The arc $A_+$ is a short boundary-parallel arc; $B_+$ consists of most of $c_+$, and two arcs parallel to $\eta_+$. So $A_+$ avoids all $\beta^\Sigma$ curves, and $B_+$ avoids all $\alpha$ curves.

\begin{figure}
\begin{center}
\def\svgwidth{450pt}
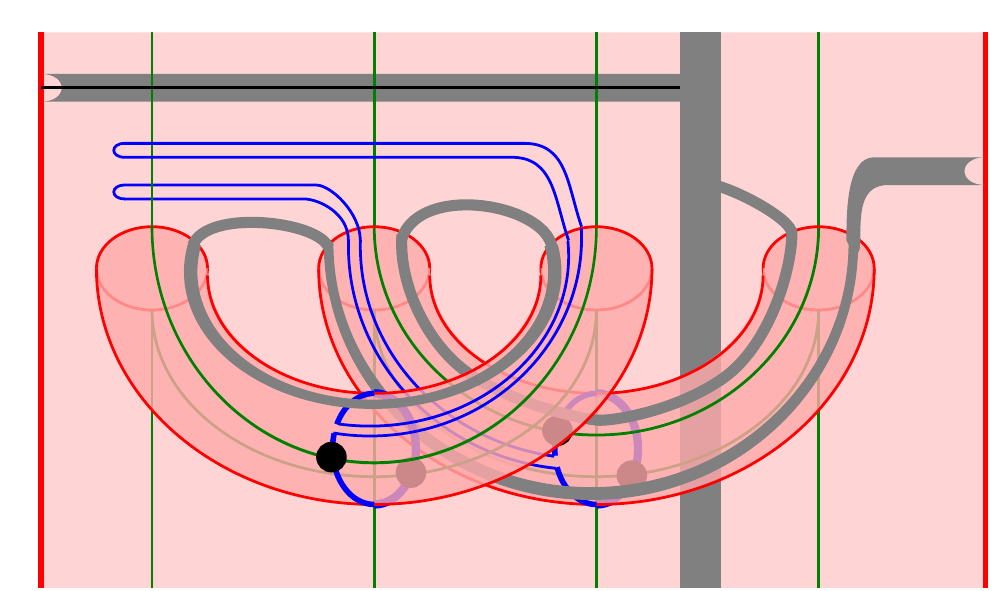
\caption{A Heegaard diagram $(S, \alpha, \beta^\Sigma, P)$ adapted to $U$; this is the same example as figure \ref{fig:make_admissible} above. The $\alpha$ curves are green, $\beta^\Sigma$ curves in blue, and $P$ in grey. The boundary curves $c_\pm$ of the annulus $\mathcal{A}$ are isotoped along curves $\eta_\pm$ to obtain polygonal boundaries $A_- \cup B_-$ and $A_+ \cup B_+$.}
\label{fig:annulus_cut_decomposition}
\end{center}
\end{figure}

Now $A_+ \cup B_+$ and $A_- \cup B_-$ form the polygonal boundary of an annulus $P \subset S$. We check that $(S, \alpha, \beta^\Sigma, P)$ is a good Heegaard diagram adapted to $U$, with $A = A_- \cup A_+$ and $B = B_- \cup B_+$. Thickening to $S \times [0,1]$ and surgering along $\alpha \times \{0\}$ and $\beta^\Sigma \times \{1\}$ curves, we see that $(\mathcal{A} \times \{1/2\}) \cup (c_+ \times [0, 1/2]) \cup (c_- \times [1/2, 1])$ is isotopic to the original annulus $a \times S^1$. Isotoping $c_+$ along $\eta_+$ corresponds to isotoping $v_- \times S^1$ around to intersect $R_+$, and isotoping $c_-$ along $\eta_-$ corresponds to isotoping $v_+ \times S^1$ around to intersect $R_-$. The resulting $(P \times \{1/2\}) \cup (A \times [1/2, 1]) \cup (B \times [0, 1/2])$ is isotopic to the decomposing surface $U$, so $(S, \alpha, \beta, P)$ is adapted to $U$.

We now decompose $(S, \alpha, \beta^\Sigma, P)$ along $P$, as per Juh\'{a}sz' construction. Recall we remove $P$ from $S$ and glue in two copies $P_A, P_B$ along $A$ and $B$ respectively, with $\alpha$ curves on $P_A$ and $\beta$ curves on $P_B$. But the original annulus $\mathcal{A}$ is disjoint from all $\alpha$ and $\beta^\Sigma$ curves; the ``finger" added along $\eta_-$ only intersects $\alpha$ curves; and the ``finger" added along $\eta_+$ only intersects $\beta^\Sigma$ curves. So the result is simply to cut out $\mathcal{A}$, and the decomposed Heegaard diagram is $(S \backslash \mathcal{A}, \alpha, \beta^\Sigma)$.

Indeed, this $(S \backslash \mathcal{A}, \alpha, \beta^\Sigma)$ is a sutured Heegaard diagram for $(\Sigma',V',F')$. The spine $G_{Q'}^+$ is obtained from $G_Q^+$ by cutting the blown-up vertex $v$ at the wedge $w_{d-1}$; and the Heegaard blocks for $(\Sigma', V', F')$ assemble to give a Heegaard surface $S' \cong S \backslash \mathcal{A}$ for $(\Sigma' \times S^1, F' \times S^1)$. The ``unperturbed" curves $\alpha_i$ and $\beta_i$ on $S$ in the Heegaard decomposition of $(\Sigma \times S^1, F \times S^1)$ correspond precisely to the ``unperturbed" curves $\alpha'_i$ and $\beta'_i$ on $S'$ for $(\Sigma' \times S^1, F' \times S^1)$; that is, $(S', \alpha', \beta') \cong (S \backslash \mathcal{A}, \alpha, \beta)$.

However, there is a difference. Since an annulus is removed from $S$ to obtain $S'$, there are more boundary-adjacent regions on $(S', \alpha', \beta')$ than on $(S, \alpha, \beta)$, so less need for isotoping $\beta'$-curves along $\zeta^\sigma_i$ arcs. Nonetheless for a curve $\beta'_i$ on $S'$ that has all adjacent components of $S' \backslash \left( \bigcup \alpha' \cup \bigcup \beta' \right)$ on a side $\sigma'$ internal, we can take the arc $\zeta^{\sigma'}_i$ connecting $\beta'_i$ to a boundary-adjacent component on side $\sigma'$ to be given by $\zeta^\sigma_i$ on $S' \cong S \backslash \mathcal{A}$: the same construction of ``always turn left" applies.

Thus, the admissible Heegaard decomposition $(S', \alpha', \beta^{\Sigma'})$ for $(\Sigma' \times S^1, F' \times S^1)$ is obtained from the admissible Heegaard decomposition $(S, \alpha, \beta^\Sigma)$ for $(\Sigma \times S^1, F \times S^1)$ by cutting the annulus $\mathcal{A}$ out of $S$, and then isotoping some of the curves $\beta^\Sigma_i$ curves back along $\zeta^\sigma_i$ arcs to their original position (``retracting fingers").

Moreover, the hypotheses of corollary \ref{cor:Juhasz_decomp_adapted} are satisfied. The map $\iota_* : H_2(\Sigma' \times S^1) \To H_2( \Sigma \times S^1)$ induced by inclusion is isomorphic to the map $H_1( G_{Q'}^+ ) \To H_1( G_Q^+ )$ induced by inclusion. As graphs are homotopy equivalent to wedges of circles, $\iota_*$ is injective with image a direct summand. The annular polygon $P$ contains no points of $\alpha \cap \beta^\Sigma$. As $H_1(\Sigma) \cong H_2(\Sigma \times S^1)$, we conclude the following.
\begin{prop}
\label{prop:decomposition_computation}
Tensoring with $\Z[H_1(\Sigma)]$ over $\iota_*: \Z[H_1(\Sigma')] \To \Z[H_1(\Sigma)]$ gives an isomorphism
\[
\Z[H_1(\Sigma)] \otimes_{\Z[H_1(\Sigma')]} SFH( \Sigma' \times S^1, F' \times S^1; \Z[H_1(\Sigma')]) \cong SFH(\Sigma \times S^1, F \times S^1; \Z[H_1(\Sigma)]).
\]
\qed
\end{prop}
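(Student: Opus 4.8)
The plan is to verify that the decomposition constructed throughout this section satisfies the four hypotheses of corollary \ref{cor:Juhasz_decomp_adapted}, applied to the sutured manifold decomposition $(\Sigma \times S^1, F \times S^1) \stackrel{U}{\rightsquigarrow} (\Sigma' \times S^1, F' \times S^1)$ along the good decomposing surface $U$, and the adapted Heegaard diagram $(S, \alpha, \beta^\Sigma, P)$ where $P$ is the annulus bounded by $A_- \cup B_-$ and $A_+ \cup B_+$. Once all four hypotheses hold, the corollary immediately yields the stated isomorphism, since $H_2(\Sigma \times S^1) \cong H_1(\Sigma)$ and $H_2(\Sigma' \times S^1) \cong H_1(\Sigma')$ by the K\"unneth formula, with $\iota_*$ on second homology corresponding to the inclusion-induced map $H_1(G_{Q'}^+) \To H_1(G_Q^+)$ on graph homology.

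First I would confirm that $(S, \alpha, \beta^\Sigma, P)$ is \emph{good}, i.e. that $A = A_- \cup A_+$ and $B = B_- \cup B_+$ have no closed components: this is built into the construction, as each $A_\pm$, $B_\pm$ is a bona fide arc (part of $c_\pm$ together with fingers parallel to $\eta_\pm$, or a short boundary-parallel arc), never a closed curve. Second, $(S, \alpha, \beta^\Sigma)$ is admissible by proposition \ref{prop:Heegaard_block_decomposition_admissible}. Third, I would verify that $\iota_* : H_2(\Sigma' \times S^1) \To H_2(\Sigma \times S^1)$ is injective with image a direct summand: as noted, under the identifications $H_2(\Sigma \times S^1) \cong H_1(\Sigma) \cong H_1(G_Q^+)$ and likewise for $\Sigma'$, this is the map on first homology induced by including $G_{Q'}^+$ (obtained by cutting the blown-up vertex $v$ at the wedge $w_{d-1}$) into $G_Q^+$; since both graphs are homotopy equivalent to wedges of circles and the included graph's loops extend to a basis (a maximal tree of $G_{Q'}^+$ extends to one of $G_Q^+$), injectivity and direct-summand-ness follow. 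Fourth, $P$ contains no points of $\alpha \cap \beta^\Sigma$: the annulus $\mathcal{A} = \pi^{-1}(\I)$ is disjoint from $\alpha$ and from $\beta^\Sigma$ (by lemma \ref{lem:disjoint_wedge}), and the isotopies of $c_\pm$ along $\eta_-$ (intersecting only $\alpha$ curves) and $\eta_+$ (intersecting only $\beta^\Sigma$ curves) do not create intersection points \emph{inside} $P$ — the fingers are pushed outward, so $P$ itself stays free of $\alpha \cap \beta^\Sigma$.

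The main obstacle I anticipate is the fourth point, and more precisely checking carefully that the decomposed Heegaard diagram really is $(S \backslash \mathcal{A}, \alpha, \beta^\Sigma)$ — that Juh\'asz' cut-and-reglue operation on $(S, \alpha, \beta^\Sigma, P)$ has the effect of simply excising the annulus $\mathcal{A}$. This requires knowing that $P_A$ carries only the $\alpha$ arcs running through $P$ and $P_B$ only the $\beta^\Sigma$ arcs, and that after regluing along $S \backslash P$ these recombine into the original (unperturbed) curves on $S \backslash \mathcal{A}$; the subtlety is that $P$ is not $\mathcal{A}$ but $\mathcal{A}$ with two fingers attached, and one must see that the finger along $\eta_-$ (which meets $\alpha$) glues back cleanly on the $A$-side while the finger along $\eta_+$ (which meets $\beta^\Sigma$) glues back cleanly on the $B$-side. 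This is exactly the content of the paragraph preceding the proposition, so I would cite that discussion. Having established that the decomposed diagram is $(S', \alpha', \beta') \cong (S \backslash \mathcal{A}, \alpha, \beta)$, and that re-isotoping finitely many $\beta'_i$ back to their original positions recovers $(S', \alpha', \beta^{\Sigma'})$ — an admissible diagram for $(\Sigma' \times S^1, F' \times S^1)$ — corollary \ref{cor:Juhasz_decomp_adapted} gives
\[
\Z[H_1(\Sigma)] \otimes_{\Z[H_1(\Sigma')]} SFH(\Sigma' \times S^1, F' \times S^1; \Z[H_1(\Sigma')]) \cong SFH(\Sigma \times S^1, F \times S^1; \Z[H_1(\Sigma)]),
\]
which is the desired statement.
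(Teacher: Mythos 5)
Your proposal is correct and takes essentially the same approach as the paper: the proof consists in verifying the four hypotheses of corollary \ref{cor:Juhasz_decomp_adapted} for the good decomposing surface $U$ and the adapted diagram $(S,\alpha,\beta^\Sigma,P)$ built in the preceding paragraphs, then invoking that corollary together with the identification $H_2(\Sigma \times S^1)\cong H_1(\Sigma)$. One minor imprecision worth flagging: $G_{Q'}^+$ is not a subgraph of $G_Q^+$ (rather $G_Q^+$ is obtained from $G_{Q'}^+$ by identifying the two halves of the cut vertex), so ``a maximal tree of $G_{Q'}^+$ extends to one of $G_Q^+$'' is not quite right --- the image of such a tree can acquire a cycle; the cleaner argument is that identifying two vertices is homotopy-equivalent either to wedging on a circle (if they lie in the same component) or to joining two components, and in both cases $H_1(G_{Q'}^+)\to H_1(G_Q^+)$ is visibly injective with direct-summand image.
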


Successively decomposing in this way, we finally arrive at a quadrangulation $Q$ of a $(\Sigma,V)$ where every vertex of $G_Q^+$ has degree $1$. In this case, $G_Q^+$ is a set of disconnected edges, $(\Sigma,V)$ is a set of disconnected squares, and $(S, \alpha, \beta)$ is a disconnected set of Heegaard blocks. Moreover, $S \backslash \left( \bigcup \alpha \cup \bigcup \beta \right)$ has no internal components so $(S, \alpha, \beta^\Sigma) = (S, \alpha, \beta)$ is admissible. There are no periodic domains and no continuous Whitney discs, let alone any holomorphic ones, so $\partial = 0$.

When $(\Sigma,V)$ is a single square we have a single $\alpha$ and $\beta$ curve intersecting at two points, which we call $|\0\rangle$ and $|\1\rangle$, with distinct spin-c structures. As $\partial = 0$ and  $\Z[H_1(\Sigma)] \cong \Z$ we obtain $SFH(\Sigma \times S^1, F \times S^1; \Z[H_1(\Sigma)]) \cong \Z|\0\rangle \oplus \Z|\1\rangle$. Similarly, when $(\Sigma, V, F)$ consists of $I$ disconnected squares we have $SFH(\Sigma \times S^1, F \times S^1) \cong \left( \Z|\0\rangle \oplus \Z|\1\rangle \right)^{\otimes I}$.

Now for a general occupied surface $(\Sigma, V, F)$, we have seen how to successively decompose $(\Sigma \times S^1, F \times S^1)$, and successively obtain $SFH(\Sigma \times S^1, F \times S^1)$ as isomorphic to the $SFH$ of a simpler $(\Sigma' \times S^1, F' \times S^1)$, after tensoring with the appropriate coefficient ring; moreover we simplify the Heegaard decomposition at each stage by removing an annulus from the Heegaard surface and retracting some fingers from $\beta$-curves. (In particular, retractions of $\beta$-curves do not affect homology; we have ``pair annihilation'' of intersection points \cite{OS04Closed}.) We thus have the following.

\begin{thm}
\label{thm:twisted_SFH_computation}
\[
SFH(\Sigma \times S^1, F \times S^1; \Z[H_1(\Sigma)]) \cong \Z[H_1(\Sigma)] \otimes_\Z \left( \Z\0 \oplus \Z\1 \right)^{\otimes I(\Sigma, V)} \cong \left( \Z[H_1(\Sigma)] \; \0 \oplus \Z[H_1(\Sigma)] \; \1 \right)^{\otimes I(\Sigma, V)}.
\]
Moreover, each tensor factor corresponds to a square of the quadrangulation, and the generators $|\0\rangle, |\1\rangle$ correspond to the two intersection points in a Heegaard block.
\qed
\end{thm}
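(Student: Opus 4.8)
The plan is to assemble the computation by an induction on the index $I(\Sigma,V)$, or equivalently on the gluing number, using the decomposition machinery built up in the preceding sections. The base case is exactly the last paragraph before the theorem: when every vertex of $G_Q^+$ has degree $1$, the surface $(\Sigma,V)$ is a disjoint union of $I(\Sigma,V)$ squares, the Heegaard decomposition $(S,\alpha,\beta)$ is a disjoint union of Heegaard blocks, there are no internal components of $S\setminus(\bigcup\alpha\cup\bigcup\beta)$, hence no periodic domains, hence $\partial=0$. Since $\Z[H_1(\Sigma)]\cong\Z$ in that case and each block contributes a single $\alpha$ and $\beta$ curve meeting in two points $|\0\rangle,|\1\rangle$ lying in distinct spin-c structures, the chain complex is $\bigl(\Z|\0\rangle\oplus\Z|\1\rangle\bigr)^{\otimes I(\Sigma,V)}$ with zero differential, and its homology is itself. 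This establishes the theorem for quadrangulations with all degree-$1$ vertices.

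For the inductive step I would argue as follows. If $(\Sigma,V)$ is not a disjoint union of squares, then some vertex $v$ of $G_Q^+$ has degree $d\ge 2$. Cut along the internal edge $a$ corresponding to the wedge $w_{d-1}$ at $v$, as described in Section~\ref{sec:computing_SFH}: this produces $(\Sigma',V',F')$ with $I(\Sigma',V')=I(\Sigma,V)$ but gluing number decreased by $1$, and a naturally associated quadrangulation $Q'$ whose positive spine $G_{Q'}^+$ is obtained from $G_Q^+$ by splitting $I_v$ at $w_{d-1}$. Proposition~\ref{prop:decomposition_computation} then gives
\[
\Z[H_1(\Sigma)]\otimes_{\Z[H_1(\Sigma')]} SFH(\Sigma'\times S^1,F'\times S^1;\Z[H_1(\Sigma')])\cong SFH(\Sigma\times S^1,F\times S^1;\Z[H_1(\Sigma)]).
\]
By the inductive hypothesis applied to $(\Sigma',V')$ with quadrangulation $Q'$, the right-hand factor is $\Z[H_1(\Sigma')]\otimes_\Z(\Z\0\oplus\Z\1)^{\otimes I(\Sigma',V')}$, with tensor factors indexed by the squares of $Q'$ and generators identified with intersection points in Heegaard blocks. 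Substituting and using $I(\Sigma',V')=I(\Sigma,V)$ together with associativity of the tensor product yields $SFH(\Sigma\times S^1,F\times S^1;\Z[H_1(\Sigma)])\cong\Z[H_1(\Sigma)]\otimes_\Z(\Z\0\oplus\Z\1)^{\otimes I(\Sigma,V)}$; the second isomorphism in the statement, distributing $\Z[H_1(\Sigma)]$ across the tensor factors, is the elementary identity $R\otimes_\Z(\bigoplus_j M_j)\cong\bigoplus_j(R\otimes_\Z M_j)$ applied to each factor.

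For the final sentence — that each tensor factor corresponds to a square of $Q$ and that $|\0\rangle,|\1\rangle$ correspond to the two intersection points of a Heegaard block — I would track the correspondence through the induction. The squares of $Q$ and of $Q'$ are in canonical bijection (cutting along $a$ does not change the index nor, up to the associated slack square collapse, the set of squares), and under the isomorphism $(S',\alpha',\beta')\cong(S\setminus\mathcal{A},\alpha,\beta)$ the unperturbed curves $\alpha_i,\beta_i$ match the unperturbed $\alpha_i',\beta_i'$; hence the intersection points $\alpha_i\cap\beta_i$ in each Heegaard block of $S$ match those in $S'$. The retraction of fingers on the $\beta^\Sigma$ curves does not move any intersection point of an unperturbed $\alpha_i$ with $\beta_i$, so the identification of generators with pairs $|\0\rangle,|\1\rangle$ per block is preserved at each stage, and in the base case it is the definition. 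Spin-c splitting is used throughout to see $\partial$ respects the decomposition, but by the time we reach the base case $\partial=0$ and no holomorphic curve ever contributes; the induction then propagates this, so the generators of the chain complex are identified with $SFH$ directly.

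The main obstacle I expect is not any single estimate but rather the careful bookkeeping in the inductive step: verifying that all four hypotheses of Corollary~\ref{cor:Juhasz_decomp_adapted} genuinely hold for the adapted diagram $(S,\alpha,\beta^\Sigma,P)$ constructed at $w_{d-1}$ — goodness, admissibility, injectivity of $\iota_*$ with direct-summand image, and $P$ containing no points of $\alpha\cap\beta^\Sigma$ — and that the decomposed diagram really is the Heegaard-block diagram for $(\Sigma',V',F')$ after finger retraction, so that the inductive hypothesis applies to it on the nose. All of these are assembled in Section~\ref{sec:computing_SFH} (Lemma~\ref{lem:disjoint_wedge} for the fourth hypothesis, Proposition~\ref{prop:Heegaard_block_decomposition_admissible} for admissibility, the homotopy-equivalence-to-wedges-of-circles remark for $\iota_*$), so the work is mostly to chain these facts together cleanly; Proposition~\ref{prop:decomposition_computation} packages precisely the output needed, so once the diagram identification is in hand the algebra is routine.
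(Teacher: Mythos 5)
Your proposal is correct and takes essentially the same route as the paper: the paper's argument, spread over Section~\ref{sec:computing_SFH}, is precisely the iteration of Proposition~\ref{prop:decomposition_computation} down to the base case of disjoint Heegaard blocks with $\partial=0$, and your induction packages that iteration cleanly. (One small wording note: your opening line says ``induction on the index $I(\Sigma,V)$'' — this is not the inductive quantity since the index is invariant under the decomposition, as you yourself observe a few lines later; the induction is on the gluing number, i.e.\ the number of internal edges of the quadrangulation.)
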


\subsection{A twisted SQFT from SFH}
\label{sec:twisted_SQFT_from_SFH}

We can now prove a theorem for twisted coefficients directly analogous to theorem 1.2 of \cite{Me12_itsy_bitsy}.
\begin{thm}
The sutured Floer homology of 3-manifolds $(\Sigma \times S^1, V \times S^1)$, with twisted coefficients, contact elements, and maps on SFH induced by inclusions of surfaces, form a twisted sutured quadrangulated field theory.
\end{thm}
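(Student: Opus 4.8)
The plan is to assemble the ingredients developed through the paper into a verification of each clause of Definition~\ref{defn:twisted_sqft}, following the blueprint of the untwisted construction in Section~\ref{sec:SQFT_from_SFH} but substituting the twisted-coefficient tools. First I would fix, for each occupied surface $(\Sigma,V)$, the module $\V(\Sigma,V) = SFH(-\Sigma\times S^1, -V\times S^1; \Z[H_2(\Sigma\times S^1)])$, noting $H_2(\Sigma\times S^1)\cong H_1(\Sigma)$ by K\"unneth so the coefficient ring is the required $\Z[H_1(\Sigma)]$. The grading by $\Z$ comes from the spin-c decomposition $SFH \cong \bigoplus_\s SFH(-\Sigma\times S^1,-V\times S^1,\s)$, identifying spin-c structures with Euler classes of the associated contact structures exactly as in the untwisted case; that each coefficient ring element has grading $0$ is built into the definition of twisted $SFH$. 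To a decorated morphism $(\phi,\Gamma_c):(\Sigma,V)\To(\Sigma',V')$ we associate the module homomorphism $\D_{\phi,\Gamma_c} = \Phi_{\xi_{\Gamma_c}}$ over $\phi_*:\Z[H_1(\Sigma)]\To\Z[H_1(\Sigma')]$ supplied by Theorem~\ref{thm:twisted_SFH_TQFT_map} (Ghiggini--Honda), where $\xi_{\Gamma_c}$ is the contact structure on the complementary region determined by $\Gamma_c$; this is well defined up to units since contact gluing maps are. The suture element assignment is $c(\Gamma) = c(\xi_\Gamma)$, which Theorem~\ref{thm:twisted_SFH_TQFT_map} tells us is a $\Z[H_1(\Sigma)]^\times$-orbit, and which is respected, $\D_{\phi,\Gamma_c}c(\Gamma)\subseteq c(\Gamma\cup\Gamma_c)$, directly by the statement $\Phi_\xi c(\xi')\subseteq c(\xi\cup\xi')$.

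Next I would check the ``functor up to units'' property. Identity morphisms induce the identity gluing map up to units (the complementary contact region is a collar, whose contact invariant gluing is an isomorphism), and composition of decorated morphisms corresponds to gluing complementary contact regions in sequence, so functoriality up to units follows from the composition law for the $\Phi_\xi$ maps. Decorated-isotopy invariance is the one point where the literature needs care: decorated-isotopic morphisms have isotopic complementary contact structures (including the vacuum creation/annihilation singularities, which correspond to adding or removing trivial $S^2\times S^1$-type pieces that do not change $SFH$), hence induce the same gluing map up to units; I would note this matches the treatment in \cite{Me12_itsy_bitsy} and the naturality discussion of \cite{HKM09, Ghiggini_Honda08}.

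Then I would verify the quadrangulation/tensor-decomposition axiom, which is where Theorem~\ref{thm:twisted_SFH_computation} does the work. Given a quadrangulation $Q$ with squares $(\Sigma_i^\square,V_i^\square)$, the Heegaard-block construction of Proposition~\ref{prop:Heegaard_block_decomposition} and the successive-decomposition computation of Section~\ref{sec:computing_SFH} give
\[
SFH(-\Sigma\times S^1,-V\times S^1;\Z[H_1(\Sigma)])\cong \Z[H_1(\Sigma)]\otimes_\Z\bigl(\Z\0\oplus\Z\1\bigr)^{\otimes I(\Sigma,V)},
\]
with one tensor factor per square and generators $|\0\rangle,|\1\rangle$ the two intersection points of each block; this is exactly the claimed decomposition, and $\V(\Sigma^\square,V^\square)$ is the free $\Z$-module $\Z\0\oplus\Z\1$, giving the ``basic sutures give bases'' axiom (with $c(\Gamma_\pm) = \{\pm\1\},\{\pm\0\}$, since the square's two tight contact structures are the basic suture elements, ambiguous only by the sign units of $\Z$). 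The compatibility of $c$ with the tensor decomposition on basic sutures, $c(\Gamma)=\Z[H_1(\Sigma)]^\times\otimes\bigotimes_i c(\Gamma_i)$, follows because a basic $\Gamma$ restricts to a tight contact structure on each $\Sigma_i^\square\times S^1$ and the decomposition isomorphism is built by gluing these, with the unit ambiguity absorbed into the $\Z[H_1(\Sigma)]^\times$ factor. The Euler grading axiom then follows from matching the spin-c grading on each block factor with $e(\0)=-1,e(\1)=1$ and additivity of Euler class under the gluings (Lemma~\ref{lem:decorated_morphism_degree} on the occupied-surface side, the spin-c grading additivity on the $SFH$ side).

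\textbf{Main obstacle.} The delicate step is confirming that the gluing map $\D_{\phi,\Gamma_c}$ of Theorem~\ref{thm:twisted_SFH_TQFT_map} is compatible with the \emph{explicit} tensor decompositions coming from quadrangulations and Heegaard blocks --- i.e., that the abstractly-defined contact gluing map and the concretely-computed isomorphism of Theorem~\ref{thm:twisted_SFH_computation} fit together so that creations, standard gluings, folds and zips act on the $\bV^{\otimes n}$ picture by the expected digital operators (up to units). For creations and standard gluings this is immediate from how the Heegaard blocks were built to be glued; for folds and zips one must match the slack-square-collapse picture with a decomposition move as in Section~\ref{sec:computing_SFH}, and track that no holomorphic curve ever contributes, so the map is combinatorially determined up to the sign/unit ambiguity. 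I expect this bookkeeping --- reconciling the two descriptions of the same map --- to be the real content, with everything else following the untwisted template plus the twisted machinery of Sections~\ref{sec:twisted_SFH}--\ref{sec:TSQFT_Heegaard} essentially verbatim.
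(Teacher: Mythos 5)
Your proposal follows essentially the same route as the paper: take $\V(\Sigma,V) = SFH(-\Sigma\times S^1,-V\times S^1;\Z[H_1(\Sigma)])$, $c(\Gamma) = c(\xi_\Gamma)$, and $\D_{\phi,\Gamma_c} = \Phi_{\xi}$ via Theorem~\ref{thm:twisted_SFH_TQFT_map}, with functoriality up to units from the composition law of the gluing maps and the tensor-decomposition axiom from the Heegaard-block computation of Theorem~\ref{thm:twisted_SFH_computation}. The overall strategy and the pieces you invoke match the paper's proof.

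Two small corrections of detail. First, the vacuum $(\Sigma^\emptyset,V^\emptyset)$ is a disc with two boundary vertices, so $\Sigma^\emptyset\times S^1$ is a solid torus $D^2\times S^1$ with two $V$-sutures, not ``$S^2\times S^1$''; its $SFH$ is $\Z$ (the coefficient ring), which is what makes vacuum creation/annihilation harmless, rather than the piece vanishing. Second, the paper sidesteps the decorated-isotopy singularities entirely by first decorated-isotoping any $(\phi,\Gamma_c)$ to an embedding $\phi^*$ into $\Int\Sigma'$ (which is also what makes Theorem~\ref{thm:twisted_SFH_TQFT_map} directly applicable, since that theorem requires $M'\subset\Int M$ --- a step your proposal leaves implicit), and then noting that decorated-isotopic morphisms have $\phi^*$ that are genuinely isotopic without boundary singularities, so their complementary contact structures are isotopic; no direct appeal to the singularity events is needed.

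On your flagged ``main obstacle'': your instinct that reconciling the abstract contact gluing map with the explicit Heegaard-block isomorphism is the real content is correct, and the paper's resolution is exactly Proposition~\ref{prop:decomposition_computation} together with the chain-level explicitness of Corollary~\ref{cor:Juhasz_decomp_adapted}. Concretely: start from the disjoint union of squares, where $\V(\sqcup_i(\Sigma_i^\square,V_i^\square))\cong\bigotimes_i\V(\Sigma_i^\square,V_i^\square)$ and $c(\sqcup_i\Gamma_i)=\bigotimes_i c(\Gamma_i)$ are immediate; then glue squares one at a time, and at each stage Proposition~\ref{prop:decomposition_computation} identifies the new $SFH$ with the old one tensored over $\iota_*$ with the enlarged coefficient ring. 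Since the gluing maps $\Phi_\xi$ are defined at chain level by tensoring with a contact class, and the decomposition isomorphism of Corollary~\ref{cor:Juhasz_decomp_adapted} is also realised at chain level on the explicit Heegaard-block complexes (where no nontrivial holomorphic curves ever appear), the two descriptions agree up to units, which is all the axioms demand. Your proposal gestures at this but does not supply the successive-gluing mechanism; that is the one piece of substance you would need to fill in.
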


\begin{proof}
For occupied $(\Sigma,V,F)$, we let $\V(\Sigma,V)$ be $SFH(-\Sigma \times S^1, -F \times S^1; \Z[H_1(\Sigma)]) \cong SFH(-\Sigma \times S^1, -V \times S^1; \Z[H_1(\Sigma)])$ with twisted coefficients, which is a $\Z[H_1(\Sigma)]$-module. The twisted coefficient ring is $\Z[H_2(\Sigma \times S^1)] \cong \Z[H_1(\Sigma)]$, so $\V(\Sigma,V)$ is a $\Z[H_1(\Sigma)]$-module.

To sutures $\Gamma$ on $(\Sigma,V)$, we assign $c(\Gamma) = c(\xi_\Gamma)$, the contact invariant of the contact structure $\xi_\Gamma$ corresponding to $\Gamma$. This is a $\Z[H_1(\Sigma)]^\times$-orbit in $\V(\Sigma,V)$.

Given a decorated morphism $(\phi, \Gamma_c): (\Sigma,V) \To (\Sigma',V')$, as in part (v) of the construction in \cite[section 8.2, part (v)]{Me12_itsy_bitsy}, we can decorated-isotope the morphism to $(\phi^*, \Gamma_c^*)$ where $\phi^*$ is an embedding into $\Int \Sigma'$. This corresponds to an inclusion of $(\Sigma \times S^1, V \times S^1)$ into the interior of $(\Sigma' \times S^1, V \times S^1)$, with a contact structure $\xi^*$ on the complement corresponding to by $\Gamma_c^*$. We then use the TQFT property of twisted SFH (theorem \ref{thm:twisted_SFH_TQFT_map}) to define $\mathcal{D}_{\phi,\Gamma_c} = \Phi_{\xi^*} \; : \; \V(\Sigma,V) \To \V(\Sigma',V')$, which is a graded module homomorphism over $\phi_*$. Moreover, if $(\phi, \Gamma_c)$ and $(\phi', \Gamma'_c)$ are decorated-isotopic, then the embeddings-into-interiors obtained from them $(\phi^*, \Gamma_c^*)$, $(\phi'^*, \Gamma'^*_c)$ are isotopic (not just decorated-isotopic; they are isotopic without singularities at boundaries). Hence the complementary contact structures $\xi^*, \xi'^*$ are isotopic and $\Phi_{\xi^*} = \Phi_{\xi'^*}$, up to units. So $\D_{\phi,\Gamma_c}$ only depends on the decorated-isotopy class of $(\phi, \Gamma_c)$. 

In \cite[prop. 6.2]{HKM08} Honda--Kazez--Mati\'{c} show, for $\Z$ coefficients, the maps $\Phi_\xi$ on SFH respect compositions, up to units. Similarly, in \cite[thm. 6.1]{HKM08} it is shown for $\Z$ coefficients that, in our language, an identity morphism gives $\Phi_\xi$ the identity up to units. Since the $\Phi_\xi$ are defined, over $\Z$ or twisted coefficients, at the chain level simply by tensoring with a contact class (see also \cite[proof of thm. 12]{Ghiggini_Honda08}), these proofs extend without difficulty to twisted coefficients. Thus $\D$ is a functor up to units, satisfying condition (i).

It follows immediately from the computations above, that a quadrangulation $(\Sigma^\square_i, V^\square_i)$ gives a tensor decomposition of the desired type. For a square $(\Sigma^\square, V^\square)$ we have $\V(\Sigma^\square, V^\square) = SFH(-\Sigma^\square \times S^1, -V^\square \times S^1) \cong \Z|\0\rangle \oplus \Z|\1\rangle$, and theorem \ref{thm:twisted_SFH_computation} immediately gives
\[
\V(\Sigma,V) \cong \Z[H_1(\Sigma)] \otimes \bigotimes \V(\Sigma_i^\square, V_i^\square).
\]
Note that an occupied vacuum $(\Sigma^\emptyset, V^\emptyset)$ corresponds to $(\Sigma^\emptyset \times S^1, V^\emptyset \times S^1)$, which has a Heegaard decomposition without $\alpha$ or $\beta$ curves, corresponding to its null quadrangulation, and $\V(\Sigma^\square,V^\square) \cong \Z$.

Suppose $\Gamma$ is a basic set of sutures, restricting to $\Gamma_i$ on $(\Sigma^\square_i, V^\square_i)$. Start with the squares disconnected and the sutures $\Gamma_i$ on them; we clearly have $\V(\sqcup(\Sigma^\square_i, V^\square_i)) \cong \bigotimes_i \V(\Sigma^\square_i, V^\square_i)$ and $c(\sqcup \Gamma_i) = \bigotimes_i c(\Gamma_i)$. Now we consider successively gluing these squares together into $(\Sigma,V)$. Proposition \ref{prop:decomposition_computation} then says that at each stage, the effect on SFH is simply to tensor with the appropriate coefficient ring; so after gluing all the squares together, we have $c(\Gamma) = \Z[H_1(\Sigma)]^\times \otimes \bigotimes_i c(\Gamma_i)$. So condition (ii) is satisfied.

As noted in theorem \ref{thm:twisted_SFH_TQFT_map}, the maps $\Phi_\xi$ respect suture elements; so $\mathcal{D}_{\phi,\Gamma_c} c(\Gamma) \subseteq c(\Gamma \cup \Gamma_c)$, satisfying condition (iii).

It is explained in \cite{Me09Paper, HKM08} how the spin-c grading of SFH of manifolds $(\Sigma \times S^1, V \times S^1)$ corresponds to Euler class of contact structures; all elements in $c(\Gamma)$ have grading given by $e(\Gamma)$.

Finally, consider a square $(\Sigma^\square, V^\square)$. We have $\V(\Sigma^\square,V^\square) \cong \Z \oplus \Z$, with generators of the two summands corresponding to the two intersection points $\alpha \cap \beta$ in a Heegaard block, which have distinct spin-c structures. On the other hand, $c(\Gamma_+)$ has grading $1$ and $c(\Gamma_-)$ has grading $-1$. In fact \cite[sec. 7.2]{HKM08} $c(\Gamma_+)$ generates the $1$-graded summand and $c(\Gamma_-)$ generates the $0$-graded summand. Letting $c(\Gamma_+) = \{ \pm \1 \}$ and $c(\Gamma_-) = \{ \pm \0 \}$ then, $\0, \1$ form a free basis for $\V(\Sigma^\square, V^\square)$ of the desired gradings. Thus conditions (iv) and (v) hold.
\end{proof}

\begin{cor}
A twisted SQFT exists.
\qed
\end{cor}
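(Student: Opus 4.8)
The plan is to obtain the corollary as an immediate consequence of the theorem just proved: that theorem exhibits a concrete example of a twisted SQFT, namely the one built from sutured Floer homology of the manifolds $(\Sigma \times S^1, V \times S^1)$ with twisted coefficients, contact elements, and inclusion-induced maps, so nothing further is required beyond invoking it. In effect the proof is one line --- ``immediate from the preceding theorem'' --- and the substance lies entirely in having proved that theorem.

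It is nonetheless worth recording which pieces of the construction carry the weight. First I would check the coefficient ring is the intended one: since $H_2(\Sigma \times S^1) \cong H_1(\Sigma)$ (by K\"{u}nneth, or by deformation retracting $\Sigma$ onto a $1$-skeleton), the module $SFH(-\Sigma\times S^1,-V\times S^1;\Z[H_2(\Sigma\times S^1)])$ is automatically a $\Z[H_1(\Sigma)]$-module, matching definition \ref{defn:twisted_sqft}. Next I would verify functoriality up to units: define $\D_{\phi,\Gamma_c}$ as the Ghiggini--Honda map $\Phi_{\xi^*}$ of theorem \ref{thm:twisted_SFH_TQFT_map} after decorated-isotoping the morphism to an embedding into the interior, check that it depends only on the decorated-isotopy class (isotopic complementary contact structures give the same $\Phi$ up to units), and note that compatibility with composition and identities follows from the Honda--Kazez--Mati\'{c} arguments of \cite{HKM08}, which survive passage to twisted coefficients because $\Phi_\xi$ is defined at the chain level simply by tensoring with a contact class. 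The suture elements $c(\Gamma) = c(\xi_\Gamma)$ are $\Z[H_1(\Sigma)]^\times$-orbits by construction and are respected by the $\D_{\phi,\Gamma_c}$ by the last clause of theorem \ref{thm:twisted_SFH_TQFT_map}; the grading condition is the spin-c / Euler-class correspondence from \cite{HKM08, Me09Paper}.

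The hard part is the tensor-decomposition condition, and this is where most of the paper goes. The goal there is theorem \ref{thm:twisted_SFH_computation}: a quadrangulation $Q$ of $(\Sigma,V)$ should give $\V(\Sigma,V) \cong \Z[H_1(\Sigma)] \otimes \bigotimes_i \V(\Sigma_i^\square,V_i^\square)$ with each $\V(\Sigma_i^\square,V_i^\square)$ a free rank-$2$ $\Z$-module, compatibly with basic suture elements. I would prove this by first assembling an explicit Heegaard decomposition of $(\Sigma\times S^1, F\times S^1)$ out of one Heegaard block per square (proposition \ref{prop:Heegaard_block_decomposition}), then making it admissible via the $\beta$-curve finger-move construction (proposition \ref{prop:Heegaard_block_decomposition_admissible}), and finally peeling squares off one at a time using the twisted surface-decomposition corollary \ref{cor:Juhasz_decomp_adapted} and proposition \ref{prop:decomposition_computation}, checking at each stage that no nontrivial holomorphic disc contributes, so that the chain complex and its homology coincide and $\partial = 0$ in the base case of disjoint squares. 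The genuine obstacle throughout is admissibility: Massot's naive Heegaard decompositions are inadmissible, and making them admissible without disturbing the decomposition structure forces the spine / tape-graph formalism (sections \ref{sec:spines_of_occupied_surfaces}--\ref{sec:tape_graphs}), with the ``always turn left'' arcs $\zeta_i^\sigma$ and lemma \ref{lem:disjoint_wedge} ensuring the finger moves are compatible with successive cutting. Once that infrastructure is in place, the theorem --- and hence the corollary --- follows with no further effort.
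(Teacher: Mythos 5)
Your proposal is correct and takes the same route as the paper: the corollary follows immediately from the preceding theorem, which exhibits twisted SFH of the manifolds $(\Sigma \times S^1, V \times S^1)$ as a concrete example of a twisted SQFT. Your recap of where the substance lies — the Ghiggini--Honda maps for functoriality up to units, and especially the Heegaard-block / admissibility / decomposition machinery of section \ref{sec:TSQFT_Heegaard} feeding theorem \ref{thm:twisted_SFH_computation} for the tensor-decomposition condition — accurately reflects the paper's argument.
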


\addcontentsline{toc}{section}{References}

\small

\bibliography{danbib}
\bibliographystyle{amsplain}

\end{document}